\documentclass{article}
\usepackage{amsmath}
\usepackage{amsfonts}
\usepackage{graphicx, subfigure}
\usepackage[pdftex]{hyperref}
\usepackage[dutch,english]{babel}
\usepackage{amsmath,amssymb}
\usepackage{amsthm}
\usepackage[usenames,dvipsnames]{color}
\usepackage{tikz}
\usepackage{accents}
\usepackage{xcolor}
\usepackage{hyperref}
\usepackage{mathtools}
\usepackage{float}
\usepackage{multicol}
\usepackage{comment}
\usepackage[T1]{fontenc}
\usepackage[font={small,it}]{caption}

\DeclarePairedDelimiter{\ceil}{\lceil}{\rceil}
\usepackage{tikz}
\usetikzlibrary{shadows, arrows}

\newcommand{\la}{\lambda}

\DeclareMathOperator{\spn}{span}

\DeclareMathOperator{\Id}{Id}

\DeclareMathOperator{\End}{End}
\DeclareMathOperator{\mat}{Mat}
\DeclareMathOperator{\tr}{Tr}
\DeclareMathOperator{\nil}{Nil}
\DeclareMathOperator{\cen}{Cen}
\DeclareMathOperator{\im}{Im}

\DeclareMathOperator{\Exp}{exp}
\DeclareMathOperator{\Gl}{Gl}
\DeclareMathOperator{\Det}{Det}
\DeclareMathOperator{\ad}{ad}
\DeclareMathOperator{\degt}{deg_2}

\theoremstyle{plain}
\newtheorem{thr}{Theorem}[section]

\newtheorem{lem}[thr]{Lemma}
\newtheorem{cor}[thr]{Corollary}
\newtheorem{prop}[thr]{Proposition}
\theoremstyle{definition}
\newtheorem{defi}[thr]{Definition}
\newtheorem{ex}[thr]{Example}
\theoremstyle{remark}
\newtheorem{remk}{Remark}
\theoremstyle{remark}

\newcommand{\field}[1]{\mathbb{#1}}
\newcommand{\R}{\field{R}}
\newcommand{\N}{\field{N}}
\newcommand{\C}{\field{C}}

\newcommand{\K}{\field{K}}

\definecolor{wred}{rgb}{0.7,0.18,0.12}
\definecolor{wgreen}{rgb}{0.1,0.53,0.37}

\numberwithin{equation}{section}

\def\barray{\begin{eqnarray*}}             \def\earray{\end{eqnarray*}}
\def\beq{\begin{equation}} \def\eeq{\end{equation}}

\title{Transversality in Dynamical Systems with Generalized Symmetry}
\author{Eddie Nijholt\footnote{\mbox{Department of Mathematics, VU University Amsterdam, The Netherlands, \href{mailto:eddie.nijholt@gmail.com}{eddie.nijholt@gmail.com} }  }, Bob Rink\footnote{ \mbox{Department of Mathematics, VU University Amsterdam, The Netherlands, \href{mailto:b.w.rink@vu.nl}{b.w.rink@vu.nl} }} }
\date{\today}

\excludecomment{percent}

\begin{document}

\maketitle

\begin{abstract}
\noindent We prove that a generic $k$-parameter bifurcation of a dynamical system with a monoid symmetry occurs along a generalized kernel or center subspace of a particular type. More precisely, any (complementable) subre\-presentation $U$ is given a number $K_U$ and a number $C_U$. A $k$-parameter bifurcation can generically only occur along a generalized kernel isomorphic to $U$ if $k \geq K_U$. It can generically only occur along a center subspace isomorphic to $U$ if $k \geq C_U$. The numbers $K_U$ and $C_U$ depend only on the decomposition of $U$ into indecomposable subrepresentations. In particular, we prove that a generic one-parameter steady-state bifurcation occurs along one absolutely indecomposable subrepresentation. Likewise, it follows that a generic one-parameter Hopf bifurcation occurs along one indecomposable subrepresentation of complex or quaternionic type, or along two isomorphic absolutely indecomposable subrepresentations. In order to prove these results, we show that the set of endomorphisms with generalized kernel (or center subspace) isomorphic to $U$ is the disjoint union of a finite set of conjugacy invariant submanifolds of codimension $K_U$ and higher (or $C_U$ and higher). The results in this article hold for any monoid, including non-compact groups. 
\end{abstract}


\section{Introduction} \label{Geometric Reduction}
Symmetries play an important role in the study of dynamical systems. Equi\-variant dynamics, the mathematical discipline concerned with this interplay, has correspondingly gained a lot of attention and has developed into a well established field of research. It should be noted however, that many results from this field require the symmetries in question to form a group, often a finite one or a compact topological group. See for example \cite{sym2}, \cite{sym3}, \cite{sym1} or \cite{sym4} for more on equivariant dynamics.\\
\noindent Recent developments in the study of network dynamical systems have called for a generalization of this. More precisely, it can be shown that under mild conditions, a dynamical system with a network structure can be seen as the restriction of an equivariant system to some invariant subspace. This equivariant system is referred to as the fundamental network of the original network, see Figure \ref{Fig1}. Often the symmetries appearing in this latter network system do not form a group, but rather a more relaxed structure such as a semigroup, monoid (i.e. a semigroup with an identity) or category. See \cite{cen}, \cite{proj}, \cite{fibr}, \cite{RinkSanders2}, \cite{RinkSanders3} and \cite{CCN} for more on this formalism. Other authors have likewise linked network structures to more general algebraic concepts, such as the groupoid formalism by Golubitsky and Stewart (\cite{golstew}), or the categorical approach by Lerman and Deville (\cite{deville}).\\

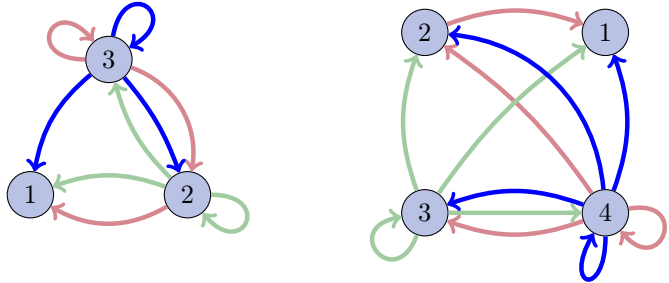
\begin{figure}[H]
\begin{center}
\begin{tikzpicture}[scale=1.2]
\tikzstyle{every node} = [ draw, circle, fill={rgb:red,1;green,2;blue,6;white,20}, anchor=center]
\node (1) at ( -0.87, -0.5) {1};
\node (2) at ( 0.87, -0.5) {2};
\node (3) at (0,1) {3};
\path[ ->, line width=1.7pt,  color={rgb:red,12; blue,3; green, 2 ;white,15}]
	(2) edge[bend left=30] (1)
	(3) edge[bend left=40] (2)
	(3) edge[loop, in=130, out=180,  distance=7mm] (3);
\path[ ->, ,line width=1.7pt, color={rgb:red,1; blue,1; green, 6 ;black,4;white,18}]
	(2) edge[bend right=20] (1)
	(2) edge[loop, in=-50, out=0,  distance=7mm] (2)
	(2) edge[bend left= 15] (3);
\path[ ->, ,line width=1.7pt,  color={blue}]
	(3) edge[bend right=20] (1)
	(3) edge[bend left=10 ] (2)
	(3) edge[loop, in=30, out=80, min distance = 7mm ] (3);
\node (2') at ( 3.5, 1.3) {2};
\node (1') at ( 5.5, 1.3) {1};
\node (4') at (5.5,-0.7) {4};
\node (3') at (3.5,-0.7) {3};
\path[ ->, line width=1.7pt,  color={rgb:red,12; blue,3; green, 2 ;white,15}]
	(2') edge[bend left=20] (1')
	(4') edge[bend left=-10] (2')
	(4') edge[bend left=20] (3')
	(4') edge[loop, in=-50, out=10,  distance=7mm] (4');
\path[ ->, ,line width=1.7pt, color={rgb:red,1; blue,1; green, 6 ;black,4;white,18}]
	(3') edge[bend right=-10] (1')
	(3') edge[bend left=20 ] (2')
	(3') edge[bend left=0 ] (4')
	(3') edge[loop, in=190, out=250, min distance = 7mm ] (3');
\path[ ->, line width=1.7pt, color={blue}]
	(4') edge[bend left=-20] (1')
	(4') edge[bend left=-40] (2')
	(4') edge[bend left=-20] (3')
	(4') edge[loop, in=-120, out=-90,  distance=7mm] (4');
\end{tikzpicture}
\caption{A network (left) with its fundamental network (right). For convenience, we have left out additional self-loops representing interior dynamics for all the cells in both the networks. The network vector fields corresponding to the fundamental network are exactly the vector fields with a linear monoid-symmetry, for which the fundamental network is in fact the Cayley graph. The network vector fields for the graph on the left correspond to those on the right restricted to a linear (synchrony) subspace. This is reflected in the graphs by the fact that identifying cells 2 and 3 in the right network yields the left one, up to renumbering.}
\label{Fig1}
\end{center}
\end{figure}
\noindent This article deals with the question of generic bifurcations in equivariant systems for such generalized symmetries. Let $f(x,\la)$ be a family of vector fields, indexed by some parameter $\la$. It is known that for a bifurcation to occur in the differential equation $\dot{x} = f(x,\la)$, often a certain condition on the spectrum of the linearization $D_xf$ has to hold. For example, the implicit function theorem excludes (non-trivial) steady-state bifurcations unless the matrix $D_xf(x_0,\la_0)$ at the bifurcation point $(x_0,\la_0)$ has a non-trivial kernel. Furthermore, Hopf bifurcations are associated with a pair of complex-conjugate eigenvalues of $D_xf(x_0, \la)$ passing through the imaginary axis as $\la$ varies. However, when considering generic one-parameter steady-state bifurcations, one does not expect the kernel of $D_xf(x_0,\la_0)$ to be two-dimensional or even bigger either, as a small perturbation of the family of vector fields $f(x,\la)$ would generically perturb the kernel to a lower dimensional one. The situation is more complicated for equivariant systems, as the symmetry might exclude certain spaces to appear as kernels or center subspaces, thereby making other spaces more 'likely'. \\
\noindent As it turns out, the correct generalization of a one-dimensional kernel in the case of a compact Lie-group symmetry, is that of an \textit{irreducible} subrepresentation. As a one-dimensional space can be characterized by the property that it does not contain any non-trivial subspaces, so too can an irreducible subre\-presentation be defined by the property that it does not contain any non-trivial invariant subspaces. One can further generalize this concept to that of an \textit{absolutely} irreducible subrepresentation, by imposing the condition that the only symmetry-respecting  endomorphisms of the space are multiples of the identity.  It can then be proven that for compact groups a steady-state bifurcation occurs generically along one absolutely irreducible subrepresentation of the symmetry. See Proposition 3.2 in chapter XIII of \cite{sym4}. \\
When the symmetries only form a monoid, one can define an \textit{indecomposable} subrepresentation as an invariant space that cannot be written as the direct sum of two (non-trivial) invariant subspaces. Such an indecomposable subrepresentation is called \textit{absolutely} indecomposable if the only symmetry preserving endomorphisms are multiples of the identity, up to nilpotent maps. It was shown in \cite{RinkSanders3} (Theorem 6.2) that under a certain technical condition on the representation of the symmetry-monoid, a steady-state bifurcation occurs generically along one absolutely indecomposable subrepresentation. \\
In this article we prove the following more general result about generic $k$-parameter bifurcations in monoid-symmetric dynamical systems. Note that any group is a particular example of a monoid. One often calls an absolutely indecomposable representation an indecomposable representation of real type, after the algebra of its endomorphisms. Likewise, there are the notions of complex type and of quaternionic type. Any (finite-dimensional) indecomposable representation falls in either of these three classes. Our main result is the following.

\begin{thr}\label{mmain}
Let $W$ be a finite-dimensional representation space of a monoid $\Sigma$. Let $U \subset W$ be an invariant subspace satisfying 
\begin{equation}
U \cong \bigoplus^{r_1} W_{1}^R \dots \bigoplus^{r_u} W_{u}^R \bigoplus^{c_1} W_{1}^C \dots \bigoplus^{c_v} W_{v}^C \bigoplus^{h_1} W_{1}^H \dots \bigoplus^{h_w} W_{w}^H\, .
\end{equation}
Here, the $W_{i}^R$, $W_{i}^C$ and $W_{i}^H$ are non-isomorphic indecomposable representations of respectively real type, complex type and quaternionic type, and the numbers  $r_i$,  $c_i$ and $h_i$ denote multiplicities in $U$. Suppose furthermore that there exists an invariant subspace $U'$ such that $W = U \oplus U'$. Then generically a $k$-parameter family of endomorphisms of $W$ has an element with generalized kernel isomorphic to $U$ only when $k$ is bigger or equal to 
\[K_U := r_1 + \dots + r_u + 2c_1 + \dots  + 2c_v + 4h_1 + \dots + 4h_w\, .\] 
\noindent Likewise, generically a $k$-parameter family of endomorphisms of $W$ has an element with center subspace isomorphic to $U$ only when $k$ is bigger or equal to 
\[ C_U:= \ceil{r_1/2} + \dots + \ceil{r_u/2} + c_1 + \dots  + c_v + h_1 + \dots + h_w\, .\] 
Here, $\ceil{x}$ means $x$ rounded up to the nearest integer. (It is not hard to see that if such a $U'$ does not exist, then $U$ will not appear as a generalized kernel or as a center subspace.) \\ \noindent More precisely, let $\End(W)$ denote the set of all $\Sigma$-equivariant linear maps from $W$ to itself. We furthermore denote by $\nil(U)$ the set of elements in $\End(W)$ with generalized kernel isomorphic to $U$ and by $\cen(U)$ the set of elements in $\End(W)$ with center subspace isomorphic to $U$. Then the set  $\nil(U)$ is the union of a finite set of conjugacy invariant submanifolds of codimension $K_U$ and higher. Likewise, $\cen(U)$ is the union of a finite set of conjugacy invariant submanifolds of codimension $C_U$ and higher.\\
\noindent Consequently, when $\Omega \subset \R^k$ is some open parameter-space, the set 
\[\{f \in C^{\infty}(\Omega, \End(W)) \mid f(\Omega) \cap \nil(U) = \emptyset \}\]
\noindent is dense in the weak and strong topologies on $C^{\infty}(\Omega, \End(W))$ whenever \\ 
\noindent $k < K_U$. Likewise, the set 
\[\{f \in C^{\infty}(\Omega, \End(W)) \mid f(\Omega) \cap \cen(U) = \emptyset \}\]
\noindent is dense in the weak and strong topologies on $C^{\infty}(\Omega, \End(W))$ whenever $k < C_U$. Moreover, the sets
\[\{f \in C^{\infty}(\Omega, \End(W)) \mid f(\Omega) \cap \nil(U) \not= \emptyset \}\]
and
\[\{f \in C^{\infty}(\Omega, \End(W)) \mid f(\Omega) \cap \cen(U) \not= \emptyset \}\]
contain a non-empty open set in $C^{\infty}(\Omega, \End(W))$ for $k \geq K_U$ and $k \geq C_U$, respectively.
\end{thr}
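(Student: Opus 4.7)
The plan is to reduce the question to classical stratifications of matrix algebras $\mat_n(\R), \mat_n(\C), \mat_n(\H)$ and then apply standard transversality machinery. I would first appeal to the structure theory of the finite-dimensional $\R$-algebra $\End(W)$. By Krull--Schmidt one decomposes $W = \bigoplus_\alpha V_\alpha^{m_\alpha}$ with the $V_\alpha$ pairwise non-isomorphic indecomposables (the $V_\alpha$ appearing in $U$ together with further summands of $U'$). A Fitting/Schur-type lemma gives $\End(V_\alpha) = D_\alpha \oplus N_\alpha$, where $D_\alpha \in \{\R, \C, \H\}$ corresponds to the type of $V_\alpha$ and $N_\alpha$ is a nilpotent ideal. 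The Jacobson radical $J$ of $\End(W)$ is correspondingly the set of nilpotent equivariant endomorphisms, and Artin--Wedderburn yields a canonical surjective algebra morphism $\pi : \End(W) \twoheadrightarrow \bigoplus_\alpha \mat_{m_\alpha}(D_\alpha)$ with kernel $J$.

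The next step is to show that the $\Sigma$-isomorphism type of the generalized kernel of $A \in \End(W)$ (respectively of its center subspace) is determined by $\pi(A)$. Any generalized kernel or center subspace is an invariant direct summand, so by Krull--Schmidt it has the form $\bigoplus_\alpha V_\alpha^{k_\alpha}$, and the integers $k_\alpha$ must be pinned down. A Nakayama-type argument, exploiting nilpotency of $J$, shows that $A$ is invertible iff $\pi(A)$ is, and more generally that $k_\alpha$ equals the algebraic multiplicity of $0$ as an eigenvalue of the $\mat_{m_\alpha}(D_\alpha)$-component of $\pi(A)$, computed over $D_\alpha$ (using the reduced characteristic polynomial in the quaternionic case). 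The analogue for $\cen(U)$ replaces ``eigenvalue $0$'' by ``eigenvalue on the imaginary axis of $\C$''.

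The codimensions now follow from classical matrix theory: the locus in $\mat_n(D)$ whose generalized $0$-eigenspace has $D$-dimension exactly $k$ is a finite union of locally closed smooth Jordan strata, the largest of which has real codimension $k \cdot \dim_\R D$. Summing $k_\alpha = r_i, c_j, h_k$ over components reproduces $K_U$; for $\cen(U)$, real-type eigenvalues on the imaginary axis come in conjugate pairs $\pm i\omega$ with possibly one extra $0$, giving $\ceil{r_i/2}$ per factor, while each complex- or quaternionic-type imaginary eigenvalue forms one conjugacy orbit and contributes a single real condition, yielding $C_U$. Pulling back by the linear submersion $\pi$ exhibits $\nil(U)$ and $\cen(U)$ as finite unions of submanifolds of the claimed codimensions, with conjugacy invariance automatic from the Jordan characterization. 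The density statement for $k < K_U$ (resp.\ $k < C_U$) in both the weak and strong $C^\infty$ topologies then follows from Thom's transversality theorem applied to the finite list of strata; the openness statement for $k \geq K_U$ (resp.\ $k \geq C_U$) follows by exhibiting an explicit $k$-parameter family transverse to a minimal-codimension stratum, since transversality is stable under $C^\infty$-small perturbation. The main obstacle I expect is the quaternionic case of the reduction step, where noncommutativity of $\H$ makes ``algebraic multiplicity of an eigenvalue'' subtle and forces careful use of the reduced characteristic polynomial of $\mat_n(\H) \hookrightarrow \mat_{2n}(\C)$.
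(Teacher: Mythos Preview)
Your overall strategy coincides with the paper's: pass to the semisimple quotient $\End(W)/J \cong \bigoplus_\alpha \mat_{m_\alpha}(D_\alpha)$, identify the relevant loci there, pull back along the linear surjection, and apply Thom transversality together with a stability argument. Your reduction is in fact more economical than the paper's. The paper first performs an analytic ``geometric reduction'' (Section~3), using an implicit-function-theorem block-diagonalisation to pass from $W = U \oplus U'$ to the case $W = U$, and only afterwards carries out the algebraic reduction (Section~4). Your observation that the Krull--Schmidt multiplicities $k_\alpha$ in the generalized kernel (resp.\ center subspace) of $A$ are read off directly from $\pi(A)$ --- via the idempotent $\pi(P_K)$ for $P_K$ the spectral projector --- collapses these two steps into one. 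This is a genuine simplification.

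Two points deserve correction or expansion. First, a misstatement: the Jacobson radical $J$ is \emph{not} the set of all nilpotent equivariant endomorphisms; for instance $\begin{psmallmatrix} 0 & \Id \\ 0 & 0 \end{psmallmatrix} \in \End(V^2)$ is nilpotent but not in $J$ when $V$ is indecomposable of real type. What is true (and what you actually use) is that $J$ is the ideal of maps whose entries between isomorphic indecomposables are non-isomorphisms, and that $J \subset \nil(W)$.

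Second, and more substantively, the phrase ``classical stratifications of $\mat_n(D)$'' hides the bulk of the work. For $\nil$ the relevant pieces are conjugacy orbits, and showing that these are \emph{embedded} (not merely immersed) submanifolds already requires an argument; the paper invokes algebraic-group theory over $\C$ (orbits are Zariski-open in their closure) and then descends to $\R$ and $\H$ via a ``conjugate-by-an-element-of-the-real-form'' lemma. For $\cen$ the strata are not single orbits at all but unions of orbits parametrised by tuples of distinct imaginary eigenvalues, and proving these are embedded submanifolds with the stated codimensions $\lceil n/2 \rceil$, $n$, $n$ (over $\R$, $\C$, $\H$ respectively) occupies the paper's Section~7 in full. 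Your heuristic eigenvalue count gives the right numbers, but turning it into a proof that the loci are finite disjoint unions of embedded manifolds --- particularly in the quaternionic case you flag --- is exactly where the paper expends its effort.
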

\noindent This result will be proven in several steps. In Section \ref{Geometric Reduction} we will show how (the technical formulation of) the result follows, provided it holds in the special case when $W = U$. In Section  \ref{Algebra: Reduction} we then reduce the situation to one that only involves three families of real algebras, essentially stripping away the symmetry-monoid itself.  As some of the results of this section have value on their own, we have decided to split this reduction in three separate steps, and to furthermore elaborate on  quite some of the intermediate findings. Next, Section \ref{proof of main results} is dedicated to proving Theorem \ref{mmain} in the forms of Theorem \ref{main} and Remark \ref{mainremk}. For the proof of Theorem \ref{main} we need some technical results. These are proven in Sections \ref{Intermezzo; Some algebraic geometry} and \ref{Geometry; Counting dimensions}. More precisely,  Section \ref{Intermezzo; Some algebraic geometry} serves as a short excursion into algebraic geometry needed to prove a technical result. Section \ref{Geometry; Counting dimensions} then uses this result to count the dimensions of the set of elements with a vanishing or purely imaginary spectrum in the three reduced algebras that we obtain in Section \ref{Algebra: Reduction}. To start off, Section \ref{Pre} gives an overview of the results from representation theory that we will be using. \\
\noindent Despite all sections working towards the single goal of proving Theorem \ref{mmain}, it is best to think of this article as consisting of two separate parts. The first part consists of Sections \ref{Pre} till \ref{proof of main results} and comprises the main discussion. The second part is Sections \ref{Intermezzo; Some algebraic geometry} and \ref{Geometry; Counting dimensions} and could be thought of as an appendix where we treat some of the harder geometrical results (where there is no given symmetry anymore). Many of the results in the second part are known to experts, but hard or even impossible to find in the literature. Furthermore, an in debt study of these topics was needed to generalize some of the results to for example matrices with  quaternion entries. What is more, we believe some of the results in Section \ref{Geometry; Counting dimensions} constitute meaningful results in geometry.  Whereas little to no knowledge of the geometrical techniques used in these last two sections is required, a reader interested in only the main result of this article can simply skip this second part.

\section{Preliminaries} \label{Pre}
In this section we present some basic results from the representation theory of monoids. We furthermore fix the notation that will be used throughout this article. It should be noted that we put no restrictions on the monoid $\Sigma$. In particular, it may be finite or infinite, and it could correspond to a (non-compact) topological group or Lie-group. It is, however, essential that the representation space $V$ is finite dimensional. Proofs and additional remarks can be found in \cite{proj} and \cite{RinkSanders3}.

\begin{defi}
A \textit{monoid} is a triple $(\Sigma, e, \circ)$, where $\Sigma$ is a set, $e$ is an element of $\Sigma$ (called the \textit{unit}) and $\circ$ is a map from $\Sigma \times \Sigma$ to $\Sigma$ (notation: $x \circ y \in \Sigma$ for $x,y \in \Sigma$). This triple has to satisfy the following properties:
\begin{enumerate}
\item $(x \circ y) \circ z = x \circ (y \circ z)$ for all $x,y,z \in \Sigma$
\item $e \circ x = x \circ e = x$ for all $x \in \Sigma$.
\end{enumerate}
Note that a group is a particular instance of a monoid. If one drops the existence of a unit in the definition of a monoid (and therefore the second condition), one obtains a \textit{semigroup}. Note that any semigroup can be made into a monoid by artificially adding a unit as an extra element to $\Sigma$. The multiplication $\circ$ is then expanded to $\Sigma \cup \{e\}$ by imposing the second condition in the definition of a monoid. \\

\noindent If $V$ is a finite dimensional vector space over a field $\mathbb{K}$, then we denote by $\mat_{\mathbb{K}}(V)$ the space of $\mathbb{K}$-linear maps from $V$ to itself. We will often drop the subscript $\mathbb{K}$ and simply write $\mat(V)$ when the underlying field is clear. Furthermore, if we have $V = \R^n$ and $\mathbb{K} = \R$ then we will write $\mat(\R,n) := \mat_{\R}(\R^n)$. Likewise, we write $\mat(\C,n) := \mat_{\C}(\C^n)$ for the space of complex matrices. Using this notation, we say that a \textit{representation} of the monoid $\Sigma$ in the vector space $V$ over $\mathbb{K}$ is a map $\phi$ from $\Sigma$ to $\mat_{\mathbb{K}}(V)$ satisfying:
\begin{enumerate}
\item $\phi(x \circ y) = \phi(x) \circ \phi(y)$ for all $x,y \in \Sigma$, and with multiplication between elements in $\mat_{\mathbb{K}}(V)$ understood as composition of operators.
\item $\phi(e) = \Id_V$.
\end{enumerate}
A representation of a semigroup can be defined analogously, by dropping the second condition. Given such a representation $\phi$ of a semigroup $\Sigma$, one obtains a representation of the induced monoid $\Sigma \cup \{e\}$ by setting $\phi(e) := \Id_V$. As we will mostly be interested in those (linear) operators that commute with all elements of the form $\phi(x)$, results will often not change if one passes from a semigroup to its induced monoid. Likewise, if one has a subset $S$ of a monoid $T$, then one may often pass to the smallest monoid $\Sigma \subset T$ containing $S$. In this article a representation will always be over the field $\R$. We also note in passing that the maps $\phi(x)$ need by no means be invertible.\\

\noindent Given a monoid $\Sigma$ and a representation $(V,\phi)$, a linear subspace $W \subset V$ is said to be \textit{invariant} if it holds that $\phi(x)$ maps $W$ into itself for all $x \in \Sigma$. In that case, $W$ becomes a representation space itself via the maps $\phi(x)|_W$. We say that an invariant space $W \subset V$ is \textit{complementable} if there exists an invariant space $U \subset V$ such that $V = W \oplus U$. It is in general not true that every invariant subspace is complementable. If we have two representations $(V, \phi)$ and $(V,' \phi')$, then a \textit{morphism} between these two representations is a linear map $f: V \rightarrow V'$ satisfying $f \circ \phi(x) = \phi'(x) \circ f$ for all $x \in \Sigma$. If $f$ is furthermore invertible, then we call it an \textit{isomorphism}.  Note that in that case, it follows that $f^{-1} \circ \phi'(x) = \phi(x) \circ f^{-1}$, so that $f^{-1}$ is also a morphism. We call two representations \textit{isomorphic} if there exists an isomorphism between them. The space of morphisms between $(V, \phi)$ and itself wil be denoted by $\End(V)$ (Note that we suppress $\phi$ here, as we will often do in $(V, \phi)$ once $\phi$ is fixed). Some examples of morphisms include the inclusion of $(W,\phi(\bullet)|_W)$ in $(V, \phi)$ when $W \subset V$ is invariant, and the projection of $V = W \oplus U$ onto $W$ when $W$ and $U$ are (complementable) invariant spaces. \\

\noindent An element of $\End(V)$ can give rise to a number of invariant spaces. For example, the image, kernel and more generally the span of the eigenvectors of a real eigenvalue or pair of complex-conjugate eigenvalues is always an invariant space. Furthermore, the span of the \textit{generalized} eigenvectors of an eigenvalue or pair of complex-conjugate eigenvalues is invariant and is in fact complementable (by the span of the generalized eigenvectors of other eigenvalues). In particular, we define the \textit{generalized kernel} of an endomorphism to be the span of the generalized eigenvectors corresponding to the eigenvalue $0$. Likewise, the \textit{center subspace} of an endomorphism is the span of the generalized eigenvectors corresponding to all eigenvalues with vanishing real part. By the foregoing, the generalized kernel and center subspace of an element of $\End(V)$ are examples of complementable invariant subspaces.\\

\noindent Lastly, we say that a nonzero representation $V$ is \textit{indecomposable} if it cannot be written as $V = W \oplus U$ for non-trivial invariant spaces $W$ and $U$. Note that an indecomposable representation may still have non-trivial invariant subspaces. 
\end{defi}
\noindent The following result states that indecomposable representations can be seen as the building blocks of other representations.

\begin{thr}[The Krull-Schmidt theorem]
Any (finite-dimensional) representation space $W$ is isomorphic to the direct sum of a finite number of indecomposable representations. I.e. we have 
\begin{equation}
W \cong W_{1} \oplus W_2 \oplus \dots W_k\, , 
\end{equation}
for certain indecomposable representations $W_1$ till $W_k$. This decomposition is unique in the following sense. If it also holds that 
\begin{equation}
W \cong W'_{1}  \oplus \dots W'_l\, , 
\end{equation}
for certain indecomposable representations $W'_1$ till $W'_l$, then $k = l$ and we have that $W_i \cong W'_i$ for all $i$, after renumbering. 
\end{thr}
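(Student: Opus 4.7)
The plan is to prove the Krull--Schmidt theorem in the classical way: existence by an easy induction on dimension, and uniqueness via Fitting's lemma plus the local ring property of $\End(V)$ for indecomposable $V$.

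For existence, I induct on $\dim W$. If $W$ is indecomposable we are done. Otherwise, by definition there exist nontrivial invariant subspaces $W = W' \oplus W''$ with $0 < \dim W', \dim W'' < \dim W$, and the inductive hypothesis applied to each summand yields a decomposition of $W$ into indecomposables.

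Uniqueness is the substantial part. First I would establish \emph{Fitting's lemma}: for any $A \in \End(V)$ with $V$ finite-dimensional, the chains $\ker A \subset \ker A^2 \subset \dots$ and $\im A \supset \im A^2 \supset \dots$ stabilize, and for $n$ large one has the $A$-invariant decomposition $V = \ker A^n \oplus \im A^n$. Crucially, because the $\phi(x)$ commute with $A$, both summands are also $\Sigma$-invariant, i.e.\ subrepresentations. Applying this with $V$ indecomposable forces one summand to be trivial, so every $A \in \End(V)$ is either nilpotent or invertible. From this I deduce the \emph{local ring property}: in $\End(V)$, the non-invertible elements (i.e.\ the nilpotents) form a two-sided ideal. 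The only nontrivial point is closure under addition, which follows because if $A+B$ were invertible with $A,B$ nilpotent, then $\mathrm{Id} = (A+B)^{-1}A + (A+B)^{-1}B$ would express the identity as a sum of two non-invertible commuting-with-nothing-in-particular endomorphisms, and the same Fitting argument shows that in a local situation at least one of the summands must then be invertible, a contradiction.

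With the local ring property in hand I carry out the exchange argument. Suppose $W = W_1 \oplus \dots \oplus W_k = W'_1 \oplus \dots \oplus W'_l$ with all summands indecomposable. Write $\iota_i, p_i$ and $\iota'_j, p'_j$ for the associated inclusions and projections; all of these are morphisms of representations because the summands are invariant. Then
\[
\mathrm{Id}_{W_1} \;=\; p_1 \circ \mathrm{Id}_W \circ \iota_1 \;=\; \sum_{j=1}^{l} (p_1 \iota'_j)(p'_j \iota_1),
\]
an identity in the local ring $\End(W_1)$. Since the identity is not in the maximal ideal, at least one summand (say $j=1$) is invertible in $\End(W_1)$. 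That means $\alpha := p_1 \iota'_1 : W'_1 \to W_1$ and $\beta := p'_1 \iota_1 : W_1 \to W'_1$ satisfy $\alpha \beta \in \End(W_1)^\times$. Then $\beta \alpha \in \End(W'_1)$ is not nilpotent (otherwise $\alpha\beta$ would be too), hence invertible by Fitting, so $\alpha$ and $\beta$ are mutually inverse isomorphisms $W_1 \cong W'_1$.

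It remains to conclude by induction on $k$. Using the isomorphism $\alpha$, I would show that $W = W_1 \oplus W'_2 \oplus \dots \oplus W'_l$ (an "exchange": replace $W'_1$ by $W_1$ inside the second decomposition). Concretely, one checks that $\iota'_1 \alpha^{-1} : W_1 \to W$ together with the $\iota'_j$ for $j\geq 2$ gives an internal direct sum decomposition, because $p'_1 \circ \iota'_1 \alpha^{-1} = \alpha^{-1}$ makes $p'_1|_{W_1}$ an isomorphism onto $W'_1$. Quotienting by $W_1$ identifies $W_2 \oplus \dots \oplus W_k \cong W'_2 \oplus \dots \oplus W'_l$ as representations, and the induction hypothesis (on $k$, say) finishes the matching $W_i \cong W'_{\sigma(i)}$ after a permutation, forcing also $k=l$.

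The main obstacle is the uniqueness step, and within it the local ring property for $\End(V)$ with $V$ indecomposable: one must be careful that Fitting's lemma produces $\Sigma$-invariant summands (which it does, because $\phi(x)$ commutes with any $A \in \End(V)$), and one must handle closure of nilpotents under addition without assuming $\End(V)$ is commutative. The rest is a bookkeeping exchange argument that is standard once the local structure is established.
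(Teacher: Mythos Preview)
The paper does not actually prove this theorem: it is listed in Section~\ref{Pre} as a preliminary result, with proofs deferred to the references \cite{proj} and \cite{RinkSanders3}. So there is no in-paper argument to compare against; the paper simply quotes Krull--Schmidt and the Fitting Lemma (Lemma~\ref{fitting}, which already includes the statement that $\nil(W)$ is an ideal) as background.

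Your proof is the standard one and is essentially correct. Two places deserve a bit more care. First, the closure of nilpotents under addition: your phrasing ``the same Fitting argument shows that in a local situation at least one of the summands must then be invertible'' is circular as written. The clean version is: if $N$ is nilpotent then $\Id - N$ is invertible via the finite geometric series; now if $A,B$ are nilpotent and $C:=A+B$ is invertible, then $C^{-1}A$ cannot be invertible (else $A$ would be), hence is nilpotent, so $C^{-1}B = \Id - C^{-1}A$ is invertible, forcing $B$ invertible --- a contradiction. Within the paper's framework you could simply invoke Lemma~\ref{fitting} for this step. Second, in the exchange step you should make explicit that $\beta = p'_1\iota_1$ being an isomorphism means $p'_1|_{W_1}$ is bijective onto $W'_1$, whence $W_1 \cap (W'_2\oplus\cdots\oplus W'_l)=0$ and $W_1 + (W'_2\oplus\cdots\oplus W'_l)=W$; your sketch of this is right but terse. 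With these clarifications the argument is complete.
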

\noindent The space $\End(W)$ has some special properties in the case that $W$ is indecomposable.

\begin{lem}[The Fitting Lemma] \label{fitting}
If $W$ is an indecomposable representation of a monoid $\Sigma$, then every element $A$ of $\End(W)$ is either invertible, or nilpotent (i.e satisfies $A^n = 0$ for some $n \in \N$). Moreover, the set of nilpotent elements of $\End(W)$ forms an ideal. That is, if we have $A, N, N' \in \End(W)$ with $N$ and $N'$ nilpotent and $\la \in \R$, then $AN$, $NA$, $N+N'$ and $\la N$ are all nilpotent as well. 
\end{lem}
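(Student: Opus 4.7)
The plan is to establish the Fitting decomposition $W = \ker(A^n) \oplus \im(A^n)$ for sufficiently large $n$, use indecomposability to force every equivariant $A$ into one of the two extremes, and then bootstrap this dichotomy to deduce the ideal properties.

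First I would observe that the ascending chain $\ker(A) \subseteq \ker(A^2) \subseteq \ldots$ and the descending chain $\im(A) \supseteq \im(A^2) \supseteq \ldots$ stabilize in finite dimension, so there is some $n$ with $\ker(A^n) = \ker(A^{2n})$ and $\im(A^n) = \im(A^{2n})$. A routine rank-nullity argument then yields $W = \ker(A^n) \oplus \im(A^n)$. The key point is that both summands are $\Sigma$-invariant: because $A$ commutes with every $\phi(g)$, so does $A^n$, and hence each $\phi(g)$ preserves $\ker(A^n)$ and $\im(A^n)$. By indecomposability of $W$, one summand is all of $W$ and the other is $0$. If $\ker(A^n) = W$, then $A^n = 0$ and $A$ is nilpotent; if $\im(A^n) = W$, then $A$ is surjective, hence (in finite dimension) bijective, and its inverse lies in $\End(W)$ because the inverse of an equivariant bijection is automatically equivariant.

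For the ideal statement, the scalar case is trivial since $(\la N)^n = \la^n N^n$. For $AN$ (and symmetrically $NA$), note that $AN \in \End(W)$, so by the first part it is invertible or nilpotent; if $AN$ were invertible then $N$ would be right-invertible and hence, by finite-dimensional linear algebra, invertible, contradicting nilpotence. The subtle case is $N + N'$. Again it is invertible or nilpotent, so assume for contradiction that it is invertible. Set $M := (N+N')^{-1} N \in \End(W)$. This $M$ cannot be invertible, for that would force $N$ to be invertible; so by the first part $M$ is nilpotent. But then
\[
(N+N')^{-1} N' \;=\; I - M,
\]
and $I-M$ is invertible with explicit inverse $I + M + M^2 + \cdots + M^{k-1}$ (where $M^k = 0$), which lies in $\End(W)$ as a polynomial in $M$. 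This would make $N'$ invertible, contradicting its nilpotence, so $N + N'$ must be nilpotent.

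The main obstacle I anticipate is not any individual calculation but the bookkeeping of the bootstrap argument for the ideal part: each closure property is proven by first appealing to the dichotomy, then ruling out invertibility via a further application of it. The sanity checks to verify carefully are that $\ker(A^n)$ and $\im(A^n)$ are genuinely $\Sigma$-invariant (which follows at once from $A\phi(g) = \phi(g)A$) and that the Neumann series inverse of $I - M$ indeed lies in $\End(W)$, so that the contradictions in the ideal step are valid inside the equivariant endomorphism ring.
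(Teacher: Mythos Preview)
The paper does not supply its own proof of this lemma: it is stated as a preliminary in Section~\ref{Pre}, with the surrounding text referring the reader to \cite{proj} and \cite{RinkSanders3} for proofs. There is therefore nothing to compare against directly.

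Your argument is the standard one and is correct. The Fitting decomposition $W = \ker(A^n) \oplus \im(A^n)$ into $\Sigma$-invariant summands, followed by the indecomposability dichotomy, is exactly how this is normally proved. For the ideal part your bootstrap is also the usual route: each of $AN$, $NA$, $N+N'$ lies in $\End(W)$, hence is invertible or nilpotent by the first part, and invertibility is ruled out by contradiction. One small wording slip: from $AN$ invertible you get that $N$ is \emph{left}-invertible (via $(AN)^{-1}A\cdot N = \Id$), not right-invertible as you wrote; of course in finite dimensions either one forces $N$ to be a bijection, so the conclusion is unaffected. The Neumann-series step for $N+N'$ is fine, and your remark that $I + M + \cdots + M^{k-1}$ stays in $\End(W)$ as a polynomial in $M$ is the right sanity check.
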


\begin{defi}
If we write $\nil(W)$ for the ideal of nilpotent endomorphisms of an indecomposable representation $W$, then it follows that the space \\
$\End(W)/\nil(W)$ is a real associative division algebra of finite dimension. By the Frobenius Theorem, it follows that $\End(W)/\nil(W)$ is isomorphic to either $\R$, $\C$ or $\mathbb{H}$. Depending on which, we say that $W$ is of \textit{real type}, \textit{complex type} or \textit{quaternionic type}. It can be shown that isomorphic indecomposable representations are of the same type. An indecomposable representation of real type is sometimes also referred to as an \textit{absolutely indecomposable} representation.
\end{defi}

\noindent We will also make use of the following lemma.

\begin{lem}
Let $W_1$ and $W_2$ be indecomposable representations of a monoid $\Sigma$, and let $f: W_1 \rightarrow W_2$ and $g: W_2 \rightarrow W_1$ be morphisms. If the morphism $g \circ f \in \End(W_1)$ is invertible, then $W_1$ and $W_2$ are isomorphic representations. Combining with Lemma \ref{fitting}, we see that if $W_1$ and $W_2$ are non-isomorphic, then $g \circ f$ is necessarily nilpotent.
\end{lem}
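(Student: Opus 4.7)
The plan is to apply the Fitting Lemma (Lemma \ref{fitting}) not to $g \circ f$ (which we already know to be invertible) but to its companion $f \circ g \in \End(W_2)$. Since $W_2$ is indecomposable, $f \circ g$ is either invertible or nilpotent. Once we show that $f \circ g$ must be invertible, we will read off an explicit inverse for $f$ and conclude that $f$ itself is an isomorphism $W_1 \to W_2$.

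To rule out nilpotency, I would suppose for contradiction that $(f \circ g)^n = 0$ for some $n \in \N$. Composing on the left with $g$ yields
\[
0 \;=\; g \circ (f \circ g)^n \;=\; (g \circ f)^n \circ g.
\]
By hypothesis $g \circ f$ is invertible in $\End(W_1)$, so $(g \circ f)^n$ is invertible as well, and we may cancel it to conclude $g = 0$. But this forces $g \circ f = 0$, contradicting its invertibility (note $W_1 \neq 0$ since $W_1$ is an indecomposable representation). Hence $f \circ g$ is invertible.

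Having both $g \circ f$ and $f \circ g$ invertible, the maps
\[
\alpha := (g \circ f)^{-1} \circ g \colon W_2 \to W_1
\quad\text{and}\quad
\beta := g \circ (f \circ g)^{-1} \colon W_2 \to W_1
\]
are morphisms of representations (being compositions of such) and satisfy $\alpha \circ f = \Id_{W_1}$ and $f \circ \beta = \Id_{W_2}$. Thus $f$ has a two-sided inverse (in fact $\alpha = \beta$) and is therefore an isomorphism $W_1 \cong W_2$. The ``moreover'' statement is then immediate: if $W_1$ and $W_2$ are non-isomorphic, $g \circ f \in \End(W_1)$ cannot be invertible, so by Lemma \ref{fitting} it must be nilpotent.

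I do not expect any serious obstacle here; the proof is essentially a one-line application of the Fitting dichotomy, and the only thing to get right is the manipulation $g \circ (f \circ g)^n = (g \circ f)^n \circ g$ used to rule out the nilpotent case.
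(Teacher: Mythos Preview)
Your proof is correct. Note, however, that the paper itself does not include a proof of this lemma: it is stated in Section~\ref{Pre} (Preliminaries), which opens with the remark that proofs of the results collected there can be found in the cited references. Your argument---applying the Fitting dichotomy to $f\circ g\in\End(W_2)$ and using the identity $g\circ(f\circ g)^n=(g\circ f)^n\circ g$ to rule out nilpotency---is the standard one and is entirely sound.
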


\section{Geometric Reduction} \label{Geometric Reduction}

\begin{defi}
Let $W$ be a finite dimensional representation space of the monoid $\Sigma$ and let $W = U \oplus U' $ be a decomposition of $W$ into invariant spaces. We denote by 

\begin{equation}
\nil(U) \subset \End(W)
\end{equation}
those elements of $\End(W)$ whose generalized kernel is isomorphic to $U$ as representations of $\Sigma$. In particular, $\nil(W)$  simply denotes the nilpotent elements of $\End(W)$. Likewise, we denote by

\begin{equation}
\cen(U) \subset \End(W)
\end{equation}
those elements of $\End(W)$ whose center subspace is isomorphic to $U$. 
\end{defi}
A general finite dimensional invariant space $W$ can be written as

\begin{equation}\label{decom1a}
W \cong \bigoplus^{r_1} W_{1}^R \dots \bigoplus^{r_u} W_{u}^R \bigoplus^{c_1} W_{1}^C \dots \bigoplus^{c_v} W_{v}^C \bigoplus^{h_1} W_{1}^H \dots \bigoplus^{h_w} W_{w}^H\, 
\end{equation}
where the $W_{i}^K$, $K \in \{R,C,H\}$ are non isomorphic indecomposable representations of real ($R$), complex ($C$) or quaternionic ($H$) type. If we are given a decomposition $W = U \oplus V$ then we may furthermore write

\begin{equation}\label{decom1b}
U \cong \bigoplus^{r_1'} W_{1}^R \dots \bigoplus^{r_u'} W_{u}^R \bigoplus^{c_1'} W_{1}^C \dots \bigoplus^{c_v'} W_{v}^C \bigoplus^{h_1'} W_{1}^H \dots \bigoplus^{h_w'} W_{w}^H\, 
\end{equation}
for some numbers $r_1' \leq r_1, \dots h_w' \leq h_w$. We will hold on to this notation for the rest of this chapter. The following result can be considered the core result of this paper.

\begin{thr}\label{main}
In $\End(W)$, the set $\nil(U)$ is the disjoint union of a finite set of embedded manifolds having codimension

\[K_U := r_1' + \dots + r_u' + 2c_1' + \dots  + 2c_v' + 4h_1' + \dots + 4h_w'\] 
or higher. Exactly one of these manifolds has codimension precisely equal to this number. Furthermore, these manifolds are conjugacy invariant. That is, if $M$ denotes any of these manifolds and if $A$ is an element of $M$ and $C \in \End(W)$ is invertible, then  $CAC^{-1}$ is an element of $M$ as well.\\
\noindent Likewise, the set $\cen(U)$ is the disjoint union of a finite set of conjugacy invariant embedded manifolds having codimension
\[ C_U := \ceil{r_1'/2} + \dots + \ceil{r_u'/2} + c_1' + \dots  + c_v' + h_1' + \dots + h_w'\] 
or higher. Here, $\ceil{x}$ denotes $x$ rounded up to the nearest integer. Exactly one of these manifolds has codimension precisely equal to $C_U$.

\end{thr}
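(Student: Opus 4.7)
My plan is to prove Theorem \ref{main} in three stages: a geometric reduction to the case $W = U$, an algebraic reduction stripping the monoid away to a product of classical matrix algebras, and a direct codimension computation in those algebras.

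First, I would use the hypothesis $W = U \oplus U'$ to reduce to the case $W = U$. Given $A \in \End(W)$ whose generalized kernel $N_A$ is isomorphic to $U$, both $N_A$ and its spectral complement (the sum of generalized eigenspaces for nonzero eigenvalues) are complementable $\Sigma$-invariant subspaces, and $A$ preserves this splitting. Applying Krull-Schmidt to $W = N_A \oplus N_A^c$ and to $W = U \oplus U'$, the complement $N_A^c$ must be isomorphic to some complementable invariant subspace of $W$; there are only finitely many isomorphism classes that can arise, so $\nil(U)$ decomposes into finitely many pieces indexed by the isomorphism type of $N_A^c$. On each piece, an invertible equivariant $C \in \End(W)$ can be found conjugating the splitting onto $U \oplus U'$, showing the piece fibers conjugation-equivariantly over a locus in the small algebra $\End(U)$. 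This both yields conjugacy invariance and shows the codimension in $\End(W)$ equals the codimension of the nilpotent (respectively purely-imaginary-spectrum) locus inside $\End(U)$. Henceforth assume $W = U$.

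Second, I would use Krull-Schmidt together with Fitting's Lemma to reduce the computation to classical matrix algebras. Writing $U$ as in (\ref{decom1a}) with $r_i'=r_i$ etc., every $A \in \End(U)$ decomposes into blocks, each block being a morphism between indecomposables. By the last lemma in the Preliminaries, blocks between non-isomorphic indecomposables are nilpotent, and within each isotypic component $\End(W_i^?)/\nil(W_i^?)$ is $\R$, $\C$, or $\mathbb{H}$ according to the type. Thus there is an ideal $\mathcal{N} \subset \End(U)$ of "structurally nilpotent" entries whose quotient is isomorphic to
\[
\bigoplus_{i=1}^{u} \mat(\R,r_i) \oplus \bigoplus_{j=1}^{v} \mat(\C,c_j) \oplus \bigoplus_{k=1}^{w} \mat(\mathbb{H},h_k).
\]
The crucial observation, which I expect to require real care, is that perturbing $A$ by an element of $\mathcal{N}$ affects neither the property of being nilpotent nor the property of having purely imaginary spectrum: since $\mathcal{N}$ is a nil ideal, modulo it the Jordan decomposition of $A$ descends to the quotient, and the spectrum of $A$ equals the spectrum of its image. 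Consequently $\nil(U)$ and $\cen(U)$ are the $\mathcal{N}$-saturations of the corresponding loci in the product algebra, so their codimensions in $\End(U)$ equal the codimensions of the loci of nilpotent (respectively purely-imaginary-spectrum) matrices in the product algebra.

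Third, I would compute these codimensions factor by factor. For $\mat(\R,n)$ the nilpotent cone is cut out by the vanishing of the $n$ non-leading coefficients of the characteristic polynomial, so it has codimension $n$, stratified by Jordan type with a unique open regular stratum. For $\mat(\C,n)$ regarded as a real $2n^2$-dimensional space, the $n$ complex coefficients give real codimension $2n$. For $\mat(\mathbb{H},n)$, realizing $\mathbb{H}^n$ as $\C^{2n}$ with a quaternionic structure and using that the characteristic polynomial of the complex realization is a square, the nilpotent cone has real codimension $4n$. Summing over all factors recovers exactly $K_U$. For the center-subspace statement, the purely-imaginary-spectrum locus in $\mat(\R,n)$ is cut out by the vanishing of the odd-indexed coefficients of the characteristic polynomial (plus the constant term when $n$ is odd), giving codimension $\ceil{n/2}$; in $\mat(\C,n)$ requiring each of the $n$ eigenvalues to have zero real part gives real codimension $n$; and in $\mat(\mathbb{H},n)$ the analogous count gives codimension $n$. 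Summing yields $C_U$. The stratification by Jordan (or rational-canonical-form) type provides the finite disjoint-union decomposition into embedded conjugacy-invariant submanifolds, with a unique stratum of minimal codimension.

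The main obstacle I anticipate is in the third stage: showing that the loci in question are genuinely embedded smooth submanifolds of the predicted codimension (not merely algebraic subvarieties), and that all non-generic Jordan strata have strictly higher codimension than the predicted $K_U$ or $C_U$. This requires an effective Jordan-type stratification and differential-independence arguments for the defining equations, which is presumably why the paper devotes Sections \ref{Intermezzo; Some algebraic geometry} and \ref{Geometry; Counting dimensions} to this technical heart of the proof.
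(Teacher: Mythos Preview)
Your three-stage outline matches the paper's architecture (Sections~\ref{Geometric Reduction}, \ref{Algebra: Reduction}, \ref{Geometry; Counting dimensions}), but your Stage~1 has a genuine gap. Finding, for each individual $A\in\nil(U)$, an invertible equivariant $C$ conjugating the spectral splitting $N_A\oplus N_A^c$ onto $U\oplus U'$ does \emph{not} by itself show that $\nil(U)$ is an embedded submanifold of the claimed codimension. What you need is a \emph{smooth} local choice of conjugator---equivalently, a smooth submersion from a neighbourhood of $A$ in $\End(W)$ onto $\End(U)$ whose fibres are exactly the conjugacy directions---so that $\nil(U)$ locally becomes the preimage of the nilpotent locus in $\End(U)$. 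The paper obtains this via Lemma~\ref{trivia1}: an implicit-function-theorem argument produces a smooth map $M(X)$ with $M(L)=\Id$ such that $M(X)XM(X)^{-1}$ is block-diagonal, and the block $B_1(X)$ is shown to be a submersion. Without this analytic step your ``fibres conjugation-equivariantly'' claim is only set-theoretic, and you cannot read off the codimension. (Incidentally, your indexing by the isomorphism type of $N_A^c$ is vacuous: Krull--Schmidt forces $N_A^c\cong U'$ always.)

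Your Stage~2 is essentially the paper's algebraic reduction; the spectrum-preservation under the nil ideal $\mathcal{N}$ is exactly Proposition~\ref{eigen1}, proved via the trace identities of Lemma~\ref{samemat}. In Stage~3 you take a somewhat different tack from the paper: you propose cutting out the loci by coefficients of the characteristic polynomial, whereas the paper stratifies by conjugacy orbits $\mathcal{O}_{B_{n,p}(0)}$ (resp.\ $\mathcal{O}_\xi$), invokes algebraic-geometry facts to see these orbits are embedded (Theorem~\ref{embedd}), and computes their dimensions via $\dim\im(\ad_{B_{n,p}(0)})$. Your approach could also be made to work, but you would still have to verify that the coefficient map is a submersion on the generic stratum and that all degenerate strata have strictly higher codimension---which is precisely the content of the paper's dimension counts in Section~\ref{Geometry; Counting dimensions}. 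You correctly flag this as the main technical obstacle.
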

In this section we will prove Theorem \ref{main}, under the assumption that it holds in the special case when $W = U$. In the next sections we will then prove Theorem \ref{main} for $U = W$. More precisely, the following theorem will be proven in the next sections.

\begin{thr}\label{codimsteady}
In $\End(W)$, the set $\nil(W)$ is the disjoint union of a finite set of conjugacy invariant embedded manifolds having codimension

\[K_W = r_1 + \dots + r_u + 2c_1 + \dots  + 2c_v + 4h_1 + \dots + 4h_w\] 
or higher. Exactly one of these manifolds has codimension precisely equal to this number. \\
Likewise, the set $\cen(W)$ is the disjoint union of a finite set of conjugacy invariant embedded manifolds having codimension
\[C_W = \ceil{r_1/2} + \dots + \ceil{r_u/2} + c_1 + \dots  + c_v + h_1 + \dots + h_w\] 
or higher. Exactly one of these manifolds has codimension precisely equal to this number.
\end{thr}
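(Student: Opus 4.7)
The overall strategy is to use the representation-theoretic structure of $\End(W)$ to strip away the monoid entirely and reduce the problem to counting real codimensions of the nilpotent and purely-imaginary-spectrum loci in three families of classical matrix algebras.

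First I would combine the Krull--Schmidt theorem, the Fitting Lemma, and the Frobenius Theorem to pin down the algebraic shape of $\End(W)$. Writing $W$ in its indecomposable form, the last lemma of Section~\ref{Pre} ensures that every morphism between non-isomorphic indecomposable summands is nilpotent, Fitting's Lemma does the same for non-invertible morphisms between isomorphic summands, and $\End(W_i^\bullet)/\nil(\End(W_i^\bullet))$ is $\R$, $\C$ or $\mathbb{H}$ according to the type. Packaging these observations gives a short exact sequence of real algebras
\[ 0 \longrightarrow \nil(\End(W)) \longrightarrow \End(W) \xrightarrow{\pi} \overline{\End}(W) \longrightarrow 0, \]
with
\[ \overline{\End}(W) \;\cong\; \prod_{i=1}^{u} \mat(\R,r_i) \,\times\, \prod_{j=1}^{v} \mat(\C,c_j) \,\times\, \prod_{k=1}^{w} \mat(\mathbb{H},h_k). \]

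Next I would observe that $\pi$ respects all the spectral data at stake: since its kernel is a nil ideal, $A\in\End(W)$ is nilpotent iff $\pi(A)$ is nilpotent, and more generally the spectrum of $A$ on the complexification of $W$ coincides with that of $\pi(A)$. Hence $\nil(W) = \pi^{-1}(\{\text{nilpotents}\})$ and $\cen(W) = \pi^{-1}(\{\text{elements with spectrum in }i\R\})$ inside $\overline{\End}(W)$. Because $\pi$ is a linear surjection whose kernel is a fixed linear subspace, codimensions, conjugacy-invariance, and the disjoint-submanifold structure transport between $\End(W)$ and $\overline{\End}(W)$, so the problem really is the matrix-algebra problem.

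Third, in each factor I would stratify by Jordan type and count real codimensions. For $\mat(\R,n)$ the nilpotent variety has codimension $n$ and the purely-imaginary-spectrum set has codimension $\ceil{n/2}$, attained on the regular stratum in which all eigenvalues occur in distinct complex-conjugate pairs on $i\R$. For $\mat(\C,n)$, viewed as a real algebra of dimension $2n^2$, the analogous codimensions are $2n$ and $n$. For $\mat(\mathbb{H},n)$, embedded inside $\mat(\C,2n)$ as the commutant of a $\C$-antilinear involution $J$ with $J^2=-1$, the same spectral arguments give $4n$ and $n$. Summing the contributions over the three products reproduces exactly $K_W$ and $C_W$, with the unique minimal-codimension stratum in each factor corresponding to the regular (nilpotent, respectively purely-imaginary semisimple) conjugacy class.

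The main obstacle will be verifying that the Jordan-type strata, and especially the purely-imaginary-spectrum strata in $\mat(\mathbb{H},n)$, are genuine embedded conjugacy-invariant submanifolds of the claimed codimensions. Over $\C$ this is essentially classical, but over $\R$ and $\mathbb{H}$ one must check that the complex strata are preserved by the relevant (anti)involutions, which is the role of the short algebraic-geometry excursion in Section~\ref{Intermezzo; Some algebraic geometry}. The subsequent bookkeeping -- summing real dimensions of $\mathrm{GL}$-orbits under conjugation together with the continuous parameter spaces of admissible spectra -- is then precisely the content of Section~\ref{Geometry; Counting dimensions}.
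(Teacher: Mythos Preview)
Your proposal is correct and follows essentially the same route as the paper: the short exact sequence you write down is precisely the quotient by the ideal $\mathcal{J}$ constructed in Section~\ref{Algebra: Reduction} (the Jacobson radical, as the paper remarks after Theorem~\ref{main2}), your $\overline{\End}(W)$ is the paper's $\mathcal{K}^P_W$, and your pull-back of strata via the linear surjection $\pi$ is exactly the argument in Section~\ref{proof of main results}. The only difference is presentational --- you invoke Wedderburn's theorem directly, whereas the paper builds the quotient map explicitly through three successive reductions and verifies the spectral compatibility by hand (via trace identities and Lemma~\ref{samemat}) rather than appealing to general facts about nil ideals.
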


\noindent In order to prove Theorem \ref{main} from Theorem \ref{codimsteady}, we will need the following, technical lemmas. Some of these will also play a major role in Section \ref{Geometry; Counting dimensions}. Hence, they do not assume the result of Theorem \ref{codimsteady}. The first of these lemmas is well known (see for instance \cite{wim}), but included here for completeness. Furthermore, it demonstrates some techniques that will play an important role in Section \ref{Geometry; Counting dimensions}.

\begin{lem}\label{cominv}
Let $A \in \mat(\C,n)$ and $B \in \mat(\C,m)$ be square matrices and denote by $\mat(\C^n, \C^m)$ the space of complex $m \times n$ matrices. Define the linear map

\begin{equation}
\begin{split}
\mathcal{L}_{A,B}:  &\mat(\C^n, \C^m) \rightarrow  \mat(\C^n, \C^m)\\
&X \mapsto XA - BX \, .
\end{split}
\end{equation}
The eigenvalues of $\mathcal{L}_{A,B}$ are exactly given by $\la - \mu$ for $\la$ an eigenvalue of $A$ and $\mu$ an eigenvalue of $B$. In particular, this map is invertible if and only if $A$ and $B$ have no eigenvalues in common.
\end{lem}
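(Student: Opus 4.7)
The plan is to reduce to the case where $A$ and $B$ are upper triangular by invoking the Schur decomposition, and then to show that in a suitably ordered matrix-unit basis of $\mat(\C^n, \C^m)$, the operator $\mathcal{L}_{A,B}$ is represented by an upper triangular matrix whose diagonal is precisely the list of pairwise differences of eigenvalues.

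First I would write $A = U T_A U^{-1}$ and $B = V T_B V^{-1}$ with $U, V$ unitary and $T_A, T_B$ upper triangular, whose diagonals list the eigenvalues $\la_1, \dots, \la_n$ of $A$ and $\mu_1, \dots, \mu_m$ of $B$ with multiplicity. The map $\Phi: X \mapsto V^{-1} X U$ is a linear isomorphism of $\mat(\C^n, \C^m)$, and a direct substitution shows that $\Phi \circ \mathcal{L}_{A,B} = \mathcal{L}_{T_A, T_B} \circ \Phi$. Hence $\mathcal{L}_{A,B}$ and $\mathcal{L}_{T_A, T_B}$ are similar, so it suffices to compute the spectrum of the latter.

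Next, I would evaluate $\mathcal{L}_{T_A, T_B}$ on each standard matrix unit $E_{ij}$. A brief calculation using upper triangularity of $T_A$ and $T_B$ yields
\[
\mathcal{L}_{T_A, T_B}(E_{ij}) \;=\; (\la_j - \mu_i)\, E_{ij} \;+\; \sum_{k > j} (T_A)_{jk}\, E_{ik} \;-\; \sum_{k < i} (T_B)_{ki}\, E_{kj}\, .
\]
Ordering the index pairs lexicographically by $(-i, j)$, every off-diagonal term on the right appears strictly later than $E_{ij}$ in the ordering. Thus the operator is represented by an upper triangular matrix in this ordered basis, and its diagonal entries are precisely the differences $\la_j - \mu_i$ for $1 \le i \le m$ and $1 \le j \le n$. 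This is the claimed spectrum, and the invertibility statement follows immediately since $\mathcal{L}_{A,B}$ is invertible if and only if $0$ does not appear among these differences.

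No serious obstacle arises; the result is essentially a classical Sylvester-operator computation. The only step that benefits from a little care is choosing the correct ordering on matrix units so that the operator is genuinely upper triangular in the induced basis. Everything else is routine linear algebra.
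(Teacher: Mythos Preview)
Your argument is correct and follows essentially the same route as the paper: triangularize $A$ and $B$ (the paper does this by choosing bases putting $B$ and $A^T$ in upper triangular form, you do it via Schur and an explicit similarity $\Phi$), then compute $\mathcal{L}$ on the rank-one/matrix-unit basis and read off the diagonal. One cosmetic point: with your ordering by $(-i,j)$ the off-diagonal terms land at \emph{later} basis elements, so the matrix of $\mathcal{L}_{T_A,T_B}$ is actually lower triangular rather than upper triangular, but this has no effect on the spectrum.
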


\begin{proof}
Denote by $\{ e_i\}_{i=1}^m$ a basis such that $B$ is in upper triangular form. That is, we write 

\begin{equation}
Be_i = \mu_ie_i + \sum_{j<i}B_{j,i}e_j \, ,
\end{equation}
for $(\mu_1, \dots \mu_m)$ the set of eigenvalues of $B$. Likewise, we denote by $\{ f_i\}_{i=1}^n$ a basis such that $A^T$ is in upper triangular form:

\begin{equation}
A^Tf_i = \la_if_i + \sum_{j<i}A^T_{j,i}e_j \, ,
\end{equation}
for $(\la_1, \dots \la_n)$ the set of eigenvalues of $A^T$. Here, $A^T$ denotes the entry-wise transpose of $A$. In other words, we have $(A^T)_{i,j} = A_{j,i}$, so that the eigenvalues of $A^T$ are those of $A$. We will first show that the set $\{e_if_j^T \}_{i,j}$ is a basis for the linear space $\mat(\C^n, \C^m)$. By looking at the dimension of $\mat(\C^n, \C^m)$, this statement holds if and only if the $e_if_j^T$ are linearly independent over $\C$. To this end, let us write
\begin{equation}\label{lind1}
\sum_{i=1}^m \sum_{j=1}^n a_{i,j} (e_if_j^T) = 0 \, ,
\end{equation}
for $a_{i,j} \in \C$. Let $N \in \mat(\C,n)$ be a matrix satisfying $f_i^TNf_j = \delta_{i,j}$ (for example by setting $N := C^TC$, where $C$ maps the basis $\{ f_j\}_{j=1}^n$ to the standard basis of $\C^n$). Multiplying equation $\eqref{lind1}$ by $Nf_k$ for a given value of $k$ yields

\begin{equation}
\sum_{i=1}^m \sum_{j=1}^n a_{i,j} (e_if_j^T)Nf_k = \sum_{i=1}^m\sum_{j=1}^n a_{i,j} e_i(f_j^TNf_k) = \sum_{i=1}^m  a_{i,k} e_i = 0 \, .
\end{equation}
By linear independence of the basis $\{ e_i\}_{i=1}^m$ we see that $a_{i,k} = 0$ for all $i$. Since $k$ was chosen arbitrary, it follows that $a_{i,k} = 0$ for all $i$ and $k$. This proves that $\{e_if_j^T \}_{i,j}$ is a basis for $\mat(\C^n, \C^m)$. \\
Next, we order the set $\{e_if_j^T \}_{i,j}$ lexicographically. That is, we say that $e_if_j^T > e_kf_l^T$ if $i > k$ or if it holds that $i = k$ and $j > l$. It follows that

\begin{align}
\mathcal{L}_{A,B}(e_if_j^T) &= (e_if_j^T)A - B(e_if_j^T) = e_i(A^Tf_j)^T -  (Be_i)f_j^T  \\  \nonumber
&= e_i( \la_jf_j + \sum_{k<j}A^T_{k,j}f_k)^T - (\mu_ie_i + \sum_{l<i}B_{l,i}e_l)f_j^T  \\ \nonumber
&= (\la_j - \mu_i)(e_if_j^T) + \sum_{k<j}A^T_{k,j}(e_if_k^T) - \sum_{l<i}B_{l,i}(e_lf_j^T)   \\ \nonumber
&=(\la_j - \mu_i)(e_if_j^T) + \{\text{lexicographically lower order terms} \} \nonumber \, .
\end{align}
We see that, with respect to the ordered basis $\{e_if_j^T \}_{i,j}$, the matrix of $\mathcal{L}_{A,B}$ is in upper diagonal form, with diagonal entries $\{(\la_j - \mu_i)\}_{i,j}$. This proves the statement.
\end{proof}

\noindent The following lemma will be key in proving Theorem \ref{main} from Theorem \ref{codimsteady}. It can be seen as an extension of Lemma 6.3 from \cite{RinkSanders3}.

\begin{lem}\label{trivia1}
Let $L \in \End(W)$ be an equivariant linear map and denote by $Z \subset \C$ any subset of the complex numbers. (In this article, $Z$ will either be $\{0\}$ or the imaginary axis.) Write $W = W_Z \oplus W_{Z^c}$ for the decomposition of $W$ into the space spanned by the generalized eigenvectors corresponding to eigenvalues of $L$ in $Z$ ($W_Z$) and in the complement of $Z$ ($W_{Z^c}$). Note that both $W_Z$ and $W_{Z^c}$ are invariant spaces for the symmetry, as well as for $L$. That is, $L$ is in block diagonal form corresponding to this decomposition of $W$. We write  $L_{1,1} := L|_{W_Z}$ and $L_{2,2} := L|_{W_{Z^c}}$ for the two blocks. \\
\noindent Then, there exist an open neighborhood $S \subset \End(W)$ containing $L$ and smooth maps $M: S \rightarrow  \End(W)$, $B_{1}: S \rightarrow \End(W_Z)$ and $B_{2}: S \rightarrow \End(W_{Z^c})$ such that the following holds.

\begin{itemize}
\item $M(L) = \Id$, $B_1(L) = L_{1,1}$, $B_2(L) = L_{2,2}$.
\item $M(X)$ is invertible for all $X \in S$. 
\item $B_1$ and $B_2$ are submersions.
\item For all $X \in S$ it holds that 

\begin{equation}
M(X)XM(X)^{-1} = \begin{pmatrix}
 B_1(X) &  0 \\
0 &  B_2(X)  \\
\end{pmatrix}\, 
\end{equation}
corresponding to the decomposition $W = W_Z \oplus W_{Z^c}$.  
\end{itemize}
\end{lem}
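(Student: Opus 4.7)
The plan is to straighten out the pair of $L$-invariant subspaces $W_Z, W_{Z^c}$ equivariantly via the implicit function theorem, and to read off $M$, $B_1$, $B_2$ from the resulting block-diagonalisation. Any $X \in \End(W)$ near $L$ decomposes with respect to $W = W_Z \oplus W_{Z^c}$ as a block matrix
\[
X = \begin{pmatrix} X_{11} & X_{12} \\ X_{21} & X_{22} \end{pmatrix},
\]
and every $X_{ij}$ is $\Sigma$-equivariant because the decomposition is $\Sigma$-invariant. First I will look for an equivariant morphism $\alpha(X): W_Z \to W_{Z^c}$ with $\alpha(L) = 0$ whose graph $\{(v, \alpha(X) v) \mid v \in W_Z\}$ is $X$-invariant; this plays the role of a perturbed $W_Z$. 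Symmetrically I will produce $\beta(X): W_{Z^c} \to W_Z$ giving a perturbed $W_{Z^c}$.

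Invariance of the graph of $\alpha$ is equivalent to the Riccati-type equation
\[
F(X,\alpha) \;:=\; X_{21} + X_{22}\alpha - \alpha X_{11} - \alpha X_{12}\alpha \;=\; 0.
\]
One checks $F(L, 0) = 0$ and the partial derivative $D_\alpha F|_{(L,0)}(\delta\alpha) = L_{22}\,\delta\alpha - \delta\alpha\, L_{11}$, which up to sign is the Sylvester operator $\mathcal{L}_{L_{11}, L_{22}}$ of Lemma \ref{cominv}. Since $Z \in \{\{0\}, i\R\}$ is closed under complex conjugation, the real decomposition $W = W_Z \oplus W_{Z^c}$ respects the complex spectrum, so the (complex) eigenvalues of $L_{11}$ lie in $Z$ and those of $L_{22}$ in $Z^c$; in particular they are disjoint, so Lemma \ref{cominv} tells us that this operator has no zero eigenvalue. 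Because $L_{11}, L_{22}$ are equivariant, it also preserves the finite-dimensional subspace of equivariant morphisms from $W_Z$ to $W_{Z^c}$; its restriction there is still injective, hence invertible. The implicit function theorem applied inside this subspace then yields a smooth germ $X \mapsto \alpha(X)$ of equivariant morphisms, defined on a neighbourhood of $L$, with $\alpha(L) = 0$ and $F(X, \alpha(X)) = 0$. The same argument produces $\beta(X)$.

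I then define
\[
M(X)^{-1} := \begin{pmatrix} I & \beta(X) \\ \alpha(X) & I \end{pmatrix},
\]
which equals the identity at $L$ and is therefore invertible on an open neighbourhood $S$; $M(X)$ itself lies in $\End(W)$ because $\alpha(X), \beta(X)$ are equivariant. A short computation using the invariance of the two graphs and the equation $F(X, \alpha(X)) = 0$ yields
\[
M(X)\,X\,M(X)^{-1} = \begin{pmatrix} B_1(X) & 0 \\ 0 & B_2(X) \end{pmatrix}
\]
with $B_1(X) := X_{11} + X_{12}\alpha(X) \in \End(W_Z)$ and $B_2(X) := X_{22} + X_{21}\beta(X) \in \End(W_{Z^c})$, where $B_i(L)$ is the corresponding diagonal block of $L$. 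Differentiating $B_1$ at $X = L$ and using both $\alpha(L) = 0$ and $L_{12} = 0$ gives $dB_1|_L(\delta X) = \delta X_{11}$, i.e.\ projection onto the $(1,1)$-block of an equivariant perturbation; this is surjective onto $\End(W_Z)$ because any equivariant endomorphism of $W_Z$ extends by zero to an equivariant endomorphism of $W$. Hence $B_1$ is a submersion at $L$, and by openness of the submersion condition it remains a submersion after possibly shrinking $S$; the argument for $B_2$ is symmetric.

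The step I expect to need the most care is verifying that the Sylvester operator stays invertible after restriction to the equivariant morphisms, so that the implicit function theorem in fact produces an equivariant $\alpha(X)$ and not merely a general linear one. This is handled by the elementary observation that an injective endomorphism of a finite-dimensional space is invertible on each of its invariant subspaces, and is the only point at which remaining inside the equivariant category requires active attention.
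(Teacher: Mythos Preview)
Your proof is correct. Both you and the paper block-diagonalise $X$ near $L$ via the implicit function theorem, with the invertibility of the Sylvester operator $\mathcal{L}_{L_{11},L_{22}}$ (Lemma~\ref{cominv}) as the crucial input, and both make the same observation that this operator restricts to a bijection of the equivariant morphisms. The difference is in the parametrisation of the conjugating map: the paper writes $M(X)=\exp(m(X))$ for a single block off-diagonal $m\in\End(W)$ and applies the implicit function theorem once to the map $(m,X)\mapsto\text{(off-diagonal part of }\exp(m)X\exp(-m)\text{)}$, whereas you solve two separate Riccati equations for graphs $\alpha,\beta$ and set $M(X)^{-1}=\begin{pmatrix}I&\beta\\ \alpha&I\end{pmatrix}$. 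Your graph-transform route is arguably more elementary (no matrix exponential) and makes the formula $B_1(X)=X_{11}+X_{12}\alpha(X)$ explicit; the paper's exponential route has the mild advantage that $M(X)$ is manifestly invertible for all $X$, not just near $L$, and packages both off-diagonal directions into a single implicit-function step. Either way the derivative computation showing $dB_i|_L(\delta X)=\delta X_{ii}$ is identical.
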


\begin{proof}
Let $\mathcal{M}$ be the linear space of elements $m \in \End(W)$ of the form 

\begin{equation}
m = \begin{pmatrix}
0 &  m_{1,2} \\
m_{2,1} & 0  \\
\end{pmatrix}\, 
\end{equation}
with respect to the decomposition $W = W_Z \oplus W_{Z^c}$. Define the map \\
$\Psi: \mathcal{M} \times \End(W) \rightarrow \mathcal{M}$ given by 

\begin{equation}
\Psi(m,X) = \begin{pmatrix}
0 &  (\exp(m)X\exp(-m))_{1,2} \\
(\exp(m)X\exp(-m))_{2,1} & 0  \\
\end{pmatrix}\, .
\end{equation}
Here, $\exp(m)$ denotes the matrix exponential of $m$, defined by the usual power series. Note that $\exp(m)$ is again an equivariant map, as the set of equivariant maps is closed in the set of all linear maps. We will apply the implicit function theorem to the map $\Psi$. First of all, we have $\Psi(0,L) = 0$, as $L$ is block diagonal with respect to the given decomposition. Secondly, the derivative at $(0,L)$ in the direction of $V \in \mathcal{M}$ is given by

\begin{align}
D_m\Psi{(0,L)}V &= \begin{pmatrix}
0 &  [V,L]_{1,2} \\
[V,L]_{2,1} & 0  \\
\end{pmatrix} \\ &= \nonumber
\begin{pmatrix}
0 &  V_{1,2}L_{2,2} - L_{1,1}V_{1,2} \\
V_{2,1}L_{1,1} -L_{2,2} V_{2,1} & 0  \\
\end{pmatrix}  \\ &= \nonumber
\begin{pmatrix}
0 &  \mathcal{L}_{L_{2,2}, L_{1,1}}(V_{1,2}) \\
\mathcal{L}_{L_{1,1}, L_{2,2}}(V_{2,1}) & 0  \\
\end{pmatrix} \, ,
\end{align}
where $[V,L]$ denotes the commutator between the two operators. Looking at the eigenvalues of $L_{1,1}$ and $L_{2,2}$, we see that the difference between an eigenvalue of the first and an eigenvalue of the second can never be $0$. Hence, it follows from Lemma \ref{cominv} that the operators  $\mathcal{L}_{L_{2,2}, L_{1,1}}$ and  $\mathcal{L}_{L_{1,1}, L_{2,2}}$ are bijections. As they moreover send equivariant maps to equivariant maps, we conclude that they are bijective on the set of equivariant maps. By the implicit function theorem, it therefore holds that there exists a smooth map $m$ from some open neighborhood $S \subset \End(W)$ containing $L$ to $\mathcal{M}$ such that $\Psi(m(X),X) = 0$. It furthermore holds that $m(L) = 0$. By setting $M(X) := \exp(m(X))$, we get a map satisfying $M(L) = \Id$ and with $M(X)$ invertible for all $X \in S$. By construction, $M(X)XM(X)^{-1}$ is of block diagonal form. Finally, we set $B_1(X) := (M(X)XM(X)^{-1})_{1,1}$ and $B_2(X) := (M(X)XM(X)^{-1})_{2,2}$, so that $B_1(L) = L_{1,1}$ and $B_2(L) = L_{2,2}$.\\
\noindent It remains to show that these two smooth maps are in fact submersions. For this, it is enough to show that their derivatives have maximal rank at $L$. The lemma is then proven by choosing $S$ small enough so that the derivatives of $B_1$ and $B_2$ have maximal rank throughout. The derivative of the map $X \mapsto M(X)XM(X)^{-1}$ at $L$ in the direction of $V \in \End(W)$ is given by 

\begin{align}
&\left. \frac{d}{dt} \right|_{t=0} M(L+tV)(L+tV)M(L+tV)^{-1} \\ \nonumber
= &\left. \frac{d}{dt} \right|_{t=0} \exp(m(L+tV))(L+tV)\exp(-m(L+tV)) \\ \nonumber
=&\left. \frac{d}{dt} \right|_{t=0} (\Id + m(L+tV)+ \dots)(L+tV)(\Id - m(L+tV)+ \dots) \\ \nonumber
=&(Dm(L)V)L - L(Dm(L)V) + V \\ \nonumber
=&\, [Dm(L)V, L] + V\, .
\end{align}
As $m(X)$ is an element of $\mathcal{M}$ for all $X \in S$ and as $L$ is block diagonal, we see that $[Dm(L)V, L]_{1,1} = 0$ and $[Dm(L)V, L]_{2,2} = 0$. It follows that ${DB_1}(L)V = V_{1,1}$ and ${DB_2}(L)V = V_{2,2}$, which are indeed of full rank. This proves the lemma.
\end{proof}
\noindent Another, simple lemma that we will use is the following.

\begin{lem}\label{blok1a}
Let $A, B \in End(W)$ be two endomorphisms that are conjugate by an equivariant map. That is, there exists an invertible $M \in End(W)$ such that $B = MAM^{-1}$. As in Lemma \ref{trivia1}, denote by $Z \subset \C$ any subset of the complex numbers. Let $W_Z(X)$ be the span of the generalized eigenvectors corresponding to eigenvalues of $X \in \End(W)$ that lie in $Z$. Likewise, denote by $W_{Z^c}(X)$ the span of the generalized eigenvectors corresponding to eigenvalues of $X$ not in $Z$. Then it holds that $W_Z(A)$ and $W_Z(B)$ are isomorphic representations, and likewise for $W_{Z^c}(A)$ and $W_{Z^c}(B)$. More precisely, $M$ restricts to isomorphisms $M_1:= M|_{W_Z(A)}: W_Z(A) \rightarrow W_Z(B)$ and $M_2:= M|_{W_{Z^c}(A)}: W_{Z^c}(A) \rightarrow W_{Z^c}(B)$, and we have $B|_{W_Z(B)} = M_1A|_{W_Z(A)}M_1^{-1}$ and  $B|_{W_{Z^c}(B)} = M_2A|_{W_{Z^c}(A)}M_2^{-1}$.
\end{lem}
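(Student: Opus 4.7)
The plan is to exploit the intertwining relation $BM = MA$ together with the equivariance of $M$ in a very direct way. First I would verify that $M$ sends generalized eigenvectors to generalized eigenvectors with the same eigenvalue: since $B - \lambda\Id = M(A - \lambda\Id)M^{-1}$, iterating gives $(B - \lambda\Id)^n = M(A - \lambda\Id)^n M^{-1}$, so $(A - \lambda\Id)^n v = 0$ forces $(B - \lambda\Id)^n(Mv) = 0$. Hence $M$ maps the generalized eigenspace of $A$ for $\lambda$ into the generalized eigenspace of $B$ for the same $\lambda$; applying the same argument to $M^{-1}$ gives equality. Summing over the eigenvalues lying in $Z$ (respectively in $Z^c$) yields $M(W_Z(A)) = W_Z(B)$ and $M(W_{Z^c}(A)) = W_{Z^c}(B)$.

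Next, I would observe that the restrictions $M_1 := M|_{W_Z(A)}$ and $M_2 := M|_{W_{Z^c}(A)}$ are linear bijections onto $W_Z(B)$ and $W_{Z^c}(B)$, respectively. Equivariance is inherited for free: since $M$ commutes with every $\phi(x)$ for $x \in \Sigma$, and since each $W_Z(A)$, $W_Z(B)$, $W_{Z^c}(A)$, $W_{Z^c}(B)$ is an invariant subspace (a standard consequence of the fact that generalized eigenspaces of an equivariant operator are themselves invariant under the monoid action, because the action commutes with $A$ and $B$), the restrictions are morphisms of representations. Being bijective morphisms, they are isomorphisms of $\Sigma$-representations.

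Finally, the conjugation relations follow immediately from restricting the identity $BM = MA$ to the appropriate subspaces: for $v \in W_Z(A)$, $B M_1 v = B M v = M A v = M_1 (A|_{W_Z(A)} v)$, hence $B|_{W_Z(B)} = M_1 A|_{W_Z(A)} M_1^{-1}$, and the same argument with $W_{Z^c}$ in place of $W_Z$ yields $B|_{W_{Z^c}(B)} = M_2 A|_{W_{Z^c}(A)} M_2^{-1}$.

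I do not anticipate any real obstacle: the only mild subtlety is keeping track of the fact that $W_Z(X)$ for $Z$ closed under complex conjugation is genuinely a real subspace (both sets of interest, $\{0\}$ and the imaginary axis, satisfy this), but this is already built into the definitions, so no additional work is required. The proof is essentially a bookkeeping argument around the single identity $BM = MA$.
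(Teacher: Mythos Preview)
Your proposal is correct and follows essentially the same approach as the paper: both argue that $M$ carries generalized eigenvectors of $A$ for $\lambda$ to generalized eigenvectors of $B$ for the same $\lambda$, deduce $M(W_Z(A)) = W_Z(B)$, and then restrict the relation $BM = MA$ to obtain the conjugation identities. You are slightly more explicit than the paper about why equivariance passes to the restrictions and about the real-subspace issue, but these are elaborations of the same argument rather than a different route.
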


\begin{proof}
It can directly be verified that if $v \in W$ is a generalized eigenvector of $A$ for an eigenvalue $\la \in \C$, then $Mv$ is a generalized eigenvector of $B = MAM^{-1}$ for the same eigenvalue $\la$. Hence we see that $W_Z(B) = MW_Z(A)$ and $W_{Z^c}(B) = MW_{Z^c}(A)$. In particular, as we have $W = W_Z(A) \oplus W_{Z^c}(A) =  W_Z(B) \oplus W_{Z^c}(B)$, we see that we may write $M = M_1 \oplus M_2$ for $M_1:= M|_{W_Z(A)}: W_Z(A) \rightarrow W_Z(B)$ and $M_2:= M|_{W_{Z^c}(A)}: W_{Z^c}(A) \rightarrow W_{Z^c}(B)$. $M_1$ and $M_2$ are furthermore both injective, and therefore both isomorphisms. Now let $v \in W_Z(B)$ be given, then 

\begin{align}
M_1A|_{W_Z(A)}M_1^{-1}(v) &= M_1A|_{W_Z(A)}(M_1^{-1}(v) + M_2^{-1}(0)) \\ \nonumber
&= M_1A|_{W_Z(A)}M^{-1}(v) = M_1AM^{-1}(v) \\ \nonumber
&= MAM^{-1}(v) = B(v) \,.
\end{align}
Therefore, $B|_{W_Z(B)} = M_1A|_{W_Z(A)}M_1^{-1}$. One likewise finds that $B|_{W_{Z^c}(B)} = M_2A|_{W_{Z^c}(A)}M_2^{-1}$. This proves the lemma.
\end{proof}

%
%
\noindent We are now in a position to prove Theorem \ref{main}, assuming Theorem \ref{codimsteady}.

\begin{proof}[Proof that Theorem \ref{codimsteady} implies Theorem \ref{main}]
We fix a decomposition \\ $W = U \oplus U'$. By Theorem \ref{codimsteady}, we may write 

\begin{align}
&\End(U) \supset \nil(U) = \coprod_{i=1}^k M_i\\
&\End(U) \supset \cen(U) = \coprod_{i=1}^l N_i \, ,
\end{align}
where the $M_i$ and $N_i$ are conjugacy invariant, embedded manifolds. In $\End(W)$ we define the sets $M'_i$, consisting of those endomorphisms $A$ with generalized kernel $W_0(A)$ isomorphic to $U$, for which there exists an isomorphism $\phi: W_0(A) \rightarrow U$ such that $\phi A|_{W_0(A)} \phi^{-1} \in M_i$. Analogously, we define $N'_i$ to be given by

\begin{equation}
N'_i := \{A \in \End(W) \mid \exists \, \phi: W_c(A) \rightarrow U \text{ iso, s.t. }\phi A|_{W_0(A)} \phi^{-1} \in N_i  \}\, ,
\end{equation}
where $W_c(A)$ denotes the center subspace of $A$. \\
Note that if $A$ is in some $M'_i$ (or $N'_i$), then its generalized kernel (or center subspace) is isomorphic to $U$. Conversely, if the generalized kernel of $A$ is isomorphic to $U$, then there exists an isomorphism $\phi: W_0(A) \rightarrow U$. The map $\phi A|_{W_0(A)} \phi^{-1} \in \End(U)$ is nilpotent, and hence contained in some $M_i$. We conclude that $A \in M'_i$ for some $1 \leq i \leq k$. Likewise, if $A$ has its center subspace isomorphic to $U$, then $A \in N'_i$ for some $1 \leq i \leq l$. We conclude that the union of the $M'_i$ is exactly all elements in $\End(W)$ with generalized kernel isomorphic to $U$, and likewise for the center subspace case and the $N'_i$. \\
First, we show that the definitions of $M'_i$ and $N'_i$ are independent of the choice of isomorphism $\phi$. If $\phi A|_{W_0(A)} \phi^{-1}$ is an element of $M_i$, and if $\psi: W_0(A) \rightarrow U$ is any other isomorphism, then 

\begin{align}
\psi A|_{W_0(A)} \psi^{-1} &= \psi  \phi^{-1} \phi A|_{W_0(A)} \phi^{-1} \phi \psi^{-1} \\
&= (\psi  \phi^{-1}) \phi A|_{W_0(A)} \phi^{-1}(\psi  \phi^{-1})^{-1}\,.
\end{align}
As $\psi  \phi^{-1} \in \End(U)$ and as $M_i$ is conjugacy invariant, we conclude that\\
\noindent  $\psi A|_{W_0(A)} \psi^{-1} \in M_i$ as well. The same proof works for the $N_i'$. This shows that the sets $M'_i$ are in fact disjoint, and likewise for the $N_i'$. For, if $\phi A|_{W_0(A)} \phi^{-1} \in M_i$ and $\psi A|_{W_0(A)} \psi^{-1} \in M_j$ for some isomorphisms $\phi$ and $\psi$, then by the foregoing, $\phi A|_{W_0(A)} \phi^{-1} \in M_j$. As the $M_i$  are disjoint, we conclude that $i = j$. The same reasoning works to show that the $N_i'$ are disjoint sets. \\
\noindent Next, we show that none of the sets $M'_i$ is empty. To this end, pick an element $B \in M_i$. It follows that the element

\begin{align}
A:= \begin{pmatrix}
B &  0 \\
0 & \Id_{U'}  \\
\end{pmatrix}\in \End(U \oplus U') = \End(W)
\end{align}
belongs to $M'_i$ (by choosing $\phi = \Id_U$). A similar proof shows that none of the $N_i'$ is empty. \\
\noindent It also holds that each $M_i'$ and $N_i'$ is conjugacy invariant. For, if $A$ is an \\ element of $M_i'$ and $B \in \End(W)$ is conjugate to $A$, then by Lemma \ref{blok1a} there exists an isomorphism $M_1: W_0(A) \rightarrow W_0(B)$ such that $M_1A|_{W_0(A)}M_1^{-1} = B|_{W_0(B)}$. By assumption, there exists an isomorphism $\phi: W_0(A) \rightarrow U$ for which $\phi A|_{W_0(A)}\phi^{-1} \in M_i$. It follows that $ \phi M_1^{-1}: W_0(B) \rightarrow U$ is an isomorphism satisfying $(\phi M_1^{-1}) B|_{W_0(B)}(\phi M_1^{-1})^{-1} \in M_i$. This proves that $B \in M_i'$ as well. The proof is analogous for the $N_i'$\\

\noindent It remains to show that the $M'_i$ and $N'_i$ are embedded manifolds satisfying the proposed conditions on their dimensions. We will in fact show that every $M'_i$ has the same codimension as its counterpart $M_i$, and likewise for the $N'_i$ with respect to the $N_i$. \\
\noindent To this end, let $L \in M'_i$ be given. We will set $W_0 := W_0(L)$ and write $W_0^{\circ} := W_0^{\circ}(L)$ for the span of the generalized eigenvectors of $L$ corresponding to its non-zero eigenvalues. It follows that there exists an isomorphism \\ $\phi: W_0 \rightarrow U$ such that $\phi L|_{W_0} \phi^{-1} \in M_i$. We fix such an isomorphism $\phi$. By Lemma \ref{trivia1} there exist an open neighborhood $S \subset \End(W)$ containing $L$ and smooth submersions $B_1: S \rightarrow \End(W_0)$, $B_2: S \rightarrow \End(W_0^{\circ})$ such that every element $A \in S$ is conjugate to $B_1(A) \oplus B_2(A) \in \End(W_0 \oplus W_0^{\circ}) = \End(W)$. It furthermore holds that $B_1(L) = L|_{W_0}$ and $B_2(L) = L|_{W_0^{\circ}}$. As $B_2(L)$ is invertible, there exists an open neighborhood $T \subset \End(W_0^{\circ})$ containing $B_2(L)$ of only invertible linear operators. By redefining $S$ as $S \cap B_2^{-1}(T)$, we may therefore assume $B_2(A)$ to be invertible for all $A \in S$. \\

\noindent We claim that $M'_i \cap S$ is exactly the set of all elements $A \in S$ for which $\phi B_1(A) \phi^{-1} \in M_i$. Because $A \in S$ is conjugate to $B_1(A) \oplus B_2(A)$, it follows from the conjugacy invariance of $M'_i$ that $A$ is an element of $M'_i$ if and only if $B_1(A) \oplus B_2(A)$ is. Therefore, let us first assume $B_1(A) \oplus B_2(A)$ is an element of $M'_i$. It follows that the generalized kernel of $B_1(A) \oplus B_2(A)$ is isomorphic to $U$, and therefore to $W_0$. As $B_2(A)$ is furthermore assumed to be invertible, we see that the generalized kernel of $B_1(A) \oplus B_2(A)$ is necessarily contained in $W_0$. Hence, we conclude equality of the two spaces, i.e. $W_0(B_1(A) \oplus B_2(A)) = W_0$. In particular, we see that $(B_1(A) \oplus B_2(A))|_{W_0(B_1(A) \oplus B_2(A))} = B_1(A)$. As $B_1(A) \oplus B_2(A) \in M'_i$, it holds that $\psi B_1(A) \psi^{-1} \in M_i$ for some isomorphism $\psi: W_0 \rightarrow U$. By the first part of the proof, we also get $\phi B_1(A) \phi^{-1} \in M_i$. \\
Conversely, if $A \in S$ is such that $\phi B_1(A) \phi^{-1} \in M_i$, then the generalized kernel of $B_1(A) \oplus B_2(A)$ contains $W_0$. As $B_2(A)$ is invertible, we see that exactly $W_0(B_1(A) \oplus B_2(A)) = W_0$. From $\phi (B_1(A) \oplus B_2(A))|_{W_0} \phi^{-1} =  \phi B_1(A) \phi^{-1} \in M_i$ we conclude that $B_1(A) \oplus B_2(A) \in M'_i$, and therefore $A \in M'_i$.\\
\noindent From this we see that $M_i' \cap S = B_1^{-1}(\phi^{-1} M_i \phi)$. In particular, as $B_1$ is a submersion, $M_i'$ is an embedded submanifold of $\End(W)$ of the same codimension as $M_i$. \\

\noindent The case for the center subspace is completely analogous. For a given \\ $L \in N'_i$, choose an open neighborhood $S$ on which any element $A$ is conjugate to $B_1(A) \oplus B_2(A)$. Here, the direct sum is with respect to the decomposition $W = W_c(L) \oplus W_c^{\circ}(L)$, where $W_c^{\circ}(L)$ corresponds to all eigenvalues away from the imaginary axis. Analogous to the case of the $M'_i$, we want to assume that for all $A \in S$, $B_2(A)$ has only eigenvalues away from the imaginary axis. This can be assumed if it holds that the set of elements in $\End(W_c^{\circ})$ with no purely imaginary eigenvalues is an open set. However, note that there exists a continuous inclusion from $ \End(W_c^{\circ})$ into $\mat(\C,n)$ for some $n$. In $\mat(\C,n)$, the set of matrices with no purely imaginary eigenvalues is indeed an open set. See for example \cite[p.~118]{kato} or see \cite{Alexanderian2013OnCD} for a short proof using Rouch\'e's theorem. Therefore, the set of elements in $\End(W_c^{\circ})$ with no purely imaginary eigenvalues is indeed open. It follows that $N_i' \cap S = B_1^{-1}(\phi^{-1} N_i \phi)$, where $\phi: W_c(L) \rightarrow U$ is any (fixed) isomorphism. Therefore each $N_i'$ is an embedded submanifold of the same codimension as $N_i$. This proves the theorem.
\end{proof}

\section{Algebraic Reduction} \label{Algebra: Reduction}
The proof of Theorem \ref{codimsteady} consists of two steps. First, we reduce the problem from one involving $\End(W)$ to one involving certain matrix algebras that are easier to analyse. The most important aspect of this reduction is the fact that it does not (in essence) change the spectrum of the endomorphisms. The second step is to then construct the manifolds in these reduced spaces that contain all matrices with a vanishing or purely imaginary spectrum, and to count their dimensions. This section is dedicated to the first step, whereas Sections \ref{Intermezzo; Some algebraic geometry} and \ref{Geometry; Counting dimensions} will cover the second. In Section \ref{proof of main results} we present the proof of Theorem \ref{codimsteady}, using the results from this section and from Sections \ref{Intermezzo; Some algebraic geometry} and \ref{Geometry; Counting dimensions}.   \\

\noindent The first step comes down to three consecutive reductions. In the first reduction, we isolate an ideal in $\End(W)$ whose cosets have a constant spectrum. That is, the algebraic multiplicity of the eigenvalues of an endomorphism does not change when one adds an element of this ideal. We furthermore identify a full set of representatives for the cosets of this ideal. In the second reduction, we show that choosing different generators for the real, complex and quaternionic structure does not change the eigenvalues of the endomorphism, and has a predictable effect on the algebraic multiplicities. In the third step, we further reduce the problem to one involving three families of algebras. This last reduction forgets about some of the eigenvalues. However, the property of having a vanishing or purely imaginary spectrum is still respected. Throughout this section, we have chosen to elaborate on quite some intermediate results, as we believe they have significance outside of the proof of Theorem \ref{codimsteady} as well. 

\subsection{The First Reduction} \label{The First Reduction}
In this part, we identify an ideal in $\End(W)$ whose cosets have a constant spectrum. We furthermore identify a suitable set of representatives for these cosets. The main tool in this subsection will be the following:

\begin{defi}
Writing 

\begin{equation}\label{decomp3}
W \cong \bigoplus^{r_1} W_{1}^R \dots \bigoplus^{r_u} W_{u}^R \bigoplus^{c_1} W_{1}^C \dots \bigoplus^{c_v} W_{v}^C \bigoplus^{h_1} W_{1}^H \dots \bigoplus^{h_w} W_{w}^H\, ,
\end{equation}
we fix an isomorphism between $W$ and the right hand side of equation \eqref{decomp3}. We may then denote any element of $\End(W)$ as a matrix with entries formed by equivariant maps between two (isomorphic or non-isomorphic) indecomposable components of \eqref{decomp3}. We denote by $\mathcal{J} \subset \End(W)$ the set of all elements for which there are no isomorphisms among the entries of this matrix.  Equivalently, $\mathcal{J}$ consists of those endomorphisms for which there are only nilpotent entries between isomorphic components, alongside entries between non-isomorphic components. We will later see in Corollary \ref{corinde} that this definition is independent of the chosen isomorphism between the right hand side and the left hand side of equation \eqref{decomp3}.

\end{defi}
\begin{ex}\label{ex1}
Let $W$ be given by 

\begin{equation}
W \cong W_1^R \oplus W_1^R \oplus W_1^C\, ,
\end{equation}
where $W_1^R$ and $W_1^C$ are (necessarily non-isomorphic) indecomposable representations of real, respectively complex type. An element  $A \in \End(W)$ may then be written with respect to this decomposition as 

\begin{equation}
A = 
\begin{pmatrix}
 a\Id + N_{1,1} &  b\Id + N_{1,2}  & A_{1,3} \\
c\Id + N_{2,1} &  d\Id + N_{2,2}  & A_{2,3} \\
A_{3,1} & A_{3,2} & e\Id + fI + N_{3,3}
\end{pmatrix}\, ,
\end{equation}
for $a,b, \dots f \in \R$. Here, $N_{i,j}$ denotes a nilpotent map between isomorphic representations and $I \in \End(W_1^C)$ is an isomorphism such that $\{[\Id], [I]\} \subset \End(W_1^C)/\nil(W_1^C)$ generates a complex structure. It follows that $A$ is an element of $\mathcal{J}$ if and only if $a = b = \dots = f = 0$. \hspace*{\fill}$\triangle$
\end{ex}

\begin{prop}
The set $\mathcal{J}$ is a (two-sided) ideal in the algebra $\End(W)$.
\end{prop}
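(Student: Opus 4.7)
The plan is to verify the three properties: $\mathcal{J}$ is a linear subspace of $\End(W)$, it is closed under left multiplication by arbitrary elements of $\End(W)$, and it is closed under right multiplication. For this, I would fix the matrix representation of elements of $\End(W)$ with respect to the decomposition \eqref{decomp3} and work blockwise, using the two key algebraic tools from Section \ref{Pre}: the Fitting lemma (which tells us that $\nil(W_i^K)$ is a two-sided ideal inside $\End(W_i^K)$), and the subsequent lemma that says a composition $g \circ f$ of morphisms between non-isomorphic indecomposable representations is automatically nilpotent.

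First, I would introduce notation. For indices $p, q$ ranging over the indecomposable summands in \eqref{decomp3}, let $W_{(p)}$ denote the $p$-th summand, and write $A_{p,q}$ for the $(p,q)$-block of $A \in \End(W)$, which is a morphism $W_{(q)} \to W_{(p)}$. I would then note that $A \in \mathcal{J}$ if and only if, whenever $W_{(p)} \cong W_{(q)}$, the block $A_{p,q}$ becomes nilpotent in $\End(W_{(p)})$ after identifying $W_{(q)}$ with $W_{(p)}$ via any chosen isomorphism (which is well-defined modulo $\nil$ by the last lemma of Section~\ref{Pre}). Linearity of $\mathcal{J}$ then follows immediately: for blocks between non-isomorphic summands there is no constraint, and for blocks between isomorphic summands the set of nilpotent maps is closed under addition and scalar multiplication by Fitting's lemma.

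For the ideal property, I would compute the $(p,r)$-block of a product $AB$:
\[
(AB)_{p,r} = \sum_q A_{p,q} \circ B_{q,r}.
\]
Assume $A \in \mathcal{J}$ and $B \in \End(W)$. If $W_{(p)} \not\cong W_{(r)}$ the block lies in $\mathcal{J}$ automatically, so assume $W_{(p)} \cong W_{(r)}$. For each $q$ in the sum there are two cases. If $W_{(q)} \cong W_{(p)} \cong W_{(r)}$, then under the natural identifications $A_{p,q}$ is nilpotent and $B_{q,r}$ is an arbitrary element of $\End(W_{(p)})$, so by Fitting the composition is nilpotent. If $W_{(q)} \not\cong W_{(p)}$ (equivalently, not isomorphic to $W_{(r)}$), then after identifying $W_{(r)}$ with $W_{(p)}$, the composition $A_{p,q} \circ B_{q,r}$ has the form $f \circ g$ with $f : W_{(q)} \to W_{(p)}$ and $g : W_{(p)} \to W_{(q)}$ between non-isomorphic indecomposables, and the second lemma of Section \ref{Pre} guarantees it is nilpotent. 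Summing nilpotents inside $\End(W_{(p)})$ and using Fitting once more yields $(AB)_{p,r} \in \nil(W_{(p)})$. The argument that $BA \in \mathcal{J}$ is the mirror image, with the roles of $A_{p,q}$ and $B_{q,r}$ interchanged; note crucially that the Fitting ideal is \emph{two-sided}, so multiplication by an arbitrary endomorphism on either side of a nilpotent still yields a nilpotent.

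The only genuinely subtle point, and the one I would be most careful about, is the well-definedness of the "nilpotent" condition on blocks between isomorphic summands under the choice of identifying isomorphism: different isomorphisms $W_{(q)} \to W_{(p)}$ differ by an element of $\End(W_{(p)})$, which is invertible up to nilpotents, so the condition of being nilpotent in $\End(W_{(p)})$ is preserved. This also hints at the proof of Corollary \ref{corinde}, which states that $\mathcal{J}$ does not depend on the chosen isomorphism between $W$ and the right-hand side of \eqref{decomp3}. Everything else is a careful but routine bookkeeping of matrix block multiplication.
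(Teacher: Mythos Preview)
Your proposal is correct and follows essentially the same route as the paper: block-matrix computation of the product, using the Fitting lemma on blocks through isomorphic summands and the ``composition through non-isomorphic indecomposables is nilpotent'' lemma on the rest. The paper is slightly terser --- it bundles the non-isomorphic case into an unexplained nilpotent remainder $N$ --- whereas you spell out both cases and also discuss the well-definedness of the nilpotency condition under change of identifying isomorphism (a point the paper sidesteps by taking isomorphic summands to be literally equal and deferring independence of the decomposition to Corollary~\ref{corinde}).
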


\begin{proof}
Recall that $\nil(W_i^K)$ is an ideal in $\End(W_i^K)$ for every indecomposable representation $W_i^K$. In particular, $\nil(W_i^K)$ is a linear subspace of the vector space $\End(W_i^K)$.  From this it follows that $\mathcal{J}$ is a linear subspace of $\End(W)$. \\
To prove that it is an ideal, let us denote the indecomposable components of $W$ (i.e. $W_1^R$ ($r_1$ times) up to  $W_w^H$ ($h_w$ times)) by $W_1, \dots W_k$ for $k = r_1 + \dots r_u + c_1 + \dots c_v + h_1+ \dots h_w$. Let us furthermore denote the entries of $A \in \End(W)$ and $X \in \mathcal{J}$ by $A = (A_{i,j})$ and $X = (X_{i,j})$ with respect to this decomposition of $W$. If $p$ and $q$ are indices such that $W_p$ is isomorphic to $W_q$, then we have

\begin{equation}
(AX)_{p,q} = \sum_{l = 1}^k A_{p,l}X_{l,q} = \sum_{l \in P(p)} A_{p,l}X_{l,q} + N \,.
\end{equation}
Here $P(p)$ denotes the set of indices of representations isomorphic to $W_p$ and $N$ is some nilpotent map. Now, because $X$ is an element of $\mathcal{J}$, we know that $X_{l,q}$ is an element of $\nil(W_p)$ for all $l \in P(p)$. Using the fact that $\nil(W_p)$ is an ideal in $\End(W_p)$ we conclude that the entire term $(AX)_{p,q}$ is nilpotent. Since this holds for all $p$ and $q$ such that $W_p$ is isomorphic to $W_q$, we see that $AX \in \mathcal{J}$. This proves that $\mathcal{J}$ is a left ideal in $\End(W)$. The proof that it is a right ideal is similar, which concludes the proof.
\end{proof}

\begin{cor}\label{corinde}
The ideal $\mathcal{J}$ is independent of the decomposition of $W$ into indecomposable representations.
\end{cor}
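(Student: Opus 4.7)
The plan is to use the Krull--Schmidt theorem to relate two different decompositions, and then exploit that $\mathcal{J}$ is a two-sided ideal (just established) together with the triviality that any two-sided ideal is closed under conjugation by invertible elements.

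Let $\Psi_1 : W \to \bigoplus_i W_i$ and $\Psi_2 : W \to \bigoplus_j W'_j$ be two isomorphisms identifying $W$ with a direct sum of indecomposables, and write $\mathcal{J}_1, \mathcal{J}_2 \subset \End(W)$ for the corresponding candidate ideals. First I would invoke the Krull--Schmidt theorem to relabel the $W'_j$ so that $W_i \cong W'_i$ for every $i$, and fix equivariant isomorphisms $\phi_i : W_i \to W'_i$. These assemble into a diagonal isomorphism $\Phi_{\mathrm{diag}} := \bigoplus_i \phi_i$, and setting $T := \Phi_{\mathrm{diag}}^{-1} \circ \Psi_2 \circ \Psi_1^{-1}$ one obtains an invertible element $T \in \End\bigl(\bigoplus_i W_i\bigr)$. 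For $A \in \End(W)$ and $B := \Psi_1 A \Psi_1^{-1}$ this yields the identity
\[
\Psi_2 \, A \, \Psi_2^{-1} \;=\; \Phi_{\mathrm{diag}} \, T B T^{-1} \, \Phi_{\mathrm{diag}}^{-1}\, .
\]

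Next I would observe that conjugation by $\Phi_{\mathrm{diag}}$ acts entry-wise as $X_{i,j} \mapsto \phi_i \, X_{i,j} \, \phi_j^{-1}$, which carries $\nil(W_i)$ bijectively onto $\nil(W'_i)$ via the equivariant isomorphism $\phi_i$. Consequently the defining "nilpotent entries between isomorphic components" condition is preserved, so $A \in \mathcal{J}_2$ if and only if $T B T^{-1}$ satisfies the same condition inside $\End\bigl(\bigoplus_i W_i\bigr)$, i.e.\ if and only if $T B T^{-1} \in \mathcal{J}$.

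Finally, by the preceding proposition $\mathcal{J}$ is a two-sided ideal; since $T$ is invertible, $T \mathcal{J} T^{-1} = \mathcal{J}$, giving $T B T^{-1} \in \mathcal{J} \Longleftrightarrow B \in \mathcal{J} \Longleftrightarrow A \in \mathcal{J}_1$. Stringing these equivalences together produces $\mathcal{J}_1 = \mathcal{J}_2$. I expect the only real obstacle to be the bookkeeping in the first step---cleanly separating the change of decomposition into a Krull--Schmidt diagonal part (handled by direct inspection of the nilpotency condition) and a residual invertible equivariant automorphism (handled automatically by the ideal property). Everything else is essentially forced.
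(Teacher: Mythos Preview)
Your proof is correct and follows essentially the same idea as the paper: the heart of both arguments is that a two-sided ideal is automatically invariant under conjugation by invertible elements of the algebra. The paper's presentation is slightly more economical: rather than splitting the transition map into a diagonal Krull--Schmidt part $\Phi_{\mathrm{diag}}$ and a residual automorphism $T$, it simply takes both identifications $d_0,d_1$ to land in the \emph{same} target $W_1\oplus\cdots\oplus W_k$ (implicitly absorbing your $\Phi_{\mathrm{diag}}$ into $d_1$), and then the single conjugation $(d_1 d_0^{-1})\,\mathcal{J}'\,(d_0 d_1^{-1})\subset\mathcal{J}'$ (and its reverse) finishes the job in one stroke. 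Your separate verification that $\Phi_{\mathrm{diag}}$ preserves the entrywise nilpotency condition is correct but not strictly needed once you make that reduction.
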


\begin{proof}
Let 

\[
d_0, d_1:W \rightarrow W_1 \oplus \dots \oplus W_k
\]
denote two identifications of $W$ with the sum of indecomposable representations $W_1$ till $W_k$. We will furthermore denote by 

\[
\mathcal{J}' \subset \End(W_1 \oplus \dots \oplus W_k)
\]
the ideal of endomorphisms without isomorphisms among the entries. We have to show that 

\begin{equation}\label{jacwed}
d_0^{-1}\mathcal{J}'d_0 = d_1^{-1}\mathcal{J}'d_1 \, .
\end{equation}
However, from the fact that $\mathcal{J}'$ is an ideal it follows that

\begin{equation}
(d_1d_0^{-1})\mathcal{J}'(d_0d_1^{-1})\subset \mathcal{J}' \, ,
\end{equation}
and
\begin{equation}
(d_0d_1^{-1})\mathcal{J}'(d_1d_0^{-1})\subset \mathcal{J}' \, .
\end{equation}
This shows that equation \eqref{jacwed} indeed holds, which concludes the proof.
\end{proof}

\begin{prop}\label{eigen1}
Let $A \in \End(W)$ and let $X \in \mathcal{J}$. The set of eigenvalues of $A$, counted with algebraic multiplicity, is the same as that of $A+X$. In other words, the map assigning the set of eigenvalues to an endomorphism descends to a map on $\End(W)/\mathcal{J}$. In particular, we have that $\mathcal{J} \subset \nil(W)$.
\end{prop}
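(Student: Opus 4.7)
My plan proceeds in two steps. First, I show that the ideal $\mathcal{J}$ is \emph{nilpotent}, i.e., there exists $N \in \N$ with $\mathcal{J}^N = \{0\}$; this immediately yields the ``in particular'' clause $\mathcal{J} \subset \nil(W)$, since any $X \in \mathcal{J}$ then satisfies $X^N = 0$. Second, I use nilpotence of $\mathcal{J}$ to show that $A$ and $A+X$ have identical characteristic polynomials, which gives invariance of the spectrum counted with algebraic multiplicities.

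For the first step, the cleanest route is to identify $\mathcal{J}$ with the Jacobson radical of the finite-dimensional $\R$-algebra $\End(W)$. Using the Krull--Schmidt decomposition together with the Fitting Lemma, one checks that
\[
\End(W)/\mathcal{J} \;\cong\; \prod_i \mat_{n_i}(D_i),
\]
where the product ranges over isomorphism classes of indecomposable summands $W_i$ of $W$ (occurring with multiplicity $n_i$) and $D_i = \End(W_i)/\nil(W_i) \in \{\R, \C, \mathbb{H}\}$ is the associated division algebra. The key observations are that all morphisms between non-isomorphic isotypic components of $W$ lie in $\mathcal{J}$ by definition, and that within each isotypic component, the matrix algebra $\mat_{n_i}(\End(W_i))$ modulo its ideal $\mat_{n_i}(\nil(W_i))$ is $\mat_{n_i}(D_i)$. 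The right-hand side is semisimple, so $\mathcal{J}$ is the Jacobson radical of $\End(W)$ and is therefore automatically nilpotent in finite dimension.

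For the second step, I would use a logarithmic-derivative computation. Consider the path $A_t := A + tX$ and the characteristic polynomial $p_t(\lambda) := \det(A_t - \lambda\, \Id_W)$, a polynomial in $\lambda$ whose coefficients depend polynomially on $t$. Fix any real $\lambda_0$ with $|\lambda_0| > \|A\| + \|X\|$; then $A_t - \lambda_0 \Id$ is invertible in $\End(W)$ for every $t \in [0,1]$, with equivariant inverse given by the Neumann series. Since $\mathcal{J}$ is a two-sided ideal, $(A_t - \lambda_0 \Id)^{-1} X \in \mathcal{J}$, hence nilpotent by the first step, hence of vanishing trace. Jacobi's formula then yields
\[
\tfrac{d}{dt} \det(A_t - \lambda_0 \Id) \;=\; \det(A_t - \lambda_0 \Id)\cdot \tr\bigl((A_t - \lambda_0 \Id)^{-1} X\bigr) \;=\; 0,
\]
so $p_t(\lambda_0)$ is constant in $t$ and in particular $p_1(\lambda_0) = p_0(\lambda_0)$. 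Since this equality holds for infinitely many $\lambda_0$ and $p_1(\lambda) - p_0(\lambda)$ is polynomial in $\lambda$, the two polynomials agree identically, giving the desired spectral invariance.

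The main obstacle is the identification $\End(W)/\mathcal{J} \cong \prod_i \mat_{n_i}(D_i)$, which requires a careful block-matrix verification that $\mathcal{J}$ contains exactly the morphisms between non-isomorphic indecomposables together with the nilpotent within-class morphisms. Alternatively, one may prove nilpotency of $\mathcal{J}$ directly: expand a product $X^{(1)} \cdots X^{(N)}$ in block form, note that each entry decomposes as a sum over paths $i_0, i_1, \ldots, i_N$ through the indices of indecomposable summands, and combine the fact that $\nil(W_i)$ is an ideal in $\End(W_i)$ with the last preliminary lemma (compositions of morphisms through non-isomorphic indecomposables are nilpotent) to show that sufficiently long paths produce compositions that vanish, so $\mathcal{J}^N = \{0\}$ for $N$ large enough.
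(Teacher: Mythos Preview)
Your proposal is correct, but it takes a genuinely different route from the paper's own argument.

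The paper's proof is considerably shorter and more elementary. It observes directly from the block description of $\mathcal{J}$ that every $X \in \mathcal{J}$ has only nilpotent maps on its diagonal, hence $\tr(X) = 0$. Since $\mathcal{J}$ is an ideal, $(A+X)^m - A^m \in \mathcal{J}$ for every $m$, so $\tr((A+X)^m) = \tr(A^m)$ for all $m \ge 1$. The conclusion then follows from Lemma~\ref{samemat} (Newton's identities: equality of all power-sum traces forces equality of characteristic polynomials). The inclusion $\mathcal{J} \subset \nil(W)$ drops out at the end by taking $A = 0$.

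Your approach instead first establishes that $\mathcal{J}$ is nilpotent \emph{as an ideal} (either by identifying it with the Jacobson radical---something the paper only mentions in a later remark---or by a direct path-expansion argument), and then uses Jacobi's formula for the derivative of $\det(A + tX - \lambda_0 \Id)$ to show the characteristic polynomial is independent of $t$. This works, but note that your Step~2 actually needs only that individual elements of $\mathcal{J}$ have vanishing trace, not that $\mathcal{J}^N = 0$; the stronger nilpotence statement is used solely to obtain $\mathcal{J} \subset \nil(W)$, which the paper gets for free as a corollary. So the paper's argument avoids both the structural identification of $\mathcal{J}$ as the radical and the analytic Jacobi computation, trading them for the single combinatorial Lemma~\ref{samemat}. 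Your route buys a stronger intermediate fact (nilpotence of the ideal) at the cost of more machinery; the paper's route is the more economical one for this particular statement.
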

\noindent To prove Proposition \ref{eigen1} we need the following useful lemma. This result is known to experts, but included here for completeness and for its significance throughout this section. 

\begin{lem} \label{samemat}
Let  $A, B \in \mat(\C,n)$ (or in particular $\mat(\R,n)$) be two $n$ times $n$ matrices such that the following identities hold 

\begin{equation}
\begin{aligned}
\tr(A) &= \tr(B)\\
\tr(A^2) &= \tr(B^2)\\
&\hspace{1.8mm} \vdots \\
\tr(A^n) &= \tr(B^n)\,,
\end{aligned}
\end{equation}
then the set of eigenvalues of $A$ and $B$, counted with algebraic multiplicity, are the same.
\end{lem}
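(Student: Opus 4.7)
My plan is to prove this via Newton's identities, which express the elementary symmetric polynomials of a finite list of numbers in terms of their power sums.

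First, I would recall that for any matrix $A \in \mat(\C,n)$, the quantity $\tr(A^k)$ equals the $k$-th power sum of its eigenvalues, counted with algebraic multiplicity. This follows by passing to Schur (upper triangular) form: if $A = QTQ^{-1}$ with $T$ upper triangular having diagonal entries $\lambda_1, \dots, \lambda_n$ (the eigenvalues of $A$ with algebraic multiplicity), then $A^k = Q T^k Q^{-1}$, and $T^k$ is upper triangular with diagonal $\lambda_1^k, \dots, \lambda_n^k$. Hence $\tr(A^k) = \sum_{i=1}^n \lambda_i^k =: p_k(A)$. The analogous statement holds for $B$ with eigenvalues $\mu_1, \dots, \mu_n$.

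Next, I would invoke Newton's identities, which state that the power sums $p_1, \dots, p_n$ and the elementary symmetric polynomials $e_1, \dots, e_n$ satisfy the triangular recursion
\begin{equation*}
k e_k = \sum_{i=1}^k (-1)^{i-1} e_{k-i} p_i \quad (1 \leq k \leq n),
\end{equation*}
with the convention $e_0 = 1$. Since $\C$ has characteristic zero, each $e_k$ is uniquely determined by $p_1, \dots, p_k$. Thus the hypothesis $\tr(A^k) = \tr(B^k)$ for $k = 1, \dots, n$ forces $e_k(\lambda_1,\dots,\lambda_n) = e_k(\mu_1,\dots,\mu_n)$ for all $1 \leq k \leq n$.

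Finally, the characteristic polynomials of $A$ and $B$ are
\begin{equation*}
\chi_A(t) = \prod_{i=1}^n (t - \lambda_i) = \sum_{k=0}^n (-1)^k e_k(\lambda_1,\dots,\lambda_n) t^{n-k},
\end{equation*}
and analogously for $\chi_B$. Equality of all the $e_k$ gives $\chi_A = \chi_B$, so $A$ and $B$ have identical eigenvalues counted with algebraic multiplicity, which proves the lemma. The statement about $\mat(\R,n)$ follows as a special case since a real matrix also lives in $\mat(\C,n)$. The only subtle point is the characteristic-zero assumption required for Newton's identities to be invertible, but this holds automatically over $\R$ and $\C$, so no obstacle remains.
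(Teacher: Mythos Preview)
Your proof is correct and follows essentially the same route as the paper: both arguments reduce to showing that the power sums $p_1,\dots,p_n$ determine the characteristic polynomial, and hence the multiset of eigenvalues. The only cosmetic difference is that you invoke Newton's identities explicitly to recover the elementary symmetric polynomials, whereas the paper cites the more general fact that the power sums generate the ring of symmetric polynomials; these are equivalent observations.
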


\begin{proof}
Let $(\lambda_1, \dots \lambda_n)$ and $(\mu_1, \dots \mu_n)$ denote the eigenvalues of $A$, respectively $B$ (taking into account algebraic multiplicity). We see that $\tr(A^k) = p_k(\lambda_1 \dots \lambda_n)$ for all $k \geq 0$, where $p_k$ is the \textit{power sum symmetric polynomial} given by 

\begin{equation}
p_k(x_1.,\dots x_n) = x_1^k + \dots + x_n^k \, .
\end{equation}
It is known that these polynomials form a basis of the symmetric polynomials. In other words, every symmetric polynomial in $n$ variables can be written as a polynomial expression of the functions  $p_0$ until $p_n$. (Note that $p_0 := n$). See \cite[p.~2-3]{smith}. In particular, the coefficients of the polynomial 

\begin{equation}\label{pol1}
(x-\lambda_1)(x - \lambda_2) \dots (x-\lambda_n) 
\end{equation}
are symmetric polynomials in the variables $\lambda_1$ till $\lambda_n$. It follows that they can be expressed in the symmetric polynomials $p_0$ till $p_n$. Therefore, they are determined by the values $\tr(A^k) = p_k(\lambda_1 \dots \lambda_n)$ for  $k \leq n$. We conclude that the polynomial $\eqref{pol1}$ is equal to the polynomial 

\begin{equation}\label{pol2}
(x-\mu_1)(x - \mu_2) \dots (x-\mu_n) \, ,
\end{equation}
and from this we see that the roots $(\lambda_1, \dots \lambda_n)$ and $(\mu_1, \dots \mu_n)$ coincide. This proves the lemma.
\end{proof}

\begin{proof}[Proof of Proposition \ref{eigen1}]
We first note that $\tr(X) = 0$ for all $X \in \mathcal{J}$. This follows from the fact that $X$ has only nilpotent maps as its diagonal entries. In particular, we see that $\tr(A) = \tr(A+X)$ for all $A \in \End(W)$. Furthermore, from the fact that $\mathcal{J}$ is an ideal in $\End(W)$ it follows that $(A+X)^m = A^m + X_m$ for some $X_m \in \mathcal{J}$ and for all $m>0$. From this we conclude that $\tr((A+X)^m) = \tr(A^m)$ for all $m>0$. The claim of the theorem now follows from applying Lemma \ref{samemat} to $A$ and $A+X$. In particular, we conclude that $X$ has only $0$ as an eigenvalue and is hence nilpotent.
\end{proof}
Before we move on, it will be convenient to introduce a full set of representatives for the classes of $\End(W)/\mathcal{J}$. To this end, let $W_i$ be an indecomposable representation. If $W_i$ is of quaternionic type, we fix isomorphisms $\{\Id_{i}, I_{i}, J_{i}, K_{i} \}\subset \End(W_{i})$ such that $\{[\Id_{i}], [I_{i}], [J_{i}], [K_{i}]\} \subset \End(W_{i})/\nil(W_{i})$ generates the quaternionic structure on  $ \End(W_{i})/\nil(W_{i})$. Likewise we will have that $ [\Id_{i}] \in \End(W_{i})/\nil(W_{i})$ generates the real structure on \\ $ \End(W_{i})/\nil(W_{i})$ if $W_i$ is of real type and that $\{[\Id_{i}], [I_{i}]\} \subset \End(W_{i})/\nil(W_{i})$ generates the complex structure on $ \End(W_{i})/\nil(W_{i})$ if $W_i$ is of complex type. We note in passing that $\Id_{i} \in \End(W_{i})$ may be chosen to equal the identity operator $\Id_{W_i}$. As a matter of fact, because $\Id_{W_i}^2 = \Id_{W_i}$ and because there is only one non-zero idempotent element in any division ring, we see that necessarily $\Id_{i} = \Id_{W_i} + N$ for any nilpotent element $N$. Therefore, we will always choose $\Id_{i}$ to be $\Id_{W_i}$. \\
Given this choice of generators, we will construct out of an endomorphism $A \in \End(W)$ an endomorphism $D_A \in \End(W)$ that only differs from $A$ by an element of $\mathcal{J}$. 
To this end, we write

\begin{equation}\label{decomp}
\begin{split}
W &\cong \bigoplus^{r_1} W_{1}^R \dots \bigoplus^{r_u} W_{u}^R \bigoplus^{c_1} W_{1}^C \dots \bigoplus^{c_v} W_{v}^C \bigoplus^{h_1} W_{1}^H \dots \bigoplus^{h_w} W_{w}^H\\
&= \bigoplus_{i=1}^{k} W_{i} \, ,
\end{split}
\end{equation}
for $k = r_1 + \dots r_u + c_1 + \dots c_v + h_1+ \dots h_w$ and where each $W_i$ is equal to one of the $W_{j}^K$, $K \in \{R,C,H\}$ from the first line of \eqref{decomp}. If $p, q \in \{1. \dots k\}$ are such that $W_p = W_q$, then $A_{p,q}$ can be written uniquely as

\begin{equation}\label{exprkrt}
A_{p,q} = \left\{
\begin{tabular}{l l}
 $a\Id_p + N$ & if $W_p$ is of real type \\
 $a\Id_p + bI_p + N$ & if $W_p$ is of complex type \\
 $a\Id_p + bI_p + cJ_p + dK_p+N$ & if $W_p$ is of quaternionic type \\
\end{tabular} \right. \, .
\end{equation}
Here, $a,b,c,d \in \R$ and $N \in \End(W_p)$ is a nilpotent endomorphism. (Note that $\Id_p = \Id_q$, $I_p = I_q$ and so on, as we assume $W_p = W_q$. In other words, if some of the $W_i$ in equation \eqref{decomp} are the same, then they are given the same generators for the division algebra). We then define $(D_A)_{p,q}$ by simply removing the nilpotent terms:

\begin{equation}
(D_A)_{p,q} := \left\{
\begin{tabular}{l l}
 $a\Id_p $ & if $W_p$ is of real type \\
 $a\Id_p + bI_p$ & if $W_p$ is of complex type \\
 $a\Id _p+ bI_p + cJ_p + dK_p$ & if $W_p$ is of quaternionic type \\
\end{tabular} \right. \, .
\end{equation}
Finally, for $p$ and $q$ such that $W_p$ and $W_q$ are non-isomorphic, we set

\begin{equation}
(D_A)_{p,q} := 0 \, .
\end{equation}
As a result, $D_A$ is a block-diagonal endomorphism, where the blocks correspond to isomorphic representations (sometimes referred to as the \textit{isotypical components} of the representation). By construction, we  see that $A$ and $D_A$ differ only by an element of $\mathcal{J}$. What is more, if $A$ and $B$ in $\End(W)$ are in the same coset with respect to $\mathcal{J}$, then the real numbers $a$, $b$, $c$ and $d$ as in equation \eqref{exprkrt} will have to be the same. From this we see that necessarily $D_A = D_B$. We conclude that the elements $D_A$ for $A \in \End(W)$ form a full set of representatives for the cosets of $\mathcal{J}$.

\begin{ex}\label{ex2}
As in Example \ref{ex1}, let $W$ be given by 

\begin{equation}
W = W_1^R \oplus W_1^R \oplus W_1^C\, ,
\end{equation}
where $W_1^R$ and $W_1^C$ are indecomposable representations of real, respectively complex type. An element $A \in \End(W)$ is given with respect to this decomposition as 

\begin{equation}
A = 
\begin{pmatrix}
 a\Id + N_{1,1} &  b\Id + N_{1,2}  & A_{1,3} \\
c\Id + N_{2,1} &  d\Id + N_{2,2}  & A_{2,3} \\
A_{3,1} & A_{3,2} & e\Id + fI + N_{3,3}
\end{pmatrix}\, .
\end{equation}
It follows that $D_A$ is the block diagonal matrix 

\begin{equation}
D_A = 
\begin{pmatrix}
 a\Id &  b\Id   & 0 \\
c\Id  &  d\Id  & 0 \\
0& 0 & e\Id + fI 
\end{pmatrix}\, .
\end{equation} \hspace*{\fill}$\triangle$
\end{ex}

As $A$ and $D_A$ differ by an element of $\mathcal{J}$, it follows from Proposition \ref{eigen1} that they have the same eigenvalues, counted with algebraic multiplicity. Furthermore, as the set of endomorphisms $\{D_A \mid A \in \End(W)\}$ forms a complete set of representatives for the equivalence classes of $\End(W)/\mathcal{J}$, we see that we can define any map on $\End(W)/\mathcal{J}$ by specifying its value on the elements $D_A$. \\
Note that $D_A + D_B = D_{A+B}$ and $\mu \cdot D_A = D_{\mu \cdot A}$ for all $A,B \in \End(W)$ and $\mu \in \R$. It does however not hold that $D_AD_B = D_{AB}$. Indeed, the element $D_AD_B$ is again a block diagonal endomorphism, but it may have nilpotent terms among its entries. To realize this, we note that for example the identity $[I]^2 = -[\Id]$ for $[I] \in  \End(W_{i}^C)/\nil(W_{i}^C)$ does not imply $I^2 = -\Id$, but rather $I^2 = -\Id + N$ for some nilpotent term $N$. This seems to suggest that $D_AD_B$ and $D_{AB}$ differ only by an element of $\mathcal{J}$, which the following theorem confirms. 

\begin{prop}\label{prods}
Let $\{A_i\}_{i=1}^m \subset \End(W)$  be any finite set of endomorphisms, then the following holds:

\begin{equation}
\prod_{i=1}^m D_{A_i} - D_{\left(\prod_{i=1}^m A_i\right)} \in \mathcal{J}\, .
\end{equation}
\end{prop}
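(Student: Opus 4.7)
The plan is to prove the claim by induction on $m$, reducing the general case to the binary case $m = 2$, which in turn follows purely from the fact that $\mathcal{J}$ is a two-sided ideal. The base case $m = 1$ is immediate since $D_{A_1} - D_{A_1} = 0 \in \mathcal{J}$.

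For the inductive step, I would set $B := \prod_{i=1}^{m-1} A_i$ and apply the induction hypothesis to write $\prod_{i=1}^{m-1} D_{A_i} = D_B + J$ for some $J \in \mathcal{J}$. Right-multiplying by $D_{A_m}$ gives
\[
\prod_{i=1}^{m} D_{A_i} \;=\; D_B\, D_{A_m} + J\, D_{A_m},
\]
and $J\, D_{A_m} \in \mathcal{J}$ because $\mathcal{J}$ is a right ideal. Thus the whole problem reduces to verifying $D_B\, D_{A_m} - D_{B A_m} \in \mathcal{J}$, i.e.\ the binary case.

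For the binary case, I would write $A = D_A + J_A$ and $B = D_B + J_B$ with $J_A, J_B \in \mathcal{J}$ (possible by the very construction of $D_{(\cdot)}$ as a coset representative). Expanding the product yields
\[
AB \;=\; D_A D_B + D_A J_B + J_A D_B + J_A J_B,
\]
and all three cross-terms lie in $\mathcal{J}$ because $\mathcal{J}$ is two-sided. Since also $AB - D_{AB} \in \mathcal{J}$ by construction, subtracting gives $D_A D_B - D_{AB} \in \mathcal{J}$, which closes the induction.

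The main obstacle is conceptual rather than technical: one must resist the natural impulse to verify the claim by tracking the explicit nilpotent residues that arise from identities such as $I_p^2 = -\Id_p + N$, since a direct entry-wise computation is cumbersome and the product $D_A D_B$ is in general \emph{not} of the form $D_C$ for any $C \in \End(W)$. The plan above bypasses this entirely by treating the proposition as a formal consequence of the two-sided ideal property together with the fact that $A \mapsto D_A$ is a well-defined set-theoretic section of the quotient map $\End(W) \to \End(W)/\mathcal{J}$.
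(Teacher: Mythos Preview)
Your proof is correct and rests on exactly the same two ingredients the paper uses: that $D_A \in A + \mathcal{J}$ by construction, and that $\mathcal{J}$ is a two-sided ideal. The only difference is organizational: the paper dispenses with the induction and handles all $m$ at once by observing directly that
\[
\prod_{i=1}^m D_{A_i} \;\in\; \prod_{i=1}^m (A_i + \mathcal{J}) \;\subset\; \Bigl(\prod_{i=1}^m A_i\Bigr) + \mathcal{J},
\]
so that both $\prod_i D_{A_i}$ and $D_{(\prod_i A_i)}$ lie in the same coset. Your induction simply unwinds this coset inclusion one factor at a time; the binary case you isolate is exactly the paper's argument specialized to $m=2$.
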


\begin{proof}
Given $A \in \End(W)$, we denote by $A+\mathcal{J}$ the set of endomorphisms that differ from $A$ by an element of $\mathcal{J}$. We have already seen that $D_{A_i} \in A_i + \mathcal{J}$ for all $i \in \{1, \dots m\}$. Consequently, it follows that 

\begin{equation}
\prod_{i=1}^m D_{A_i} \in \prod_{i=1}^m (A_i + \mathcal{J}) \subset  \left(\prod_{i=1}^m A_i\right) + \mathcal{J}  \, .
\end{equation}
Since it also holds that 

\begin{equation}
D_{\left(\prod_{i=1}^m A_i\right)} \in   \left(\prod_{i=1}^m A_i \right) + \mathcal{J}  \, ,
\end{equation}
we see that $\prod_{i=1}^m D_{A_i}$ and $D_{\left(\prod_{i=1}^m A_i\right)}$ indeed differ by an element of  $\mathcal{J}$. This proves the claim.
\end{proof}

\subsection{The Second Reduction} \label{The Second Reduction}

Our goal is to define a map from $\End(W)/\mathcal{J}$ to a space where it is easier to identify exactly those matrices that have a specified condition on their eigenvalues. Since the set $\{D_A \mid A \in \End(W)\}$ forms a complete set of representatives for the equivalence classes of $\End(W)/\mathcal{J}$, we may define this map by giving its value on endomorphisms of the form $D_A$. Since $D_A$ is a block diagonal matrix, where the blocks correspond to isomorphic indecomposable representations, we will first focus our attention on the case that $W$ is the direct sum of isomorphic indecomposable representations. 

\noindent The key point will be that we may replace $I$, $J$ and $K$ by any other real  repre\-sentation of the quaternionic (or complex, or real) numbers, without losing information about the eigenvalues. More specifically, we make the following construction.

\begin{defi}
Let $Q :=\{ \tilde{I}, \tilde{J},\tilde{K}\} \subset \mat(\R,m)$ denote any three $m \times m$ matrices such that  $\R\Id_m \oplus \R \tilde{I} \oplus \R \tilde{J} \oplus \R \tilde{K}$ has a quaternionic structure. That is, we have the identities

\begin{equation}
\tilde{I}^2 = \tilde{J}^2 = \tilde{K}^2 = \tilde{I}\tilde{J}\tilde{K} = -\Id_m \, .
\end{equation}
Given an element 

\[ A \in \End\left(\bigoplus^{h_i} W_{i}^H\right)\, ,\] 
we may write

\begin{equation}
A_{p,q} = a_{p,q}\Id +b_{p,q}I + c_{p,q}J+ d_{p,q}K+ N_{p,q} \, ,
\end{equation}
for $p,q \in \{1, \dots h_i\}$ and where $\Id, I,J,K \in \End(W_i^H)$ generate the quaternionic structure on $\End(W_i^H)/\nil(W_i^H)$. Here we furthermore have that $a_{p,q}$, $b_{p,q}$, $c_{p,q}$ and $d_{p,q}$ are real numbers and that $N_{p,q} \in \End(W_{i}^H)$ is a nilpotent endomorphism. From $A$, we can now construct the element 

\[ A_Q \in \mat\left(\bigoplus^{h_i} \R^m \right) \, ,\]
given by 

\begin{equation}
(A_Q)_{p,q} := a_{p,q}\Id_m +b_{p,q}\tilde{I} + c_{p,q}\tilde{J}+ d_{p,q}\tilde{K} \, .
\end{equation}
Similarly, if 
\[ A \in \End\left(\bigoplus^{k_i} W_{i}^K\right), \quad K \in \{R,C\} \]
has components of complex or real type, we can define $A_Q$ corresponding to some real representation $Q$ of the complex or real numbers. For example, given a matrix $\tilde{I} \subset \mat(\R,m)$ satisfying $\tilde{I}^2 = -\Id_m$, and writing

\begin{equation}
A = 
\begin{pmatrix}
a_{1,1}\Id +b_{1,1}I + N_{1,1} &\cdots & a_{1,c_i}\Id + b_{1,c_i}I + N_{1,c_i} \\
\vdots & \ddots & \vdots \\
a_{c_i,1}\Id + b_{c_i,1}I + N_{c_i,1} &\cdots & a_{c_i,c_i}\Id + b_{c_i,c_i}I + N_{c_i,c_i}
\end{pmatrix}\, ,
\end{equation}
for an element

\[ A \in \End\left(\bigoplus^{c_i} W_{i}^C \right)\, ,\] 
we get

\begin{equation}
A_Q = 
\begin{pmatrix}
a_{1,1}\Id_m +b_{1,1}\tilde{I}  &\cdots & a_{1,c_i}\Id_m + b_{1,c_i}\tilde{I} \\
\vdots & \ddots & \vdots \\
a_{c_i,1}\Id_m + b_{c_i,1}\tilde{I}&\cdots & a_{c_i,c_i}\Id_m + b_{c_i,c_i}\tilde{I}
\end{pmatrix}\, .
\end{equation}
\end{defi}

%
%
%
%
%
%
%
%
%
%
%
%
%
%
%
%
%
%
%
%
%
%
\noindent The following theorem relates the eigenvalues of $A$ and $A_Q$.

\begin{prop}\label{repres1}
Let $n$ be the dimension of $W_{i}^K$ and let $Q \subset \mat(\R,m)$ be a representation of the division algebra $\K$ by real $m \times m$ matrices. Then, a complex number $\lambda$ is an eigenvalue of $A$ if and only it is an eigenvalue of the matrix $A_Q$. Furthermore, if we denote the algebraic multiplicity of $\lambda$ in $A$ by $m_{\la}$ and its algebraic multiplicity in $A_Q$ by $m_{\la}'$ then these numbers satisfy

\begin{equation}
m_{\lambda} \cdot m = m_{\lambda}' \cdot n 
\end{equation}
\end{prop}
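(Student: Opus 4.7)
The plan is to reduce the claim to a comparison of traces of powers and then invoke Lemma \ref{samemat}. More precisely, writing $k := k_i$ for brevity, the matrix $A$ is a linear map on a real vector space of dimension $nk$, while $A_Q$ acts on one of dimension $mk$. The identity $m_\lambda\cdot m = m_\lambda'\cdot n$ is equivalent to the equality of characteristic polynomials $p_A^{\,m}=p_{A_Q}^{\,n}$, and hence to the statement that the block-diagonal matrices $A\oplus\cdots\oplus A$ ($m$ copies) and $A_Q\oplus\cdots\oplus A_Q$ ($n$ copies), both of size $nmk$, have the same spectrum counted with multiplicity. By Lemma \ref{samemat} it therefore suffices to prove
\[
m\cdot\tr(A^r)\;=\;n\cdot\tr(A_Q^r)\qquad \text{for every } r\geq 0.
\]

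First I would reduce to $A = D_A$. Since $A-D_A\in \mathcal{J}$ by construction and $\mathcal{J}\subset \nil(W)$ by Proposition \ref{eigen1}, both $A$ and $D_A$ have the same eigenvalues with multiplicity; moreover, Proposition \ref{prods} gives $A^r = D_A^r = D_{A^r} + X_r$ with $X_r\in\mathcal{J}$. Any element of $\mathcal{J}$ has nilpotent diagonal entries and hence trace zero, so $\tr(A^r)=\tr(D_{A^r})$, and the analogous statement on the matrix side is immediate because $\psi:\K\to\R\Id_m\oplus\R\tilde I\oplus\R\tilde J\oplus\R\tilde K$ defined by $1\mapsto\Id_m,\,i\mapsto\tilde I,\,j\mapsto\tilde J,\,k\mapsto\tilde K$ is a genuine algebra homomorphism (the defining relations hold without nilpotent correction), so $(A_Q)^r = (\mathbf{A}^r)_Q$ where $\mathbf{A}\in\mat(\K,k)$ is the matrix of coefficients extracted from $D_A$.

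The key algebraic input is then a trace formula: for any real representation $\rho:\K\to\mat(\R,d)$ of the division algebra $\K$, one has $\tr(\rho(q))=d\cdot\mathrm{Re}(q)$ for all $q\in\K$. For $K=R$ this is trivial; for $K=C$ or $K=H$ it boils down to $\tr(\rho(i))=\tr(\rho(j))=\tr(\rho(k))=0$, which follows because $\rho(i)^2=-\Id_d$ forces the eigenvalues of $\rho(i)$ to be $\pm i$ with some multiplicities summing to $d$, while $\tr(\rho(i))\in\R\cap i\R=\{0\}$ pins the multiplicities to $d/2$ each; the same argument applies to $\rho(j),\rho(k)$. Applying this both to the representation $\phi$ induced by the chosen generators $\Id,I,J,K\in\End(W_i^K)$ (noting once more that nilpotent terms in entries contribute zero trace) and to $\psi$, one obtains
\[
\tr(D_{A^r}) \;=\; n\sum_{p=1}^k\mathrm{Re}\bigl((\mathbf{A}^r)_{p,p}\bigr),\qquad \tr((A_Q)^r)\;=\;m\sum_{p=1}^k\mathrm{Re}\bigl((\mathbf{A}^r)_{p,p}\bigr),
\]
so that $m\tr(A^r)=n\tr(A_Q^r)$ as required.

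The main obstacle is conceptual rather than computational: the map $\phi$ sending $\K$ into $\End(W_i^K)$ via the chosen generators is only a homomorphism modulo $\nil(W_i^K)$, so one must consistently argue that the nilpotent discrepancies produced when multiplying out powers disappear at the level of traces. This is exactly what Proposition \ref{prods} together with the observation $\mathcal{J}\subset\ker\tr$ accomplishes, which is why both ingredients had to be set up in Subsection \ref{The First Reduction} before the present proposition could be proved.
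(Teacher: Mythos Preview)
Your proof is correct and follows essentially the same route as the paper: both compare the spectra of $A^{\oplus m}$ and $A_Q^{\oplus n}$ (the paper writes these as $\Id_m\otimes A$ and $\Id_n\otimes A_Q$) via Lemma~\ref{samemat}/Lemma~\ref{mapalg}, reducing everything to the identity $m\,\tr(A^r)=n\,\tr(A_Q^r)$ and the vanishing of $\tr(I),\tr(J),\tr(K),\tr(\tilde I),\tr(\tilde J),\tr(\tilde K)$. The only wrinkle worth flagging is that your justification for $\tr(\rho(i))=0$ invokes $\rho(i)^2=-\Id_d$, which holds for $\psi$ but not for the $\End(W_i^K)$ side, where one only has $I^2=-\Id+N$ with $N$ nilpotent; the paper patches this by observing that $(I^2+\Id)^k=0$ for some $k$ still forces the eigenvalues of the real matrix $I$ to be $\pm i$ in conjugate pairs, so $\tr(I)=0$ survives.
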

\noindent In order to prove Proposition \ref{repres1} we will need the following useful lemma.

\begin{lem}\label{mapalg}
Let $U$ and $V$ be two (real or complex) finite dimensional vector spaces of the same dimension, and let $\mathcal{A} \subset \mat(U,U)$ and $\mathcal{B} \subset \mat(V,V)$ be two sub-algebras of the algebras of linear operators. Suppose furthermore that $\psi: \mathcal{A} \rightarrow \mathcal{B}$ is a map satisfying 
\begin{itemize}
\item $\psi(A_1A_2) = \psi(A_1)\psi(A_2)$ for all $A_1, A_2 \in \mathcal{A}$ \, ,
\item $\tr(\psi(A)) = \tr(A)$ for all $A \in \mathcal{A}$\, .
\end{itemize}
Then, $A$ and $\psi(A)$ have the same eigenvalues, counted with algebraic multiplicity. 
\end{lem}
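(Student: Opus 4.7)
The plan is to reduce the statement directly to Lemma \ref{samemat}, which is the natural tool here: it says that two square matrices of the same size agree in their spectra with multiplicity as soon as their powers have matching traces up to the size of the matrix. Since $U$ and $V$ have the same dimension $n$, picking any bases turns $A \in \mathcal{A}$ and $\psi(A) \in \mathcal{B}$ into elements of $\mat(\mathbb{K},n)$, and eigenvalues of a linear operator do not depend on the choice of basis, so Lemma \ref{samemat} can be applied to $A$ and $\psi(A)$ once the trace hypotheses are verified.

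First I would exploit multiplicativity of $\psi$. A simple induction on $k \geq 1$, using $\psi(A_1 A_2) = \psi(A_1)\psi(A_2)$, gives $\psi(A^k) = \psi(A)^k$ for every positive integer $k$ and every $A \in \mathcal{A}$; note this only uses closure of $\mathcal{A}$ under multiplication, not any unital assumption. Combining this with the trace hypothesis $\tr(\psi(B)) = \tr(B)$ applied to $B = A^k$ yields
\[
\tr(A^k) \;=\; \tr(\psi(A^k)) \;=\; \tr(\psi(A)^k)
\]
for every $k \geq 1$, and in particular for $k = 1, 2, \dots, n$.

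With that in hand, I would fix bases of $U$ and $V$ so that $A$ and $\psi(A)$ are represented by genuine $n \times n$ matrices (over $\mathbb{R}$ or $\mathbb{C}$, depending on the ground field; in either case one can pass to $\mat(\mathbb{C},n)$ to invoke the lemma as stated). The trace identities above are exactly the hypotheses of Lemma \ref{samemat}, so that lemma delivers the coincidence of the eigenvalue multisets of $A$ and $\psi(A)$, which is what was to be shown.

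I do not anticipate a genuine obstacle: the whole content of the lemma is that the two algebraic hypotheses on $\psi$ are precisely tailored to feed into Lemma \ref{samemat}. The only mildly delicate point is conceptual rather than technical, namely keeping track that $A$ and $\psi(A)$ live in different ambient spaces but can be compared spectrally because $\dim U = \dim V$; this is handled transparently by the basis-independence of eigenvalues.
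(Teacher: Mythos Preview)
Your proof is correct and follows essentially the same route as the paper: use multiplicativity to get $\psi(A^k)=\psi(A)^k$, then trace preservation to obtain $\tr(A^k)=\tr(\psi(A)^k)$ for all $k\ge 1$, and finally invoke Lemma~\ref{samemat} (the paper also remarks that $\tr(A^0)=\dim U=\dim V=\tr(\psi(A)^0)$, but this is not needed for the cited lemma).
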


\begin{proof}
From the properties of $\psi$ it follows that

\begin{equation}
\tr(\psi(A)^n) = \tr(\psi(A^n)) = \tr(A^n)\, ,
\end{equation}
for all $n>0$ and $A \in \mathcal{A}$. Note furthermore that $\tr(A^0) = \dim(U) = \dim(V) = \tr(\psi(A)^0)$. It follows from Lemma \ref{samemat} that the eigenvalues of $A$ and $\psi(A)$, counted with algebraic multiplicity, coincide. This proves the lemma.
\end{proof}

\begin{proof}[Proof of Proposition \ref{repres1}]
From the operator 

\[ A \in \End\left(\bigoplus^{k_i} W_{i}^K\right)\]
we may construct the operator

\[ \Id_m  \otimes A  \in \mat\left( \R^m \otimes \bigoplus^{k_i} W_{i}^K  \right) \, .\] 
Note that a value $\la \in \C$ is an eigenvalue of $A$ with algebraic multiplicity $m_{\la}$, if and only if it is an eigenvalue of $\Id_m \otimes A $ with algebraic multiplicity $m_{\la} \cdot m$. Likewise, for 

\[ A_Q \in \mat\left(\bigoplus^{k_i} \R^m \right)\]
we may construct

\[\Id_n \otimes A_Q  \in \mat\left(\R^n \otimes  \bigoplus^{k_i} \R^{m}  \right) \, ,\]
and a value $\la \in \C$ is an eigenvalue of $A_Q$ with algebraic multiplicity $m_{\la}'$, if and only if it is an eigenvalue of $ \Id_n \otimes A_Q$ with algebraic multiplicity $m_{\la}' \cdot n$. Our aim is to apply Lemma \ref{mapalg} to the map 

\begin{equation}
\begin{split}
\psi'_Q: \quad &\left\{ \Id_m \otimes A \mid A \in \End\left(\bigoplus^{k_i} W_{i}^K\right)\right\} \subset \mat\left(\R^m \otimes \bigoplus^{k_i}  W_{i}^K \right) \\ 
\rightarrow  \, &\left\{\Id_n \otimes B \mid B \in \mat\left(\bigoplus^{k_i} \R^{m} \right) \right\} \subset \mat\left(\R^n \otimes \bigoplus^{k_i} \R^{m} \right) \\
& \Id_m \otimes A  \mapsto \Id_n \otimes A_Q
\end{split}
\end{equation} 
Note that by construction we have

\begin{equation}
\dim\left(\R^m \otimes \bigoplus^{k_i} W_{i}^K  \right) = \dim \left(\R^n \otimes \bigoplus^{k_i} \R^{m}\right) \, ,
\end{equation}
both being equal to $k_i \cdot m \cdot n$.\\

\noindent The first thing to show is that 

\begin{equation}
\begin{split}
&\psi'_Q(\Id_m \otimes A  \cdot \Id_m \otimes B ) = \psi'_Q(\Id_m \otimes A ) \cdot \psi'_Q(\Id_m \otimes B)  \\
&\text{ for all } A,B \in \End\left(\bigoplus^{k_i} W_{i}^K\right) \,.
\end{split}
\end{equation}
To this end, let us write 

\begin{equation}
A_{i,j} = a_{i,j,0}\Id + a_{i,j,1}I + a_{i,j,2}J+ a_{i,j,3}K+ N^A_{i,j}\, ,
\end{equation}
and
\begin{equation}
B_{i,j} = b_{i,j,0}\Id + b_{i,j,1}I + b_{i,j,2}J+ b_{i,j,3}K+ N^B_{i,j}\, ,
\end{equation}
where the terms $N^A_{i,j}$ and $N^B_{i,j}$ are nilpotent. We see that

\begin{equation}
\begin{aligned}
(AB)_{i,j} &= \sum_{l =1}^{k_i} (a_{i,l,0}\Id + a_{i,l,1}I + a_{i,l,2}J+ a_{i,l,3}K+ N^A_{i,l})\\ 
 &\hspace{0.67cm} \cdot \,(b_{l,j,0}\Id + b_{l,j,1}I + b_{l,j,2}J+ b_{l.j,3}K+ N^B_{l,j}) \\
&=  c_{i,j,0}\Id + c_{i,j,1}I + c_{i,j,2}J+ c_{i,j,3}K+ {N}_{i,j}  \, ,
\end{aligned}
\end{equation}
for a nilpotent term ${N}_{i,j}$ and where the coefficients $c_{i,j,h}$ are determined by the regular quaternionic (or real, or complex) multiplication. That is, we have 

\begin{equation}
\begin{split}
\sum_{l = 1}^{k_i} (&a_{i,l,0} + a_{i,l,1}\tilde{i}+ a_{i,l,2}\tilde{j}+ a_{i,l,3}\tilde{k})(b_{l,j,0}+ b_{l,j,1}\tilde{i} + b_{l,j,2}\tilde{j}+ b_{l,j,3}\tilde{k})  \\
= \, &c_{i,j,0} + c_{i,j,1}\tilde{i} + c_{i,j,2}\tilde{j}+ c_{i,j,3}\tilde{k}
\end{split}
\end{equation}
in $\mathbb{H} = \spn\{1, \tilde{i}, \tilde{j}, \tilde{k}\}$. (We use tildes to distinguish from indices.)
It follows that 

\begin{equation}\label{produ0}
([AB]_Q)_{i,j} = c_{i,j,0}\Id_m + c_{i,j,1}\tilde{I} + c_{i,j,2}\tilde{J}+ c_{i,j,3}\tilde{K} \, .
\end{equation}
On the other hand, we see that 

\begin{equation}
(A_Q)_{i,j} = a_{i,j,0}\Id_m + a_{i,j,1}\tilde{I} + a_{i,j,2}\tilde{J}+ a_{i,j,3}\tilde{K}\, ,
\end{equation}
and
\begin{equation}
(B_Q)_{i,j} = b_{i,j,0}\Id_m + b_{i,j,1}\tilde{I} + b_{i,j,2}\tilde{J}+ b_{i,j,3}\tilde{K}\, ,
\end{equation}
from which it follows that 

\begin{equation}\label{produ1}
(A_QB_Q)_{i,j} = c_{i,j,0}\Id_m + c_{i,j,1}\tilde{I} + c_{i,j,2}\tilde{J}+ c_{i,j,3}\tilde{K} \, .
\end{equation}
Comparing expressions \eqref{produ0} and \eqref{produ1} we see that $(AB)_Q = A_QB_Q$. From this it follows that $\Id_n \otimes (AB)_Q = \Id_n \otimes (A_QB_Q)$ and hence that indeed \\
 $\psi'_Q(\Id_m \otimes A  \cdot \Id_m \otimes B ) = \psi'_Q(\Id_m \otimes A ) \cdot \psi'_Q(\Id_m \otimes B)$. \\
 
\noindent Finally, we have to show that $\tr(\Id_m \otimes A)  = \tr(\psi'_Q(\Id_m \otimes A))$. For this, let $S \in \mat(\R,s)$ be any real $s \times s$ matrix satisfying $S^2 = -\Id_s + N$ for some nilpotent matrix $N \in \mat(\R,s)$ (possibly the zero matrix). It follows that the eigenvalues of $S$ satisfy the equation $\lambda^2 = -1$ and are hence all equal to $i$ or $-i$. As the matrix $S$ is real, it follows that the algebraic multiplicity of $i$ is the same as that of $-i$, and we conclude that necessarily $\tr(S) = 0$. In particular, we see that

 \begin{equation}
 \tr(I) = \tr(J) = \tr(K) = \tr(\tilde{I}) = \tr(\tilde{J}) = \tr(\tilde{K}) = 0 \, .
 \end{equation}
Using this, we calculate the trace of $A$ as

\begin{equation}
\tr(A) = \sum_{l=1}^{k_i} \tr(A_{l,l}) = \sum_{l=1}^{k_i} \tr(a_{l,l,0}\Id_{W_i^K}) = n \cdot \sum_{l=1}^{k_i} a_{l,l,0} \, .
\end{equation}
It follows that 

\begin{equation}
\tr(\Id_m \otimes A) = mn \cdot \sum_{l=1}^{k_i} a_{l,l,0} \, .
\end{equation}
Likewise, we argue that 

\begin{equation}
\tr(A_Q) = \sum_{l=1}^{k_i} \tr(a_{l,l,0}\Id_{m}) = m \cdot \sum_{l=1}^{k_i} a_{l,l,0} \, ,
\end{equation}
and hence that 

\begin{equation}
\tr(\Id_n \otimes A_Q) = mn \cdot \sum_{l=1}^{k_i} a_{l,l,0} \, .
\end{equation}
We conclude that indeed $\tr(\Id_m \otimes A)  = \tr(\psi'_Q(\Id_m \otimes A))$. From Lemma \ref{mapalg} we now see that the numbers $m_{\la}$ and $m_{\la}'$ are indeed related by the identity $m_{\lambda} \cdot m = m_{\lambda}' \cdot n$. This proves the theorem.
\end{proof}

\begin{remk}\label{switch}
The matrix $A_Q$ can be seen as an element of $\mat(\R,k_i) \otimes \mat(\R,m)$. It will often be convenient to use the matrix \\ 
$A^Q \in \mat(\R,m) \otimes \mat(\R,k_i)$ instead though, which is simply $A_Q$ conjugated by the natural braiding isomorphism 

\begin{equation}
B: \R^{k_i} \otimes \R^m \rightarrow \R^{m} \otimes \R^{k_i} \, .
\end{equation}
In particular $A_Q$ and $A^Q$ have the same eigenvalues, counted with algebraic multiplicity. \hspace*{\fill}$\triangle$
\end{remk}

\begin{defi}
Let $Q$ be a particular choice of real matrices representing the division algebra $\End(W_i^K)/\nil(W_i^K)$. We denote by $\mathcal{R}^Q_{k_i}$, $\mathcal{C}^Q_{k_i}$ and $\mathcal{H}^Q_{k_i}$ the linear space of all matrices $A^Q$ for $A \in \End\left(\bigoplus^{k_i} W_{i}^K\right)$ when $W_{i}^K$ is of real, complex or quaternionic type, respectively.
\end{defi}
\noindent We will now make a particular choice for the representation $Q$ of the division algebra $\K$. If $\K = \R$ then we will represent $1 \in \R$ simply by $\Id_1 \in \mat(\R,1)$. For $\K = \C$ we will choose $\Id_2 \in \mat(\R,2)$ and 

\begin{equation}
\tilde{I} :=  \begin{pmatrix}
  0&1\\
  -1 & 0
 \end{pmatrix} \in \mat(\R,2)\,.
\end{equation}
For $\K = \mathbb{H}$ we choose $\Id_4 \in \mat(\R,4)$ and 

\begin{equation}
\tilde{I}:= \begin{pmatrix}
  0&1&0&0\\
  -1 & 0 & 0 & 0\\
  0 & 0 & 0 & -1\\
  0 & 0 & 1 & 0
 \end{pmatrix} \, ,
\end{equation}

\begin{equation}
\tilde{J}:= \begin{pmatrix}
  0&0&1&0\\
  0 & 0 & 0 & 1\\
  -1 & 0 & 0 & 0\\
  0 & -1 & 0 & 0
 \end{pmatrix} 
\end{equation}\,
and
\begin{equation}
\tilde{K}:= \begin{pmatrix}
  0&0&0&1\\
  0 & 0 & -1 & 0\\
  0 & 1 & 0 & 0\\
  -1 & 0 & 0 & 0 
 \end{pmatrix} \, ,
\end{equation}
all in $\mat(\R,4)$. It is easily verified that $\tilde{I}^2 = \tilde{J}^2 = \tilde{K}^2 = \tilde{I}\tilde{J}\tilde{K} = -\Id_4$.\\
With this specific choice of $Q$, we get the following algebras.

\begin{equation}
\begin{split}
\mathcal{R}^Q_n &= \mat(\R,n)\\
\mathcal{C}^Q_n &=\left \{  \begin{pmatrix}
  A&B\\
  -B & A
 \end{pmatrix}, \quad A,B \in \mat(\R,n) \right \}\\
 \mathcal{H}^Q_n &=\left \{  \begin{pmatrix}
  A&B&C&D\\
  -B & A & -D & C\\
  -C & D & A & -B\\
  -D & -C & B & A
 \end{pmatrix}, \quad A,B, C , D \in \mat(\R,n) \right \}
\end{split} \, .
\end{equation}
We will sometimes write $\mathcal{K}^Q_{n} $ to denote one of these algebras when the type (real, complex or quaternionic) is clear. We furthermore define maps 

\begin{equation}
\begin{split}
\psi^Q_{K,k_i} : &\End\left(\bigoplus^{k_i} W_{i}^K\right) \rightarrow \mathcal{K}^Q_{k_i} \\
& A \mapsto A^Q
\end{split}
\end{equation}
for $K \in \{R,C,H\}$. (Here,  $\mathcal{K}^Q_{k_i}$ denotes  $\mathcal{R}^Q_{k_i}$  when $K = R$, $\mathcal{C}^Q_{k_i}$  when $K = C$ and $\mathcal{H}^Q_{k_i}$  when $K = H$.) We will sometimes omit the subscripts in $\psi^Q_{K,k_i}$ and simply write $\psi^Q$ when this data is clear from context. It follows from the proof of Proposition \ref{repres1} that the map $A \mapsto A_Q$ is a morphism of unitary algebras. Therefore, so is the map $\psi^Q$. It can furthermore be seen that this map is surjective with kernel $\mathcal{J} \subset \End\left(\bigoplus^{k_i} W_{i}^K\right)$. It also follows from Proposition \ref{repres1} and Remark \ref{switch} that a value $\la \in \C$ is an eigenvalue of $A$ if and only if it is an eigenvalue of $\psi^Q(A)$, where Proposition \ref{repres1} gives more detailed information about the algebraic multiplicity. \\

\noindent We now return to the more general setting where $W$ is given by the direct sum of (not necessarily isomorphic) indecomposable representations:

\begin{equation}\label{decompp0}
W := \bigoplus^{r_1} W_{1}^R \dots \bigoplus^{r_u} W_{u}^R \bigoplus^{c_1} W_{1}^C \dots \bigoplus^{c_v} W_{v}^C \bigoplus^{h_1} W_{1}^H \dots \bigoplus^{h_w} W_{w}^H\, .
\end{equation}
Recall that an element $D_A \in \End(W)$ is of block diagonal form, where the blocks correspond to isomorphic indecomposable representations. Consequently, we can apply the maps $\psi^Q$ to each of these blocks to define a map

\begin{equation}
\begin{split}
\Psi^Q  &: \{ D_A \mid A \in \End(W)\} \\ 
&\rightarrow  \, \mathcal{R}^Q_{r_1} \oplus  \dots \mathcal{R}^Q_{r_u} \oplus  \mathcal{C}^Q_{c_1} \oplus \dots \mathcal{C}^Q_{c_v} \oplus \mathcal{H}^Q_{h_1} \oplus \dots \mathcal{H}^Q_{h_w} =: \mathcal{K}^Q_W\\
\Psi^Q &:= \psi^Q_{R,r_1} \times \dots \psi^Q_{R,r_u} \times \psi^Q_{C,c_1} \times \dots \psi^Q_{C,c_v} \times \psi^Q_{H,h_1} \times \dots \psi^Q_{H,h_w} \,. 
\end{split}
\end{equation}
Note that by construction of $D_A$, the map $\Psi^Q$ is a linear bijection. Furthermore, as the elements $D_A$, $A \in \End(W)$ form a full set of representatives for the equivalence classes of $\End(W)/\mathcal{J}$, we may define $\Psi^Q$ to be a linear bijection from $\End(W)/\mathcal{J}$ to $\mathcal{K}^Q_W$. Precomposing with the natural projection from $\End(W)$ onto  $\End(W)/\mathcal{J}$, we finally obtain a linear surjective map from $\End(W)$ onto  $\mathcal{K}^Q_W$ with kernel equal to $\mathcal{J}$. We will also denote this latter map by $\Psi^Q$. The following theorem summarizes most of our results so far by listing some properties of this map.

\begin{thr}\label{main2}
The map $\Psi^Q : \End(W) \rightarrow \mathcal{K}^Q_W$ is a surjective morphism of real unitary algebras with kernel $\mathcal{J}$. Moreover, a value $\la \in \C$ is an eigenvalue of $A \in \End(W)$ if and only if it is an eigenvalue of one of the components of $\Psi^Q(A)$.
\end{thr}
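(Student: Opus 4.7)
The plan is to verify each claim in turn, leveraging the preparatory results of Subsections \ref{The First Reduction} and \ref{The Second Reduction}. Linearity of $\Psi^Q$ and preservation of the unit are essentially built into the construction: the assignment $A \mapsto D_A$ is $\R$-linear (since $D_{A+B} = D_A + D_B$ and $D_{\mu A} = \mu D_A$), each blockwise map $\psi^Q_{K,k_i}$ is $\R$-linear by construction, and $D_{\Id_W} = \Id_W$ maps to the identity of $\mathcal{K}^Q_W$ because the generators were chosen so that $\Id_i = \Id_{W_i}$ and each $\psi^Q_{K,k_i}$ sends the identity endomorphism block to the identity matrix of the appropriate size.

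The main point will be multiplicativity. I would fix $A,B \in \End(W)$ and note that $D_A$ and $D_B$ are block diagonal with respect to the isotypic decomposition, so the same holds for $D_A D_B$, whose $i$-th isotypic block is simply $D_A^{(i)} D_B^{(i)}$ (writing $D_A^{(i)}$ for the $i$-th isotypic block of $D_A$). By Proposition \ref{prods}, the difference $D_A D_B - D_{AB}$ lies in $\mathcal{J}$, hence block by block $D_A^{(i)} D_B^{(i)} - D_{AB}^{(i)}$ lies in the ideal of nilpotent-per-entry endomorphisms on the $i$-th isotypic component --- precisely the kernel of $\psi^Q_{K,k_i}$. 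Combined with the blockwise identity $(AB)_Q = A_Q B_Q$ established inside the proof of Proposition \ref{repres1}, this gives
\begin{equation*}
\psi^Q_{K,k_i}(D_{AB}^{(i)}) = \psi^Q_{K,k_i}(D_A^{(i)} D_B^{(i)}) = \psi^Q_{K,k_i}(D_A^{(i)}) \cdot \psi^Q_{K,k_i}(D_B^{(i)}),
\end{equation*}
and taking the direct sum over $i$ yields $\Psi^Q(AB) = \Psi^Q(A)\Psi^Q(B)$.

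Surjectivity and the identification of the kernel are essentially tautological: by construction $\Psi^Q$ factors as the projection $\End(W) \to \End(W)/\mathcal{J}$ followed by the induced map on the quotient, which on the set of canonical representatives $\{D_A\}$ is the coordinate-wise application of the blockwise bijections $\psi^Q_{K,k_i}$ (these are bijections because the nilpotent-per-entry parts have already been stripped in forming $D_A$). Hence the induced map on the quotient is a linear bijection onto $\mathcal{K}^Q_W$, so $\Psi^Q$ itself is surjective with kernel exactly $\mathcal{J}$.

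Finally, the eigenvalue statement will follow by chasing $\la \in \C$ through the three steps of the construction: Proposition \ref{eigen1} yields that $A$ and $D_A$ share the same spectrum with multiplicities; block diagonality of $D_A$ decomposes its spectrum as the multiset union of the spectra of its isotypic blocks $D_A^{(i)}$; and Proposition \ref{repres1} together with Remark \ref{switch} ensures that each $D_A^{(i)}$ and its image $\psi^Q_{K,k_i}(D_A^{(i)})$ have the same set of eigenvalues (the algebraic multiplicities may differ by a uniform rational factor, but the sets themselves coincide). Since these images are precisely the components of $\Psi^Q(A)$, the claimed equivalence follows. The hard part, in my view, will be the multiplicativity, since it requires juggling two genuinely different corrections to the naive identity $D_A D_B = D_{AB}$: one absorbed by Proposition \ref{prods} and the other by the kernel of $\psi^Q$.
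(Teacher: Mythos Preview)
Your proposal is correct and follows essentially the same route as the paper's proof: both establish multiplicativity by combining the blockwise algebra morphism property $(AB)_Q = A_Q B_Q$ (from the proof of Proposition~\ref{repres1}) with Proposition~\ref{prods} to absorb the discrepancy $D_A D_B - D_{AB} \in \mathcal{J}$ into the kernel of the blockwise maps, and both trace the eigenvalue claim through Proposition~\ref{eigen1}, block diagonality, and Proposition~\ref{repres1} with Remark~\ref{switch}. Your closing remark about ``two genuinely different corrections'' slightly overstates the difficulty --- there is really a single correction by the ideal $\mathcal{J}$, which is simultaneously what Proposition~\ref{prods} produces and what $\psi^Q$ annihilates --- but this does not affect the argument.
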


\begin{proof}
It follows from the above discussion that $\Psi^Q$ is a linear surjective map with kernel $\mathcal{J}$. It is also clear that $\Psi^Q(\Id_W) = \Id \in  \mathcal{K}^Q_W$. To show that $\Psi^Q$ is a morphism of algebras, we need to show that $\Psi^Q(A)\psi^Q(B) = \Psi^Q(AB)$ for all $A,B \in \End(W)$. To this end, let us denote by $l :=u + v + w $ the number of different types of indecomposable representations appearing in the decomposition \eqref{decompp0} of $W$. Let $D \in \End(W)$ be a block diagonal endomorphism with respect to isomorphic indecomposable representations. We denote by $(\psi^Q)^l(D)$ the element of $\mathcal{K}^Q_W$ obtained by applying the map $\psi^Q$ to each of the $l$ block components of $D$. Note that by definition, $(\psi^Q)^l(D_A) = \Psi^Q(D_A)$ for all $A \in \End(W)$. We have also  seen that $\psi^Q: \End\left(\bigoplus^{k_i} W_{i}^K\right) \rightarrow \mathcal{K}^Q_{k_i} $ is a morphism of algebras, hence we have that $(\psi^Q)^l(D)(\psi^Q)^l(D') = (\psi^Q)^l(DD')$ for all block diagonal endomorphisms $D$ and $D'$. Given $A,B \in \End(W)$ it now follows that

\begin{equation}
\begin{split}
\Psi^Q(A)\Psi^Q(B) &= \Psi^Q(D_A)\Psi^Q(D_B) \\
&= (\psi^Q)^l(D_A)(\psi^Q)^l(D_B) = (\psi^Q)^l(D_AD_B) \, .
\end{split}
\end{equation}
From Proposition \ref{prods} it follows that $D_AD_B$ and $D_{AB}$ differ by an element of $\mathcal{J}$. This difference is furthermore block diagonal, as both $D_AD_B$ and $D_{AB}$ are. It follows that 

\begin{equation}
\begin{split}
 (\psi^Q)^l(D_AD_B) = (\psi^Q)^l(D_{AB}) = \Psi^Q(AB) 
\end{split}\, 
\end{equation}
and we see that indeed $\Psi^Q(A)\psi^Q(B) = \Psi^Q(AB)$. \\
To show that $A$ and $D_A$ share the same eigenvalues, we note that by Proposition \ref{eigen1} a value $\la \in \C$ is an eigenvalue of $A$ if and only if it is an eigenvalue of $D_A$. This is furthermore equivalent to $\la$ being an eigenvalue of one of the diagonal blocks of $D_A$. By Proposition \ref{repres1} and Remark \ref{switch} this is equivalent to $\la$ being an eigenvalue of one of the components of $(\psi^Q)^l(D_A) = \Psi^Q(D_A) =  \Psi^Q(A)$. This proves the claim.
\end{proof}

\begin{ex}\label{ex3}
As in Examples \ref{ex1} and \ref{ex2}, we let $W$ be given by 

\begin{equation}
W = W_1^R \oplus W_1^R \oplus W_1^C\, ,
\end{equation}
where $W_1^R$ and $W_1^C$ are indecomposable representations of real, respectively complex type. An element $A \in \End(W)$ is then given by  

\begin{equation}
A = 
\begin{pmatrix}
 a\Id + N_{1,1} &  b\Id + N_{1,2}  & A_{1,3} \\
c\Id + N_{2,1} &  d\Id + N_{2,2}  & A_{2,3} \\
A_{3,1} & A_{3,2} & e\Id + fI + N_{3,3}
\end{pmatrix}\, ,
\end{equation}
where $D_A$ is the block diagonal matrix 

\begin{equation}
D_A = 
\begin{pmatrix}
 a\Id &  b\Id   & 0 \\
c\Id  &  d\Id  & 0 \\
0& 0 & e\Id + fI 
\end{pmatrix}\, .
\end{equation} 
It follows that $\Psi^Q(A)$ is given by

\begin{equation}
\Psi^Q(A) = \begin{pmatrix} 
a & b \\
c & d
\end{pmatrix} \oplus
 \begin{pmatrix} 
e & f \\
-f & e
\end{pmatrix} \in \mathcal{R}^Q_2 \oplus \mathcal{C}^Q_1  \subset \mat(\R,2) \oplus \mat(\R,2) \,.
\end{equation}

\hspace*{\fill}$\triangle$
\end{ex}

\begin{remk}
Readers familiar with non-commutative algebra might have recognised in $\mathcal{J}$ the Jacobson radical of $\End(W)$. Consequently, $\End(W)/\mathcal{J}$ is a semisimple algebra. By Wedderburn's structure theorem such an algebra is isomorphic to the direct sum of a number of matrix algebras over a division algebra. This is precisely the result of Theorem \ref{main2}. Moreover, this latter theorem tells us that Wedderburn's isomorphism can be done in the case of $\End(W)/\mathcal{J}$ while keeping track of the eigenvalues. \hspace*{\fill}$\triangle$
\end{remk}

\subsection{The Third Reduction} \label{The Third Reduction}
Before explicitly describing those matrices with a vanishing or purely imaginary spectrum, we will now make one last reduction. This reduction will allow us to work with matrices over the complex numbers, which not only makes calculations easier, but will also allow us to use results from algebraic geometry. This latter observation will be crucial in Section \ref{Intermezzo; Some algebraic geometry}. Unlike in the last two reductions, the reduced matrix might not have all the eigenvalues that the original matrix has (even when ignoring algebraic multiplicity). However, this discrepancy will happen in a way that is not relevant to the proof of Theorem \ref{codimsteady}. \\
\noindent We define the real and complex algebras

\begin{equation}
\begin{split}
\mathcal{R}^P_n &:= \mathcal{R}^Q_n = \mat(\R,n)\\
\mathcal{C}^P_n &:= \mat(\C,n)\\
\mathcal{H}^P_n &:=\left \{  \begin{pmatrix}
  X&Y\\
  -\overline{Y} & \overline{X}
 \end{pmatrix}, \quad X, Y \in \mat(\C,n) \right \} \subset \mat(\C,2n) 
\end{split} \, .
\end{equation}
We will write $\mathcal{K}^P_{n} $ to denote one of these algebras when the type is clear. It can readily be seen that the maps

\begin{equation}
\begin{split}
C_n: \begin{pmatrix}
  A&B\\
  -B & A
 \end{pmatrix} &\mapsto A+Bi \\
H_n: \begin{pmatrix}
  A&B&C&D\\
  -B & A & -D & C\\
  -C & D & A & -B\\
  -D & -C & B & A
 \end{pmatrix} &\mapsto
 \begin{pmatrix}
  A+Bi&C+Di\\
  -C+Di & A-Bi
 \end{pmatrix} 
 \end{split}
 \end{equation}
Identify $\mathcal{C}^Q_n$ with $\mathcal{C}^P_n$ and $\mathcal{H}^Q_n$ with $\mathcal{H}^P_n$ as real unitary algebras. We furthermore note that $\mathcal{H}^P_n$ can be described as

 \begin{equation}
\mathcal{H}^P_n = \{Z \in \mat(\C,2n) \text{ such that } SZ = \overline{Z}S\} \, ,
 \end{equation}
 for 
 
 \begin{equation}
S = \begin{pmatrix}
 0 & \Id \\
 -\Id & 0 
 \end{pmatrix} \, .
 \end{equation}
Lastly, it can be seen that

\[\mat(\C,2n) =  \mathcal{H}^P_n \oplus i\mathcal{H}^P_n \, ,\]
as vector spaces over $\R$. See \cite{Rodm} for more on quaternionic matrices and their real and complex representations. Obviously $X \in \mathcal{C}^Q_n$ and $C_n(X)  \in \mathcal{C}^P_n$ cannot have the same eigenvalues, as they are matrices of different sizes. This likewise holds for the map $H_n$. Nevertheless,  the following theorem tells us that matrices with a vanishing or purely imaginary spectrum are respected by these identifications.

\begin{prop}\label{comprepeig}
Any eigenvalue of $C_n(X)  \in \mathcal{C}^P_n$ is an eigenvalue of $X \in \mathcal{C}^Q_n$. Furthermore, If $\lambda \in \C$ is an eigenvalue of $X$ then either $\lambda$ or its complex conjugate $\overline{\lambda}$ is an eigenvalue of $C_n(X)$.  \\
$\lambda \in \C$  is an eigenvalue of $X \in \mathcal{H}^Q_n$ if and only if it is an eigenvalue of $H_n(X)  \in \mathcal{H}^P_n$.

\end{prop}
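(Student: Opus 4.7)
The plan is to exhibit, in each case, an explicit complex similarity that block-diagonalizes $X$ (viewed as a matrix in $\mat(\C,2n)$ respectively $\mat(\C,4n)$) with blocks involving $C_n(X)$ or $H_n(X)$ and their entry-wise conjugates. The eigenvalues then become transparent, and the claims reduce to observations about how complex conjugation acts on the spectrum of the blocks.

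For the complex case, I would conjugate by the block matrix
\[
T := \begin{pmatrix} \Id & \Id \\ i\Id & -i\Id \end{pmatrix}.
\]
A direct computation shows that for $X = \begin{pmatrix} A & B \\ -B & A \end{pmatrix} \in \mathcal{C}^Q_n$,
\[
T^{-1}XT \;=\; \begin{pmatrix} A+iB & 0 \\ 0 & A-iB \end{pmatrix} \;=\; \begin{pmatrix} C_n(X) & 0 \\ 0 & \overline{C_n(X)} \end{pmatrix},
\]
so the spectrum of $X$ (in $\mat(\C,2n)$) is the union of the spectra of $C_n(X)$ and $\overline{C_n(X)}$. Since $A,B$ are real, the eigenvalues of $\overline{C_n(X)} = A-iB$ are exactly the complex conjugates of those of $C_n(X)$, and both statements in the complex case follow at once.

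For the quaternionic case, I would apply the same $T$ simultaneously to the two complex-type $2n\times 2n$ sub-blocks of $X$, i.e.\ conjugate by $\operatorname{diag}(T,T)$. Writing $U := A+iB$ and $V := C+iD$, a block-wise computation identical to the one above diagonalizes each of the four $2n\times 2n$ blocks of $X$, after which a straightforward row/column permutation rearranges the result into the block-diagonal form
\[
\begin{pmatrix} H_n(X) & 0 \\ 0 & \overline{H_n(X)} \end{pmatrix},
\]
with $H_n(X) = \begin{pmatrix} U & V \\ -\bar V & \bar U\end{pmatrix}$ as defined in the excerpt. Thus the spectrum of $X$ is the union of the spectra of $H_n(X)$ and $\overline{H_n(X)}$.

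The final step, and the one that might look mysterious at first, is to upgrade "$\lambda$ or $\bar\lambda$ is an eigenvalue of $H_n(X)$" to the stronger "$\lambda$ is an eigenvalue of $H_n(X)$" required by the proposition. For this I would use the characterization $\mathcal{H}^P_n = \{Z \in \mat(\C,2n) \mid SZ = \bar Z S\}$ stated just before the proposition. If $Zv = \lambda v$, then $SZ = \bar Z S$ gives $\bar Z (Sv) = \lambda (Sv)$; conjugating and using that $S$ is real yields $Z(S\bar v) = \bar\lambda (S\bar v)$, so the spectrum of any $Z \in \mathcal{H}^P_n$ is closed under complex conjugation. In particular the spectra of $H_n(X)$ and $\overline{H_n(X)}$ coincide, which gives the "if and only if" in the quaternionic case. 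The main technical obstacle is just the bookkeeping of the $4\times 4$ block computation and the permutation; the conceptual content is the conjugation-symmetry of the spectrum of elements of $\mathcal{H}^P_n$.
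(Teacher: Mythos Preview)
Your argument is correct and complete, but it takes a genuinely different route from the paper's. Rather than exhibiting explicit similarities, the paper invokes its Lemma~\ref{mapalg}, which says that a multiplicative, trace-preserving map between matrix subalgebras of equal ambient dimension preserves eigenvalues with multiplicity. For the complex case the paper applies this to $X \mapsto C_n(X) \oplus \overline{C_n(X)}$, arriving at the same block-diagonal form you produce by conjugation with $T$, and the conclusion follows just as in your argument. For the quaternionic case, however, the paper applies the lemma to $X \mapsto H_n(X) \oplus H_n(X)$ --- two copies of the \emph{same} block, not $H_n(X) \oplus \overline{H_n(X)}$. Since $\lambda$ is an eigenvalue of $H_n(X)\oplus H_n(X)$ if and only if it is an eigenvalue of $H_n(X)$, the ``if and only if'' is immediate, and your conjugation-symmetry step is bypassed entirely. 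Your route is more elementary (no trace lemma, just an explicit similarity) and as a bonus yields the genuine similarity $X \sim H_n(X)\oplus\overline{H_n(X)}$ rather than mere equality of spectra; the paper's route is uniform in its machinery and, in the quaternionic case, slightly slicker in that it avoids the extra spectral-symmetry argument.
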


\begin{proof}
\noindent We define the map

\begin{equation}
\begin{split}
C_n \oplus \overline{C}_n: \,\mathcal{C}^Q_n &\rightarrow \mat(\C,2n)\\
X = \begin{pmatrix}
  A&B\\
  -B & A
 \end{pmatrix} &\mapsto
 \begin{pmatrix}
  C_n(X)&0\\
  0 & \overline{C}_n(X)
 \end{pmatrix} =
  \begin{pmatrix}
  A+Bi&0\\
  0 & A-Bi
 \end{pmatrix}
 \end{split}
\end{equation}
between real sub-algebras of $\mat(\C,2n)$. Because $C_n(X)\cdot C_n(Y) = C_n(X\cdot Y)$ for all $X,Y \in \mathcal{C}^Q_n$, it follows that $C_n \oplus \overline{C}_n$ likewise respects matrix multiplication. Since we also see that $\tr(X) = 2\tr(A)= \tr((C_n \oplus \overline{C}_n)(X))$, we conclude from Lemma \ref{mapalg} that $X$ and $(C_n \oplus \overline{C}_n)(X)$ have the same eigenvalues, counted with algebraic multiplicity. From this it follows that any eigenvalue of $C_n(X)$ is an eigenvalue of $X$. Conversely, an eigenvalue of $X$ is either an eigenvalue of $C_n(X)$ or of $\overline{C}_n(X)$, in which case its complex conjugate is an eigenvalue of $C_n(X)$. This proves the first claim of the theorem. \\
\noindent The second claim follows likewise from applying Lemma \ref{mapalg} to the map

\begin{equation}
\begin{split}
H_n \oplus H_n: \,\mathcal{H}^Q_n &\rightarrow \mat(\C,4n)\\
 \begin{pmatrix}
  A&B&C&D\\
  -B & A & -D & C\\
  -C & D & A & -B\\
  -D & -C & B & A
 \end{pmatrix} &\mapsto 
  \begin{pmatrix}
  A+Bi & C+Di & 0 & 0 \\
  -C+Di & A-Bi & 0 & 0 \\
  0 & 0 & A+Bi&C+Di\\
  0 & 0 & -C+Di & A-Bi
  \end{pmatrix} \, .
 \end{split}
\end{equation} 
Note that both argument and image have trace equal to $4\tr(A)$.
\end{proof}

\noindent Combining the results in this section, we see that there exists a surjective morphism of unitary algebras between $\End(W)$ and the direct sum of a number of spaces of the types $\mathcal{R}^P_n$, $\mathcal{C}^P_n$ and $\mathcal{H}^P_n$. As this morphism also respects the property of having a vanishing or purely imaginary spectrum, the task of describing those elements in $\End(W)$ with this property is reduced to finding those in the three families of matrices $\mathcal{R}^P_n$, $\mathcal{C}^P_n$ and $\mathcal{H}^P_n$. In particular, we will be able to prove Theorem \ref{codimsteady} once we prove an analogous result for the matrix algebras $\mathcal{K}^P_n$. Note that reducing the problem to one in $\mathcal{K}^P_n$ means that we may essentially forget about the symmetry monoid $\Sigma$. Indeed, it already follows that the codimension of the set of endomorphisms in $\End(W)$ with a vanishing or purely imaginary spectrum depends only on the decomposition of $W$ into indecomposable representations. Apparently all other details of the action of the symmetry monoid $\Sigma$ on the space $W$ do not play a role.

\subsection{A Remark on Uniqueness}\label{A Remark on Uniqueness}

\noindent During the reductions in this section we have made a particular choice of generators $\Id, I_i, J_i, K_i \in \End(W_i)$ for any indecomposable representation $W_i$ of quaternionic type, and likewise for those of complex type. It may be insightful to see what effect a different choice of generators has on the manifolds in Theorem \ref{main}. We claim that there is no effect. More precisely, a different choice of generators will correspond to a particular isomorphism of the algebra $\mathcal{K}^P_n$. As the manifolds in $\mathcal{K}^P_n$ that we will construct in Section \ref{Geometry; Counting dimensions} will be invariant under these isomorphisms, we will conclude that the manifolds in Theorems \ref{codimsteady} and \ref{main} will be unaltered by a different choice of generators as well.\\
%

\noindent To illustrate, suppose $\{ \Id,I,J,K\} \subset \End(W_i^H)$ and $\{\Id, I',J',K' \} \subset \End(W_i^H)$ both generate the quaternionic structure. Then an element $X \in \End(W_i^K)$ can be expressed in either set of generators and a nilpotent term. For convenience, let us write $\Id = I_0$, $I = I_1$, $J = I_2$ and $K = I_3$. Likewise, we write $\Id = I'_0$, $I' = I'_1$ and so forth. It follows that there are coefficients $A_{i,j}$ and  $A'_{i,j}$ such that 
\begin{align}
I_i = \sum_{j = 0}^3 A_{i,j} I'_j + N_i\, \text{ and } I'_i = \sum_{j = 0}^3 A'_{i,j} I_j + N_i'
\end{align}
for $i \in \{0, \dots 3\}$, and where the $N_i$ and $N'_i$ are nilpotent endomorphisms. If we write
\begin{align}
X = \sum_{i = 0}^3 a_i I_i + N=  \sum_{j = 0}^3 a'_j I'_j + N' \in  \End(W_i^K) \, ,
\end{align}
for $N$ and $N'$ nilpotent, then it follows that 
\begin{align} \label{linn1}
a'_j =  \sum_{i = 0}^3 A_{i,j}a_i \, \text{ and } a_i =  \sum_{j = 0}^3 A'_{j,i}a'_j \, .
\end{align}
Motivated by this, we define a map $\phi$ from the quaternions to itself given by 
\begin{align}
\phi(a_0 + a_1\tilde{i} + a_2\tilde{j} + a_3\tilde{k}) =  a'_0 + a'_1\tilde{i} + a'_2\tilde{j} + a'_3\tilde{k}\, .
\end{align}
From \eqref{linn1} we see that this map is linear and invertible. If we furthermore write 
\begin{align}
Y = \sum_{i = 0}^3 b_i I_i + N=  \sum_{j = 0}^3 b'_j I'_j + N' \in  \End(W_i^K) \, ,
\end{align}
and
\begin{align}
XY = \sum_{i = 0}^3 c_i I_i + N=  \sum_{j = 0}^3 c'_j I'_j + N' \in  \End(W_i^K) \, ,
\end{align}
Then we see that the $c_i$ are formed from the $a_i$ and $b_i$ following the rules of regular multiplication in the quaternions. The same holds for the $c'_j$, $a'_j$ and $b'_j$. In other words, we have 

\begin{align}
&(a_0 + a_1\tilde{i} + a_2\tilde{j} + a_3\tilde{k})(b_0 + b_1\tilde{i} + b_2\tilde{j} + b_3\tilde{k}) =  c_0 + c_1\tilde{i} + c_2\tilde{j} + c_3\tilde{k}\, , \nonumber \\
&(a'_0 + a'_1\tilde{i} + a'_2\tilde{j} + a'_3\tilde{k})(b'_0 + b'_1\tilde{i} + b'_2\tilde{j} + b'_3\tilde{k}) =  c'_0 + c'_1\tilde{i} + c'_2\tilde{j} + c'_3\tilde{k} \, .
\end{align}
However, what this tells us is exactly that $\phi(xy) = \phi(x)\phi(y)$ for all $x,y \in \mathbb{H}$. As it furthermore holds that $\phi(1) = 1$, we conclude that a different choice of generators for the quaternionic structure leads to a (unitary) automorphism of the quaternions applied to the entries of $\mathcal{H}_n^P$. The same holds analogously for the complex case. For the real case there is no choice left, as we always take $\Id$ as the operator whose class in $\End(W_i^R)/\nil(W_i^R)$ generates the real structure. \\

\noindent It can easily be verified that the only unitary isomorphisms of $\C$ are the identity and complex conjugation. This latter operation yields component-wise complex conjugation to the entries of $\mathcal{C}_n^P = \mat(\C,n)$. However, it will follow from Remark \ref{Cdis} in Section \ref{Geometry; Counting dimensions} that this gives the same manifolds. It follows from the \textit{Skolem-Noether theorem} (see   
Proposition 2.4.7. of \cite{Rodm}) that every automorphisms $\phi$ of $\mathbb{H}$ is an \textit{inner automorphism}. In other words, there exists an $\alpha = \alpha(\phi) \in \mathbb{H}$ such that $\phi(x) = \alpha x \alpha^{-1}$ for all $x \in \mathbb{H}$. This means that a different choice of generators for the quaternionic structure of $\End(W_i^H)/\nil(W_i^H)$ will yield a transformation in $\mathcal{H}_n^P$ that is just given by conjugation by an element in $\mathcal{H}_n^P$. As the manifolds in Theorems \ref{Hnil} and \ref{Hcen} will be conjugacy invariant, such a transformation will not effect these manifolds. 
\hspace*{\fill}$\triangle$

\newpage

\section{Proof of Main Results} \label{proof of main results}
Here we prove Theorem \ref{codimsteady}, assuming a technical result on general matrix algebras that will be proven in the next sections. We also present the theorem in the introduction as a consequence of Theorem \ref{codimsteady}.

\subsection{Proof of Theorem \ref{codimsteady}}
We will now prove Theorem \ref{codimsteady} under the assumption of Theorem \ref{summary} below. Theorem \ref{summary} itself will then be proven in the following sections as  Theorems \ref{Cnil}, \ref{Rnil}, \ref{Hnil}, \ref{Ccen}, \ref{Rcen} and \ref{Hcen}. Recall that in Section \ref{Geometric Reduction} we have already proven Theorem \ref{main}, assuming the result of Theorem \ref{codimsteady}.

\begin{thr}\label{summary}
The set of all nilpotent elements in $\mathcal{R}^P_{n}$, $\mathcal{C}^P_{n}$ and $\mathcal{H}^P_{n}$ is the disjoint union of a finite number of conjugacy invariant, embedded submanifolds of real codimension greater or equal to $n$, $2n$ and $4n$, respectively. In all three cases, there is unique manifold of this exact codimension. \\

\noindent The set of all elements in $\mathcal{R}^P_{n}$, $\mathcal{C}^P_{n}$ and $\mathcal{H}^P_{n}$ with a purely imaginary spectrum is the disjoint union of a finite number of conjugacy invariant, embedded submanifolds of real codimension greater or equal to $\ceil{n/2}$, $n$ and $n$, respectively. Again, there is in all three cases exactly one submanifold of this precise codimension.
\end{thr}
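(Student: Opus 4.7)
The plan is to stratify each of the three algebras $\mathcal{K}^P_n \in \{\mathcal{R}^P_n, \mathcal{C}^P_n, \mathcal{H}^P_n\}$ by the Jordan-type data of its elements so that each stratum is simultaneously a conjugation orbit (possibly fibered over a configuration space of admissible eigenvalue tuples) and an embedded submanifold. The matrix group $\Gl(n,\mathbb{K})$, with $\mathbb{K} \in \{\R, \C, \mathbb{H}\}$ as appropriate, acts smoothly on $\mathcal{K}^P_n$ by conjugation, so every orbit is an embedded submanifold whose real codimension equals the real dimension of the centralizer of a representative. Finiteness of the decomposition is immediate, since the discrete data --- a Jordan partition in the nilpotent case, or a tuple of such partitions at the distinct imaginary eigenvalues in the center case --- admits only finitely many admissible values.

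I would handle the nilpotent case first. Over $\C$, the orbits of nilpotent matrices in $\mat(\C,n)$ under $\Gl(n,\C)$ are indexed by partitions $\lambda = (\lambda_1 \geq \cdots \geq \lambda_s)$ of $n$, and the complex dimension of the centralizer of a nilpotent of Jordan type $\lambda$ equals $\sum_i (2i-1)\lambda_i$. This expression is minimised uniquely at $\lambda = (n)$ (the regular nilpotent), with value exactly $n$, yielding a single orbit of real codimension $2n$ in $\mathcal{C}^P_n$; every other partition produces strictly larger codimension. The same argument transplants to $\mathcal{R}^P_n$ with codimension $n$, because the real centralizer of a real nilpotent matrix of a given Jordan type has real dimension equal to the complex dimension of its complex centralizer. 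For $\mathcal{H}^P_n$ one uses the quaternionic Jordan normal form to conclude that the regular nilpotent stratum has real codimension $4n$; the requisite centralizer formula over $\mathbb{H}$ is what Section \ref{Geometry; Counting dimensions} is designed to supply.

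Next, for the purely-imaginary-spectrum statement I would stratify by the finer datum $\tau = (\lambda^{(1)}, \ldots, \lambda^{(k)})$, where the $\lambda^{(j)}$ are the Jordan partitions at the $k$ distinct imaginary eigenvalues, adopting the convention that each conjugate pair (and separately, the possible eigenvalue $0$) is counted once in the real and quaternionic settings. The stratum $S_\tau$ is the image of a constant-rank smooth map from the product of the configuration space of admissible eigenvalue tuples with $\Gl(n,\mathbb{K})$ into $\mathcal{K}^P_n$, hence an embedded conjugacy-invariant submanifold. A direct count then shows: in $\mathcal{C}^P_n$, the generic stratum of $n$ simple pairwise distinct imaginary eigenvalues contributes $n$ real parameters against a centralizer of real dimension $2n$, giving real codimension exactly $n$. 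In $\mathcal{R}^P_n$, writing $n = 2m$ or $n = 2m+1$, the generic stratum consisting of $m$ distinct simple conjugate pairs (and a simple zero when $n$ is odd) contributes $m$ parameters against a centralizer of real dimension $n$, giving codimension $\ceil{n/2}$; equivalently, the characteristic polynomial must be a polynomial in $x^2$, times $x$ when $n$ is odd, which is $\ceil{n/2}$ independent real conditions. In $\mathcal{H}^P_n$, the $2n$-dimensional complex realisation forces eigenvalues into conjugate pairs, and the generic stratum of $n$ simple pairs contributes $n$ parameters against a centralizer of real dimension $2n$, giving codimension $n$. Any coalescence of eigenvalues or enlargement of a Jordan block increases the centralizer dimension by strictly more than it decreases the parameter count, so non-generic strata have strictly larger codimension and the generic one is unique.

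The main obstacle is twofold: establishing the centralizer dimension formulas in $\mathcal{H}^P_n$, where the classical complex theory does not directly apply, and controlling the codimension jumps under degeneration --- verifying that merging two distinct imaginary eigenvalues, or lengthening a Jordan chain, strictly increases the codimension by at least one. Both points require precise dimension bounds on loci of matrices with prescribed spectral multiplicity patterns, which is exactly the content of the algebro-geometric preparations in Section \ref{Intermezzo; Some algebraic geometry} and the dimension counts in Section \ref{Geometry; Counting dimensions}; once those are in hand, the existence, uniqueness, and codimension properties of the generic stratum follow by the combinatorial inspection outlined above.
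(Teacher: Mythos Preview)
Your overall stratification strategy --- decompose by Jordan-type data, identify each stratum with a conjugacy orbit (or a family of such orbits over a configuration space), and compute codimensions via centralizer dimensions --- matches the paper's approach, and your dimension counts for the generic strata are correct.

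However, there is a genuine gap. You write that the conjugation action is smooth, ``so every orbit is an embedded submanifold'', and later that each stratum $S_\tau$ is the image of a constant-rank map, ``hence an embedded conjugacy-invariant submanifold''. Neither inference is valid: orbits of smooth actions of non-compact Lie groups, and images of constant-rank maps, are in general only \emph{immersed} submanifolds. Upgrading from immersed to embedded is in fact the main technical difficulty here, and it is what Sections~\ref{Intermezzo; Some algebraic geometry} and~\ref{Geometry; Counting dimensions} are primarily devoted to. For single conjugacy orbits in $\mat(\C,n)$ the paper invokes the algebraic-geometric fact that orbits of an algebraic group action are Zariski-open in their Zariski-closure (Theorem~\ref{orbitsmagic}), and then transfers this to $\mathcal{R}^P_n$ and $\mathcal{H}^P_n$ via Lemma~\ref{reeal}. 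For the purely-imaginary strata, which are not single orbits but unions over a parameter space of eigenvalue tuples, the paper constructs explicit slice charts (Lemma~\ref{neigh}) and then runs delicate sequence arguments to rule out accumulation from other parts of the stratum. You have identified the wrong ``main obstacle'': the centralizer formulas over $\mathbb{H}$ and the monotonicity of codimension under degeneration are comparatively routine, whereas embeddedness is where the real work lies.
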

\noindent Note that the monoid $\Sigma$ does not occur in Theorem \ref{summary} anymore.
\begin{cor} \label{dimuit}
Let $W$ be the representation
\begin{equation}\label{decom3b}
W = \bigoplus^{r_1} W_{1}^R \dots \bigoplus^{r_u} W_{u}^R \bigoplus^{c_1} W_{1}^C \dots \bigoplus^{c_v} W_{v}^C \bigoplus^{h_1} W_{1}^H \dots \bigoplus^{h_w} W_{w}^H\, ,
\end{equation}
where the $W_i^K$, $K \in \{R,C,H\}$, are mutually non-isomorphic indecomposable representations. We write 
\begin{equation}
\mathcal{K}^P_W := \mathcal{R}^P_{r_1} \oplus  \dots \mathcal{R}^P_{r_u} \oplus  \mathcal{C}^P_{c_1} \oplus \dots \mathcal{C}^P_{c_v} \oplus \mathcal{H}^P_{h_1} \oplus \dots \mathcal{H}^P_{h_w} 
\end{equation}
for the corresponding representation of the endomorphism algebra of $W$. The set of nilpotent elements in $\mathcal{K}^P_W$ consists of a finite number of disjoint, conjugacy invariant manifolds of real codimension
\[K_W = r_1 + \dots + r_u + 2c_1 + \dots  + 2c_v + 4h_1 + \dots + 4h_w\] 
or higher. Exactly one of these manifolds has codimension precisely equal to this number. \\
Likewise,  the set of elements in $\mathcal{K}^P_W$ with a purely imaginary spectrum consists of a finite number of disjoint, conjugacy invariant manifolds of real codimension
\[C_W = \ceil{r_1/2} + \dots + \ceil{r_u/2} + c_1 + \dots  + c_v + h_1 + \dots + h_w\] 
and higher, with this exact number appearing only once. Here, the spectrum of an element of $\mathcal{K}^P_W$ is to be understood as the union of the spectra of the individual components, conform an interpretation as block matrices. Conjugacy invariance is with respect to invertible elements in the algebra $\mathcal{K}^P_W$.  
\end{cor}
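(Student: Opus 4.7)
The plan is to leverage Theorem \ref{summary} componentwise, using that the algebra $\mathcal{K}^P_W$ is an honest direct sum of factors of the three basic types. Write an element of $\mathcal{K}^P_W$ as a tuple $(A_1,\dots,A_\ell)$ with $\ell := u+v+w$, each $A_i$ sitting in its respective factor $\mathcal{R}^P_{r_j}$, $\mathcal{C}^P_{c_j}$, or $\mathcal{H}^P_{h_j}$. Two immediate facts drive everything: first, $(A_1,\dots,A_\ell)^k=(A_1^k,\dots,A_\ell^k)$, so $(A_1,\dots,A_\ell)$ is nilpotent iff each $A_i$ is nilpotent; second, by the conventional interpretation of the spectrum of a block-diagonal matrix (which is built into the statement), the spectrum of $(A_1,\dots,A_\ell)$ is the union of the spectra of the $A_i$, so it is purely imaginary iff each component's spectrum is. This reduces the problem to assembling stratifications factor by factor.

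For the nilpotent case I would proceed as follows. Theorem \ref{summary} provides for each factor $F_i$ a finite disjoint decomposition of its nilpotent set $\nil(F_i)=\coprod_{\alpha} M_{i,\alpha}$ into conjugacy invariant embedded submanifolds of real codimension at least $d_i$, where $d_i\in\{r_j,2c_j,4h_j\}$ depending on the type, and with a unique $M_{i,\alpha_i^*}$ of codimension exactly $d_i$. Taking all products
\[
M_{1,\alpha_1}\times\cdots\times M_{\ell,\alpha_\ell}\ \subset\ F_1\oplus\cdots\oplus F_\ell=\mathcal{K}^P_W
\]
yields a finite disjoint decomposition of the nilpotent set of $\mathcal{K}^P_W$. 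Each such product is an embedded submanifold of $\mathcal{K}^P_W$ (a product of embedded submanifolds in a product of ambient linear spaces), and its real codimension equals the sum of the factor codimensions, hence is at least $\sum_i d_i=K_W$. The unique product of minimum codimension equal to $K_W$ is the one formed from the distinguished factors $M_{i,\alpha_i^*}$; uniqueness follows because any other choice strictly increases at least one factor's codimension, and hence the total.

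The center-subspace case is completely analogous: Theorem \ref{summary} gives decompositions for the sets of matrices with purely imaginary spectrum in each factor, with minimum codimensions $\lceil r_j/2\rceil$, $c_j$, and $h_j$ respectively, and a unique factor of minimal codimension in each. Taking products gives the finite decomposition of the purely-imaginary-spectrum set of $\mathcal{K}^P_W$ into embedded submanifolds of codimension at least $C_W$, with the unique minimal stratum realised by the product of the distinguished factor strata.

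The only point that needs a brief justification is conjugacy invariance in the direct sum. An element $(C_1,\dots,C_\ell)\in\mathcal{K}^P_W$ is invertible iff each $C_i$ is invertible in its factor, and conjugation acts componentwise: $(C_1,\dots,C_\ell)(A_1,\dots,A_\ell)(C_1,\dots,C_\ell)^{-1}=(C_1A_1C_1^{-1},\dots,C_\ell A_\ell C_\ell^{-1})$. Since each factor stratum $M_{i,\alpha}$ is conjugacy invariant in $F_i$ by Theorem \ref{summary}, each product stratum is invariant under conjugation by invertible elements of $\mathcal{K}^P_W$. No serious obstacle is expected; the content of the corollary is entirely in Theorem \ref{summary}, and this argument is a bookkeeping step translating factorwise results into a statement about the direct sum.
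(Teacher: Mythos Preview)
Your proposal is correct and takes essentially the same approach as the paper: form the strata in $\mathcal{K}^P_W$ as products of the factorwise strata from Theorem~\ref{summary}, and observe that codimensions add. The paper's proof is in fact terser than yours, stating only that one takes all possible product sets and that the codimension of a product of embedded submanifolds is the sum of the codimensions; the componentwise characterisations of nilpotency, purely imaginary spectrum, invertibility, and conjugacy invariance that you spell out are left implicit there.
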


\begin{proof}
We define submanifolds in $\mathcal{K}^P_W$ by taking all possible product sets of the submanifolds in $\mathcal{R}^P_{n}$, $\mathcal{C}^P_{n}$ and $\mathcal{H}^P_{n}$. It follows directly from Theorem \ref{summary} that these submanifolds satisfy all the conditions of the statement.  Note that if we have a finite number of manifolds $M_1, \dots M_k$ with embedded submanifolds $N_i \subset M_i$ of codimension $n_i$, then $N_1 \times \dots N_k$ is an embedded submanifold of $M_1 \times \dots M_k$ of codimension $n_1 + \dots n_k$.
\end{proof}

\begin{proof}[Proof of Theorem \ref{codimsteady}]
As we may write

\begin{equation}\label{decom3a}
W \cong \bigoplus^{r_1} W_{1}^R \dots \bigoplus^{r_u} W_{u}^R \bigoplus^{c_1} W_{1}^C \dots \bigoplus^{c_v} W_{v}^C \bigoplus^{h_1} W_{1}^H \dots \bigoplus^{h_w} W_{w}^H\, ,
\end{equation}
there exists an isomorphism between the right hand side and the left hand side of equation \eqref{decom3a}. We fix such an isomorphism, so that we may assume without loss of generality that $W$ equals the direct sum on the right of equation \eqref{decom3a}. Note that a different choice of identification (i.e. isomorphism) will not yield other submanifolds, as we will prove that these manifolds are conjugacy invariant. We furthermore write 
\begin{equation}
\mathcal{K}^Q_W = \mathcal{R}^Q_{r_1} \oplus  \dots \mathcal{R}^Q_{r_u} \oplus  \mathcal{C}^Q_{c_1} \oplus \dots \mathcal{C}^Q_{c_v} \oplus \mathcal{H}^Q_{h_1} \oplus \dots \mathcal{H}^Q_{h_w} 
\end{equation}
and 
\begin{equation}
\mathcal{K}^P_W = \mathcal{R}^P_{r_1} \oplus  \dots \mathcal{R}^P_{r_u} \oplus  \mathcal{C}^P_{c_1} \oplus \dots \mathcal{C}^P_{c_v} \oplus \mathcal{H}^P_{h_1} \oplus \dots \mathcal{H}^P_{h_w} 
\end{equation}
for the algebras corresponding to the real and complex representations of the endomorphism algebras. From Theorem \ref{main2} we know that there exists a surjective, linear map $\Psi^Q : \End(W) \rightarrow \mathcal{K}^Q_W$. Moreover, by the identification between $\mathcal{K}^P_W$ and $\mathcal{K}^Q_W$ we get a surjective, linear map $\Psi^P : \End(W) \rightarrow \mathcal{K}^P_W$. From Corollary \ref{dimuit} we see that the set of nilpotent elements in $\mathcal{K}^P_W$ consists of a finite number of disjoint, conjugacy invariant manifolds of codimension $K_W$ and higher, with this exact number appearing only once. Likewise, we see that the set of  elements in $\mathcal{K}^P_W$ with a purely imaginary spectrum consists of a finite number of disjoint, conjugacy invariant manifolds of codimension $C_W$ and higher. Again, this exact number appears only once. From Theorem \ref{main2} and Proposition \ref{comprepeig} we see that the map $\Psi^P$ preserves the property of having a vanishing or purely imaginary spectrum. Therefore, the disjoint manifolds in $\End(W)$ with a vanishing or purely imaginary spectrum will be just the inverse images under $\Psi^P $ of the manifolds in $\mathcal{K}^P_W$. Because $\Psi^P$ is a surjective linear map, the inverse images are indeed embedded submanifolds of the same codimension as their original. \\
It remains to show that the manifolds in $\End(W)$ are conjugacy invariant. However, as $\Psi^Q$ and hence $\Psi^P$ are morphisms of unitary algebras, it holds that $\Psi^P(C^{-1}) = \Psi^P(C)^{-1}$ for any invertible element $C \in \End(W)$. Therefore, if $M$  is a conjugacy invariant subset of $\mathcal{K}^P_W$ and we have $A \in (\Psi^P)^{-1}(M)$, then $\Psi^P(CAC^{-1}) = \Psi^P(C)\Psi^P(A)(\Psi^P(C))^{-1}$ for any invertible $C \in \End(W)$. From this it follows that $CAC^{-1}$ is an element of $(\Psi^P)^{-1}(M)$ as well. This finishes the proof.
\end{proof}

\subsection{Transversality} \label{Transversality}
We will now show how the technical result of Theorem \ref{main} implies the more intuitive result in the introduction. To this end, we need the following definition:

\begin{defi}
Let $M$ and $N$ be manifolds and let $A \subset N$ be a submanifold of $N$. A $C^1$ map $f: M \rightarrow N$ is called \textit{transverse} to $A$ (notation $f \pitchfork A$) if for all $x \in M$ with $f(x) \in A$ it holds that $\im(T_xf) + T_{f(x)}A = T_{f(x)}N$.
\end{defi}

\begin{remk}
Note that whenever $\dim M + \dim A < \dim N$, the condition \\ $\im(T_xf) + T_{f(x)}A = T_{f(x)}N$ cannot be satisfied. Hence, in this case the set of all $f$ transverse to $A$ is exactly the set of all $C^1$ maps $f$  such that $f(M) \cap A = \emptyset$. In other words, transverse to $A$ then means avoiding the set $A$. \hspace*{\fill}$\triangle$
\end{remk}

\noindent Next, we introduce different topologies on the set of smooth maps from $M$ to $N$. We will see that the set of maps from $M$ to $N$ transverse to a given finite set of submanifolds is dense in these topologies.

\begin{defi}
Let $s$ be a natural number and let $M$ and $N$ be two $C^s$ ma\-nifolds (in particular, $M$ and $N$ might be $C^{\infty}$ manifolds). Denote by $C^s(M,N)$ the set of all $C^s$ maps from $M$ to $N$. We will give two ways of defining a topology on $C^s(M,N)$. To this end, let $(U\subset M,\phi)$ and $(V\subset N,\psi)$ be charts on $M$ and $N$, and let $K \subset U \subset M$ be a compact subset of $M$. Let furthermore $\epsilon > 0$ be given and let $f \in C^s(M,N)$ be a map satisfying $f(K) \subset V$. We denote by

\[\mathcal{N}^s(f,(U,\phi), (V,\psi),K,\epsilon) \]
the set of all $g \in C^s(M,N)$ such that $g(K) \subset V$ and

\[ ||D^k_x(\psi f \phi^{-1}) - D^k_x(\psi g \phi^{-1})|| < \epsilon\]
\noindent for all $x \in K$ and $k \in \{0, \dots s\}$. The \textit{weak} or \textit{compact-open} topology on $C^s(M,N)$ is the smallest topology containing all sets of this form. We denote $C^s(M,N)$ with this topology by $C_W^s(M,N)$. In particular, a base for $C_W^s(M,N)$ is given by all sets of the form

\begin{equation} \label{strongweak}
\bigcup_{i \in I} \mathcal{N}^s(f_i,(U_i,\phi_i), (V_i,\psi_i),K_i,\epsilon_i) \, ,
\end{equation}
where $I$ is some finite index set. \\

\noindent We can enlarge this topology by allowing not just finite index sets $I$ in equation \eqref{strongweak}, but also sets $I$ such that the family of sets $(U_i)_{i \in I}$ is locally finite. That is, any point in $M$ has a neighbourhood intersecting $U_i$ for only finitely many $i \in I$. The topology on $C^s(M,N)$ with base the sets in \eqref{strongweak} with $(U_i)_{i \in I}$ locally finite is called the \textit{strong} or \textit{Whitney} topology. We denote $C^s(M,N)$ with this topology by $C_S^s(M,N)$. $C_W^{\infty}(M,N)$ and $C_S^{\infty}(M,N)$ are then defined as the union of the topologies on $C^{\infty}(M,N)$ induced by the inclusions in $C_W^{s}(M,N)$ and $C_S^{s}(M,N)$ respectively, for all finite $s$. See \cite{Hir} for more on these topologies. 
\end{defi}

\begin{remk}
In \cite{Hir}, a base for the strong topology is given only by sets  of the form \eqref{strongweak} with $f_i = f_j$ for all $i,j \in I$ (and with $(U_i)_{i \in I}$ locally finite). However, for 

\begin{equation} 
g \in \bigcup_{i \in I} \mathcal{N}^s(f_i,(U_i,\phi_i), (V_i,\psi_i),K_i,\epsilon_i) \, ,
\end{equation}
with 

\begin{equation} 
\max_{k \leq s} \sup_{x \in K_i} ||D^k_x(\psi_i f_i \phi_i^{-1}) - D^k_x(\psi_i g \phi_i^{-1})|| := \mu_i < \epsilon_i\, ,
\end{equation}
it is readily seen by the triangle inequality that 

\begin{align} 
g \in  &\bigcup_{i \in I} \mathcal{N}^s(g,(U_i,\phi_i), (V_i,\psi_i),K_i,\epsilon_i-\mu_i) \\
\subset &\bigcup_{i \in I} \mathcal{N}^s(f_i,(U_i,\phi_i), (V_i,\psi_i),K_i,\epsilon_i) \, .
\end{align}
Therefore, the strong topology can also be defined by a base consisting of sets of the form \eqref{strongweak} with $f_i = f_j$ for all $i,j \in I$. Note that if $M$ is compact, any locally finite family of open sets $(U_i)_{i \in I}$ is finite ($M$ can then be covered by finitely many sets, each intersecting only finitely many $U_i$). Therefore, the weak and strong topologies coincide when $M$ is compact. \hfill$\triangle$
\end{remk}
\noindent Harder to prove is that $C_W^s(M,N)$ and $C_S^s(M,N)$ are \textit{Baire spaces} for $0 \leq s \leq \infty$.  That is, the intersection of countably many dense open sets is again dense. See \cite{Hir}. We call a set \textit{residual} if it contains the intersection of countably many dense open sets. In particular, residual sets are therefore dense in $C_W^s(M,N)$ and $C_S^s(M,N)$.

\noindent The following proposition follows from Theorems 2.1 and 2.5 in Chapter 3 of \cite{Hir}. This result will be used to argue that a subrepresentation $U$ is not expected to occur as a kernel or center subspace in a $k$-parameter bifurcation if $k$ is less than $K_U$ or $C_U$, respectively. 
\begin{prop}\label{transver}
Let $M$ and $N$ be manifolds and let $N_1$ till $N_r$ be submanifolds of $N$. Let $s$ be a (non-zero) natural number or infinity. The set of $C^s$-maps from $M$ to $N$ that are transverse to all manifolds $N_i$ is residual (and therefore dense) in both $C_W^s(M,N)$ and $C_S^s(M,N)$.
\end{prop}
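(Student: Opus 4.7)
The plan is to reduce the proposition to the single-submanifold case of the Thom transversality theorem, which the excerpt already attributes to \cite{Hir}, and then combine finitely many such statements using the Baire property of the function spaces $C_W^s(M,N)$ and $C_S^s(M,N)$.

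First I would apply Theorem 2.1 of Chapter 3 of \cite{Hir} (for the weak topology) and Theorem 2.5 of the same chapter (for the strong topology) to each individual submanifold $N_i \subset N$. These theorems state that for any $C^s$ submanifold $A \subset N$, the set
\[ T^s(A) := \{ f \in C^s(M,N) \mid f \pitchfork A \} \]
is residual in $C_W^s(M,N)$ and in $C_S^s(M,N)$. Applying this with $A = N_i$ for each $i \in \{1,\dots,r\}$ yields that each of the sets $T^s(N_1), \dots, T^s(N_r)$ is residual in both topologies.

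Next I would observe that a map $f \colon M \to N$ is transverse to all of the $N_i$ simultaneously if and only if $f \in \bigcap_{i=1}^r T^s(N_i)$. Since each $T^s(N_i)$ contains a countable intersection of dense open sets, the finite intersection $\bigcap_{i=1}^r T^s(N_i)$ contains the corresponding finite union of these countable families, which is again a countable collection of dense open sets. Hence $\bigcap_{i=1}^r T^s(N_i)$ is itself residual in both $C_W^s(M,N)$ and $C_S^s(M,N)$.

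Finally, to upgrade residuality to density, I would invoke the fact stated just above in the excerpt that both $C_W^s(M,N)$ and $C_S^s(M,N)$ are Baire spaces for $0 \leq s \leq \infty$ (see \cite{Hir}). In a Baire space every residual set is dense, so the set of $C^s$-maps transverse to all $N_i$ is dense in both topologies, as claimed. The only genuine content here lies in the Thom transversality theorem itself, which we are importing wholesale from \cite{Hir}; the remaining steps (finite intersections of residual sets and the Baire property) are formal, so there is no real obstacle beyond checking that the cited theorems are stated in sufficient generality to cover both topologies and arbitrary $C^s$ regularity, which they are.
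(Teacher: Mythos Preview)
Your proposal is correct and matches the paper's approach exactly: the paper does not give a proof but simply cites Theorems 2.1 and 2.5 in Chapter~3 of \cite{Hir}, and your argument is precisely the standard unpacking of that citation (transversality to each $N_i$ is residual, finite intersections of residual sets are residual, and residual sets are dense in a Baire space).
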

\noindent In contrast to Proposition \ref{transver}, the next result will be used to argue that a subrepresentation $U$ \textit{is} expected to occur as a kernel or center subspace in a $k$-parameter bifurcation if $k$ equals or exceeds $K_U$ or $C_U$, respectively. Proposition \ref{hit} is a well-known consequence of transversality, but an explicit proof can be hard to find in the literature. The method of proof that we will use is adapted from \cite{pol}. 
\begin{prop}\label{hit}
Let $A$ be an $m$-dimensional submanifold of the $n$-dimensional manifold $N$ and let $U \subset \R^k$ be a non-empty open subset. Assume furthermore that $k+m \geq n$. Then there exists a non-empty open subset of maps $f \in C^{\infty}(U,N)$ (in both the strong and weak topologies) such that $f(U) \cap A \not= \emptyset$.
\end{prop}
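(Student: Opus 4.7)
The plan is to exhibit a single map $f_0 \in C^\infty(U, N)$ that is transverse to $A$ at one interior point $u_0 \in U$, and then to argue that every map in a sufficiently small $C^0$-neighbourhood of $f_0$ still meets $A$ near that point. The openness of the ``hits $A$'' property is the heart of the matter; I would establish it by a Brouwer-degree computation on a suitable $r$-dimensional slice of $U$, where $r := n - m$ satisfies $0 \leq r \leq k$ by hypothesis.

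\emph{Construction of $f_0$.} Pick $p \in A$ and a submanifold-straightening chart $(V, \psi)$ on $N$ around $p$ with $\psi(p) = 0$, $\psi(V)$ a convex open ball about the origin, and $\psi(V \cap A) = \psi(V) \cap (\R^m \times \{0\})$. Fix $u_0 \in U$ and an open ball $B \subset U$ around $u_0$ small enough that the formula
\[
g(u) := (0,\dots,0, u_1 - u_{0,1},\dots, u_r - u_{0,r}) \in \R^n
\]
(zeros in the first $m$ slots, coordinate offsets in the last $r$) sends $\overline{B}$ into $\psi(V)$. Let $\chi : U \to [0,1]$ be a smooth bump with $\chi \equiv 1$ on a strictly smaller open ball $B'$ around $u_0$ with $\overline{B'} \subset B$, and with $\mathrm{supp}(\chi) \subset B$. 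Set
\[
f_0(u) :=
\begin{cases}
\psi^{-1}\bigl(\chi(u)\, g(u)\bigr), & u \in B, \\
p, & u \in U \setminus B.
\end{cases}
\]
Convexity of $\psi(V)$ keeps the argument of $\psi^{-1}$ in $\psi(V)$, so $f_0$ is a well-defined smooth map with $f_0(u_0) = p \in A$. On $B'$, $\chi \equiv 1$ and therefore $\psi \circ f_0 = g$, which shows $f_0 \pitchfork A$ at $u_0$.

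\emph{Openness via Brouwer degree.} Let $L_0 \subset \R^k$ be the $r$-dimensional affine slice $\{u : u_{r+1} = u_{0,r+1}, \dots, u_k = u_{0,k}\}$ and choose a closed ball $\overline{B_L} \subset B'$ inside $L_0$ of radius $\delta > 0$ centred at $u_0$. Let $\pi : \R^n \to \R^r$ project onto the last $r$ coordinates; for $f(u) \in V$ one has $f(u) \in A$ iff $\pi(\psi(f(u))) = 0$. Restricted to $\partial B_L$, the map $\pi \circ \psi \circ f_0$ equals $u \mapsto (u_1 - u_{0,1}, \dots, u_r - u_{0,r})$, which has constant norm $\delta$ and Brouwer degree one about the origin as a map $\partial B_L \to \R^r \setminus \{0\}$. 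Now consider the weak-topology basic open neighbourhood
\[
\mathcal{N} := \mathcal{N}^0\bigl(f_0, \mathrm{id}_{\R^k}, \psi, \overline{B_L}, \epsilon\bigr) \cap C^\infty(U, N),
\]
with $\epsilon < \delta$ chosen small enough that every $f \in \mathcal{N}$ still maps $\overline{B_L}$ into $V$ (automatic for small $\epsilon$ by uniform continuity of $\psi^{-1}$). For such $f$, the map $\pi \circ \psi \circ f$ is within $\epsilon$ of $\pi \circ \psi \circ f_0$ on $\overline{B_L}$, so it is non-vanishing on $\partial B_L$ and straight-line homotopic there to $\pi \circ \psi \circ f_0$ through non-vanishing maps. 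Homotopy invariance of the Brouwer degree then forces $\pi \circ \psi \circ f$ to vanish somewhere in $B_L$, giving $f(U) \cap A \neq \emptyset$.

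\emph{Conclusion and main obstacle.} The set $\mathcal{N}$ is a non-empty open subset of $C^\infty(U, N)$ in the weak topology by construction, and hence also in the finer strong topology. The principal technical hurdle is the openness step: the radii $\delta$ and $\epsilon$ must be coordinated so that the perturbation stays inside the chart $V$ over $\overline{B_L}$ (permitting application of $\psi$ and $\pi$) while remaining non-vanishing on the boundary sphere, so the degree argument closes. The degenerate case $r = 0$, in which $A$ is open in $N$, is trivial, since any constant map into $A$ possesses a $C^0$-neighbourhood of maps landing entirely in $A$.
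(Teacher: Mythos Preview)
Your proof is correct and follows essentially the same strategy as the paper's: reduce to a submanifold chart, project onto the normal $\R^{n-m}$ directions, and use a topological degree argument on an $(n-m)$-dimensional disk together with a straight-line homotopy to show that the intersection with $A$ persists under $C^0$-small perturbation. The only cosmetic differences are that the paper uses mod 2 degree and argues by contradiction (building the map $h(x)=\mu\,\tilde g(x)/\|\tilde g(x)\|$ and deriving $\deg_2(h)=0=1$), while you use the Brouwer degree directly; and you are somewhat more explicit about globalising $f_0$ on all of $U$ via a bump function, whereas the paper simply works inside the chart.
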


\begin{proof}
By choosing a submanifold chart, we may assume that $A $ equals $\R^m \times 0 := \{(x_1, \dots  x_m, 0, \dots 0)\} \subset \R^n$. Identifying $\R^k$ with 
$0 \times \R^k :=\\  \{(0, \dots 0, x_{n-k+1}, \dots  x_n)\} \subset \R^n$, we may then simply set $f$ equal to the identity to obtain a map whose image intersects $A$ (by shifting $U$ we may always assume that $U$ contains $0$). Because $U$ is open and because $k \geq n-m$, $U$ contains a closed $(n-m)$-dimensional disk centered around $0$, $D_{\mu}(0) \subset 0 \times \R^{n-m} \subset0 \times \R^k$, for some $\mu > 0$. Let $P$ denote the projection from $\R^n$ to $0 \times \R^{n-m}$. Then $P \circ f$ restricts to the identity on $D_{\mu}(0)$. We claim that whenever $g$ is a smooth map from $U$ to $\R^n$ satisfying $||g(x) - f(x)|| < \frac{1}{2} \mu$ for all $x \in D_{\mu}(0)$, then the image of $g$ intersects $\R^m \times 0$. To this end, it suffices to show that the image of $P \circ g|_{D_{\mu}(0)}$ contains $0$. \\
\noindent Suppose the converse, so that $g: U \rightarrow \R^n$ satisfies $||g(x) - f(x)|| < \frac{1}{2} \mu$ for all $x \in D_{\mu}(0)$ and so that $\tilde{g} := P \circ f|_{D_{\mu}(0)}: D_{\mu}(0) \rightarrow D_{\mu}(0)$ does not reach $0$. We will use the concept of the degree of a smooth map to arrive at a contradiction. Let $f$ be a smooth map between a compact manifold $X$ and a connected manifold $Y$. If $X$ and $Y$ have the same dimension, then the \textit{mod 2 degree} of $f$, notation $\degt(f)$, equals the number of points in $f^{-1}(y)$ modulo $2$ for any regular value $y \in Y$. It is shown in \cite{pol} (chapter 2, paragraph 4) that this is a well-defined concept. This reference also contains the following statements that we will be using:

\begin{enumerate}
\item The mod 2 degree of a map is homotopy invariant.
\item If $X$ is the boundary of a manifold $W$, and $f$ can be extended to all of $W$, then $\degt(f) = 0$.
\end{enumerate}
We also note that if $X = Y$, the degree of the identity map equals $1$, as every point has only itself as a preimage.\\
\noindent Let $S_{\mu}(0)$ denote the boundary of $D_{\mu}(0)$. As $\tilde{g}$ does not reach $0$, we may define the map 
\begin{equation}\label{extendthis}
h: x \mapsto \mu\frac{\tilde{g}(x)}{||\tilde{g}(x)||}
\end{equation}
\noindent from $S_{\mu}(0)$ to itself. As $h$ may be extended to a map from $D_{\mu}(0)$ to $S_{\mu}(0)$ (also given by equation \eqref{extendthis}) we conclude from the second statement that $\degt(h) = 0$.  We will now show that $h: S_{\mu}(0) \rightarrow S_{\mu}(0)$ is homotopic to the identity, thereby arriving at a contradiction using the first statement. To this end, note that for all $x \in D_{\mu}(0)$ we have

\begin{align}
||\tilde{g}(x) - x|| &= ||(P\circ g)(x) - (P \circ f)(x)|| \leq ||P|| \cdot ||g(x) - f(x)||   \nonumber \\
& < ||P||\cdot \frac{1}{2} \mu \leq \frac{1}{2} \mu \, .
\end{align}
\noindent It follows that for all $x \in  S_{\mu}(0)$ we have

\begin{align}
||h(x) - x|| &= \left|\left|\mu \frac{\tilde{g}(x)}{||\tilde{g}(x)||} - x\right|\right| = \left|\left| \left( \frac{\mu}{||\tilde{g}(x)||} - 1 \right) \tilde{g}(x) + \tilde{g}(x) - x \right|\right|  \nonumber \\
& \leq \left| \frac{\mu}{||\tilde{g}(x)||} - 1 \right| \cdot ||\tilde{g}(x)|| + ||\tilde{g}(x) - x || \nonumber \\
& = |\mu - ||\tilde{g}(x)|| \,| + ||\tilde{g}(x) - x || \nonumber \\
& = | \, ||x|| - ||\tilde{g}(x)||\,| + ||\tilde{g}(x) - x || \nonumber \\
& \leq ||\tilde{g}(x) - x || + ||\tilde{g}(x) - x || < 2 \cdot \frac{1}{2} \mu = \mu \, .
\end{align}
From this we see that for all $t \in [0,1]$ and $x \in  S_{\mu}(0)$ it holds that

\begin{align}
||th(x) + (1-t)x|| &= ||x - t(x - h(x))||  \geq ||x|| - ||t(x - h(x))|| \nonumber \\
&= \mu - |t| \cdot ||x - h(x)|| \geq \mu - ||x - h(x)|| \nonumber \\
&> \mu - \mu = 0   \, .
\end{align}
Hence, $||th(x) + (1-t)x||$ never vanishes and we get a well-defined homotopy between $h$ and the identity given by

\begin{equation}\label{extendthis}
 (x,t) \mapsto \mu\frac{th(x) + (1-t)x}{||th(x) + (1-t)x||} 
\end{equation}
for $(x,t) \in S_{\mu}(0) \times [0,1]$. We conclude that $1 = \degt(h) = 0$. This is of course a contradiction, and the proposition follows. 
\end{proof}

\begin{remk}\label{mainremk}
We may interpret Theorem \ref{main} and Propositions \ref{transver} and \ref{hit} as results on generic bifurcations in equivariant systems. To see why, let $\Sigma$ be a monoid acting on a finite dimensional representation space $V$ by linear maps $A_{\sigma}$, $\sigma \in \Sigma$. Let $F(x,\la)$ be a family of equivariant vector fields on $V$, indexed by a parameter $\la$  in some open set $\Omega \subset \R^k$. Suppose furthermore that $F(x(\la),\la) = 0$ for some smooth curve of values $x(\la)$. If the zeroes $x(\la)$ are all invariant, that is if $A_{\sigma}X(\la) = x(\la)$ for all $\sigma \in \Sigma$ and $\la \in \Omega$, then linearization gives a map $f$ from $\Omega$ to $\End(V)$ given by $f(\la) = D_x(x(\la), \la)$. For a bifurcation to appear along $x(\la)$, one of the endomorphisms $D_x(x(\la), \la)$ has to have a non-trivial kernel or center subspace. Hence, we are interested in maps from the manifold $\Omega$ into the manifold $\End(V)$ that have a non-trivial kernel or center subspace. Note that $f$ can be perturbed into any other map from  $\Omega$ to $\End(V)$, by adding the equivariant map $B(\la)(x - x(\la))$ to $F(x(\la),\la)$ for any map $B$ from $\Omega$ to $\End(V)$. \\
As a result, if $U$ is any (complementable) invariant subspace of $V$ with $k < K_U$, then it follows from Proposition \ref{transver} that $U$ will not 'robustly' occur as the generalized kernel of any of the linear maps $f(\la) = D_x(x(\la), \la)$. More precisely, if $U$ does appear as the generalized kernel of any of the maps $f(\la)$, then after an arbitrarily small perturbation of $f(\la)$ it may not anymore. Likewise, one does not expect $U$ to appear as the center subspace of any of the maps $f(\la)$ if $k < C_U$. \\
If, however, it holds that $k \geq K_U$, then by Proposition \ref{hit} there is an open set of maps $f: \Omega \rightarrow \End(V)$ for which $U$ appears as the generalized kernel of one of the maps $f(\la)$. Similarly for $U$ as the center subspace if $k \geq C_U$. We summarize these results by saying that a generic $k$-parameter steady state bifurcation occurs along those $U$ for which $K_U \leq k$ and that generically a center manifold is a graph over those $U$ for which $C_U \leq k$. In particular, a generic $1$-parameter steady state bifurcation appears along exactly one indecomposable representation of real type, as this is the only way it can hold that $K_U = 1$. Likewise, a generic $1$-parameter Hopf bifurcation appears along two isomorphic indecomposable representations of real type or along one indecomposable representation of either complex or quaternionic type. Note that $C_U=1$ only when $U$ is indecomposable of any type or when $U$ is the direct sum of two isomorphic indecomposable representations of real type. Moreover, if $U$ is indecomposable of real type then elements of $\End(U)$ have only one, real eigenvalue (of algebraic multiplicity $\dim(U)$). This excludes the standard Hopf bifurcation scenario whereby two (separate) conjugate eigenvalues pass through the imaginary axis.  \hspace*{\fill}$\triangle$
\end{remk}

\section{Intermezzo; Some Algebraic Geometry} \label{Intermezzo; Some algebraic geometry}
In this section and the next we will prove Theorem \ref{summary} by identifying those elements in  $\mathcal{R}^P_n$, $ \mathcal{C}^P_n$ and $\mathcal{H}^P_n$ with a vanishing or purely imaginary spectrum. Our first step is proving the technical result of Theorem \ref{embedd} below. It should be noted that the results in this section are known to experts, but hard to find in the literature. 

\begin{thr}\label{embedd}
Conjugacy classes in $\mat(\C,n)$ are embedded manifolds. That is, given $X \in \mat(\C,n)$, the set $\{A^{-1}XA \mid A \in \Gl(\C,n)\}$ is an embedded submanifold of $\mat(\C,n)$.
\end{thr}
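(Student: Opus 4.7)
The orbit $\mathcal{O}_X := \{A^{-1}XA \mid A \in \Gl(\C,n)\}$ is the image of the smooth orbit map $\phi_X : \Gl(\C,n) \to \mat(\C,n)$, $A \mapsto A^{-1}XA$. The plan is to first show this map has constant rank, then to identify the orbit with the homogeneous space $\Gl(\C,n)/\mathrm{Stab}(X)$ as an injective immersion, and finally to use an algebraic-geometry argument to show the orbit is locally closed in the Euclidean topology on $\mat(\C,n)$, so that the immersion becomes an embedding. For the constant rank step, writing a tangent vector at $A$ as $V = AW$, a direct calculation yields
\[
D\phi_X(A)[AW] = -W A^{-1}XA + A^{-1}XAW = [\,A^{-1}XA,\,W\,],
\]
so the image of $D\phi_X(A)$ is identified via right translation with the image of $\ad_{A^{-1}XA}$ on $\mat(\C,n)$. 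Since $\ker(\ad_Y)$ is the centralizer of $Y$ and conjugate matrices have centralizers of equal complex dimension, the rank of $\phi_X$ equals the constant $r := n^2 - \dim_{\C}\{M \in \mat(\C,n) \mid MX = XM\}$ independently of $A$. By the constant rank theorem, each $A_0 \in \Gl(\C,n)$ has a neighbourhood whose image under $\phi_X$ is a piece of an embedded complex submanifold of $\mat(\C,n)$ of dimension $r$.

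Next, the stabilizer $\mathrm{Stab}(X) := \{A \in \Gl(\C,n) \mid AX = XA\}$ is a closed Lie subgroup of $\Gl(\C,n)$, so $\Gl(\C,n)/\mathrm{Stab}(X)$ carries a canonical smooth manifold structure, and the orbit map descends to an injective smooth immersion $\bar\phi_X : \Gl(\C,n)/\mathrm{Stab}(X) \to \mat(\C,n)$ whose image is exactly $\mathcal{O}_X$. Here I would invoke the standard fact from the theory of smooth Lie group actions that such an orbit, viewed through its homogeneous-space structure, is an embedded submanifold of the ambient manifold precisely when it is \emph{locally closed} there. Combined with the constant rank calculation, this reduces the theorem to the purely topological statement that $\mathcal{O}_X$ is locally closed in $\mat(\C,n)$.

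For local closedness I would appeal to algebraic geometry. The map $\phi_X$ is a morphism of complex algebraic varieties, so by Chevalley's theorem its image $\mathcal{O}_X$ is Zariski-constructible and therefore contains a non-empty Zariski-open subset of its Zariski closure $\overline{\mathcal{O}_X}$. Because $\Gl(\C,n)$ acts on $\mat(\C,n)$ by polynomial (hence Zariski-bicontinuous) automorphisms and acts transitively on $\mathcal{O}_X$, translates of that open subset by group elements cover every point of $\mathcal{O}_X$, showing that $\mathcal{O}_X$ is in fact Zariski-open in $\overline{\mathcal{O}_X}$, i.e.\ Zariski-locally-closed. The Zariski topology being coarser than the Euclidean topology, $\mathcal{O}_X$ is then also locally closed in the Euclidean topology, as needed.

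The step I expect to be the main obstacle is precisely this local-closedness claim. Conjugacy classes in $\mat(\C,n)$ are not closed in general: for example, the class of a non-diagonalizable Jordan block has diagonal matrices in its closure. So no naive topological argument suffices, and one really needs a genuinely algebraic input; Chevalley's theorem combined with $\Gl(\C,n)$-homogeneity provides the cleanest such input that I know.
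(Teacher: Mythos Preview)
Your argument is correct, but it follows a genuinely different path from the paper's. Both proofs share the algebraic-geometry input that the orbit $\mathcal{O}_X$ is Zariski-open in its Zariski closure: you obtain this from Chevalley's theorem plus transitivity, while the paper quotes the general theorem (their Theorem~\ref{orbitsmagic}) that orbits of a connected algebraic group acting algebraically are open in their closure. From there the two arguments diverge. You pass through the constant-rank calculation to get an injective immersion $\Gl(\C,n)/\mathrm{Stab}(X)\to\mat(\C,n)$ and then invoke the Lie-theoretic fact that a locally closed orbit of a (second-countable) Lie group action is automatically embedded; this last step ultimately rests on a Baire-category argument, which you treat as a black box. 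The paper instead stays within algebraic geometry: it shows $\overline{\mathcal{O}}_X$ is irreducible, picks a non-singular point $Y\in\mathcal{O}_X$ of the closure (such a point exists because the singular locus is a proper Zariski-closed subset), cuts out $\overline{\mathcal{O}}_X$ locally near $Y$ by $n-r$ polynomials with full-rank Jacobian, and then transports this local submanifold chart to every other point of $\mathcal{O}_X$ by conjugation. Your route is shorter and more conceptual if one is willing to cite the embedded-orbit theorem; the paper's route is more self-contained and avoids the Baire argument at the cost of importing the machinery of singular and non-singular points of varieties. Note that the paper does \emph{not} use the constant-rank computation in this proof at all; that computation appears only later, when determining the actual dimensions of the orbits.
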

\noindent In order to prove Theorem \ref{embedd}, we will use a theorem from \cite{alggroup}. To this end, we will have to introduce some basic algebraic geometry. We begin by defining an alternative topology on $\C^n$.

\begin{defi}
The \textit{Zariski topology} on $\C^n$ is defined by stating that its closed sets are given by the common zeroes of a set of polynomials. More precisely, let $\C[X_1, \dots, X_n]$ denote the set of polynomials in $n$-variables and coefficients in $\C$. A subset $Z \subset \C^n$ is closed in the Zariski topology (or simply \textit{Zariski-closed}) when it can be written as

\begin{align}
Z = Z(S) = \{x\in \C^n \mid p(x) = 0 \, \forall \, p \in S\}
\end{align}
for some set of polynomials $S \subset \C[X_1, \dots, X_n]$. 
\end{defi}
\noindent Note that Zariski-closed (or Zariski-open) sets are also closed (respectively open) in the usual, Euclidean topology on $\C^n$. The following well-known result states that Zariski-closed sets can be described as cut out by only finitely many polynomials. The proof can be found in for example \cite{abalg}, paragraph 9.6.

\begin{thr}[Hilbert's basis theorem] \label{Hilbasis}
Every ideal in $ \C[X_1, \dots, X_n]$ is generated by finitely many elements. Consequently, for any Zariski-closed set $Z$ there exist finitely many polynomials $p_1, \dots p_k$ such that
 
\begin{align}
Z = Z(\{p_1, \dots, p_k\}) = \{x\in \C^n \mid p_1(x) = \dots =  p_k(x) = 0 \} \, .
\end{align}
\end{thr}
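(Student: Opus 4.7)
The plan is to reformulate the finitely-generated condition as the ascending chain condition (every chain of ideals stabilizes, equivalently every submodule of a finitely generated module is finitely generated), and then to derive the statement for $\C[X_1, \dots, X_n]$ by induction on $n$. The base case is trivial since $\C$ is a field and hence has only two ideals. All the actual work is concentrated in a single extension lemma: if $R$ is a commutative Noetherian ring then so is $R[X]$.

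To prove the extension lemma I would fix an ideal $I \subseteq R[X]$ and, for each $d \geq 0$, introduce the ideal $J_d \subseteq R$ consisting of the leading coefficients of the elements of $I$ of degree at most $d$ (together with $0$). Multiplication by $X$ shows $J_d \subseteq J_{d+1}$, so the chain $J_0 \subseteq J_1 \subseteq \cdots$ eventually stabilizes at some stage $N$; its union $J_\infty$ is an ideal of $R$, hence finitely generated by Noetherianness of $R$. Lifting these generators back to $I$ produces finitely many polynomials $f_1, \dots, f_r \in I$ whose leading coefficients generate $J_d$ for every $d \geq N$. Separately, the $R$-submodule $I \cap R[X]_{<N}$ of the finitely generated free $R$-module $R[X]_{<N}$ is itself finitely generated (again by Noetherianness of $R$), say by $g_1, \dots, g_s$. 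I would then argue that $\{f_1, \dots, f_r, g_1, \dots, g_s\}$ generates $I$: given $f \in I$ of degree $d \geq N$, one subtracts off an $R$-linear combination of the $f_i$ multiplied by appropriate powers of $X$ to cancel the leading term, and iterating reduces the degree below $N$, after which the residue lies in $\langle g_1, \dots, g_s \rangle$.

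With the extension lemma in hand the induction is immediate: $\C[X_1]$ is Noetherian by the lemma applied to $R = \C$, and $\C[X_1, \dots, X_n] = \C[X_1, \dots, X_{n-1}][X_n]$ is Noetherian by the lemma applied to $R = \C[X_1, \dots, X_{n-1}]$. The Zariski consequence is then formal: given $Z = Z(S)$ for some (possibly infinite) set $S$ of polynomials, let $I = \langle S \rangle$ be the ideal it generates; a point $x \in \C^n$ kills every element of $S$ if and only if it kills every $R[X]$-linear combination of them, so $Z(S) = Z(I)$. By the first part, $I = \langle p_1, \dots, p_k \rangle$ for finitely many $p_i$, and therefore $Z = Z(\{p_1, \dots, p_k\})$, as required. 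The main obstacle is the bookkeeping in the extension lemma, where one must carefully combine the leading-coefficient generators (which handle high-degree behavior) with the module generators in $R[X]_{<N}$ (which handle the low-degree residue); this is the one place where Noetherianness of $R$ must be invoked twice and where the argument most easily goes astray.
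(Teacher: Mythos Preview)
Your argument is the standard textbook proof of Hilbert's basis theorem and is correct. The paper, however, does not prove this result at all: it simply cites an external reference for the first part (Noetherianness of $\C[X_1,\dots,X_n]$) and then gives, in the paragraph following the statement, the same short deduction of the Zariski consequence that you give at the end --- namely $Z(S)=Z(\langle S\rangle)=Z(\{p_1,\dots,p_k\})$.

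So there is nothing to compare on the main claim; you have supplied a full proof where the paper deliberately omits one. One minor slip: in your last paragraph you write ``every $R[X]$-linear combination'' where you mean a $\C[X_1,\dots,X_n]$-linear combination of elements of $S$. Also, when you lift the generators of $J_\infty=J_N$ to polynomials $f_1,\dots,f_r\in I$, you should remark that these can be chosen with $\deg f_i\le N$ (which is immediate from the definition of $J_N$), since otherwise the subtraction step $f-\sum r_i X^{d-\deg f_i}f_i$ would involve negative exponents. With those two cosmetic fixes the proposal is complete.
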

\noindent We will shortly comment on how the second part of Theorem \ref{Hilbasis} follows from the first. One easily verifies that it does not matter whether a Zariski-closed set is defined by the vanishing of a set of polynomials $S$, or by the vanishing of the ideal generated by this set, $\langle S \rangle$. In other words, we have $Z(S) = Z(\langle S \rangle)$. As the ideal $\langle S \rangle$ is also generated by finitely many elements $p_1, \dots p_k$, we find $Z(S) = Z(\langle p_1, \dots, p_k \rangle) = Z(\{p_1, \dots, p_k\})$.

\noindent We continue to introduce some terminology from \cite{alggroup}. An example of an affine algebraic variety over $\C$ is a Zariski-closed set $Z \subset \C^n$ together with the algebra of functions $\C[X_1, \dots, X_n]|_{Z}$. More generally, we have the following definition.

\begin{defi}
An \textit{(abstract) affine algebraic variety} is a set $V$ with an algebra $A$ of functions from $V$ to $\C$, such that the following property holds. There exists a bijection $\iota$ from $V$ to a Zariski-closed set $Z \subset \C^n$ for which $\iota^*: \C[X_1, \dots, X_n]|_{Z} \rightarrow A$, $f \mapsto f \circ \iota$ is an isomorphism of algebras.  
\end{defi}
\noindent The Zariski topology can more generally be defined on any affine algebraic variety $(V,A)$, by stating that a subset of $V$ is closed when it is given by the vanishing of some elements of $A$. Moreover, given two affine varieties $(V,A)$ and $(W,B)$, one can define the product variety $(V \times W, A\otimes B)$. This is readily seen to be an affine algebraic variety in its own right. Here, $A\otimes B$ is interpreted as an algebra of functions from $V \times W$ to $\C$ by setting $(a \otimes b)(v,w) := a(v)\cdot b(w)$ for $a \otimes b \in A\otimes B $ and $(v,w) \in V \times W$. To complete the description of the category of affine algebraic varieties, we have:

\begin{defi}
A morphism between affine varieties $(V,A)$ and $(W,B)$ is a map $f$ from $V$ to $W$ such that $f^*b := b \circ f$ is an element of $A$ for all $b \in B$.
\end{defi}
\noindent Finally, we need the notions of an affine algebraic group and the action of an affine algebraic group on an affine algebraic variety. These will be the appropriate generalizations of the action of $\Gl(\C,n)$ on $\mat(\C,n)$.

\begin{defi}\hspace*{\fill}
\begin{enumerate}
\item \textit{An affine algebraic group} is an affine algebraic variety $(G,A)$ that is also a group for which the operations of multiplication \\
$ m(\bullet, \bullet): (G\times G, A \otimes A) \rightarrow   (G,A)$ and taking inverses $ \bullet^{-1}:  (G,A) \rightarrow  (G,A)$ are morphisms of affine algebraic varieties. 
\item An \textit{algebraic action} of an affine algebraic group $(G,A)$  on an affine algebraic variety  $(V,B)$ is defined as an action of the group $G$ on the set $V$ such that the defining map $\xi: (G \times V, A \otimes B) \rightarrow (V,B)$, $\xi(g,v) = g \cdot v$ is a morphism of affine algebraic varieties.
\end{enumerate}
\end{defi}

\begin{ex}\label{glisalg}
It can be shown that 
\[ (\Gl(\C,n), \C[\dots, X_{i,j}, \dots, \Det^{-1}]|_{\Gl(\C,n)})\] 
\noindent with the usual matrix multiplication is an affine algebraic group. Here, 
\[\C[\dots, X_{i,j}, \dots, \Det^{-1}]|_{\Gl(\C,n)}\] 
\noindent is the algebra of functions on $\Gl(\C,n)$ generated by the matrix coefficients $X_{i,j}$ for $1 \leq i,j \leq n$ and $1$ over the determinant, $\Det^{-1}(X) := 1/\Det(X)$ for $X \in \Gl(\C,n)$. See \cite{alggroup}.
\end{ex}
\noindent As an example of an algebraic action, we have:

\begin{lem}\label{algebraicaction}
We give $\mat(\C,n)$ the structure of an affine algebraic variety by identifying it with the (Zariski-closed) set $\C^{n\times n}$, together with the natural algebra of polynomials in $n^2$ variables. The conjugacy action of $\Gl(\C,n)$ (with the affine algebraic structure defined in Example \ref{glisalg}) on the affine algebraic variety $\mat(\C,n)$ is an example of an algebraic action.
\end{lem}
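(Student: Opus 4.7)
The plan is to verify the definition of algebraic action directly: we must show that the map $\xi : \Gl(\C,n) \times \mat(\C,n) \to \mat(\C,n)$, $\xi(g,v) = gvg^{-1}$, is a morphism of affine varieties, i.e.\ that pulling back any regular function on $\mat(\C,n)$ produces an element of the tensor product algebra $A \otimes B$, where $A = \C[\ldots,X_{i,j},\ldots,\Det^{-1}]|_{\Gl(\C,n)}$ and $B = \C[\ldots,Y_{i,j},\ldots]$.

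First I would reduce the problem to coordinates. Because $B$ is generated as an algebra by the coordinate functions $Y_{i,j}$, and because morphisms of varieties pull back algebra generators to algebra generators (the pullback $\xi^*$ is an algebra homomorphism on its image), it suffices to check that $\xi^*Y_{i,j} \in A \otimes B$ for each pair $(i,j)$. Evaluating, $\xi^*Y_{i,j}(g,v) = (gvg^{-1})_{i,j} = \sum_{k,l} g_{i,k}\, v_{k,l}\, (g^{-1})_{l,j}$, so the entire question reduces to whether each matrix entry of the inverse, $(g^{-1})_{l,j}$, lies in $A$ as a function of $g \in \Gl(\C,n)$.

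Next I would invoke Cramer's rule to settle this: $(g^{-1})_{l,j} = (-1)^{l+j} \Det(g)^{-1} \, M_{j,l}(g)$, where $M_{j,l}(g)$ is the $(j,l)$-minor of $g$, which is a polynomial in the matrix entries $X_{i,j}$. Hence $(g^{-1})_{l,j}$ is a polynomial in $X_{i,j}$ and $\Det^{-1}$, and so belongs to $A$ by definition. Substituting back, $\xi^*Y_{i,j}$ is a finite $\C$-linear combination of elements of the form $(\text{polynomial in } X_{\cdot,\cdot}, \Det^{-1}) \otimes Y_{k,l}$, which lies in $A \otimes B$, as required.

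Finally, I would note in passing that $\xi$ is genuinely a group action (one has $\xi(e,v) = v$ and $\xi(gh,v) = \xi(g,\xi(h,v))$ trivially from matrix multiplication), so the above verification is exactly what Definition of an algebraic action demands. There is no real obstacle here; the only point worth flagging is the implicit use of the fact that $\Det^{-1}$ was \emph{built into} the structure sheaf of $\Gl(\C,n)$ precisely so that matrix inversion becomes regular. Without that, the entries of $g^{-1}$ would not be polynomial in the entries of $g$, and conjugation would fail to be a morphism of affine varieties.
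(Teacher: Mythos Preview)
Your proof is correct and follows essentially the same approach as the paper: both reduce to checking that the pullback of each coordinate function $X_{i,j}$ lies in the tensor product algebra, expand the $(i,j)$-entry of the conjugate as a sum involving entries of $g$, $v$, and $g^{-1}$, and then argue that the entries of $g^{-1}$ are regular on $\Gl(\C,n)$. The only minor differences are that the paper uses the convention $\xi(C,X)=C^{-1}XC$ rather than $gvg^{-1}$ (immaterial, both are conjugation actions), and the paper primarily justifies regularity of $(g^{-1})_{l,j}$ by invoking that inversion is a morphism in any affine algebraic group, mentioning Cramer's rule only parenthetically, whereas you go straight to Cramer's rule.
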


\begin{proof}
It is clear that conjugation defines an action of $\Gl(\C,n)$ on $\mat(\C,n)$. Therefore, it remains to show that the defining map of this action,

\begin{align}
\xi: &\Gl(\C,n) \times \mat(\C,n) \rightarrow \mat(\C,n) \\ \nonumber
&(C,X) \mapsto C^{-1}XC
\end{align}
\noindent is a morphism of affine algebraic varieties. In other words, given a polynomial $p \in \C[\dots, X_{i,j}, \dots]$, we need to show that the map $(C,X) \mapsto p(C^{-1}XC)$ is an element of 
\[\C[\dots, X_{i,j}, \dots, \Det^{-1}]|_{\Gl(\C,n)} \otimes \C[\dots, X_{i,j}, \dots] \, .\] 
\noindent It suffices to verify this for the generators $X_{i,j}$, as pre-composition by $\xi$ is linear and multiplicative on function space.  Therefore,  the statement of the lemma is true whenever the map $(C,X) \mapsto (C^{-1}XC)_{i,j}$ is an element of 
\[\C[\dots, X_{i,j}, \dots, \Det^{-1}]|_{\Gl(\C,n)} \otimes \C[\dots, X_{i,j}, \dots]\] 
\noindent for all $i,j$. Writing it out, we get

\begin{equation}
(C^{-1}XC)_{i,j} = \sum_{k,l}(C^{-1})_{i,k}X_{k,l}C_{l,j} = \sum_{k,l}(C^{-1})_{i,k}C_{l,j}X _{k,l} \, .
\end{equation}

\noindent Now, $X \mapsto X_{k,l}$ is clearly an element of $\C[\dots, X_{i,j}, \dots]$. Likewise, $C \mapsto C_{l,j}$ is an element of 
\[\C[\dots, X_{i,j}, \dots, \Det^{-1}]|_{\Gl(\C,n)}\, .\]
\noindent Furthermore, as taking the inverse is a morphism from the affine algebraic group 
\[(\Gl(\C,n), \C[\dots, X_{i,j}, \dots, \Det^{-1}]|_{\Gl(\C,n)})\] 
\noindent to itself, and as the map $C \mapsto C_{i,k}$ is an element of 
\[\C[\dots, X_{i,j}, \dots, \Det^{-1}]|_{\Gl(\C,n)}\, ,\]
\noindent so is the map $C \mapsto (C^{-1})_{i,k}$. (This fact can also be directly verified using Cramer's rule for inverses.) From this we conclude that the map $(C,X) \mapsto (C^{-1}XC)_{i,j}$ is indeed an element of the algebra 
\[\C[\dots, X_{i,j}, \dots, \Det^{-1}]|_{\Gl(\C,n)} \otimes \C[\dots, X_{i,j}, \dots] \, .\] 
\noindent This proves the lemma.
\end{proof}
\noindent The following theorem is key in proving Theorem \ref{embedd}. Its proof can be found in \cite{alggroup}.

\begin{thr}\label{orbitsmagic}
Let $(G,A)$ be a connected (in the Zariski topology), affine algebraic group acting algebraically on an affine algebraic variety $(V,B)$. Then every orbit is Zariski-open in its Zariski-closure.  
\end{thr}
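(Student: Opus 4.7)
The plan is to reduce the statement to two classical ingredients from algebraic geometry: (i) \emph{Chevalley's theorem}, which says that the image of a morphism of algebraic varieties is a \emph{constructible} set (a finite union of locally closed subsets); and (ii) the elementary fact that a constructible dense subset of an irreducible variety contains a non-empty Zariski-open subset of it.

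First, I would fix $v \in V$ and consider the orbit map $\mu_v : G \to V$ given by $\mu_v(g) := \xi(g,v) = g \cdot v$. Since $\xi$ is a morphism of affine algebraic varieties by assumption, and since the inclusion $G \hookrightarrow G \times V$, $g \mapsto (g,v)$, is a morphism (its comorphism is the algebra map $A \otimes B \to A$, $a \otimes b \mapsto b(v)\cdot a$), the composition $\mu_v$ is a morphism. By Chevalley's theorem, the image $G \cdot v = \mu_v(G)$ is constructible in $V$.

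Next, I would use that a connected affine algebraic group is automatically irreducible: the irreducible components of an algebraic group are pairwise disjoint (they are the cosets of the irreducible component through the identity), so connectedness forces $G$ to coincide with its identity component. Hence $\overline{G\cdot v}$, being the closure of the image of an irreducible space under the continuous map $\mu_v$, is irreducible. Invoking ingredient (ii), the constructible set $G \cdot v$, which is tautologically dense in its own closure, must then contain a non-empty Zariski-open subset $U$ of $\overline{G \cdot v}$.

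The final step is to spread $U$ out by the group action. For every $g \in G$, the map $w \mapsto g \cdot w$ is an automorphism of the variety $V$ (its inverse is the action of $g^{-1}$), hence a Zariski homeomorphism. Since $\overline{G \cdot v}$ is $G$-stable (indeed $g \cdot \overline{G\cdot v} \subset \overline{g \cdot G\cdot v} = \overline{G\cdot v}$), each translate $g \cdot U$ is again a Zariski-open subset of $\overline{G\cdot v}$, and by construction $g\cdot U \subset G\cdot v$. Writing $G \cdot v = \bigcup_{g \in G} g \cdot U$ exhibits the orbit as a union of Zariski-open subsets of $\overline{G\cdot v}$, so the orbit is open in its closure as required. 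The main obstacle in a fully self-contained write-up would be the clean invocation of the two algebro-geometric facts above, both of which rest on Noetherianity and the behaviour of generic fibers; I would simply cite them from a standard reference such as \cite{alggroup} rather than re-prove them here.
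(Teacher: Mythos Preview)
Your argument is correct and is essentially the standard proof of this classical fact. Note, however, that the paper does not actually give a proof of this theorem at all: it simply states the result and refers the reader to \cite{alggroup}. So there is no ``paper's own proof'' to compare against beyond that citation.

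Your write-up is in fact a faithful sketch of how the proof goes in standard references such as \cite{alggroup}: the orbit map is a morphism, Chevalley's theorem makes the orbit constructible, irreducibility of $G$ (which the paper records separately as Lemma~\ref{connec}) gives irreducibility of $\overline{G\cdot v}$, constructibility plus density yields a non-empty open subset, and transitivity of the $G$-action spreads this open set over the whole orbit. One minor remark: the connectedness hypothesis is not actually needed for the conclusion (orbits of algebraic group actions are always locally closed), but since the paper only states the connected case and you use irreducibility to streamline step~(ii), your use of the hypothesis is entirely appropriate here.
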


\noindent Next, we need the concept of a non-singular point in an algebraic variety. Heuristically, this means that around this point, the variety looks like a submanifold of $\C^n$. The following definition is adapted from \cite{hart}.

\begin{defi}
A Zariski-closed subset $Z \subset \C^n$ is called \textit{irreducible} if it cannot be written as the union of two Zariski-closed, strict subsets of $Z$. Such a set can be given the notion of a (finite) dimension. This can be done algebraically by looking at an ideal in $\C[X_1, \dots, X_n]$ defining $Z$, or geometrically by looking at chains of irreducible varieties that are contained in $Z$. See \cite{hart} for more on these concepts. Let $x$ be a point in the Zariski-closed, irreducible subset $Z = Z(\{p_1, \dots p_s\}) \subset \C^n$ of dimension $r$. Writing $P := (p_1, \dots p_s):\C^n \rightarrow \C^s$, we say that $x$ is \textit{non-singular} if the rank of the Jacobian $DP(x)$ equals $n-r$. $x$ is called \textit{singular} if it is not non-singular. It can be shown that the definition of a non-singular point is independent of the set of functions $\{p_1, \dots p_s\}$ used to define $Z$.
\end{defi}
\noindent The following lemma shows that 'most points' are non-singular. This lemma is Theorem 5.3 in Chapter 1 of \cite{hart}, adapted to our setting.

\begin{lem}\label{closesingu}
Let $Z$ be a Zariski-closed, irreducible subset of $\C^n$. The set of singular points of $Z$ is a Zariski-closed, strict subset of $Z$.
\end{lem}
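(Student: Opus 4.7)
The plan is to prove the two assertions separately: Zariski-closedness of the singular locus, and strictness (non-emptiness of the non-singular locus).

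For the first assertion, I would just use the definition directly. Write $Z = Z(\{p_1, \dots, p_s\})$ and $P = (p_1, \dots, p_s)$, so $DP(x)$ is an $s \times n$ matrix whose entries are polynomials in the coordinates of $x$. A point $x \in Z$ is singular precisely when $\text{rank}\, DP(x) < n-r$, which is equivalent to the vanishing of every $(n-r)\times(n-r)$ minor of $DP(x)$. These minors are themselves polynomials in $x$, so the singular locus equals $Z$ intersected with the common zero set of finitely many polynomials, hence is Zariski-closed. The stated independence of the defining polynomials takes care of any ambiguity in the presentation of $Z$.

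For the second assertion, my plan is to reduce the existence of a non-singular point to a hypersurface statement via a birational equivalence. The function field $\C(Z)$ is a finitely generated extension of $\C$ of transcendence degree $r$, so by the primitive element theorem in characteristic zero it is generated by $r+1$ elements $t_1, \dots, t_r, y$ with $t_1, \dots, t_r$ algebraically independent and $y$ separable algebraic over $\C(t_1, \dots, t_r)$. Clearing denominators in the minimal polynomial of $y$ produces an irreducible $f \in \C[T_1, \dots, T_r, Y]$ whose zero set $H \subset \C^{r+1}$ is an irreducible hypersurface birationally equivalent to $Z$. I would then argue that on the irreducible hypersurface $H = Z(f)$, a point is non-singular as soon as some $\partial f/\partial X_i$ is non-zero there. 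Suppose for contradiction that all the partials $\partial f/\partial X_i$ vanish identically on $H$. Since $f$ is irreducible, the ideal $(f)$ is prime and hence radical, so Hilbert's Nullstellensatz forces $f \mid \partial f/\partial X_i$ for every $i$. Because $\deg(\partial f/\partial X_i) < \deg f$, each partial derivative must vanish as a polynomial, and in characteristic zero this forces $f$ to be constant, contradicting $\deg f \geq 1$. Hence $H$ has a non-singular point.

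Finally, I would transfer this smooth point back to $Z$. A birational equivalence restricts to an isomorphism of Zariski-open dense subsets $U \subset Z$ and $V \subset H$. Removing from $V$ the singular locus of $H$ (Zariski-closed and, by the previous step, proper) still leaves a non-empty Zariski-open subset; its image in $U$ under the inverse isomorphism yields a non-singular point of $Z$. The main obstacle I expect is precisely this last step: verifying that non-singularity, which is defined via Jacobian rank in an \emph{ambient} affine space, transports correctly across a birational equivalence between varieties living in ambient spaces of different dimensions. The cleanest way to handle it is to reformulate non-singularity intrinsically in terms of the dimension of the Zariski tangent space at $x$ (equivalently, via the local ring $\mathcal{O}_{Z,x}$ being regular of dimension $r$), a property that is invariant under isomorphism of open neighborhoods and therefore matches on $U$ and $V$.
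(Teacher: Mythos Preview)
The paper does not actually supply a proof of this lemma; it simply cites Theorem~I.5.3 of Hartshorne. Your sketch is precisely the argument given there: closedness via vanishing of the $(n-r)\times(n-r)$ minors of the Jacobian, and strictness by birational reduction to a hypersurface together with the intrinsic (local-ring) characterisation of non-singularity to transport the smooth point back. So your proposal is correct and coincides with the cited source; the one caveat you already anticipated --- that the Jacobian criterion must be shown to match the intrinsic notion before the birational transfer is legitimate --- is exactly what Hartshorne handles in Theorem~I.5.1, and you would need to invoke that (or reprove it) to make the sketch airtight.
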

\noindent The following lemma reinforces our remark that non-singular points should be thought of as those points where the variety looks like a differentiable manifold. Its proof can be found in \cite{oss}.

\begin{lem}\label{locmani}
Let $Z  = Z(\{p_1, \dots p_s\})$ be a Zariski-closed, irreducible subset of $\C^n$ of dimension $r$, and let $x$ be a non-singular point of $Z$. Suppose that, without loss of generality,  the derivatives of $p_1$ till $p_{n-r}$ are linearly independent at $x$. In other words, defining $Q := (p_1, \dots p_{n-r})$, the Jacobian $DQ(x)$ has full rank. Then there exists a Zariski-open set $U \subset \C^n$ containing $x$ such that $Z \cap U = Z(\{p_1, \dots p_{n-r}\}) \cap U$. In other words, $Z$ can locally be seen as cut out by just the polynomials $p_1$ till $p_{n-r}$.
\end{lem}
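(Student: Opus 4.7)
The plan is to set $W := Z(\{p_1, \ldots, p_{n-r}\})$ and show that $Z$ and $W$ agree on the complement of the ``extra'' irreducible components of $W$ that do not contain $x$. By construction $Z \subseteq W$, so I would first decompose $W$ into its finitely many irreducible components $W = W_1 \cup \ldots \cup W_k$ (using Noetherianity of $\C[X_1, \ldots, X_n]$, which underlies Hilbert's basis theorem). Each $W_i$ is cut out by at most $n-r$ equations, so Krull's height theorem immediately gives $\dim W_i \geq r$.

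Next, the full-rank hypothesis on $DQ(x)$ lets me invoke the holomorphic implicit function theorem: in some Euclidean-open neighborhood $V$ of $x$, the common zero set $W \cap V$ is a complex submanifold of $V$ of complex dimension $r$. This will yield two conclusions. First, any algebraic component $W_i$ containing $x$ has dimension exactly $r$, combining the Krull lower bound with the upper bound coming from being locally contained in the $r$-dimensional complex manifold $W \cap V$. Second, only one component of $W$ can pass through $x$: if two distinct $W_i$'s did, their union would produce a reducible analytic germ at $x$, contradicting the fact that $W \cap V$ is an honest manifold (and hence analytically irreducible) at $x$.

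Calling this unique component $W_1$, the irreducibility of $Z$ together with $x \in Z \subseteq W$ forces $Z \subseteq W_1$, and equality of dimensions ($\dim Z = \dim W_1 = r$) then gives $Z = W_1$. Setting
\[ U := \C^n \setminus (W_2 \cup \cdots \cup W_k), \]
I obtain a Zariski-open neighborhood of $x$ on which $W \cap U = W_1 \cap U = Z \cap U$, which is exactly the conclusion of the lemma.

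The main obstacle is the analytic-to-algebraic bridge in the second paragraph, specifically the uniqueness of the algebraic component through $x$. Making this rigorous requires knowing that a complex-analytic set which is smooth at a point is locally analytically irreducible there, and that each algebraic irreducible component of $W$ containing $x$ contributes a nontrivial analytic subgerm. Both facts are standard, but the cleanest explicit justification is to use the implicit function theorem to write $W$ near $x$ as the graph of a holomorphic map over an $r$-dimensional coordinate subspace (manifestly analytically irreducible), and then to observe that any algebraic component of $W$ through $x$ must be Zariski-dense in some nonempty open piece of this graph, forcing at most one such component.
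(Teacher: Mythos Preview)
The paper does not actually prove this lemma; it simply states that ``its proof can be found in \cite{oss}.'' So there is no in-paper argument to compare against, and your task is really to give a self-contained proof where the paper defers to a reference.

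Your proposal is a correct and standard argument. The outline --- set $W = Z(\{p_1,\dots,p_{n-r}\})$, decompose it into irreducible components, use Krull's height theorem for the lower bound $\dim W_i \ge r$, use the rank hypothesis on $DQ(x)$ to see that $W$ is smooth of dimension $r$ at $x$, conclude that a unique component $W_1$ passes through $x$ and has dimension exactly $r$, identify $W_1 = Z$, and take $U$ to be the complement of the remaining components --- is exactly how this is usually done.

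One stylistic remark: your ``analytic-to-algebraic bridge'' can be handled entirely algebraically, which avoids the delicacy you flag at the end. Since $DQ(x)$ has full rank, the images of $p_1,\dots,p_{n-r}$ in the maximal ideal $\mathfrak{m}_x \subset \mathcal{O}_{\C^n,x}$ form part of a regular system of parameters, so the local ring $\mathcal{O}_{W,x} = \mathcal{O}_{\C^n,x}/(p_1,\dots,p_{n-r})$ is \emph{regular} of dimension $r$. A regular local ring is an integral domain, hence has a unique minimal prime; this immediately gives that exactly one irreducible component of $W$ passes through $x$, and that its dimension is $r$. This replaces your appeal to analytic irreducibility of smooth germs and the comparison between algebraic and analytic components, and is arguably closer in spirit to the algebraic-geometry framework the paper has set up.
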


\noindent It should be noted that \cite{hart} uses a slightly different definition of affine algebraic variety, calling a Zariski-closed subset of $\C^n$ an affine algebraic variety only when it is irreducible. Dropping the irreducibility condition, \cite{hart} speaks of an \textit{algebraic set}. This will not be problem though, as we will show using the lemma below that the Zariski-closure of a conjugacy orbit is in fact irreducible. Note that the notion of irreducibility can be defined on any affine algebraic variety using its Zariski topology. The following lemma can be deduced from Remark 3 in Chapter 1 of \cite{alggroup}. 

\begin{lem}\label{connec}
An affine algebraic group is irreducible if and only if it is connected in the Zariski topology.
\end{lem}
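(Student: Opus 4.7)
The plan is to establish the two directions separately. The direction \emph{irreducible implies connected} is a general topological fact: any topological space that can be written as a disjoint union of two nonempty closed subsets can a fortiori be written as a union of two proper closed subsets, so irreducibility rules out any such disconnection. Thus if $G$ is irreducible as an affine algebraic variety, it is automatically connected in the Zariski topology, and this direction uses nothing about the group structure.

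For the converse, I would invoke the Noetherian property of $\C[X_1,\dots,X_n]$ (Hilbert's basis theorem, cited above as Theorem \ref{Hilbasis}) to decompose $G$ uniquely as a finite union $G = G_1 \cup \cdots \cup G_k$ of irreducible closed subsets (its irreducible components), with none contained in another. Let $G_0$ denote the unique component containing the identity $e$. Since a product of irreducible varieties is irreducible and the multiplication morphism $m:G\times G \to G$ is continuous, the image $m(G_0\times G_0) = G_0\cdot G_0$ is an irreducible subset containing $G_0$, hence coincides with $G_0$ by maximality of irreducible components. The same reasoning applied to inversion shows $G_0 = G_0^{-1}$, so $G_0$ is a closed irreducible subgroup.

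Next, for any $g\in G$ the left translation $L_g:h\mapsto gh$ is a Zariski homeomorphism, so it permutes the finite set of irreducible components. Consequently $gG_0 = L_g(G_0)$ is itself an irreducible component of $G$, and since every $g\in G$ satisfies $g = g\cdot e \in gG_0$, the irreducible components are precisely the left cosets of $G_0$. In particular, distinct irreducible components of $G$ are \emph{disjoint} (by elementary group theory of cosets), in contrast to the situation for a general affine variety. Because there are only finitely many such components and each is closed, each is also the complement of the union of the others, hence clopen in the Zariski topology.

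The argument concludes by combining these observations: each coset $gG_0$ is clopen and connected (being irreducible), so the cosets are exactly the connected components of $G$. Hence if $G$ is connected there is only one such coset, forcing $G = G_0$ and proving irreducibility. The main obstacle I anticipate is the justification that the cosets really are the irreducible components, so that they are \emph{automatically disjoint}; this is the step where the group structure is indispensable, and it is the reason the equivalence fails for general affine varieties.
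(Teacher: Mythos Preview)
Your argument is correct and is the standard textbook proof (as in Humphreys or Borel). The paper itself does not supply a proof of this lemma; it simply defers to Remark~3 in Chapter~1 of \cite{alggroup}, so there is nothing to compare against beyond noting that you have filled in what the paper outsources.

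Regarding the obstacle you flag: the cleanest way to close it is to first establish that \emph{only one} irreducible component contains $e$. If $X_1,\dots,X_m$ are all the components through $e$, then the product $X_1\cdots X_m$ is the image of the irreducible variety $X_1\times\cdots\times X_m$ under multiplication, hence irreducible; its closure contains every $X_i$ and so, by maximality, equals every $X_i$, forcing $m=1$. Once $G_0$ is known to be the unique component through $e$, any component $X$ containing a given $g$ satisfies $g^{-1}X=G_0$ (since $g^{-1}X$ is a component through $e$), whence $X=gG_0$. This gives the bijection between components and cosets directly, and disjointness of cosets then yields the clopen decomposition exactly as you describe.
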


\noindent We are now in a position to prove Theorem \ref{embedd}.

\begin{proof}[Proof of Theorem \ref{embedd}]
We have shown in Lemma \ref{algebraicaction} that conjugation is an algebraic action of the affine algebraic group $\Gl(\C,n)$ on the affine algebraic variety $\mat(\C,n)$. From the fact that the functions in 
\[\C[\dots, X_{i,j}, \dots, \Det^{-1}]|_{\Gl(\C,n)}\]
are continuous in the Euclidean topologies on $\Gl(\C,n)$ and $\C$, we conclude that Zariski-closed and Zariski-open sets in $\Gl(\C,n)$ are closed, respectively open in the Euclidean topology on $\Gl(\C,n)$ as well. As a result, we may conclude that $\Gl(\C,n)$ is connected in the Zariski topology from the fact that it is connected in the Euclidean topology. See also Remark 4 in Chapter 1 of \cite{alggroup}. It therefore follows from Theorem \ref{orbitsmagic} that the conjugacy orbit of any fixed element in $\mat(\C,n)$ is Zariski-open in its Zariski-closure. Let us denote this single orbit by $\mathcal{O}_X \subset  \mat(\C,n)$, for $X \in  \mat(\C,n)$, and its Zariski-closure by $\overline{\mathcal{O}}_X \subset  \mat(\C,n)$. By Lemma \ref{connec}, the algebraic group $\Gl(\C,n)$ is an irreducible variety. Fixing $X$, we get a map $\xi_X := \xi(\bullet,X): \Gl(\C,n) \rightarrow \mat(\C,n)$ whose image is $\mathcal{O}_X$. From the definition of a product variety, one sees that $\xi_X$ is a morphism of varieties as well. Furthermore, from the definitions of a morphism and of the Zariski-topology, one easily verifies that a morphism of varieties is continuous in the Zariski-topology.\\
\noindent Now suppose we have $\overline{\mathcal{O}}_X = A \cup B$ for two Zariski-closed sets $A,B \subset \mat(\C,n)$. It follows that $\Gl(\C,n) = \xi_X^{-1}(A) \cup \xi_X^{-1}(B)$, with  $\xi_X^{-1}(A)$ and $\xi_X^{-1}(B)$ Zariski-closed. As $\Gl(\C,n)$ is irreducible, it follows that (without loss of generality) $\Gl(\C,n) = \xi_X^{-1}(A)$. We conclude that $\mathcal{O}_X = \ \xi_X(\Gl(\C,n)) \subset A$, and therefore $\overline{\mathcal{O}}_X \subset A$. This proves that $\overline{\mathcal{O}}_X$ is an irreducible set as well. \\
\noindent By Lemma \ref{closesingu} we conclude that the set of singular points of $\overline{\mathcal{O}}_X$ is Zariski-closed. As it is also a strict subset of $\overline{\mathcal{O}}_X$, we conclude that ${\mathcal{O}}_X$ cannot be completely contained in this set of singular points (otherwise the closure of ${\mathcal{O}}_X$ would be smaller). We conclude that there exists a point $Y \in {\mathcal{O}}_X$ that is a non-singular point of the irreducible variety $\overline{\mathcal{O}}_X$. Now, from Lemma \ref{locmani} we see that there exist a (Zariski)-open set $U \subset \mat(\C,n)$ containing $Y$ and polynomials $p_1$ till  $p_{n-r}$ such that  $\overline{\mathcal{O}}_X \cap U = Z(\{p_1, \dots p_{n-r}\}) \cap U$. Let $V$ furthermore be a (Zariski)-open subset of $\mat(\C,n)$ such that $\overline{\mathcal{O}}_X \cap V = \mathcal{O}_X$. Note that the Zariski-topology on a Zariski-closed subset $W \subset \C^m$ coincides with the topology induced by the Zariski-topology on $\C^m$, as the algebra of functions on $W$ is obtained from that on $\C^m$ by restriction. It follows that
\begin{equation}
\mathcal{O}_X \cap U = \overline{\mathcal{O}}_X \cap V \cap U = Z(\{p_1, \dots p_{n-r}\}) \cap U \cap V = P^{-1}(0)\cap U \cap V \, ,
\end{equation}
for $P:= (p_1, \dots p_{n-r})$. As $DP(Y)$ has maximal rank, we conclude from the constant rank theorem that $\mathcal{O}_X$ is locally around $Y$ an embedded submanifold. In particular, if $U'$ is an open set containing $Y$ such that $\mathcal{O}_X \cap U'$ is a submanifold, then for any $C \in Gl(\C,n)$, $C\mathcal{O}_XC^{-1} \cap CU'C^{-1} = \mathcal{O}_X \cap CU'C^{-1}$ is a submanifold, with  $CU'C^{-1}$ containing $CYC^{-1}$. We conclude that $\mathcal{O}_X$ is globally a submanifold of $\mat(\C,n)$. This proves the theorem.
\end{proof}

\section{Geometry; Counting Dimensions} \label{Geometry; Counting dimensions}
Next, we will determine the dimensions of the set of those elements in $\mathcal{R}^P_n$, $ \mathcal{C}^P_n$ and $\mathcal{H}^P_n$ with a vanishing and purely imaginary spectrum. In order to do so, we will need the results of Lemma \ref{dimnil0} below. These are well known, but included for completeness. Furthermore, we will amply use the techniques behind these results, as well as their generalizations later on.

\begin{defi}
For $n \in \N$ and $\la \in \C$ we introduce the Jordan block matrices $B_n(\la) \in \mat(\C,n)$ given by

\begin{equation}
B_n(\la) := \begin{pmatrix}
  \la &1& 0 & \dots & 0 & 0\\
  0 & \la & 1 & 0 & \dots & 0\\
   & & \ddots & \ddots & & \\
   & & \ddots & \ddots & & \\
  0 & \dots & 0 & 0 & \la & 1 \\
  0 & 0 & \dots & 0 & 0 & \la
 \end{pmatrix} \, .
\end{equation}
More generally, let $p = (s_1, \dots s_k)$ with $s_1 \geq \dots \geq s_k \geq 1$ and  $s_1 + \dots + s_k = n$ be a partition of $n$, We define the block-diagonal matrix $B_{n,p}(\la) \in \mat(\C,n)$ by 

\begin{equation}\label{blokk}
B_{n,p}(\la) := \begin{pmatrix}
  B_{s_1}(\la) &0& \dots & 0 \\
  0 & B_{s_2}(\la) & \dots & 0\\
   &  & \ddots &  \\
  0 & \dots  & 0 & B_{s_k}(\la)
 \end{pmatrix} \, .
\end{equation}
\end{defi}

\begin{lem}\label{dimnil0}
For fixed $n \in \N$ and partition $p$, the vector spaces

\[\im(\mathcal{L}_{B_{n,p}(\la), B_{n,p}(\la)}) = \im([ B_{n,p}(\la), \bullet ])\] 
\noindent and
\[\ker(\mathcal{L}_{B_{n,p}(\la), B_{n,p}(\la)}) = \ker([ B_{n,p}(\la), \bullet ])\] 
\noindent are independent of $\la \in \C$. Here, $[ B_{n,p}(\la), \bullet ]$ denotes the commutator operator with $B_{n,p}(\la)$.

\noindent For the trivial partition $p = (n)$ (that is, when $B_{n,p}(\la) = B_{n}(\la)$), the complex dimension of the image is equal to $n^2 - n$. For all other partitions this dimension is strictly smaller than $n^2 - n$.
\end{lem}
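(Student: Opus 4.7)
The $\lambda$-independence is essentially immediate, so I would dispose of it first. Writing $B_{n,p}(\lambda) = B_{n,p}(0) + \lambda \Id_n$, the scalar matrix $\lambda \Id_n$ is central in $\mat(\C,n)$, so that $[B_{n,p}(\lambda), X] = [B_{n,p}(0), X]$ for every $X$. Hence both the image and the kernel of $\mathcal{L}_{B_{n,p}(\lambda), B_{n,p}(\lambda)} = [B_{n,p}(\lambda), \bullet\,]$ coincide with those of $[B_{n,p}(0), \bullet\,]$.

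For the dimension count, I would use the rank–nullity identity $\dim_{\C} \im [B_{n,p}(0), \bullet\,] = n^2 - \dim_{\C} \ker [B_{n,p}(0), \bullet\,]$, so it suffices to analyse the centraliser $Z := \ker [B_{n,p}(0), \bullet\,]$ of the nilpotent matrix $B_{n,p}(0)$. In the trivial case $p = (n)$, a direct computation shows that $X \in Z$ iff $X$ is upper-triangular Toeplitz, i.e.\ $X$ is a polynomial in $B_n(0)$; this space has dimension exactly $n$, so $\dim_{\C}\im = n^2 - n$, as required.

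For a general partition $p = (s_1, \dots, s_k)$, I would write $\C^n = V_1 \oplus \dots \oplus V_k$ according to the block decomposition \eqref{blokk} and decompose a commuting $X$ into blocks $X_{ij} : V_j \to V_i$. The condition $X B_{n,p}(0) = B_{n,p}(0) X$ says that each $X_{ij}$ intertwines the two Jordan blocks $B_{s_j}(0)$ and $B_{s_i}(0)$; a standard explicit calculation shows that the space of such intertwiners has complex dimension $\min(s_i, s_j)$. Therefore
\[
\dim_{\C} Z \;=\; \sum_{i,j=1}^{k} \min(s_i, s_j).
\]
This expression always contains the diagonal contribution $\sum_i s_i = n$. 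When $k \geq 2$, the off-diagonal contributions $\min(s_i,s_j)$ with $i\neq j$ each contribute at least $1$, so $\dim_{\C} Z \geq n + k(k-1) > n$, and hence $\dim_{\C} \im < n^2 - n$, exactly as claimed.

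The main conceptual step is the block calculation showing that the intertwiner space between two Jordan blocks $B_a(0)$ and $B_b(0)$ has dimension $\min(a,b)$. This is a routine but slightly tedious exercise: write out the commutation relation entrywise and solve the resulting recursive system for the entries of $X_{ij}$, which forces $X_{ij}$ to be a "shifted" upper-triangular Toeplitz rectangle with $\min(s_i,s_j)$ free parameters. I do not anticipate any serious obstacle beyond keeping the indices straight.
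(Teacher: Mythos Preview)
Your proposal is correct. The $\lambda$-independence step matches the paper exactly. For the dimension count the two arguments diverge in presentation but are dual to each other via rank--nullity: the paper works on the image side, building an explicit basis $\{e_i f_j^T\}$ graded by $i+j$ to compute $\dim\ker = n$ for a single block, and then for $k\geq 2$ blocks invokes Lemma~\ref{cominv} to see that each off-diagonal operator $\mathcal{L}_{B_{s_j}(0),B_{s_i}(0)}$ has only the eigenvalue $0$ and hence strictly positive codimension in its image. You instead work on the kernel side and compute the centraliser directly via the intertwiner formula $\dim\operatorname{Hom}(B_{s_j}(0),B_{s_i}(0)) = \min(s_i,s_j)$, obtaining the exact value $\dim Z = \sum_{i,j}\min(s_i,s_j)$ rather than just the inequality. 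Your route is slightly more informative and arguably cleaner; the paper's route has the advantage of reusing Lemma~\ref{cominv}, which is needed elsewhere anyway.
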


\begin{proof}
From the definition of $B_{n,p}(\la)$ we see that $B_{n,p}(\la) = B_{n,p}(0) + \la\Id$. Consequently, for all $X \in \mat(\C,n)$ it holds that

\begin{align}
[B_{n,p}(\la), X] = [B_{n,p}(0),X] + [\la\Id,X] = [B_{n,p}(0),X] \, ,
\end{align}
as every matrix commutes with $\la\Id$. We therefore conclude that $[B_{n,p}(\la), \bullet] = [B_{n,p}(0), \bullet]$ as operators. In particular, it holds that their images agree and that their kernels agree.

\noindent To determine the dimension of $\im (\mathcal{L}_{B_{n}(0), B_{n}(0)})$, let $\{e_i\}_{i=1}^n$ be the standard basis of $\C^n$. By the definition of $B_n(0)$ it holds that $B_n(0)e_1 = 0$ and $B_n(0)e_i= e_{i-1}$ for $2 \leq i \leq n$. Likewise, we see that  $B_n(0)^Te_n = 0$ and $B_n(0)^Te_i= e_{i+1}$ for $1 \leq i \leq n-1$. We will set $f_i := e_{n+1-i}$, so that $B_n(0)^Tf_1 = 0$ and $B_n(0)^Tf_i= f_{i-1}$ for $2 \leq i \leq n$. As in the proof of Lemma \ref{cominv}, the set $\{e_if_j^T\}_{i,j}$ forms a basis of $\mat(\C, n)$, and we have

\begin{align}\label{nil-ac}
\mathcal{L}_{B_{n}(0), B_{n}(0)}(e_if_j^T) &= e_if_{j-1}^T - e_{i-1}f_j^T  &\mathcal{L}_{B_{n}(0), B_{n}(0)}(e_1f_j^T) &= e_1f_{j-1}^T \\ \nonumber
\mathcal{L}_{B_{n}(0), B_{n}(0)}(e_if_1^T) &= - e_{i-1}f_1^T 	&\mathcal{L}_{B_{n}(0), B_{n}(0)}(e_1f_1^T) &= 0 \, ,\nonumber
\end{align}
for $2 \leq i,j \leq n$. Next, we define the spaces

\[V_m := \spn \{ e_if_j^T \mid i+j = m\}\, .\]
From the equations \eqref{nil-ac} it follows that $\mathcal{L}_{B_{n}(0), B_{n}(0)}$ restricts to a map from $V_m$ to $V_{m-1}$ for every $m$. We claim that for $m>n+1$, the map $\mathcal{L}_{B_{n}(0), B_{n}(0)}|_{V_m}$ has vanishing kernel, whereas for $m \leq n+1$ the map $\mathcal{L}_{B_{n}(0), B_{n}(0)}|_{V_m}$ has a one-dimensional kernel. Indeed, setting $[i,j] := e_if_j^T$ we find for $m>n+1$:

\begin{align}
&\mathcal{L}_{B_{n}(0), B_{n}(0)}\left( \sum_{i=m-n}^n a_{i}[i, m-i]\right)  \\ \nonumber
&=\sum_{i=m-n}^n a_{i}[i,m-i-1] - \sum_{i=m-n}^n a_{i}[i-1,m-i]  \\ \nonumber
&=\sum_{i=m-n}^n a_{i}[i,m-i-1] - \sum_{j=m-n-1}^{n-1} a_{j+1}[j,m-j-1]  \\ \nonumber
&= - a_{m-n}[m-n-1,n] + \sum_{i=m-n}^{n-1} (a_{i}-a_{i+1})[i, m-i-1] + a_{n}[n, m-n-1] \nonumber \, .
\end{align}
One readily verifies that this map has vanishing kernel. For $m \leq n+1$ we find

\begin{align}
&\mathcal{L}_{B_{n}(0), B_{n}(0)}\left( \sum_{i=1}^{m-1} a_{i}[i, m-i]\right)  \\ \nonumber
&=\sum_{i=1}^{m-2} a_{i}[i,m-i-1] - \sum_{i=2}^{m-1} a_{i}[i-1,m-i]  \\ \nonumber
&=\sum_{i=1}^{m-2} a_{i}[i,m-i-1] - \sum_{j=1}^{m-2} a_{j+1}[j,m-j-1]  \\ \nonumber
&=\sum_{i=1}^{m-2} (a_{i}-a_{i+1})[i, m-i-1]  \nonumber \, ,
\end{align}
which has a one-dimensional kernel given by $a_1 = \dots = a_{m-1} $. From the fact that 

\begin{equation}
\mat(\C, n) = \bigoplus_{m=2}^{2n} V_m \,
\end{equation}
it follows that the kernel of $\mathcal{L}_{B_{n}(0), B_{n}(0)}$ has dimension $n$. Therefore, its image has dimension equal to $n^2-n$.

\noindent Next, we prove that $\dim \im(\mathcal{L}_{B_{n,p}(0), B_{n,p}(0)})$ is strictly smaller than $n^2-n$, whenever $p = (s_1, \dots s_k) \not= (n)$. To this end, let us denote a matrix \\ $X \in \mat(\C,n)$ as $X = (X_{i,j})$, $1 \leq i,j \leq k$, with respect to the block decomposition of the matrix $\eqref{blokk}$. We see that $[B_{n,p}(0), X]_{i,i} = B_{s_i}(0)X_{i,i} - X_{i,i}B_{s_i}(0)$ for all $1 \leq i\leq k$. From this and the second part of the theorem, it follows that the image of the map given by the $i$'th diagonal block of $[B_{n,p}(0), \bullet]$ has codimension $s_i$. Together, the image of all the diagonal blocks therefore has codimension $s_1 + \dots + s_k = n$. However, for \\ $i \not= j$ we see that $[B_{n,p}(0), X]_{i,j} = B_{s_i}(0)X_{i,j} - X_{i,j}B_{s_j}(0) = -\mathcal{L}_{B_{s_j}(0),B_{s_i}(0)}(X_{i,j})$. By Lemma \ref{cominv} the image of this map has a strictly positive codimension. From this it follows that the dimension of the image of $\mathcal{L}_{B_{n,p}(0), B_{n,p}(0)}$ is strictly less than $n^2-n$, thereby proving the theorem.
\end{proof}

\subsection{The Case $\mathcal{C}^P_n$} \label{The case Cpn}
We will start the proof of Theorem \ref{summary} with the algebra $\mathcal{C}^P_n = \mat(\C,n)$. Our goal is to determine the dimension of the set of nilpotent matrices in $\mathcal{C}^P_n$ and of those matrices with a purely imaginary spectrum.

\begin{thr}\label{Cnil}
The set of nilpotent matrices in $\mathcal{C}^P_n$ is composed of finitely many conjugacy invariant embedded manifolds of complex dimension $n^2 - n$ or lower. Exactly one of these manifolds has dimension equal to $n^2 - n$.
\end{thr}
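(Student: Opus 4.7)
The plan is straightforward: combine the Jordan normal form classification of nilpotent matrices with Theorem~\ref{embedd} (each conjugacy class is an embedded submanifold) and Lemma~\ref{dimnil0} (which computes the required dimensions).

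First I would observe that, by the Jordan decomposition theorem, every nilpotent $X \in \mat(\C,n)$ is conjugate to $B_{n,p}(0)$ for a unique partition $p$ of $n$ (up to reordering of the parts). Hence the set of nilpotent matrices decomposes as
\[
\coprod_{p} \mathcal{O}_p, \qquad \mathcal{O}_p := \{C^{-1}B_{n,p}(0)C \mid C \in \Gl(\C,n)\},
\]
a disjoint union indexed by the partitions $p$ of $n$. This union is finite (there are only finitely many partitions of $n$), each $\mathcal{O}_p$ is conjugacy invariant by construction, and by Theorem~\ref{embedd} each $\mathcal{O}_p$ is an embedded submanifold of $\mat(\C,n)$.

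Next I would compute $\dim_{\C} \mathcal{O}_p$. Since $\mathcal{O}_p$ is a homogeneous space for the $\Gl(\C,n)$-action, the orbit map $\xi_p : C \mapsto C^{-1}B_{n,p}(0)C$ has constant rank, so $\dim_{\C} \mathcal{O}_p$ equals the rank of $D\xi_p$ at any point; evaluating at $C = \Id$ gives
\[
D\xi_p(\Id)V \;=\; [B_{n,p}(0), V] \;=\; -\mathcal{L}_{B_{n,p}(0), B_{n,p}(0)}(V),\qquad V \in T_{\Id}\Gl(\C,n) = \mat(\C,n).
\]
Consequently $\dim_{\C} \mathcal{O}_p = \dim_{\C} \im \mathcal{L}_{B_{n,p}(0), B_{n,p}(0)}$.

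Finally, I would invoke Lemma~\ref{dimnil0}: this dimension equals $n^2 - n$ precisely when $p = (n)$ (the orbit of a single Jordan block, i.e.\ the regular nilpotent class), and is strictly less than $n^2 - n$ for every other partition. This produces exactly one orbit of the maximal dimension $n^2-n$ and gives the desired conclusion. The only (minor) obstacle is justifying the identification of $\dim_{\C} \mathcal{O}_p$ with $\operatorname{rank} D\xi_p$, which rests on Theorem~\ref{embedd} together with the homogeneity of $\mathcal{O}_p$ under $\Gl(\C,n)$; no further input is needed.
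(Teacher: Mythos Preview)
Your proposal is correct and follows essentially the same approach as the paper: both use the Jordan form to index the nilpotent orbits by partitions, invoke Theorem~\ref{embedd} for embeddedness, compute the orbit dimension via the rank of the differential of the conjugation map, and apply Lemma~\ref{dimnil0} to identify $p=(n)$ as the unique case of dimension $n^2-n$. The paper is slightly more explicit in verifying constant rank (computing the differential at a general point $A$ rather than only at $\Id$) and in citing the fact that a surjective constant-rank smooth map is a submersion, but your homogeneity argument accomplishes the same thing.
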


\begin{proof}
Every nilpotent matrix is conjugate to exactly one of the matrices $B_{n,p}(0)$. Therefore, the manifolds will be the conjugacy orbits 

\[\mathcal{O}_{B_{n,p}(0)}:= \{A^{-1}B_{n,p}(0)A \mid A \in \Gl(\C,n)\} \, . \]
We have proven in the last section that these are indeed embedded submanifolds of $\mat(\C,n)$, so it remains to determine their dimensions. To this end, we note that every set $\mathcal{O}_{B_{n,p}(0)}$ is equal to the image of the smooth map 
\begin{align}
\Psi_{B_{n,p}(0)}: \, &Gl(n,\C) \rightarrow \mathcal{C}^P_n\\ \nonumber
&A \mapsto A^{-1} B_{n,p}(0) A \, .\nonumber
\end{align}
Its derivative at $A \in \Gl(n,\C)$ in the direction of $V \in \mat(\C,n)$ can be evaluated relatively easily by precomposing with the curve $t \mapsto \Exp(tVA^{-1})A$, which goes through $A$ with velocity $V$. We get 
\begin{align}
T_A\Psi_{B_{n,p}(0)}(V) &=  \left. \frac{d}{dt} \right|_{t=0} \Psi_{B_{n,p}(0)}(\Exp(tVA^{-1})A)  \\ \nonumber
&=\left. \frac{d}{dt} \right|_{t=0}  A^{-1}\Exp(-tVA^{-1})B_{n,p}(0)\Exp(tVA^{-1})A  \\ \nonumber
&=A^{-1}(B_{n,p}(0)VA^{-1} - VA^{-1}B_{n,p}(0))A  \\ \nonumber
&= A^{-1}[B_{n,p}(0),VA^{-1}]A \, . \nonumber
\end{align}
By varying $V$, we see that $\im(T_A\Psi_{B_{n,p}(0)}) = \im(A^{-1} [B_{n,p}(0),\bullet] A)$. Consequently, we have that 
\[\dim \im(T_A\Psi_{B_{n,p}(0)}) =\dim \im(A^{-1} [B_{n,p}(0),\bullet] A) = \dim \im( [B_{n,p}(0),\bullet]) \, ,\] which is independent of $A$. Hence, the map $\Psi$, seen as a map from $\Gl(\C,n)$ to the manifold $\mathcal{O}_{B_{n,p}(0)}$, is a surjective, smooth map whose derivative has constant rank. Moreover, it is known that any smooth, surjective map of constant rank between two manifolds is a submersion (see \cite{lee}). Hence, the dimension of $\mathcal{O}_{B_{n,p}(0)}$ is equal to the dimension of $ \im( [B_{n,p}(0),\bullet])$, which  is equal to $n^2-n$ for $p = (n)$ and strictly less in all other cases. This proves the theorem.
\end{proof}

\begin{thr}\label{Ccen}
The set of matrices in $\mathcal{C}^P_n$ with purely imaginary spectrum is composed of finitely many conjugacy invariant embedded manifolds of real dimension $2n^2 - n$ or lower. Exactly one of these manifolds has dimension equal to $2n^2 - n$.
\end{thr}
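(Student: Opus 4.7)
The plan is to mirror the proof of Theorem~\ref{Cnil}, but now augmenting the Jordan partition with real parameters $\mu_j \in \R$ that record the imaginary parts of the eigenvalues. Every $X \in \mathcal{C}^P_n = \mat(\C,n)$ with purely imaginary spectrum is conjugate to a block-diagonal matrix of Jordan blocks $B_{s_j}(i\mu_j)$, and I classify such matrices by their \emph{Segre shape} $\sharp = (p^{(1)}, \dots, p^{(m)})$, where $p^{(\ell)}$ is the Jordan partition at the $\ell$-th distinct purely imaginary eigenvalue and $\sum_\ell |p^{(\ell)}| = n$. There are only finitely many Segre shapes, and the associated strata $S_\sharp$ are mutually disjoint, because the Segre shape together with the multiset of eigenvalues is a complete conjugacy invariant. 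For a fixed shape $\sharp$, the stratum $S_\sharp$ is the image of the smooth map
\[
\Phi: \Gl(\C,n) \times \Omega_m \to \mat(\C,n), \qquad \Phi(A, \mu) := A^{-1} B_\sharp(i\mu_1, \dots, i\mu_m) A,
\]
where $\Omega_m \subset \R^m$ is the open set of $m$-tuples with pairwise distinct components, and $B_\sharp(i\mu_1, \dots, i\mu_m)$ is the block-diagonal Jordan matrix of shape $\sharp$ with eigenvalues $i\mu_1, \dots, i\mu_m$.

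I would next compute the rank of $T\Phi$ and show it is constant. A tangent vector at $(A, \mu)$ may be written as $(VA, \delta\mu)$ for $V \in \mat(\C,n)$ and $\delta\mu \in \R^m$, and proceeding as in Theorem~\ref{Cnil},
\[
T_{(A, \mu)}\Phi(VA, \delta\mu) = A^{-1}\Bigl([B_\sharp, V] + i\sum_{\ell=1}^m \delta\mu_\ell\, P_\ell\Bigr) A,
\]
where $P_\ell$ is the projection onto the $\ell$-th block of $\sharp$. Hence the rank equals the real dimension of $\im([B_\sharp, \cdot]) + \spn_\R(iP_1, \dots, iP_m)$. A block-wise trace argument (the trace of $[B_\sharp, V]$ on the $\ell$-th diagonal block vanishes, whereas the $\ell$-th diagonal block of $iP_\ell$ has trace $i|p^{(\ell)}| \ne 0$) shows that $iP_1, \dots, iP_m$ are linearly independent modulo $\im([B_\sharp, \cdot])$. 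Since $\im([B_\sharp, \cdot])$ is a complex subspace of complex dimension $n^2 - d$, where $d$ denotes the complex dimension of the centralizer of $B_\sharp$, the rank of $\Phi$ is the constant $2(n^2 - d) + m$.

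It remains to identify the unique top-dimensional stratum and to globalize the local constant-rank picture. The standard estimates $d \ge n$ (with equality iff each $p^{(\ell)}$ is a single part) and $m \le n$ (with equality iff every $p^{(\ell)} = (1)$) combine to give $\dim_\R S_\sharp = 2(n^2 - d) + m \le 2n^2 - n$, with equality precisely when $B_\sharp$ is diagonalizable with $n$ pairwise distinct purely imaginary eigenvalues. This singles out the unique stratum of maximal real dimension $2n^2 - n$. The main obstacle I anticipate is the last step: promoting the local constant-rank slices into a single embedded submanifold $S_\sharp$. For this I would combine the constant rank theorem (yielding local embedded structure near any reference point $\Phi(A_0, \mu^0)$) with the observation that any matrix sufficiently close to $\Phi(A_0, \mu^0)$ and of the same Segre shape can be realised as $\Phi(A, \mu)$ for $(A, \mu)$ close to $(A_0, \mu^0)$, so that the local image of $\Phi$ coincides with $S_\sharp$ in a neighbourhood of the reference point. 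This nearby-realisability rests on the upper semicontinuity of the Jordan type together with Theorem~\ref{embedd} applied to each fixed-$\mu$ conjugacy orbit. Conjugacy invariance of each $S_\sharp$ is automatic since the Segre shape is a conjugacy invariant.
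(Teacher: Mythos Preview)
Your strategy coincides with the paper's: stratify by the Jordan data at each distinct eigenvalue (your Segre shape is the paper's $\xi \in \Xi_n$), parametrise each stratum by $\Gl(\C,n)\times\Omega_m$, compute the constant rank $2(n^2-d)+m$ via the block-trace argument, and obtain the bound $2n^2-n$ with equality only for $n$ distinct simple eigenvalues. All of this is correct and matches the paper essentially verbatim.

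The gap is in the embeddedness step, and your proposed fix does not close it. Upper semicontinuity of the Jordan type is not the issue: you already assume the nearby matrix $X$ has the \emph{same} Segre shape, so nothing degenerates. The problem is that knowing $X\sim B_\sharp(i\mu)$ for some $\mu$ near $\mu^0$ does not by itself give a conjugating element $A$ near $\Id$, and applying Theorem~\ref{embedd} to the orbit $\mathcal{O}_{B_\sharp(i\mu)}$ gives a local chart whose size a priori depends on $\mu$; you would need uniformity in $\mu$ to conclude. The paper circumvents this with a genuine extra idea: it builds the slice chart of Lemma~\ref{neigh} at $B_\xi(x)$, chooses the transversal $U$ to be block-diagonal, and then for a sequence $X_r\to B_\xi(x)$ in the stratum writes $X_r=\exp(-v_r)(B_\xi(x)+u_r)\exp(v_r)$ and \emph{shifts each block by its eigenvalue} to produce elements $B_\xi(0)+u_r+I_p(y^r)$ lying in the single nilpotent orbit $\mathcal{O}_{B_\xi(0)}$ and converging to $B_\xi(0)$. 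Now Theorem~\ref{embedd} (via Lemma~\ref{neigh}) is applied just once, at $B_\xi(0)$, forcing $u_r+I_p(y^r)=0$, i.e.\ $u_r$ is a purely imaginary diagonal shift. This reduction to the nilpotent orbit is the missing ingredient; the containment $\im(\ad_{B_\xi(0)})\subset\im(\ad_{B_\xi(x)})$ from Lemma~\ref{biggest} is what makes the transversals compatible.
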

\noindent Before we can prove Theorem \ref{Ccen}, we need another lemma. It provides a special local chart for any conjugacy orbit. To simplify notation, we will introduce yet another way of denoting a Lie bracket, namely $\ad_X(Y) := [X,Y]$ for $X,Y \in \mat(\C, n)$ .

\begin{lem}\label{neigh}
For $X \in \mat(\C, n)$, let $U, V \subset \mat(\C, n)$ be two complex linear spaces such that

\[U \oplus \im \ad_X = V \oplus \ker \ad_X = \mat(\C,n)\, .\]
Then there exist open neighborhoods $W_U \subset U$ and $W_V \subset V$, both containing $0$, and $W \subset \mat(\C,n)$ containing $X$ such that the map

\begin{align}
\mathcal{X}: W_U\times W_V &\rightarrow W \\ \nonumber
(u,v) &\mapsto \exp(-v)(u+X)\exp(v) \nonumber
\end{align}
is a diffeomorphism. $W_U$, $W_V$ and $W$ can furthermore be chosen such that $\mathcal{O}_X \cap W = \mathcal{X}(\{(u,v) \in W_U \times W_V \mid u =0\})$.
\end{lem}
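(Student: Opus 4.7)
The plan is to apply the inverse function theorem to $\mathcal{X}$ at the point $(0,0)$. First I would compute $\mathcal{X}(0,0) = X$ and the partial derivatives. Holding $v=0$ and varying $u$ gives $\mathcal{X}(u,0) = u + X$, so $\partial_u \mathcal{X}|_{(0,0)}(u') = u'$. Holding $u=0$ and differentiating $t \mapsto \exp(-tv')X\exp(tv')$ at $t=0$ gives $\partial_v \mathcal{X}|_{(0,0)}(v') = [X,v'] = \ad_X(v')$. Hence the full derivative
\begin{equation}
D\mathcal{X}|_{(0,0)} : U \oplus V \to \mat(\C,n), \qquad (u',v') \mapsto u' + \ad_X(v')
\end{equation}
is the object we need to show is a linear isomorphism.

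Next I would verify bijectivity using the two direct sum decompositions in the hypothesis. From $V \oplus \ker \ad_X = \mat(\C,n)$ and the surjection $\ad_X : \mat(\C,n) \to \im \ad_X$, the restriction $\ad_X|_V : V \to \im \ad_X$ is a linear isomorphism (injective since $V \cap \ker \ad_X = 0$, and dimensions match). Combined with $U \oplus \im \ad_X = \mat(\C,n)$, any $w \in \mat(\C,n)$ decomposes uniquely as $w = u' + z$ with $u' \in U$, $z \in \im \ad_X$, and then $z = \ad_X(v')$ for a unique $v' \in V$. The dimension identity $\dim U + \dim V = \dim \mat(\C,n)$ (which follows from the rank--nullity theorem applied to $\ad_X$) then confirms $D\mathcal{X}|_{(0,0)}$ is a bijection. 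By the inverse function theorem $\mathcal{X}$ is a local diffeomorphism at $(0,0)$, providing the required $W_U, W_V, W$ after shrinking to product neighborhoods.

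For the orbit statement, the inclusion $\mathcal{X}(\{0\} \times W_V) \subset \mathcal{O}_X \cap W$ is immediate, since $\mathcal{X}(0,v) = \exp(-v)X\exp(v)$ is conjugate to $X$. The reverse inclusion is the step that requires care. The restriction $v \mapsto \mathcal{X}(0,v)$ has derivative $\ad_X|_V$ at the origin, whose image is $\im \ad_X$; by Theorem \ref{embedd} the orbit $\mathcal{O}_X$ is an embedded submanifold of $\mat(\C,n)$ whose tangent space at $X$ is exactly $\im \ad_X$ (this is the image of the derivative of $A \mapsto A^{-1}XA$ at $\Id$). Thus $v \mapsto \mathcal{X}(0,v)$ is a local diffeomorphism from a neighborhood of $0$ in $V$ onto a neighborhood of $X$ in $\mathcal{O}_X$, so $\mathcal{X}(\{0\} \times W_V)$ contains an open neighborhood of $X$ in $\mathcal{O}_X$. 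Because $\mathcal{O}_X$ is embedded, there is an open set $W^\ast \subset \mat(\C,n)$ containing $X$ with $\mathcal{O}_X \cap W^\ast \subset \mathcal{X}(\{0\} \times W_V)$. Replacing $W$ by a product neighborhood $\mathcal{X}(W_U' \times W_V')$ contained in $W \cap W^\ast$ and using the uniqueness clause of the diffeomorphism $\mathcal{X}$ (any $Y \in \mathcal{O}_X \cap W'''$ satisfies both $Y = \mathcal{X}(u,v)$ and $Y = \mathcal{X}(0,v^\ast)$, forcing $u = 0$) yields the desired equality $\mathcal{O}_X \cap W = \mathcal{X}(\{0\} \times W_V)$. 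The main obstacle is simply this bookkeeping in simultaneously shrinking $W$, $W_U$, $W_V$ while preserving the product structure; everything else is a direct inverse function theorem computation.
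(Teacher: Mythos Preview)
Your proof is correct and follows the same approach as the paper: the inverse function theorem at $(0,0)$ for the diffeomorphism, then Theorem~\ref{embedd} combined with injectivity of $\mathcal{X}$ on $W_U\times W_V$ for the orbit identification. The only difference is that the paper obtains the orbit neighborhood more indirectly, first parametrizing $\Gl(\C,n)$ near $\Id$ via $(v,s)\mapsto\exp(s)\exp(v)$ with $s\in\ker\ad_X$ and then pushing forward through the submersion $A\mapsto A^{-1}XA$, whereas you go straight to the fact that $v\mapsto\exp(-v)X\exp(v)$ is a local diffeomorphism onto $\mathcal{O}_X$; your route is slightly cleaner.
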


\begin{proof}
The proof goes in three steps. \\

\noindent 
Step $1$: We first prove that $W_U$, $W_V$ and $W$ exist such that $\mathcal{X}$ restricts to a diffeomorphism as in the first part of the theorem. To this end, we first define $\mathcal{X}$ as a map from the whole of $U \times V$ to $\mat(\C,n)$, again given by $\mathcal{X}(u,v) = \exp(-v)(u+X)\exp(v)$. We see that  $\mathcal{X}(0,0) = X$. Furthermore, the derivative at $(0,0)$ in the directions of $(u_0,0)$ and $(0,v_0)$ are given respectively by
\begin{align}
\left. \frac{d}{dt} \right|_{t=0} \exp(0)(tu_0+X)\exp(0) = u_0
\end{align}
\noindent and 
\begin{align}
\left. \frac{d}{dt} \right|_{t=0} \exp(-tv_0)(0+X)\exp(tv_0) =\ad_X(v_0) \, .
\end{align}
As $V \oplus \ker \ad_X = \mat(\C,n)$, we see that $\{\ad_X(v_0) \mid v_0 \in V\} = \im \ad_X$. From $U \oplus \im \ad_X  = \mat(\C,n)$, we may then conclude that the derivative of $\mathcal{X}$ at $(0,0)$ is a surjective map. Furthermore, as the dimension of $U \oplus V$ is equal to that of $\mat(\C,n)$, we may conclude that the derivative is in fact a bijection. The result of the first step now follows from applying the inverse function theorem to $\mathcal{X}$.\\

\noindent Step $2$: Next, we argue that there exists an open neighborhood $S_V \subset V$ containing $0$ so that for any open $T_V \subset S_V$ containing $0$ there exists an open $R \subset \mat(\C,n)$ containing $X$ with the property that  $\mathcal{O}_X \cap R = \\ \{\exp(-v)X\exp(v) \mid v \in T_V\}$. To this end, we define the map
\begin{align}
\mathcal{Y}: V \oplus \ker \ad_X &\rightarrow \Gl(\C,n) \\ \nonumber
(v,s) &\mapsto\exp(s)\exp(v) \nonumber\, .
\end{align}
This map clearly sends $(0,0)$ to $\Id$. Furthermore, identifying the tangent space of $Gl(\C,n)$ with $\mat(\C,n) = V \oplus \ker \ad_X$, we see that the derivative of $\mathcal{Y}$ at the point $(0,0)$ is exactly given by the identity. Hence, there exist open neighborhoods $0 \ni S_V \subset V$,  $0 \ni M \subset \ker \ad_X$ and $\Id \ni N \subset \Gl(\C,n)$ so that $\mathcal{Y}$ restricts to a bijection from $S_V \times M$ to $N$. As a result, given $T_V \subset S_V$ we have that $\mathcal{Y}$ restricts to a bijection from $T_V \times M$ to the open set $N' := \mathcal{Y}(T_V \times M)$. If $T_V$ furthermore contains $0$ then $N'$ contains $\Id$. \\

\noindent Now, recall from the previous section that $\mathcal{O}_X$ is an embedded submanifold of $\mat(\C,n)$. Furthermore, exactly as in the proof of Theorem \ref{Cnil}, the map

\begin{align}
\Psi_{X}: \, &Gl(n,\C) \rightarrow \mat(\C,n)\\ \nonumber
&A \mapsto A^{-1} X A \, \nonumber
\end{align}
\noindent defines a surjective submersion onto $\mathcal{O}_X$. As a submersion is an open map, and as the topology on any embedded submanifold coincides with its induced topology, we see that there exists an open set $R$ containing $X$ so that $\Psi_{X}(N') = \mathcal{O}_X \cap R$. Writing out $\Psi_{X}(N')$ we get

\begin{align}
\Psi_{X}(N') &= \{A^{-1}XA \mid A \in N'\} \\ \nonumber
&= \{\exp(-v)\exp(-s)X\exp(s)\exp(v) \mid (v,s) \in T_V \times M\} \\ \nonumber 
&= \{\exp(-v)X\exp(v) \mid v \in T_V \}\, , \\ \nonumber
\end{align}
where in the last step we have used that $M \subset \ker \ad_X$. More specifically, $s \in M$ gives that $s$ commutes with $X$. Therefore, so does $\exp(s)$. We see that indeed $\mathcal{O}_X \cap R = \{\exp(-v)X\exp(v) \mid v \in T_V\}$, thereby proving the second step.\\

\noindent Step $3$: To conclude, we show that $W_U$, $W_V$ and $W$ can be chosen small enough such that $\mathcal{O}_X \cap W = \mathcal{X}(\{(u,v) \in W_U \times W_V \mid u =0\})$. First, choose $W_U$, $W_V$ and $W$ as in the first part of the theorem. That is, any element $w$ of $W$ can be uniquely written as $w= \mathcal{X}(u,v) = \exp(-v)(u+X)\exp(v)$ for some $(u,v) \in W_U \times W_V$. Next, let $T_V := W_V \cap S_V \subset S_V$, where $S_V$ is determined in step $2$. It follows that there is an open set $R$ so that \\
\noindent $\mathcal{O}_X \cap R = \{\exp(-v)X\exp(v) \mid v \in T_V\}$. As $T_V$ is contained in $W_V$, we see that $\{\exp(-v)X\exp(v) \mid v \in T_V\} \subset \{\exp(-v)X\exp(v) \mid v \in W_V\} \subset W$. Therefore, we may assume that $R$ lies in $W$. Finally, choose $W'_U \subset W_U$, $W'_V \subset W_V$ and $W' \subset W$ such that the first part of the theorem applies to the triple $W'_U$, $W'_V$ and $W'$, and such that $W' \subset R$. We claim that this new triple satisfies $\mathcal{O}_X \cap W' = \mathcal{X}(\{(u,v) \in W'_U \times W'_V \mid u =0\})$. Any element of the right hand side is of the form $\mathcal{X}(0,v) = \exp(-v)X\exp(v)$, and is therefore clearly contained in $\mathcal{O}_X$ (as well as in $W'$). Conversely, we pick an element $w \in W'$. It follows that $w$ may be written as $w = \mathcal{X}(u,v)$ for $u \in W'_U$ and $v \in W'_V $. If we furthermore assume that $w \in \mathcal{O}_X$, then since $W' \subset R$ we may also write $w =  \exp(-v')X\exp(v') = \mathcal{X}(0,v')$ for some $v' \in T_V$. However, as $W'_U \subset W_U$, $W'_V \subset W_V$ and $T_V \subset  W_V$, we see that $w = \mathcal{X}(u,v) =  \mathcal{X}(0,v') \in W$ can apparently be written in two ways as the image of $\mathcal{X}$ restricted to $W_U \times W_V$. This can only be true if $u=0$ and $v=v' \in W'_V$. Hence we conclude that $\mathcal{O}_X \cap W' \subset \mathcal{X}(\{(u,v) \in W'_U \times W'_V \mid u =0\})$. From this we see that the two sets are in fact equal. This proves the lemma.
\end{proof}

\noindent Next, we will describe the sets and matrices that will eventually parametrize the manifolds in Theorem \ref{Ccen}.

\begin{defi}
Let $\mathcal{P}(n)$ denote the set of partitions of $n$. Given any partition $p =(s_1, \dots, s_k) \in \mathcal{P}(n)$, we may make a sub-partition by assigning elements $p_1 \in \mathcal{P}(s_1), \dots,  p_k \in \mathcal{P}(s_k)$. All the possible ways of doing this are captured by the set

\[ \Xi_n := \{ (p; p_1, \dots p_k) \mid p = (s_1, \dots, s_k) \in \mathcal{P}(n), \, p_i \in \mathcal{P}(s_i) \, \forall i \in \{1, \dots, k\}  \} \, .\]
Note that $\Xi_n$ is a finite set, as we have $\# \Xi_n \leq (\# \mathcal{P}(n))^{n+1}$.

\noindent Given an element $\xi = (p; p_1, \dots p_k) \in \Xi_n$, we will define the set $V_{\xi}$, given by

\[ V_{\xi} := \{(x_1, \dots, x_k) \in \R^k \mid x_i \not= x_j  \text{ for } i \not= j \} \, .\]
Note that $V_{\xi}$ is an open subset of $\R^k$. The definitions of $\Xi_n$ and $V_{\xi}$ will serve to parametrize all matrices in  $\mathcal{C}^P_n$ with a purely imaginary spectrum. In particular, given $\xi  = (p; p_1, \dots p_k) \in \Xi_n$ and $x = (x_1, \dots, x_k) \in V_{\xi}$, we define the complex $n \times n$ matrix

\begin{equation}\label{blokk2}
B_{\xi}(x) = \begin{pmatrix}
  B_{s_1, p_1}(x_1i) &0& \dots & 0 \\
  0 & B_{s_2, p_2}(x_2 i) & \dots & 0\\
   &  & \ddots &  \\
  0 & \dots  & 0 & B_{s_k, p_k}(x_k i)
 \end{pmatrix} \, ,
\end{equation}
for $i$ the complex unit. We will also define the matrix $B_{\xi}(0)$ to be the matrix in $\eqref{blokk2}$ with $x_1 = \dots = x_k = 0$ (even though $0 \in \R^k$ is clearly not an element of $V_{\xi}$ for $k>1$).
\end{defi}

\noindent The following lemma gathers up some facts about the matrices $B_{\xi}(x)$ needed to prove Theorem \ref{Ccen}.

\begin{lem}\label{biggest}
Given $\xi = (p; p_1, \dots p_k) \in \Xi_n$ and $x, y \in V_{\xi}$, it holds that

\begin{align}
\im( \ad_{B_{\xi}(x)} ) &= \im( \ad_{B_{\xi}(y)} ) \\ \nonumber
\ker( \ad_{B_{\xi}(x)} ) &= \ker( \ad_{B_{\xi}(y)} ) \\ \nonumber
\im( \ad_{B_{\xi}(0)} ) &\subset \im( \ad_{B_{\xi}(x)} )  \, . \nonumber
\end{align}

\noindent Furthermore, we have that 

\begin{align}
\dim_{\C} \im( \ad_{B_{\xi}(x)} )  \leq n^2 - n \, ,
\end{align}
with equality only when $p_i = (s_i)$ for all $i \in \{1, \dots, k\}$. Lastly, if a matrix 

\begin{equation}\label{blokk3}
 I_{p}(z) := \begin{pmatrix}
  z_1 \Id_{s_1 \times s_1} &0& \dots & 0 \\
  0 & z_2 \Id_{s_2 \times s_2} & \dots & 0\\
   &  & \ddots &  \\
  0 & \dots  & 0 & z_k \Id_{s_k \times s_k}
 \end{pmatrix} \, ,
\end{equation}
for $z = (z_1, \dots, z_k) \in \C^k$ lies in $\im( \ad_{B_{\xi}(x)} ) $, then $z_1 = \dots = z_k = 0$. 
\end{lem}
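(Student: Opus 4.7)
My plan is to exploit the Jordan–Chevalley decomposition of $B_{\xi}(x)$ and to reduce every statement about $\ad_{B_{\xi}(x)}$ to block-diagonal calculations that are already handled by Lemma~\ref{dimnil0}. Set $S(x) := I_{p}(xi)$, i.e.\ the block-scalar matrix with entries $x_{j}i \, \Id_{s_{j}}$, and let $N := B_{\xi}(0)$. Then $B_{\xi}(x) = S(x)+N$, and since $S(x)$ is a block-scalar matrix with the same block structure as $N$, the two matrices commute. Hence on $\mat(\C,n)$ the operators $\ad_{S(x)}$ and $\ad_{N}$ commute and $\ad_{B_{\xi}(x)} = \ad_{S(x)} + \ad_{N}$ is the Jordan–Chevalley decomposition, with $\ad_{S(x)}$ semisimple and $\ad_{N}$ nilpotent.

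Next I would introduce the Fitting decomposition $\mat(\C,n) = W_{0}\oplus W_{1}$, where $W_{0} := \ker \ad_{S(x)}$ consists of matrices block-diagonal with respect to the partition $(s_{1},\dots,s_{k})$, and $W_{1}$ is the sum of the non-zero eigenspaces of $\ad_{S(x)}$. The crucial point is that $W_{0}$ and $W_{1}$ depend only on $\xi$ and not on $x \in V_{\xi}$, because the $x_{j}$ are distinct. Moreover $\ad_{N}$ preserves $W_{0}$ and $W_{1}$ since it commutes with $\ad_{S(x)}$. On $W_{1}$ the operator $\ad_{S(x)}|_{W_{1}}$ is invertible while $\ad_{N}|_{W_{1}}$ is nilpotent and commutes with it, so $\ad_{B_{\xi}(x)}|_{W_{1}}$ is an isomorphism of $W_{1}$. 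On $W_{0}$ one has $\ad_{S(x)}|_{W_{0}}=0$, hence $\ad_{B_{\xi}(x)}|_{W_{0}} = \ad_{N}|_{W_{0}} = \ad_{B_{\xi}(0)}|_{W_{0}}$, which is explicitly the direct sum of the block operators $\ad_{B_{s_{i},p_{i}}(0)}$ on $\mat(\C,s_{i})$.

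From this decomposition the first two assertions fall out immediately: for every $x \in V_{\xi}$,
\[
\ker \ad_{B_{\xi}(x)} = \ker \bigl(\ad_{B_{\xi}(0)}|_{W_{0}}\bigr), \qquad \im \ad_{B_{\xi}(x)} = \im\bigl(\ad_{B_{\xi}(0)}|_{W_{0}}\bigr) \oplus W_{1},
\]
which depends only on $\xi$, proving part~1. For part~2, note that $\im \ad_{B_{\xi}(0)} = \im(\ad_{N}|_{W_{0}}) \oplus \im(\ad_{N}|_{W_{1}})$ and $\ad_{N}|_{W_{1}}$ is nilpotent, so its image is contained in $W_{1}$; combining with the above gives $\im \ad_{B_{\xi}(0)} \subset \im \ad_{B_{\xi}(x)}$.

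For part~3, dimension counting yields
\[
\dim_{\C} \im \ad_{B_{\xi}(x)} = \sum_{i=1}^{k} \dim_{\C} \im \ad_{B_{s_{i},p_{i}}(0)} + \dim_{\C} W_{1},
\]
where $\dim_{\C} W_{1} = n^{2} - \sum s_{i}^{2}$. Lemma~\ref{dimnil0} provides $\dim \im \ad_{B_{s_{i},p_{i}}(0)} \leq s_{i}^{2} - s_{i}$ with equality iff $p_{i}=(s_{i})$, and summing gives the bound $n^{2}-n$ with the stated equality case. Finally, for part~4, the matrix $I_{p}(z)$ lies in $W_{0}$, so if it belongs to $\im \ad_{B_{\xi}(x)}$ it must in fact lie in $\im(\ad_{B_{\xi}(0)}|_{W_{0}}) = \bigoplus_{i} \im \ad_{B_{s_{i},p_{i}}(0)}$; hence $z_{i}\Id_{s_{i}}$ belongs to $\im \ad_{B_{s_{i},p_{i}}(0)}$ for every $i$. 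Since every commutator has trace zero, $\tr(z_{i}\Id_{s_{i}}) = s_{i}z_{i} = 0$, forcing $z_{i}=0$. The only subtle step is verifying that $W_{0}$ and $W_{1}$ are independent of $x \in V_{\xi}$; this is where the distinctness hypothesis on the $x_{j}$ is used, and it is the one point the reader must be careful about, but once it is noted, the rest is bookkeeping using Lemma~\ref{dimnil0}.
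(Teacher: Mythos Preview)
Your proof is correct and follows essentially the same route as the paper's: both split $\mat(\C,n)$ into block-diagonal and off-block-diagonal pieces with respect to the partition $(s_1,\dots,s_k)$, observe that $\ad_{B_{\xi}(x)}$ is an isomorphism on the off-diagonal part and equals $\ad_{B_{\xi}(0)}$ on the diagonal part, and then invoke Lemma~\ref{dimnil0} and the vanishing of commutator traces. The only difference is packaging: you phrase this via the Jordan--Chevalley decomposition $B_{\xi}(x)=S(x)+N$ and the Fitting splitting for $\ad_{S(x)}$, whereas the paper works directly with the block entries and appeals to Lemma~\ref{cominv} for the off-diagonal bijectivity; the underlying computation is identical.
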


\begin{proof}
Given $X \in \mat(\C, n)$ we write $X = (X_{i,j})$, $1 \leq i,j \leq k$, with respect to the block decomposition of the matrix $\eqref{blokk2}$. Whenever $i \not= j$, we see that 

\begin{align}
(\ad_{B_{\xi}(x)}(X))_{i,j} &= [B_{\xi}(x),X]_{i,j} =  B_{s_i, p_i}(x_i i)X_{i,j} -  X_{i,j} B_{s_j, p_j}(x_j i) \\ \nonumber
&= -\mathcal{L}_{B_{s_j, p_j}(x_j i), B_{s_i, p_i}(x_i i)}(X_{i,j}) \, . \nonumber
\end{align}
As $x_i \not= x_j$, it follows from Lemma \ref{cominv} that the operator $\mathcal{L}_{B_{s_j, p_j}(x_j i), B_{s_i, p_i}(x_i i)}$ is a bijection. Hence by choosing $X_{i,j}$ appropriately, any value of the block $(\mathcal{L}_{B_{\xi}(x),B_{\xi}(x)}(X))_{i,j}$ can be attained.  Likewise, it holds that

\begin{align}
&(\ad_{B_{\xi}(x)}(X))_{j,j} = \mathcal{L}_{B_{s_j, p_j}(x_j i), B_{s_j, p_j}(x_j i)}(X_{j,j}) \, . \nonumber
\end{align}
By Lemma \ref{dimnil0}, the image and kernel of $\mathcal{L}_{B_{s_j, p_j}(x_j i), B_{s_j, p_j}(x_j i)}$ are independent of $x_j i$. Therefore, the image and kernel of $\ad_{B_{\xi}(x)} $ are independent of $x \in V_{\xi}$. We also conclude from this that the image of $\ad_{B_{\xi}(0)}$ is contained in that of $\ad_{B_{\xi}(x)}$, as the image of these operators is the same in every $(j,j)$ block entry and because $\ad_{B_{\xi}(x)}$ is bijective in the other block entries. Next, we note that the dimension of the image of $(\ad_{B_{\xi}(x)}(X))_{j,j} $ is equal to $s_j^2 - s_j$ when $p_j = (s_j)$ and strictly less otherwise. This proves that 
\begin{align}
\dim \im( \mathcal{L}_{B_{\xi}(x),B_{\xi}(x)} )  \leq n^2 - n \, ,
\end{align}
with equality only when $p_i = (s_i)$ for all $i \in \{1, \dots, k\}$. Finally, it holds that 

\begin{align}
\tr((\mathcal{L}_{B_{\xi}(x),B_{\xi}(x)}(X))_{j,j}) = \tr([X_{j,j},B_{s_j, p_j}(x_j i)]) = 0\, ,
\end{align}
for all $j \in \{1, \dots, k\}$ and $X \in \mat(\C, n)$. From this it follows that $I_{p}(z)$ can only be in the image of $\mathcal{L}_{B_{\xi}(x),B_{\xi}(x)}$ when $z=0$. This finishes the proof of the lemma.
\end{proof}

\noindent The statement that $p_i = (s_i)$ for all $i \in \{1, \dots, k\}$ can be put more succinctly as the statement that the characteristic polynomial of $B_{\xi}(x)$ is equal to its minimal polynomial. It is not hard to see at this point that this condition on a matrix is equivalent to it having an adjoint orbit of maximal dimension $n^2-n$. See also \cite{alggroup}.  \\

\noindent Lastly, we will use the following lemma.

\begin{lem}\label{bloks}
Let $\{A_i\}_{i=1}^k$ and $\{B_i\}_{i=1}^k$ be two sets of matrices, where $A_i, B_i \in \mat(\C,s_i)$ for some numbers $s_i$, $1 \leq i \leq $k. Set $n:= s_1 + \dots + s_k$ and define $A$ and $B$ to be the $n \times n$ block diagonal matrices with blocks the matrices $\{A_i\}_{i=1}^k$ and $\{B_i\}_{i=1}^k$, respectively. Suppose the eigenvalues of $A_i$ and $B_i$ are the same for all $i$, that $A_i$ and $A_j$ do not share any eigenvalues for $i \not= j$ and that $A$ and $B$ are conjugate. Then, $A_i$ and $B_i$ are conjugate for all $i$.
\end{lem}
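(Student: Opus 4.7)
The plan is to exploit the fact that, under the given hypotheses, each generalized eigenspace of $A$ (and of $B$) lies entirely in one of the coordinate blocks $W_i := \{0\} \oplus \cdots \oplus \C^{s_i} \oplus \cdots \oplus \{0\} \subset \C^n$. Since $A$ acts as $A_i$ on $W_i$, the generalized eigenspace of $A$ for an eigenvalue $\la$ decomposes as $V^A_\la = \bigoplus_{i=1}^k V^{A_i}_\la$, where $V^{A_i}_\la \subset W_i$ denotes the generalized eigenspace of the restricted block $A_i$. Because the $A_i$ pairwise share no eigenvalues, at most one of the $V^{A_i}_\la$ is nonzero, so in fact $V^A_\la \subset W_{i(\la)}$ for a uniquely determined index $i(\la)$. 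The same reasoning applies to $B$, since by hypothesis each $B_i$ has the same eigenvalues as $A_i$ and therefore the blocks $B_i$ also pairwise share no eigenvalues; this yields $V^B_\la \subset W_{i(\la)}$ as well.

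Now suppose $C \in \Gl(\C,n)$ satisfies $CAC^{-1} = B$. As observed within the proof of Lemma \ref{blok1a}, conjugation by $C$ sends generalized eigenvectors of $A$ for $\la$ to generalized eigenvectors of $B$ for the same eigenvalue $\la$, so $C$ restricts to a bijection $V^A_\la \to V^B_\la$ for every eigenvalue $\la$. Combining this with the previous paragraph, for each fixed index $i$ we have
\[
C(W_i) \; = \; C\!\left(\bigoplus_{\la \text{ eigenvalue of } A_i} \! V^A_\la\right) \; = \bigoplus_{\la \text{ eigenvalue of } B_i} \! V^B_\la \; = \; W_i\, .
\]
Hence $C$ itself is block diagonal with respect to the decomposition $\C^n = W_1 \oplus \cdots \oplus W_k$, say $C = C_1 \oplus \cdots \oplus C_k$ with each $C_i \in \Gl(\C, s_i)$. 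Reading off the $i$-th diagonal block of the identity $CA = BC$ then gives $C_i A_i = B_i C_i$, so $B_i = C_i A_i C_i^{-1}$ for every $i$, which is the desired conclusion.

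I do not anticipate any serious obstacle in executing this plan: the only step that is at all subtle is the observation that $C$ respects generalized eigenspaces, but this is immediate by applying powers of $B - \la \Id$ to $Cv$ for $v \in V^A_\la$ (using $(B-\la\Id)^m C = C(A - \la\Id)^m$), and has already been used in the paper. Every remaining step is a direct unpacking of the block-diagonal structure.
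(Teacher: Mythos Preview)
Your proof is correct. Both your argument and the paper's conclude that the conjugating matrix is itself block diagonal with respect to the decomposition $\C^n = W_1 \oplus \cdots \oplus W_k$, after which the result is immediate. The difference lies in how this block-diagonality is established. The paper reduces (by induction) to the case $k=2$, writes the conjugating matrix $X$ in block form, and observes that the off-diagonal block $X_{1,2}$ satisfies $\mathcal{L}_{A_2,B_1}(X_{1,2})=0$; since $A_2$ and $B_1$ have disjoint spectra, Lemma~\ref{cominv} forces $X_{1,2}=0$ (and similarly $X_{2,1}=0$). You instead argue directly for all $k$ at once via the generalized eigenspace decomposition: each $V^A_\la$ and $V^B_\la$ sits inside a single block $W_{i(\la)}$, and since conjugation sends $V^A_\la$ bijectively onto $V^B_\la$, the conjugating map must preserve each $W_i$. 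Your route is self-contained and does not need Lemma~\ref{cominv}, while the paper's route reuses a tool already in hand; both are perfectly valid and of comparable length.
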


\begin{proof}
Both $A$ and $B$ can be seen as block diagonal matrices with just two blocks, by taking the first block to be $A_1$ or $B_1$ and the second block to contain all the other $A_i$ or $B_i$. As these two blocks also satisfy the conditions of the lemma, we see that we may assume that $k=2$. An induction argument then finishes the proof. Therefore, let $X$ be an invertible matrix  such that $A = X^{-1}BX$, or equivalently, $XA = BX$. Writing $X = \{X_{i,j}\}$, $i,j \in \{1,2\}$, with respect to the block structure of $A$ and $B$, we see that $X_{1,2}A_2 = B_1X_{1,2}$. In other words, we have  $\mathcal{L}_{A_2, B_1}(X_{1,2}) = 0$. However, as $A_2$ and $B_1$ do not share any eigenvalues, we conclude from Lemma \ref{cominv} that $X_{1,2} = 0$. Likewise, we see that $X_{2,1} = 0$. Since $X$ is invertible, we conclude that both $X_{1,1}$ and $X_{2,2}$ are invertible. Hence, it follows that $A_1 = X_{1,1}^{-1}B_1X_{1,1}$ and $A_2 = X_{2,2}^{-1}B_2X_{2,2}$. This proves the lemma.
\end{proof}

\begin{proof}[Proof of Theorem \ref{Ccen}]
It follows from the definitions of $\Xi_n$ and $V_{\xi}$ that any matrix with purely imaginary spectrum is conjugate to at least one matrix $B_{\xi}(x)$ with $x \in V_{\xi}$. Therefore, our manifolds will be the sets

\[ \mathcal{O}_{\xi} := \{A^{-1}B_{\xi}(x)A \mid A \in \Gl(\C,n), \, x \in V_{\xi}\} \,, \]
for $\xi = (p; p_1, \dots p_k) \in \Xi_n$. These sets are not necessarily disjoint. For example, if $p \in \mathcal{P}(4)$ is given by $p = (2,2)$, then $\xi = (p;p_1,p_2)$ and $\xi' = (p;p_2,p_1)$ will define the same sets $ \mathcal{O}_{\xi} = \mathcal{O}_{\xi'}$ for all $p_1, p_2 \in \mathcal{P}(2)$. However, this is the only thing that may happen; as soon as $ \mathcal{O}_{\xi}$ and $\mathcal{O}_{\xi'}$ share an element, they coincide as sets. We may therefore assume these sets are disjoint after discarding doubles. \\
To show that they are indeed embedded submanifolds of the proposed dimension, we fix a matrix $B_{\xi}(x)$. As in Lemma \ref{neigh}, let $U, V  \subset \mat(\C, n)$ be two complex linear spaces such that

\[U \oplus \im \ad_{B_{\xi}(x)} = V \oplus \ker \ad_{B_{\xi}(x)} = \mat(\C,n)\, .\]
By the proof of Lemma \ref{biggest}, we may assume that all elements of $U$ are block diagonal matrices with respect to the structure of $B_{\xi}(x)$ into $k$ blocks. Furthermore, we may assume that $I_p(z)$ is an element of $U$ for all $z \in \C^k$. Let $W_U \subset U$, $W_V \subset V$ and $W \subset \mat(\C,n)$ be open sets as in Lemma \ref{neigh} applied to $B_{\xi}(x)$. We may assume that $W_U$ is small enough so that for all $I_p(z) \in W_U$ it still holds that $B_{\xi}(x) + I_p(z)$ has different diagonal entries among its $k$ blocks. Now, the set 

\[ \{\exp(-v)(B_{\xi}(x) + I_p(z))\exp(v) \mid v \in W_V, \, I_p(z) \in W_U\, , z \in (i\R)^k  \}   \subset W\]
is readily seen to be contained in $W \cap \mathcal{O}_{\xi}$. Hence, if we can show that equality holds for these two sets then we have proven that $\mathcal{O}_{\xi}$ is (around $B_{\xi}(x)$) an embedded submanifold of real dimension $k + 2\dim_{\C}(V)$. Note that $V$ may be chosen the same for all $x \in V_{\xi}$. As any element of $\mathcal{O}_{\xi}$ is conjugate to some element $B_{\xi}(x)$, we would conclude by homogeneity of $\mathcal{O}_{\xi}$ that $\mathcal{O}_{\xi}$ is an embedded submanifold. \\

\noindent Therefore, let us assume that for any open set $S \subset W$ around $B_{\xi}(x)$ there is an element in $S \cap \mathcal{O}_{\xi}$ that is not of the form $\exp(-v)(B_{\xi}(x) + I_p(z))\exp(v)$ for $v \in W_V$, $I_p(z) \in W_U$ and with $z \in (i\R)^k$. We will show that this leads to a contradiction. From the assumptions on $B_{\xi}(x)$ we get a sequence of matrices $(X_r)_{r=0}^{\infty}$ such that 

\begin{enumerate}
\item $\displaystyle \lim_{r \rightarrow \infty} X_r = B_{\xi}(x)$.
\item $X_r$ is conjugate to  $B_{\xi}(x^r)$ for some  $x^r \in V_{\xi}$.
\item Every $X_r$ is not of the form $\exp(-v)(B_{\xi}(x) + I_p(z))\exp(v)$ for $v \in W_V$, $I_p(z) \in W_U$ and with $z \in (i\R)^k$.
\end{enumerate}
Since $X_r \in W$ for all $r$, we may write $X_r = \exp(-v_r)(B_{\xi}(x) + u_r)\exp(v_r)$ for $v_r \in W_V$ and $u_r \in W_U$. As the limit of $X_r$ equals $B_{\xi}(x) = \exp(0)(B_{\xi}(x) + 0)\exp(0)$, it follows that 

\begin{enumerate}
\item $\displaystyle \lim_{r \rightarrow \infty} u_r = 0$.
\item $B_{\xi}(x) + u_r$ is conjugate to $B_{\xi}(x^r)$  for some  $x^r \in V_{\xi}$.
\item $\displaystyle \lim_{r \rightarrow \infty} v_r = 0$.
\end{enumerate}
Note that every $u_r$ is a block diagonal matrix, as it is an element of $U$. Therefore, so is $B_{\xi}(x) + u_r$ for all $r$. We will denote the individual blocks by $(B_{\xi}(x) + u_r)_j = B_{\xi}(x)_j + u^j_r = B_{s_j, p_j}(x_ji) + u^j_r$ for $1 \leq j \leq k$. Now, the limit of all the eigenvalues of $(B_{\xi}(x) + u_r)_j$ is $x_ji$. Hence, as per assumption $x_j \not= x_l$ for $j \not= l$, we may conclude that for $r$ big enough, the blocks $(B_{\xi}(x) + u_r)_j$ and $(B_{\xi}(x) + u_r)_l$ do not share any eigenvalues if $j \not= l$. On the other hand, the eigenvalues of $B_{\xi}(x) + u_r$ are equal to those of $B_{\xi}(x^r)$, as these matrices are conjugate. It follows that the eigenvalue $x^r_ji$, which appears with algebraic multiplicity $s_j$, appears with the same multiplicity in exactly one of the blocks of $B_{\xi}(x) + u_r$. Therefore, every block of $B_{\xi}(x) + u_r$ has exactly the same eigenvalues as some block of $B_{\xi}(x^r)$. More precisely, for every $r$ there exists a permutation $\sigma_r \in S_k$ so that $(B_{\xi}(x) + u_r)_j$ and $B_{\xi}(x^r)_{\sigma_r(j)} = B_{s_{\sigma_r(j)}, p_{\sigma_r(j)} }(x^r_{\sigma_r(j)}i)$ have the same eigenvalues. It therefore follows from Lemma \ref{bloks} that 

\[(B_{\xi}(x) + u_r)_j \text{ is conjugate to } B_{\xi}(x^r)_{\sigma_r(j)} \text{ for every } j \, .\]

\noindent Next, by comparing traces and by noting that the limit of $(B_{\xi}(x) + u_r)_j$ is $B_{\xi}(x)_j$, we see that 

\begin{equation}\label{limiteig}
\lim_{r \rightarrow \infty} x^r_{\sigma_r(j)} = x_j \, .
\end{equation}
We will use the facts we have gathered so far, together with the fact that the orbit of $B_{\xi}(0)$ is an embedded manifold, to arrive at a contradiction. To this end, we look at the expressions $u_r^j + (x_j - x^r_{\sigma_r(j)})i\Id_{s_j}$. From \eqref{limiteig} we see that

\begin{enumerate}
\item $\displaystyle \lim_{r \rightarrow \infty} u_r^j + (x_j - x^r_{\sigma_r(j)})i\Id_{s_j} = 0$.
\item $B_{\xi}(0)_j + u_r^j + (x_j - x^r_{\sigma_r(j)})i\Id_{s_j} = (B_{\xi}(x) + u_r)_j - x^r_{\sigma_r(j)}i\Id_{s_j}$ 
 is conjugate to  $B_{\xi}(x^r)_{\sigma_r(j)} -  x^r_{\sigma_r(j)}i\Id_{s_j} = B_{\xi}(0)_{\sigma_r(j)}$ .
\end{enumerate}
In part 2, we have simply used the fact that if two matrices $A$ and $B$ are conjugate, then so are $A+z\Id$ and $B+z\Id$ for any $z \in \C$. If we define $y^r \in \C^k$ by $y^r_j := (x_j - x^r_{\sigma_r(j)})i$ for $1 \leq j \leq k$, then we get for the full matrices

\begin{enumerate}
\item $\displaystyle \lim_{r \rightarrow \infty} u_r + I_p(y^r) = 0$.
\item $B_{\xi}(0) + u_r + I_p(y^r)$  is conjugate to $B_{\xi}(0)$.
\end{enumerate}
Note that $u_r + I_p(y^r) \in U$ for all $r$. \\

\noindent Finally, let $\tilde{U}, \tilde{V} \subset \mat(\C, n)$ be two complex linear spaces such that

\[\tilde{U} \oplus \im \ad_{B_{\xi}(0)} = \tilde{V} \oplus \ker \ad_{B_{\xi}(0)} = \mat(\C,n)\, .\]
From $\im \ad_{B_{\xi}(0)} \subset \im \ad_{B_{\xi}(x)}$ we see that we may choose $\tilde{U}$ such that $U \subset \tilde{U}$. We furthermore choose open sets $W_{\tilde{U}}$, $W_{\tilde{V}}$ and $\tilde{W}$ as in the statement of Lemma \ref{neigh}, so that $\mathcal{O}_{B_{\xi}(0)} \cap \tilde{W} = \mathcal{X}(\{(u,v) \in W_{\tilde{U}} \times W_{\tilde{V}} \mid u =0\})$. Now, for large enough values of $r$, the matrices $B_{\xi}(0) + u_r + I_p(y^r)$ will lie in $\tilde{W}$. Therefore, since $B_{\xi}(0) + u_r + I_p(y^r) \in \mathcal{O}_{B_{\xi}(0)}$ and $u_r + I_p(y^r) \in U \subset \tilde{U}$, it has to follow that $u_r + I_p(y^r) = 0$ for large enough $r$. Going back to $X_r$, we see that

\begin{align}
X_r &= \exp(-v_r)(B_{\xi}(x) + u_r)\exp(v_r) \\ \nonumber
&= \exp(-v_r)(B_{\xi}(x) + I_p(-y^r))\exp(v_r)\, ,
\end{align}
with $y^r_j := (x_j - x^r_{\sigma_r(j)})i$ so that $y^r \in (i\R)^k$. This is a direct contradiction to the third assumption on $X_r$. \\

\noindent Hence, there does exist an open set $S \in W$ around $B_{\xi}(x)$ where every element of $\mathcal{O}_{\xi}$ is of the form $\exp(-v)(B_{\xi}(x) + I_p(z))\exp(v)$ for $v \in W_V$, $I_p(z) \in W_U$ and  $z \in (i\R)^k$. In particular, we may choose $W'_U \subset W_U$, $W'_V \subset W_V$ and $W' \subset S \subset W$ as in Lemma \ref{neigh} for $B_{\xi}(x)$. Then 
\[\mathcal{O}_{\xi} \cap W' = \{\exp(-v)(B_{\xi}(x) + I_p(z))\exp(v) \mid v \in W'_V, \, I_p(z) \in W'_U\, , z \in (i\R)^k  \} \, ,\] 
\noindent as this otherwise contradicts the unique expression as $\mathcal{X}(u,v)$ in $W$. \\

\noindent We see that the real dimension of $\mathcal{O}_{\xi}$ is $k + 2\dim_{\C}(V)$. This value cannot exceed $2n^2 - n$, in which case $k = n$ and $\dim_{\C}(V) = n^2 - n$. By Lemma \ref{biggest} this is indeed the case when $\xi = ((1,1, \dots, 1); (1), \dots, (1))$. This is furthermore the only possibility, as $k = n$ forces the partitions in $\xi$ to be trivial.  This concludes the proof. 
\end{proof}

\begin{remk}\label{Cdis}
Note that the manifold of Theorem \ref{Ccen} of highest dimension consists of exactly those matrices with $n$ distinct (purely imaginary) eigenvalues. Another observation is that both the matrices of Theorem \ref{Cnil} and of Theorem \ref{Ccen} are invariant under taking the (component-wise) complex conjugate. This is exactly the transformation that would occur if one would choose $[\Id], [-I] \in \End(U)/\nil(U)$ as the generators of the complex structure, instead of $[\Id]$ and $[I]$. See Subsection \ref{A Remark on Uniqueness}. \hspace*{\fill}$\triangle$
%
\end{remk}

\subsection{The Case $\mathcal{R}^P_n$} \label{The case Rpn}
For $\mathcal{R}^P_n = \mat(\R,n)$ we have the following results. 

\begin{thr}\label{Rnil}
The set of all nilpotent matrices in $\mathcal{R}^P_n$ consists of a finite number of conjugacy invariant embedded manifolds. Exactly one of these has real dimension $n^2 -n$, whereas the others have dimension strictly less.
\end{thr}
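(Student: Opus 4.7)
The plan is to mirror the proof of Theorem \ref{Cnil} as closely as possible, adapting it to work over $\R$. The first step is to observe that a nilpotent matrix has only $0 \in \R$ as an eigenvalue, so its real and complex Jordan normal forms coincide. Consequently every nilpotent $X \in \mat(\R,n)$ is $\Gl(\R,n)$-conjugate to exactly one matrix $B_{n,p}(0)$, for $p$ a partition of $n$, and the set of nilpotent elements of $\mathcal{R}^P_n$ is the finite disjoint union of the orbits
\[ \mathcal{O}_p^{\R} := \{ A^{-1} B_{n,p}(0) A \mid A \in \Gl(\R,n) \}, \]
indexed by partitions $p$ of $n$. Conjugacy invariance of each $\mathcal{O}_p^{\R}$ is automatic.

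For the embedded-manifold structure I would exploit the already-established complex case. Two real nilpotent matrices are $\Gl(\R,n)$-conjugate precisely when they share the same Jordan partition, which is the same criterion for $\Gl(\C,n)$-conjugacy, so
\[ \mathcal{O}_p^{\R} = \mathcal{O}_{B_{n,p}(0)} \cap \mat(\R,n), \]
where $\mathcal{O}_{B_{n,p}(0)} \subset \mat(\C,n)$ is the complex conjugacy orbit, shown in Theorem \ref{embedd} to be a complex embedded submanifold. Entrywise complex conjugation is an antiholomorphic involution of $\mat(\C,n)$ whose fixed-point set is $\mat(\R,n)$; since $B_{n,p}(0)$ is real, this involution preserves $\mathcal{O}_{B_{n,p}(0)}$, and $\mathcal{O}_p^{\R}$ is precisely its fixed locus on that orbit. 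The standard fact that the (nonempty) fixed set of an antiholomorphic involution on a complex manifold is a totally real embedded submanifold whose real dimension equals the complex dimension of the ambient manifold then equips $\mathcal{O}_p^{\R}$ with the structure of a real embedded submanifold of $\mat(\R,n)$.

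For the dimension count I would argue as in Theorem \ref{Cnil}: the smooth surjection $\Psi_{B_{n,p}(0)} : \Gl(\R,n) \to \mathcal{O}_p^{\R}$, $A \mapsto A^{-1} B_{n,p}(0) A$, has derivative at $A$ equal to $V \mapsto A^{-1} [ B_{n,p}(0), V A^{-1} ] A$, whose image has constant real dimension $\dim_{\R} \im(\ad_{B_{n,p}(0)}|_{\mat(\R,n)})$. Hence $\Psi_{B_{n,p}(0)}$ is a constant-rank smooth surjection onto $\mathcal{O}_p^{\R}$, i.e.\ a submersion, and $\dim_{\R} \mathcal{O}_p^{\R}$ equals this rank. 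Because $B_{n,p}(0)$ is real, the operator $\ad_{B_{n,p}(0)}$ on $\mat(\C,n)$ is the complexification of $\ad_{B_{n,p}(0)}$ on $\mat(\R,n)$, so its real rank equals the complex rank. Invoking Lemma \ref{dimnil0} then yields $\dim_{\R} \mathcal{O}_p^{\R} = n^2 - n$ exactly for $p = (n)$, and strictly less for every other partition, so there is a unique orbit of maximal dimension.

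The main obstacle, exactly as in the complex case, is the embedded-submanifold claim rather than the dimension computation; the cleanest route for a short proof seems to be the antiholomorphic-involution argument above, which bootstraps directly from Theorem \ref{embedd}. An alternative would be to rerun the algebraic-geometry development of Section \ref{Intermezzo; Some algebraic geometry} over $\R$ — this is possible but requires a real analogue of Theorem \ref{orbitsmagic} (orbits of connected real algebraic groups acting algebraically are Zariski-open in their Zariski closures) which, though true, is less directly quotable from the references already cited.
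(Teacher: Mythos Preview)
Your argument is correct, and the dimension count via the complexification of $\ad_{B_{n,p}(0)}$ matches the paper exactly. For the embedded-submanifold claim, however, you and the paper take genuinely different routes. The paper does \emph{not} invoke the antiholomorphic fixed-point principle. Instead it argues directly: the constant-rank map $A\mapsto A B_{n,p}(0) A^{-1}$ on $\Gl(\R,n)$ yields an immersed image; to upgrade this to embedded, the paper assumes a sequence $X_r\in\mathcal{Q}_{B_{n,p}(0)}$ converges to $B_{n,p}(0)$ while avoiding a small slice, passes to the ambient complex orbit and uses the special chart of Lemma~\ref{neigh} to write $X_r=\exp(-v_r)B_{n,p}(0)\exp(v_r)$ with $v_r\to 0$, and then applies Lemma~\ref{reeal} (with $\mathcal{A}=\mat(\R,n)$) to replace $\exp(-v_r)$ by real conjugators $C_r\to\Id$, contradicting the assumption.

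Your route is shorter and more conceptual: it bootstraps immediately from Theorem~\ref{embedd} via a standard complex-geometry fact, and it even yields the dimension for free without a separate rank computation. The cost is an appeal to a result (fixed loci of antiholomorphic involutions are half-dimensional real submanifolds) that is not established in the paper, plus a small unstated step---the fixed-point principle gives $\mathcal{O}_p^{\R}$ embedded in $\mathcal{O}_{B_{n,p}(0)}$, and one must still note that an embedded submanifold of an embedded submanifold, contained in $\mat(\R,n)$, is embedded in $\mat(\R,n)$. The paper's approach is more self-contained and, crucially, the machinery of Lemmas~\ref{neigh} and~\ref{reeal} is reused verbatim in the harder Theorems~\ref{Rcen}, \ref{Hnil} and~\ref{Hcen}, so the investment pays off downstream. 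Your involution argument would extend to $\mathcal{H}^P_n$ as well (it is the fixed set of $Z\mapsto -S\overline{Z}S$), but adapting it to the center-subspace theorems, where the relevant strata are not single conjugacy orbits, would require additional work.
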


\begin{thr}\label{Rcen}
The set of all matrices in $\mathcal{R}^P_n$ with a purely imaginary spectrum consists of a finite number of conjugacy invariant embedded manifolds. Exactly one of these has real dimension $n^2 -\lceil \frac{n}{2} \rceil$, whereas the others have dimension strictly less.
\end{thr}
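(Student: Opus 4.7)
The plan is to closely parallel the proof of Theorem \ref{Ccen}, using the real Jordan normal form. A real matrix with purely imaginary spectrum is characterised by an algebraic multiplicity $m_0\geq 0$ of the eigenvalue $0$ with a partition $p^{(0)}\in\mathcal{P}(m_0)$, together with a finite list of distinct conjugate pairs $\pm ix_j$ ($x_j>0$), each carrying a complex Jordan partition $p^{(j)}\in\mathcal{P}(m_j)$, subject to $m_0+2\sum_{j=1}^{k}m_j=n$. These data form a finite index set $\Xi^R_n$, and for each $\xi\in\Xi^R_n$ an open parameter space $V_\xi=\{(x_1,\dots,x_k)\in\R_{>0}^{k}:x_i\neq x_j\text{ if }i\neq j\}$, together with a real block-diagonal normal form $B_\xi(x)\in\mathcal{R}^P_n$ consisting of the nilpotent block $B_{m_0,p^{(0)}}(0)$ and, for each pair, a real Jordan structure built out of the $2\times 2$ rotation $C(x_j):=\begin{pmatrix}0 & x_j\\ -x_j & 0\end{pmatrix}$ and $2\times 2$ identity shifts.

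First I would set $\mathcal{O}_\xi:=\{AB_\xi(x)A^{-1}\mid A\in\Gl(\R,n),\,x\in V_\xi\}$ and, via the uniqueness of the real Jordan form, verify that these finitely many sets (after identifying types that differ only by a permutation of the pair labels) give a disjoint cover of all matrices in $\mathcal{R}^P_n$ with purely imaginary spectrum. The next step is to prove real analogs of Lemmas \ref{biggest} and \ref{neigh}. Since the eigenvalues attached to different blocks of $B_\xi(x)$ are distinct, Lemma \ref{cominv} shows that the off-diagonal components of $\ad_{B_\xi(x)}$ are isomorphisms with image independent of $x$, while the diagonal components reduce to the $x$-independent situation of Lemma \ref{dimnil0}. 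The real replacement for the complex translation $I_p(z)$ is a space of perturbations $I_\xi(y)\in\mathcal{R}^P_n$ that are block-diagonal with $y_j C(1)$ in the $j$-th pair-block; these satisfy $B_\xi(x)+I_\xi(y)=B_\xi(x+y)$ and lie in $\ker\ad_{B_\xi(x)}$, so that Lemma \ref{neigh} (whose proof is field-independent) supplies a local chart for $\mathcal{O}_\xi$ around any $B_\xi(x)$.

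To obtain embeddedness globally, I would transplant the contradiction argument used in the proof of Theorem \ref{Ccen}. At $x=0$ each rotation $C(x_j)$ collapses to $0$, so $B_\xi(0)$ is a real nilpotent matrix, whose Jordan partition is the concatenation of $p^{(0)}$ with two copies of each $p^{(j)}$; by Theorem \ref{Rnil} its real conjugacy orbit is an embedded submanifold of $\mathcal{R}^P_n$. Applying Lemma \ref{neigh} to $B_\xi(0)$ with a complement $\tilde{U}\supset U$ to $\im\ad_{B_\xi(0)}$ that also contains the parameter directions $I_\xi(y)$, the same reasoning forces any sequence $X_r\to B_\xi(x^0)$ of elements of $\mathcal{O}_\xi$ to have, for $r$ large, the form $\exp(-v_r)B_\xi(x^0+y_r)\exp(v_r)$ with $y_r\in\R^k$, thereby establishing that $\mathcal{O}_\xi$ is embedded of real dimension $k+\dim_\R\im\ad_{B_\xi(x)}$.

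Finally I would optimise. Because blocks for distinct eigenvalues centralise independently (again Lemma \ref{cominv}), the real dimension of $\ker\ad_{B_\xi(x)}$ splits as a sum of the centraliser dimensions of the individual diagonal blocks. Using the classical formula $\sum_{a,b}\min(p_a,p_b)$ for the centraliser dimension of a nilpotent Jordan matrix with partition $p$ (and twice this quantity for a pair-block, accounting for the conjugate eigenvalue after complexification), the minimum of $\dim_\R\ker\ad_{B_\xi(x)}$ at fixed $(m_0,m_1,\dots,m_k)$ is attained precisely when every $p^{(\bullet)}$ is a one-block partition, with minimum value $m_0+2\sum_jm_j=n$. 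Maximising $k$ subject to $m_j\geq 1$ and $m_0+2\sum_jm_j=n$ then forces $m_j=1$ for every $j$ and $m_0\in\{0,1\}$ of the same parity as $n$, giving $k_{\max}=\lfloor n/2\rfloor$ and maximal orbit dimension $n^2-n+\lfloor n/2\rfloor=n^2-\ceil{n/2}$, achieved by a unique $\xi$. The main obstacle will be the case analysis in this last step, in particular verifying that any deviation from the unique maximiser strictly decreases $k-\dim_\R\ker\ad_{B_\xi(x)}$; enlarging any one $p^{(j)}$ beyond $(1)$ costs at least one in $k$ without reducing the centraliser, while refining any one $p^{(\bullet)}$ away from a single block strictly enlarges the centraliser, so both moves are penalised.
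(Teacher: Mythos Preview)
Your approach differs genuinely from the paper's. The paper does not use the real Jordan form directly; instead it writes each real matrix with purely imaginary spectrum as $Z_{n,m} D_{n,\xi,q}(x) Z_{n,m}^{-1}$, where $D_{n,\xi,q}(x) = B_\xi(x) \oplus \overline{B_\xi(x)} \oplus B_{n-2m,q}(0)$ is a \emph{complex} block-diagonal matrix and $Z_{n,m}$ is a fixed complex change of basis making this real. For embeddedness the paper pushes the problem into $\mat(\C,n)$, invokes the already-proven Theorem \ref{Ccen} to obtain the chart $Y_r = \exp(-v_r)(B_{\xi'}(y) + I_{\xi'}(z^r))\exp(v_r)$ there, verifies by an eigenvalue-pairing argument that $z^r$ has the required conjugate symmetry, and finally uses Lemma \ref{reeal} to replace the complex conjugating matrices by real ones. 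Your intrinsic real-Jordan route is more direct and your dimension count is correct; the paper's detour through $\mat(\C,n)$ buys the ability to reuse Theorem \ref{Ccen} wholesale.

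There is, however, a gap in your embeddedness step where you invoke ``the same reasoning'' as Theorem \ref{Ccen}. The heart of that complex proof is the eigenvalue shift: from $(B_\xi(x)+u_r)_j \sim (B_\xi(x^r))_{\sigma(j)}$ one subtracts $x^r_{\sigma(j)} i\,\Id$ from both sides to land in the nilpotent orbit of $B_\xi(0)$, and conjugacy survives because $\Id$ is central. Over $\R$ the analogous shift is by $x^r_{\sigma(j)} D_j$, with $D_j$ the block of rotations $C(1)$ inside the $j$-th pair-block; since $D_j$ is not central in $\mat(\R,2m_j)$, subtracting it on both sides of a real conjugacy need not produce a conjugacy. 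A repair is available: one checks (via Lemma \ref{cominv} after complexifying, using that $\pm ix_j$ are distinct) that $\ad_{B_\xi(x)}$ is surjective on the $D$-anticommuting part of each pair-block, so the transversal $U$ may be chosen to commute with $D$; then $(B_\xi(x)+u_r)_j$ and $D_j$ live in a copy of $\mat(\C,m_j)$, a continuity argument fixes the sign of the complex eigenvalue, and the shift becomes the usual complex one. But this is genuinely additional work beyond ``the same reasoning'', and it is precisely what the paper's passage through $\mat(\C,n)$ and Lemma \ref{reeal} is designed to avoid.
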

\noindent These results will follow from the analogous results for $\mathcal{C}^P_n$. An important ingredient here is the following lemma.

\begin{lem}\label{reeal}
Let $\mathcal{A} \subset \mat(\C,n)$ be a real subalgebra of matrices such that 

\[\mat(\C,n) = \mathcal{A} \oplus i \mathcal{A}\]
as real vector spaces.  Let $A,B \in \mathcal{A}$ be two conjugate matrices. Then they are also conjugate using an element in $\mathcal{A}$. More precisely, if there exists an $X \in \Gl(\C,n)$ such that $A = XBX^{-1}$, then there also exists a $C \in \mathcal{A} \cap \Gl(\C,n)$ such that $A = CBC^{-1}$. Moreover, writing $X = X_1 + iX_2$ for $X_1, X_2 \in \mathcal{A}$ and choosing $\epsilon > 0$, $C$ can be chosen such that $||C - X_1|| < \epsilon$. Here, $||\cdot||$ denotes (for example) the matrix norm, $||X||^2 := \tr(X^T\overline{X})$.
\end{lem}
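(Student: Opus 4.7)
The plan is to use the hypothesis $\mat(\C,n) = \mathcal{A} \oplus i\mathcal{A}$ to split the conjugacy equation $AX = XB$ into real and imaginary parts, and then find an invertible element in a real one-parameter family of solutions. First I would write $X = X_1 + iX_2$ with $X_1, X_2 \in \mathcal{A}$, which is possible and unique by the direct sum assumption. The equation $AX = XB$ then reads
\[
AX_1 + i\,AX_2 = X_1 B + i\, X_2 B.
\]
Since $A, B, X_1, X_2 \in \mathcal{A}$ and $\mathcal{A}$ is a (real) subalgebra, the four products $AX_1$, $X_1 B$, $AX_2$, $X_2 B$ all lie in $\mathcal{A}$. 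Using that $\mathcal{A} \cap i\mathcal{A} = \{0\}$, I can equate the $\mathcal{A}$- and $i\mathcal{A}$-components to obtain the two real identities
\[
AX_1 = X_1 B, \qquad AX_2 = X_2 B.
\]

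Next I would consider the one-parameter family $C_t := X_1 + tX_2 \in \mathcal{A}$ for $t \in \R$. By linearity, each $C_t$ satisfies $AC_t = C_t B$, so any invertible $C_t$ will do the job. To locate such a $t$, consider the polynomial $p(z) := \det(X_1 + zX_2) \in \C[z]$, which has degree at most $n$. Since $p(i) = \det(X) \neq 0$, $p$ is not identically zero and therefore has only finitely many roots in $\C$, in particular only finitely many real roots. Hence $C_t$ is invertible for all but finitely many $t \in \R$.

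To also meet the norm condition, I would simply choose $t \in \R$ with $0 < |t| < \epsilon / (1 + \|X_2\|)$ and $t$ not a root of $p$; then $C := C_t \in \mathcal{A} \cap \Gl(\C,n)$, it conjugates $B$ to $A$, and $\|C - X_1\| = |t|\, \|X_2\| < \epsilon$, as required.

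There is essentially no main obstacle: the entire argument rests on the single observation that the direct-sum decomposition $\mat(\C,n) = \mathcal{A} \oplus i\mathcal{A}$ combined with $\mathcal{A}$ being a subalgebra allows the intertwiner equation to be separated into real and imaginary parts, after which $\det(X_1 + zX_2)$ being a nonzero polynomial (because it does not vanish at $z = i$) finishes the job by a standard perturbation. The only point requiring a small amount of care is making sure that both $AX_j$ and $X_j B$ indeed stay in $\mathcal{A}$, which follows directly from $\mathcal{A}$ being closed under multiplication.
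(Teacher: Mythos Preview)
Your proof is correct and follows essentially the same approach as the paper: decompose $X=X_1+iX_2$, use the direct-sum hypothesis and the subalgebra property to split $AX=XB$ into $AX_j=X_jB$, and then pick a real $t$ avoiding the finitely many roots of $\det(X_1+zX_2)$. Your use of the bound $|t|<\epsilon/(1+\|X_2\|)$ is a minor cosmetic variant that avoids treating the case $X_2=0$ separately, but otherwise the argument is the same.
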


\begin{proof}
Write $X = X_1 + iX_2$ for $X_1, X_2 \in \mathcal{A}$. From $A = XBX^{-1}$ it follows that $AX = XB$, and hence that $AX_1 + iAX_2 = X_1B + iX_2B$. Comparing parts in $\mathcal{A}$ and  $i\mathcal{A}$, we see that both $AX_1 = X_1B$ and $AX_2 = X_2B$ hold. In particular, for any $\la \in \R$ it holds that $A(X_1 + \la X_2) = (X_1 + \la X_2)B$. Therefore, it remains to show that $X_1 + \la X_2 \in \mathcal{A} $ is invertible for arbitrarily small values of $\la$, in which case we set $C := X_1 + \la X_2$. To this end, consider the polynomial in $\la$ given by $\det(X_1 + \la X_2)$. This polynomial cannot be identically $0$, as we have $\det(X_1 + i X_2) = \det(X) \not= 0$. Therefore, there are only finitely many values of $\la$ for which $\det(X_1 + \la X_2) = 0$. We conclude that there are real values of $\la$ arbitrarily close to $0$ for which $\det(X_1 + \la X_2) \not= 0$. If $X_2 = 0$ we set $C = X_1 = X$. Otherwise, choose $0  \leq \la < \epsilon ||X_2||^{-1}$ and such that $\det(X_1 + \la X_2) \not= 0$. Then setting $C := X_1 + \la X_2$, we have $||C - X_1|| = \la||X_2|| < \epsilon ||X_2||^{-1} ||X_2|| = \epsilon$. This proves the lemma.
\end{proof}

\begin{proof}[Proof of Theorem \ref{Rnil}]

\noindent Just as in the case of $\mathcal{C}^P_n$, our manifolds will be the conjugacy orbits of the elements $B_{n,p}(0)$ for $p \in \mathcal{P}(n)$:

\[ \mathcal{Q}_{B_{n,p}(0)} := \{AB_{n,p}(0)A^{-1} \mid A \in \Gl(\R,n)\} \, .\]
Equivalently, $\mathcal{Q}_{B_{n,p}(0)}$ is the image of the map $A \in \Gl(\R,n) \mapsto AB_{n,p}(0)A^{-1}$, which has constant rank equal to $\dim_{\R} \im (\ad_{B_{n,p}(0)}|_{\mat(\R,n)})$. Since $B_{n,p}(0)$ is a real matrix, it follows that 
 \[\im(\ad_{B_{n,p}(0)}|_{\mat(\C,n)}) = \im(\ad_{B_{n,p}(0)}|_{\mat(\R,n)}) \oplus i\im(\ad_{B_{n,p}(0)}|_{\mat(\R,n)}) \, .\]
 \noindent From this we conclude that $\dim_{\R} \im (\ad_{B_{n,p}(0)}|_{\mat(\R,n)}) \leq n^2 - n$, with equality only when $p = (n)$. Therefore, it follows that every $\mathcal{Q}_{B_{n,p}(0)}$ is an immersed submanifold of the proposed dimension. In particular, there exists an open set $S \subset \Gl(\R,n)$ containing $\Id$ such that $\{AB_{n,p}(0)A^{-1} \mid A \in S\} $ is an embedded submanifold of dimension $\dim_{\R} \im (\ad_{B_{n,p}(0)}|_{\mat(\R,n)})$ containing $B_{n,p}(0)$. It remains to show that for a small enough neighborhood $T \subset \mat(\R,n)$ containing $B_{n,p}(0)$, any element in $ \mathcal{Q}_{B_{n,p}(0)} \cap T$ lies in $\{AB_{n,p}(0)A^{-1} \mid A \in S\} $. \\
 
 \noindent Assume the converse. Then there exists a sequence of elements $X_r \in \mathcal{Q}_{B_{n,p}(0)} \setminus \{AB_{n,p}(0)A^{-1} \mid A \in S\}$ such that $\displaystyle \lim_{r \rightarrow \infty} X_r = B_{n,p}(0)$. This same sequence then exists in $\mat(\C,n)$. Applying Lemma \ref{neigh} we find open neighborhoods $W_V \subset V$ containing $0$ and $W \subset \mat(\C,n)$ containing $B_{n,p}(0)$ such that any element of $\mathcal{Q}_{B_{n,p}(0)} \cap W \subset \mathcal{O}_{B_{n,p}(0)} \cap W$ can be written as $\exp(-v)B_{n,p}(0)\exp(v)$ for $v \in W_V$. Here, $V$ is a complex linear space satisfying  $V \oplus \ker \ad_{B_{n,p}(0)} = \mat(\C,n)$. We therefore write $X_r = \exp(-v_r)B_{n,p}(0)\exp(v_r)$ for large enough $r$. It also follows from Lemma \ref{neigh} that $\displaystyle \lim_{r \rightarrow \infty} X_r = B_{n,p}(0)$ implies $\displaystyle \lim_{r \rightarrow \infty} v_r = 0$. Hence we have that $\displaystyle \lim_{r \rightarrow \infty} \exp(-v_r) = \Id$. By applying Lemma \ref{reeal} with $\mathcal{A} = \mat(\R,n)$, we find matrices $C_r \in \mat(\R,n)$ such that $X_r = C_rB_{n,p}(0)C_r^{-1}$. As (the real part of) $\exp(-v_r)$ goes to $\Id$, we may arrange for the $C_r$ to have the same property. However, then for big enough $r$ we find that $C_r \in S$, contradicting that $X_r \notin \{AB_{n,p}(0)A^{-1} \mid A \in S\}$. We conclude that $\mathcal{Q}_{B_{n,p}(0)}$ is an embedded manifold around $B_{n,p}(0)$, and hence by homogeneity globally. This proves the theorem. 
\end{proof}

\noindent To prove Theorem \ref{Rcen}, we will first introduce the matrices that serve to label the relevant manifolds. Given $m \in \N$, let $\xi \in \Xi_m$ be given by $\xi = (p; p_1, \dots p_l)$. Recall that this means that $p = (s_1, \dots s_l) \in \mathcal{P}(m)$ is a partition of $m$ in $l$ numbers, whereas each $p_i$ is an element of $\mathcal{P}(s_i)$ for $1 \leq i \leq l$. We define the open set 
\begin{align}
W_{\xi} = \{x \in \R^l \mid x_i > 0, x_i \not= x_j  \text{ for all } i, j \in \{1, \dots l\} \text{ such that } i \not= j\}\, .
\end{align}
Next, we fix a number $n \in \N$. Given $m \in \{1, \dots \lfloor \frac{n}{2} \rfloor\}$, $\xi \in \Xi_m$, $x \in W_{\xi}$ and $q \in \mathcal{P}(n-2m)$ we furthermore define the $n$ times $n$ matrices 

\begin{align}
D_{n,\xi,q}(x) :=& 
\begin{pmatrix}
B_{\xi}(x)  &  0  & 0 \\
0 & \overline{B_{\xi}(x)}  & 0 \\
0 & 0 & B_{n-2m,q}(0)
\end{pmatrix}  \\ =&
\begin{pmatrix}
B_{\xi}(x)  &  0  & 0 \\
0 & B_{\xi}(-x)  & 0 \\
0 & 0 & B_{n-2m,q}(0)
\end{pmatrix}\, .
\end{align}
Note that $D_{n,\xi,q}(x)$ is conjugate to $B_{\xi'}(y)$ for some choice of $\xi' \in \Xi_n$ and $y \in V_{\xi'}$ with entries those in $x$, in $-x$ and possibly $0$. This conjugation can be done by a permutation matrix that depends only on $\xi$ and $m$ and is just an artifact of our convention to have permutations ordered. We also define the $n$ times $n$ matrices

\begin{align}
Z_{n,m} := 
\begin{pmatrix}
\Id_m  &  i\Id_m  & 0 \\
i\Id_m & \Id_m  & 0 \\
0 & 0 & \Id_{n-2m} 
\end{pmatrix}\, .
\end{align}
These have the property that for any complex matrix $X$ of the form

\begin{align}
X := 
\begin{pmatrix}
Y  &  0  & 0 \\
0 & \overline{Y}  & 0 \\
0 & 0 & W
\end{pmatrix}\, ,
\end{align}
where $Y$ is a complex $m$ times $m$ matrix and $W$ is a real $n-2m$ times $n-2m$ matrix, the matrix $Z_{n,m} X Z^{-1}_{n,m}$ is real (i.e. has real entries). Lastly for $\xi = (p; p_1, \dots p_l) \in \Xi_m$ and $z \in \C^l$, we define the matrices 

\begin{align}
\bar{I}_{\xi}(z) := 
\begin{pmatrix}
I_{p}(z)  &  0  & 0 \\
0 & \overline{I_{p}(z)}  & 0 \\
0 & 0 & 0
\end{pmatrix}\, .
\end{align}

\begin{proof}[Proof of Theorem \ref{Rcen}]
Note that every real matrix with a purely imaginary spectrum is either nilpotent, or contained in one of the sets

\[ \mathcal{Q}_{\xi,q} := \{AZ_{n,m}D_{n,\xi,q}(x)Z_{n,m}^{-1}A^{-1} \mid A \in \Gl(\R,n), x \in W_{\xi}\} \subset \mat(\R,n) \, .\]
Here we have  $\xi \in \Xi_m$ and $q \in \mathcal{P}(n-2m)$, where $m$ may furthermore vary from $1$ to $\lfloor \frac{n}{2} \rfloor$. It can again be seen that two sets $\mathcal{Q}_{\xi,q}$ and $\mathcal{Q}_{\xi',q'}$ are either the same or disjoint. The set $\mathcal{Q}_{\xi,q}$ is equal to the image of the smooth map

\begin{align}
\Psi_{\xi,q}: \Gl(\R,n) \times W_{\xi} &\rightarrow \mat(\R,n) \\ \nonumber
(A,x) & \mapsto AZ_{n,m}D_{n,\xi,q}(x)Z_{n,m}^{-1}A^{-1}  \nonumber\, .
\end{align}
We will first show that this map has constant rank, thereby showing that its image is an immersed manifold of the proposed dimension. After that, we show that it is an embedded manifold, by comparing to the complex case similarly to what we did in the proof of Theorem \ref{Rnil}. \\
\noindent We fix a point $(A,x) \in \Gl(\R,n) \times W_{\xi}$ and a direction $(V,w) \in \mat(\R,n) \oplus \R^l$. A curve through $(A,x)$ with velocity $(V,w)$ is then given by \\  \noindent $t \mapsto (A\exp(tA^{-1}V), x+tw)$ and we find

\begin{align}
&T_{(A,x)}\Psi_{\xi,q}(V,w) \nonumber  \\
&= \left. \frac{d}{dt} \right|_{t=0}A\exp(tA^{-1}V)Z_{n,m}D_{n,\xi,q}(x+tw)Z_{n,m}^{-1}\exp(-tA^{-1}V)A^{-1} \nonumber \\
&= A[A^{-1}V, Z_{n,m}D_{n,\xi,q}(x)Z_{n,m}^{-1}]A^{-1} + AZ_{n,m}\bar{I}_{\xi}(iw) Z_{n,m}^{-1}A^{-1} \, .
\end{align} 
As conjugating by $A$ does not change the dimension of a space, and as $A^{-1}V$ varies over the real matrices as $V$ does, we see that the rank of the linearization is independent of $A$. We therefore set $A$ equal to the identity. It remains to determine the dimension of the real space

\begin{equation}\label{tanggspace}
 \{ [V, Z_{n,m}D_{n,\xi,q}(x)Z_{n,m}^{-1}]  + Z_{n,m}\bar{I}_{\xi}(iw) Z_{n,m}^{-1} \mid V \in \mat(\R,n), w \in \R^l\} \, . 
 \end{equation}
 
 \noindent First suppose a matrix $B$ is both of the form $Z_{n,m}\bar{I}_{\xi}(iw) Z_{n,m}^{-1}$ for some $w \in \R^l$ and of the form $[V, Z_{n,m}D_{n,\xi,q}(x)Z_{n,m}^{-1}]$ for some $V \in \mat(\R,n)$. Then $Z_{n,m}^{-1}B Z_{n,m}$ is a complex matrix that can be written as $\bar{I}_{\xi}(iw)$ and as $[V', D_{n,\xi,q}(x)]$ for some $V' \in \mat(\C,n)$. This is a contradiction to the fact that the diagonal blocks of any element of the form $[V', D_{n,\xi,q}(x)]$ have vanishing trace, unless $w=0$ and hence $B=0$ (compare to the proof of Lemma \ref{biggest}). We conclude that the space in \eqref{tanggspace} is a direct sum of its two components. Clearly we have that the real dimension of 
 
 \[ \{Z_{n,m}\bar{I}_{\xi}(iw) Z_{n,m}^{-1} \mid  w \in \R^l\} \]
 equals $l$. Furthermore, as $Z_{n,m}D_{n,\xi,q}(x)Z_{n,m}^{-1}$ is a real matrix, we have that the real dimension of 
 \[  \{ [V, Z_{n,m}D_{n,\xi,q}(x)Z_{n,m}^{-1}]  \mid V \in \mat(\R,n)\}  \]
 is equal to the complex dimension of 
 \[  \{ [V, Z_{n,m}D_{n,\xi,q}(x)Z_{n,m}^{-1}]  \mid V \in \mat(\C,n)\} \, . \] 
 This latter space has the same complex dimension as the space
 \[  \{ [V, D_{n,\xi,q}(x)]  \mid V \in \mat(\C,n)\} \] 
 which we know from Lemma \ref{biggest} to be independent of $x$, and furthermore at most equal to $n^2 - n$ (recall that $D_{n,\xi,q}(x)$ is conjugate to $B_{\xi'}(y)$ for some $\xi' \in \Xi_n$ and $y \in V_{\xi'}$). By the constant rank theorem, every set $\mathcal{Q}_{\xi,q}$ is an immersed manifold of real dimension at most $n^2 - n + \lfloor \frac{n}{2} \rfloor = n^2 - \lceil \frac{n}{2} \rceil$. Furthermore, to get this exact number, we need to have that $m = \lfloor \frac{n}{2} \rfloor$ and that $\xi = ((1, \dots 1); (1), \dots (1)) \in \Xi_m$. This also fixes $q$ to be either $(1)$ (if $n$ is odd) or empty, (if $n$ is even). In both cases all eigenvalues of $D_{n,\xi,q}(x)$ are different, and we see that the dimension of the image of its adjoint operator is indeed equal to $n^2 - n$. We conclude that the maximal value of $n^2 - \lceil \frac{n}{2} \rceil$ is attained in exactly one case. Note that the dimension of any nilpotent orbit is at most $n^2-n$, which is less than $n^2 - \lceil \frac{n}{2} \rceil$ for $n > 1$. If $n=1$ then the nilpotent matrices are the matrices with a purely imaginary spectrum, both sets being equal to $\{0\}$. \\
\noindent Next, we prove that $\mathcal{Q}_{\xi,q}$ is in fact an embedded manifold of $\mat(\R,n)$. To this end, we fix $x \in W_{\xi}$. By the constant rank theorem, there exist open neighborhoods $S \subset \Gl(\R,n)$ containing $\Id$ and $T \subset W_{\xi}$ containing $x$ such that 

\[ \{ \Psi_{\xi,q}(A,y)  \mid A \in S, y \in T \} \subset  \mat(\R,n) \]
is an embedded manifold of real dimension equal to the rank of the derivative of $\Psi_{\xi,q}$. It remains to show that for $S$ and $T$ sufficiently small, there are no other elements of  $\mathcal{Q}_{\xi,q}$ nearby. Assuming the converse, we get a sequence of real matrices $(X_r)_{r=0}^{\infty}$ in $\mathcal{Q}_{\xi,q}$ limiting $Z_{n,m}D_{n,\xi,q}(x)Z_{n,m}^{-1}$ that are not in this embedded manifold. Pick a permutation matrix $P$ such that $PD_{n,\xi,q}(x)P^{-1} =: B_{\xi'}(y)$ for some $\xi' \in \Xi_n$ and $y \in V_{\xi}$. This permutation matrix just reorders the blocks, so that their sizes are decreasing. We define $Y_r := PZ_{n,m}^{-1}X_rZ_{n,m}P^{-1}$, so that the limit of $Y_r$ is equal to $B_{\xi'}(y)$. Note that the $Y_r$ may not be real matrices anymore. From the Jordan normal form we see that $\mathcal{Q}_{\xi,q} \subset \mathcal{O}_{\xi'}$. Therefore, $X_r$ describes a sequence in $\mathcal{O}_{\xi'}$. By conjugacy invariance of $\mathcal{O}_{\xi'}$,  so does $Y_r$. By the conclusion at the end of the proof of Theorem \ref{Ccen}, we see that we may write 

\begin{align}
Y_r = \exp(-v_r)(B_{\xi'}(y) + I_{\xi'}(z^r))\exp(v_r) \,
\end{align}
for certain complex matrices $v_r$ and with $z^r \in (i\R)^k$. Here, $k$ is determined by $\xi' = ((s_1, \dots s_k); p_1, \dots p_k)$. It furthermore holds that $\displaystyle \lim_{r \rightarrow \infty} v_r = 0$ and $\displaystyle \lim_{r \rightarrow \infty} z^r = 0$. \\
Our next step is to show that for sufficiently large values of $r$, the matrices $P^{-1}(B_{\xi'}(y) + I_{\xi'}(z^r))P$ are of the form $D_{n,\xi,q}(x^r)$ for some $x^r \in W_{\xi}$. If this holds, then the matrices $Z_{n,m}P^{-1}(B_{\xi'}(y) + I_{\xi'}(z^r))PZ_{n,m}^{-1} = Z_{n,m}D_{n,\xi,q}(x^r)Z_{n,m}^{-1}$ are real. As they are furthermore conjugate to the real matrices $X_r,$ we conclude from Lemma \ref{reeal} that this conjugation can be done by real matrices as well. We will then finish the proof by showing that this leads to a contradiction. \\
\noindent To show that the matrices $P^{-1}(B_{\xi'}(y) + I_{\xi'}(z^r))P$ are of the form $D_{n,\xi,q}(x^r)$, we need to show that the eigenvalues in the different blocks of $B_{\xi'}(y) + I_{\xi'}(z^r)$ satisfy a property that states which pairs of blocks have eigenvalues with opposite sign. Motivated by this, we say that an element  $v \in \C^k$ satisfies the \textit{real-property} if there exists a function $\tau:  \{1, \dots k\} \rightarrow \{1, \dots k\}$  such that $v_{j} = -v_{\tau(j)}$ for all indices $j$. Note that  the eigenvalues in the blocks of $B_{\xi'}(y) + I_{\xi'}(z^r)$ have this property, as they are conjugate to the real matrices $X_r \in \mathcal{O}_{\xi'}$. Likewise, $y \in V_{\xi'} \subset \C^k$ has the real-property, for some involution $\tau_0$. In fact, there is only one  function from $\{1, \dots k\}$ to itself for which $y$ has this property. For, if $\tau_1$ is another, and we have $\tau_0(j) \not= \tau_1(j)$ for some index $j$, then $y_{\tau_0(j)} = -y_j = y_{\tau_1(j)}$. However, as the entries of $y$ are just those of $x \in W_{\xi}$, minus those and perhaps $0$, the entries of $y$ are all different. This shows that such a $j$ cannot exist, and therefore that $\tau_0$ is unique. The same therefore holds for $iy$\\
\noindent Now, all the elements in $\C^k$ that satisfy the real-property form a set that is the union of a finite number of hyperplanes. These hyperplanes are indexed by all the possible functions from $\{1, \dots k\}$ to itself, and are all of a strictly smaller dimension than $\C^k$. Since $iy$ lies in exactly one such hyperplane, its distance to the other hyperplanes is strictly positive. Therefore, as the elements $iy + z^r$ have the real-property and limit $iy$, they too will lie in the hyperplane indexed by $\tau_0$ for large enough values of $r$. This shows that for large enough values of $r$, the eigenvalues of $B_{\xi'}(y) + I_{\xi'}(z^r)$ are paired correctly, and we may write $P^{-1}(B_{\xi'}(y) + I_{\xi'}(z^r))P = D_{n,\xi,q}(x^r)$ for some $x^r \in W_{\xi}$.

\noindent Returning to the $X_r$, we have

\begin{align}
X_r &= Z_{n,m}P^{-1}\exp(-v_r)(B_{\xi'}(y) + I_{\xi'}(z^r))\exp(v_r)PZ_{n,m}^{-1} \\ \nonumber
&= A_rZ_{n,m}P^{-1}(B_{\xi'}(y) + I_{\xi'}(z^r))PZ_{n,m}^{-1}A_r^{-1} \\ \nonumber
&= A_rZ_{n,m}D_{n,\xi,q}(x^r)Z_{n,m}^{-1}A_r^{-1} \, , \nonumber
\end{align}
for $A_r := Z_{n,m}P^{-1}\exp(-v_r)PZ_{n,m}^{-1}$. As $v_r$ goes to $0$, we see that the limit of $A_r$ is the identity. By Lemma \ref{reeal}, there exist real matrices $C_r$ such that

\begin{align}
X_r = C_rZ_{n,m}D_{n,\xi,q}(x^r)Z_{n,m}^{-1}C_r^{-1} =  \Psi_{\xi,q}(C_r,x_r) 
\end{align}
with furthermore $\displaystyle \lim_{r \rightarrow \infty} C_r = \Id$. Since it also holds that $\displaystyle \lim_{r \rightarrow \infty} x_r = x$, we see that $(C_r,x_r) \in S\times T$ for large enough $r$. This contradicts our assumption, and hence $\mathcal{Q}_{\xi,q}$ is locally around $Z_{n,m}D_{n,\xi,q}(x)Z_{n,m}^{-1}$ an embedded manifold. By homogeneity, it is globally an embedded manifold. This proves the theorem.
\end{proof}

\noindent Note that it follows from the proof of Theorem \ref{Rcen} that the unique manifold of highest dimension consists exactly of those matrices with no double eigenvalues. 

\subsection{The Case $\mathcal{H}^P_n$} \label{The case Hpn}
Recall that
\begin{align}
\mathcal{H}^P_n :=\left \{  \begin{pmatrix}
  X&Y\\
  -\overline{Y} & \overline{X}
 \end{pmatrix}, \, X, Y \in \mat(\C,n) \right \} \subset \mat(\C,2n) 
\end{align}
satisfies $\mat(\C,2n) = \mathcal{H}^P_n \oplus i\mathcal{H}^P_n$ as real vector spaces. Recall also that 
 \begin{equation}
\mathcal{H}^P_n = \{Z \in \mat(\C,2n) \text{ such that } SZ = \overline{Z}S\} \, ,
 \end{equation}
 for 
 \begin{equation}
S = \begin{pmatrix}
 0 & \Id_n \\
 -\Id_n & 0 
 \end{pmatrix} \, .
 \end{equation}
This matrix satisfies $S^2 = -\Id_{2n}$. Our aim is to prove the following theorems. 

\begin{thr}\label{Hnil}
The set of all nilpotent matrices in $\mathcal{H}^P_n$ consists of a finite number of conjugacy invariant embedded manifolds. Exactly one of these has real dimension $4n^2 -4n$, whereas the others have dimension strictly less.
\end{thr}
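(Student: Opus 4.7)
The plan is to mimic the proof of Theorem \ref{Rnil} almost verbatim, replacing the real-complex pair $(\mat(\R,n), \mat(\C,n))$ by $(\mathcal{H}^P_n, \mat(\C,2n))$ and exploiting that Lemma \ref{reeal} applies with $\mathcal{A} = \mathcal{H}^P_n$, since $\mat(\C,2n) = \mathcal{H}^P_n \oplus i\mathcal{H}^P_n$ as real vector spaces. For each partition $p = (s_1,\dots,s_k) \in \mathcal{P}(n)$ I would choose the representative
\[
N_p := \begin{pmatrix} B_{n,p}(0) & 0 \\ 0 & B_{n,p}(0) \end{pmatrix} \in \mathcal{H}^P_n,
\]
which lies in $\mathcal{H}^P_n$ because $B_{n,p}(0)$ is real, and define the orbit $\mathcal{Q}^H_p := \{A N_p A^{-1} \mid A \in \mathcal{H}^P_n \cap \Gl(\C,2n)\}$. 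By the quaternionic Jordan form (Jordan form over the division algebra $\mathbb{H}$), every nilpotent element of $\mathcal{H}^P_n$ lies in exactly one $\mathcal{Q}^H_p$; distinct partitions $p$ give disjoint orbits because the associated complex Jordan types $(s_1,s_1,\dots,s_k,s_k)$ are different.

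Next I would show that each $\mathcal{Q}^H_p$ is the image of a constant-rank map. Exactly as in the proof of Theorem \ref{Cnil}, the derivative of $A \mapsto A N_p A^{-1}$ at $A$ has image $A\cdot\im(\ad_{N_p}|_{\mathcal{H}^P_n})\cdot A^{-1}$, of constant real dimension $\dim_{\R}\im(\ad_{N_p}|_{\mathcal{H}^P_n})$. Because $N_p \in \mathcal{H}^P_n$, the commutator $\ad_{N_p}$ preserves $\mathcal{H}^P_n$, and the decomposition $\mat(\C,2n) = \mathcal{H}^P_n \oplus i\mathcal{H}^P_n$ gives
\[
\im(\ad_{N_p}|_{\mat(\C,2n)}) = \im(\ad_{N_p}|_{\mathcal{H}^P_n}) \oplus i\,\im(\ad_{N_p}|_{\mathcal{H}^P_n}),
\]
so the real dimension of $\mathcal{Q}^H_p$ equals the complex dimension $\dim_{\C}\im(\ad_{N_p}|_{\mat(\C,2n)})$. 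Since $N_p$ is complex-conjugate to $B_{2n,p''}(0)$ with $p'' = (s_1,s_1,\dots,s_k,s_k)$, Lemma \ref{dimnil0} applied block by block (together with the standard formula $\dim_{\C}\ker\ad = \sum ((p'')^T_i)^2 = 4\sum(p^T_i)^2$) yields $\dim_{\R}\mathcal{Q}^H_p = 4n^2 - 4\sum_i (p^T_i)^2$. Since $p^T$ is a partition of $n$ with $p^T_i \geq 1$, one has $\sum (p^T_i)^2 \geq \sum p^T_i = n$ with equality if and only if every $p^T_i = 1$, i.e.\ $p = (n)$. This gives the bound $4n^2 - 4n$ with uniqueness of the maximal orbit.

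To upgrade \emph{immersed} to \emph{embedded} I would copy the end of the proof of Theorem \ref{Rnil}. By the constant rank theorem, pick open neighborhoods $S \subset \mathcal{H}^P_n \cap \Gl(\C,2n)$ of $\Id$ and $W \subset \mathcal{H}^P_n$ of $N_p$ such that $\{A N_p A^{-1} \mid A \in S\}$ is an embedded submanifold containing $N_p$. Suppose for contradiction that there is a sequence $X_r \in \mathcal{Q}^H_p \setminus \{A N_p A^{-1} \mid A \in S\}$ with $X_r \to N_p$. Regarded in $\mat(\C,2n)$, the $X_r$ lie in the complex conjugacy orbit $\mathcal{O}_{N_p}$, so by Lemma \ref{neigh} applied at $N_p$ they may be written as $X_r = \exp(-v_r) N_p \exp(v_r)$ with $v_r \in \mat(\C,2n)$ and $v_r \to 0$. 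Since $X_r, N_p \in \mathcal{H}^P_n$ are conjugate in $\Gl(\C,2n)$, Lemma \ref{reeal} with $\mathcal{A} = \mathcal{H}^P_n$ produces $C_r \in \mathcal{H}^P_n \cap \Gl(\C,2n)$ with $X_r = C_r N_p C_r^{-1}$ and $C_r$ arbitrarily close to the $\mathcal{H}^P_n$-part of $\exp(-v_r)$; hence $C_r \to \Id$, and eventually $C_r \in S$, a contradiction. Homogeneity of the conjugation action then promotes this local embedding at $N_p$ to a global embedding of $\mathcal{Q}^H_p$; conjugacy invariance is built into the definition.

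The main obstacle is really bookkeeping rather than a new idea: one must confirm that the quaternionic Jordan form is set up correctly (so that every nilpotent element of $\mathcal{H}^P_n$ sees exactly one $N_p$, and distinct $p$ give disjoint orbits), and that Lemma \ref{reeal} can be applied cleanly with $\mathcal{A} = \mathcal{H}^P_n$ thanks to the direct sum decomposition $\mat(\C,2n) = \mathcal{H}^P_n \oplus i\mathcal{H}^P_n$ noted after the definition of $\mathcal{H}^P_n$. Once those are in place, the argument is structurally identical to the real case.
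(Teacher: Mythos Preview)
Your proposal is correct and follows essentially the same route as the paper: the paper also uses the doubled block matrix $\tilde{B}_{n,p}(0)=N_p$, shows the orbit map has constant rank, and upgrades immersed to embedded via Lemma~\ref{neigh} in $\mat(\C,2n)$ followed by Lemma~\ref{reeal} with $\mathcal{A}=\mathcal{H}^P_n$, exactly as you describe. The only cosmetic difference is in the dimension count: the paper writes $Z=[X\mid Y]$ and observes $\ad_{\tilde{B}_{n,p}(0)}([X\mid Y])=[\ad_{B_{n,p}(0)}X\mid \ad_{B_{n,p}(0)}Y]$, then invokes Lemma~\ref{dimnil0} directly to get the bound $4n^2-4n$ with equality only for $p=(n)$, whereas you use the decomposition $\mat(\C,2n)=\mathcal{H}^P_n\oplus i\mathcal{H}^P_n$ together with the transpose-partition centralizer formula; both are valid and yield the same conclusion.
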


\begin{thr}\label{Hcen}
The set of all matrices in $\mathcal{H}^P_n$ with a purely imaginary spectrum consists of a finite number of conjugacy invariant embedded manifolds. Exactly one of these has real dimension $4n^2 -n$, whereas the others have dimension strictly less.
\end{thr}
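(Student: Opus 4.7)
The plan is to adapt the proof of Theorem \ref{Rcen}, exploiting the analogy $\mat(\C,2n) = \mathcal{H}^P_n \oplus i\mathcal{H}^P_n$, which mirrors $\mat(\C,n) = \mat(\R,n) \oplus i\mat(\R,n)$. In particular, Lemma \ref{reeal} applies directly with $\mathcal{A} = \mathcal{H}^P_n$, so any two elements of $\mathcal{H}^P_n$ conjugate in $\mat(\C,2n)$ can be conjugated by an element of $\mathcal{H}^P_n \cap \Gl(\C,2n)$ arbitrarily close to the real part of the original conjugator. The identity $SZ = \overline{Z}S$ forces the complex Jordan data of any $Z \in \mathcal{H}^P_n$ to be invariant under complex conjugation of eigenvalues, so that when the spectrum is purely imaginary the nonzero eigenvalues appear in pairs $(xi,-xi)$ with matching partitions, together with a zero-eigenvalue nilpotent piece whose possible types are classified by Theorem \ref{Hnil}.

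First I would introduce model matrices. For each $m \in \{0, \dots, n\}$, each $\xi = (p; p_1, \dots, p_l) \in \Xi_m$, each $x \in W_\xi$, and each nilpotent quaternionic model $N_q \in \mathcal{H}^P_{n-m}$ supplied by Theorem \ref{Hnil}, define
\begin{equation*}
D_{\xi,q}(x) := \begin{pmatrix} B_\xi(x) & 0 & 0 \\ 0 & \overline{B_\xi(x)} & 0 \\ 0 & 0 & N_q \end{pmatrix} \in \mat(\C,2n).
\end{equation*}
A fixed, $x$-independent change of basis $Z_{n,m} \in \Gl(\C,2n)$, analogous to the one used in the proof of Theorem \ref{Rcen}, places $Z_{n,m} D_{\xi,q}(x) Z_{n,m}^{-1}$ inside $\mathcal{H}^P_n$, and every element of $\mathcal{H}^P_n$ with purely imaginary spectrum is $\mathcal{H}^P_n$-conjugate to exactly one such element (up to reorderings that produce the same set). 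The candidate manifolds are then
\begin{equation*}
\mathcal{Q}_{\xi,q} := \{ A Z_{n,m} D_{\xi,q}(x) Z_{n,m}^{-1} A^{-1} \mid A \in \mathcal{H}^P_n \cap \Gl(\C,2n),\, x \in W_\xi\}.
\end{equation*}

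Next I would show each $\mathcal{Q}_{\xi,q}$ is an immersed submanifold of the expected dimension by a constant rank argument, using Lemma \ref{biggest}: since $D_{\xi,q}(x)$ is $\mat(\C,2n)$-conjugate to some $B_{\xi'}(y)$ with $\xi' \in \Xi_{2n}$, the complex dimension of $\im(\ad_{D_{\xi,q}(x)})$ is independent of $x$ and bounded by $(2n)^2 - 2n = 4n^2 - 2n$, with equality precisely when all Jordan blocks of $D_{\xi,q}(x)$ are $1 \times 1$ and the eigenvalues are distinct. Because $\mat(\C,2n) = \mathcal{H}^P_n \oplus i\mathcal{H}^P_n$, the real dimension of $\im(\ad|_{\mathcal{H}^P_n})$ equals this complex dimension. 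Adding the $l$ real parameters coming from $W_\xi$ yields a total of at most $4n^2 - 2n + n = 4n^2 - n$, with equality only when $m = n$, $q$ is empty, and $\xi = ((1,\dots,1); (1),\dots,(1))$, giving exactly one top-dimensional stratum.

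The main obstacle is upgrading each $\mathcal{Q}_{\xi,q}$ from immersed to embedded, following the template of Theorem \ref{Rcen}. Given a sequence $X_r \in \mathcal{Q}_{\xi,q}$ converging to the base point but failing the expected local form, one transfers via $Z_{n,m}$ and a permutation to a sequence in the ambient $\mat(\C,2n)$-orbit, which by Lemma \ref{neigh} and the conclusion of the proof of Theorem \ref{Ccen} can be written as $\exp(-v_r)(B_{\xi'}(y) + I_{\xi'}(z^r))\exp(v_r)$ with $v_r \to 0$ and $z^r \in (i\R)^k$ tending to $0$. The crux is to show that, for $r$ large, the perturbation $z^r$ respects the eigenvalue pairing dictated by $\mathcal{H}^P_n$-membership, so that $B_{\xi'}(y) + I_{\xi'}(z^r)$ actually corresponds to some $D_{\xi,q}(x^r)$ with $x^r \in W_\xi$. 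This is a ``real-property''-style hyperplane argument, but now with the involution on eigenvalue tuples induced by $S$ rather than by complex conjugation of a real matrix; the finite discrete set of possible pairing patterns is isolated, so the correct pattern is preserved near the base point. Lemma \ref{reeal} applied to $\mathcal{A} = \mathcal{H}^P_n$ then converts the complex conjugator into one in $\mathcal{H}^P_n \cap \Gl(\C,2n)$ close to the identity, producing the required contradiction. The delicate point throughout is handling the interaction between the paired nonzero blocks and the nilpotent block $N_q$ in a way that respects the $SZ = \overline{Z}S$ constraint.
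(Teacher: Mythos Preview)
Your proposal is correct and follows essentially the same route as the paper. Two minor differences worth noting: the paper builds its model matrices as $\tilde{H}_{\xi,q}(x) = \left(\begin{smallmatrix} H_{\xi,q}(x) & 0 \\ 0 & \overline{H_{\xi,q}(x)}\end{smallmatrix}\right)$, which already lie in $\mathcal{H}^P_n$, so no auxiliary conjugator $Z_{n,m}$ is needed; and the paper computes $\dim_{\R}\im(\ad_{\tilde{H}_{\xi,q}(x)}|_{\mathcal{H}^P_n})$ block by block using the $[V_1\mid V_2]$ decomposition and an extra lemma (Lemma~\ref{counth}), whereas your use of $\mat(\C,2n)=\mathcal{H}^P_n\oplus i\mathcal{H}^P_n$ to equate the real dimension with the complex one and then invoke Lemma~\ref{biggest} directly is a slight streamlining of that count.
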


\noindent The following lemma will enable us to describe those elements in $\mathcal{H}^P_n$ with a vanishing or purely imaginary spectrum. This result is known, see for example \cite{Jord}, but a relatively short proof is given for completeness. The techniques used in the proof below are known to experts, but can be hard to find in the literature.
\begin{lem}\label{normalfo}
Any element of $\mathcal{H}^P_n$ is conjugate to an element of the form

\begin{equation}\label{normaal}
 \begin{pmatrix}
 N & 0 \\
0 & \overline{N} 
 \end{pmatrix} \, .
 \end{equation}
Here, $N$ is a complex matrix in Jordan normal form. Note that the matrix of \eqref{normaal} is an element of $\mathcal{H}^P_n$. Thus, in essence, the Jordan normal form of an element in  $\mathcal{H}^P_n$ is again in $\mathcal{H}^P_n$.
\end{lem}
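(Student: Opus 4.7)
The plan is to exploit the antilinear map $\sigma: \C^{2n} \to \C^{2n}$ defined by $\sigma(v) := S\overline{v}$. Since $S$ is real with $S^2 = -\Id_{2n}$, one directly verifies $\sigma^2 = -\Id$, and the defining relation $SZ = \overline{Z}S$ for $Z \in \mathcal{H}^P_n$ gives $\sigma(Zv) = S\overline{Z}\overline{v} = ZS\overline{v} = Z\sigma(v)$, so $\sigma$ commutes with $Z$ in the antilinear sense. By induction this yields $\sigma(Z-\la I)^k = (Z - \overline{\la} I)^k \sigma$, whence $\sigma$ restricts to antilinear bijections $V_\la \to V_{\overline{\la}}$ between the generalized eigenspaces $V_\la := \ker(Z - \la I)^{2n}$. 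It therefore suffices to put $Z|_{V_\la \oplus V_{\overline{\la}}}$ (for $\la \neq \overline{\la}$) and $Z|_{V_\la}$ (for $\la \in \R$) into the desired form separately.

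For a non-real eigenvalue $\la$ I pair $V_\la$ with $V_{\overline{\la}}$ and transport Jordan chains by $\sigma$: starting from any Jordan basis $v_1, \dots, v_k$ of a block in $V_\la$ satisfying $(Z-\la I)v_j = v_{j-1}$ (with $v_0 := 0$), the antilinear images $\sigma v_1, \dots, \sigma v_k$ form a Jordan basis of a $\overline{\la}$-block in $V_{\overline{\la}}$ of equal length, and in the concatenated basis $Z$ is represented by $B_k(\la) \oplus B_k(\overline{\la}) = N' \oplus \overline{N'}$ with $N' = B_k(\la)$.

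The main obstacle is the case $\la \in \R$, where $\sigma$ restricts to a quaternionic structure on $V_\la$ commuting with the nilpotent operator $T := Z - \la I$. My first step is to prove that every Jordan block size of $T$ appears with even multiplicity: since $\sigma T = T\sigma$, the filtration $W_k := \ker T^k$ is $\sigma$-stable, so $\sigma$ descends to an antilinear map with square $-\Id$ on each quotient $W_k/W_{k-1}$; any complex space admitting such an operator must have even dimension, and as $\dim(W_k/W_{k-1})$ equals the number of Jordan blocks of size $\geq k$, successive differences give an even count of blocks of each exact size. My second step is to construct $\sigma$-paired Jordan chains: given $v \in W_s \setminus W_{s-1}$, I show the vectors $\{T^i v, T^i \sigma v : 0 \leq i < s\}$ are linearly independent (using $\sigma^2 = -\Id$ to rule out any relation $\sigma v \in \C v + W_{s-1}$) and span a $2s$-dimensional $T$- and $\sigma$-invariant subspace on which $Z$ acts as $B_s(\la) \oplus B_s(\la) = B_s(\la) \oplus \overline{B_s(\la)}$. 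A complementary $T$- and $\sigma$-invariant subspace is obtained by symmetrizing any $T$-invariant projection $P$ onto this subspace to $\tilde{P} := \tfrac{1}{2}(P + \sigma P \sigma^{-1})$, which is easily verified to be a $T$- and $\sigma$-equivariant projection onto the same subspace; induction on $\dim V_\la$ then completes the decomposition.

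Concatenating the bases from all eigenspaces and reordering so that the transported halves are listed after the originals yields a single change of basis $P \in \Gl(\C, 2n)$ conjugating $Z$ into $\begin{pmatrix} N & 0 \\ 0 & \overline{N} \end{pmatrix}$ with $N$ in Jordan normal form, which proves the lemma.
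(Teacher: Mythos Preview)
Your proof is correct and shares the same core idea as the paper's: both exploit the antilinear map $\sigma(v) = S\overline{v}$, observe $\sigma^2 = -\Id$, and use $\sigma Z = Z\sigma$ to transport Jordan chains between the generalized eigenspaces $V_\la$ and $V_{\overline{\la}}$. The treatment of non-real eigenvalues is essentially identical.

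The two arguments diverge in how they handle a real eigenvalue $\la$. The paper first shows directly that $\ker Z$ (hence each $\ker Z^k$) has even dimension, then runs a parity argument on the quantities $\dim\ker Z^{l}-\dim\ker Z^{l-1}=\sum_{i\ge l}Q_i$ to conclude that every Jordan block size $Q_l$ is even; it then simply invokes the uniqueness of Jordan form to conjugate $Z$ to $N\oplus\overline{N}$. Your version instead observes that $\sigma$ descends to a quaternionic structure on each quotient $W_k/W_{k-1}$, which immediately forces $\#\{\text{blocks of size}\ge k\}$ to be even; you then go further and explicitly build $\sigma$-paired Jordan chains together with a $T$- and $\sigma$-invariant complement via the averaged projection $\tilde P=\tfrac12(P+\sigma P\sigma^{-1})$. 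This second step is not strictly needed for the lemma (once the Jordan type is known to be $N\oplus\overline{N}$, a single conjugation in $\Gl(\C,2n)$ suffices), but it makes the conjugating basis explicit. One small point worth stating: your $T$-invariant projection $P$ onto $\spn\{T^i v,\,T^i\sigma v\}$ exists provided you choose $s$ to be the nilpotency index of $T$ at each inductive step; this is implicit in your argument but should be said. With that clarification, both routes are valid, and yours is slightly more constructive while the paper's is slightly shorter.
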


\begin{proof}
Let $Z$ be an element of $\mathcal{H}^P_n$. We will show that the Jordan blocks of $Z$ corresponding to an eigenvalue $\la \in \C \setminus \R$ are exactly the same as those corresponding to $\overline{\la}$ (albeit complex conjugate), whereas those corresponding to an eigenvalue $\mu \in \R$ come in pairs. By permuting the Jordan blocks we can then arrange for $Z$ to be conjugate to an element of the form \eqref{normaal}. \\
\noindent To this end, let $\la \in \C \setminus \R$ be a complex eigenvalue of $Z$. Without loss of generality, we may assume that the dimension of the generalized eigenspace of $\la$ is at least that of its complex conjugate $\overline{\la}$. Let $\{e_1, \dots e_m\} \subset \mat(\C, 2n)$ be a set of linearly independent vectors spanning the generalized eigenspace of $\la$. Assume furthermore that $Ze_1 = \la e_1$ and $Ze_i = \la e_i + s_i e_{i-1}$ for $i \not= 1$ and with $s_i \in \{0,1\}$. In other words, $\{e_1, \dots e_m\}$ put $Z$, restricted to the generalized eigenspace of $\la$, in its Jordan normal form. We then have that 
\begin{align}
Z(S\overline{e_1}) = S\overline{Z}\overline{e_1} = S\overline{\la}\overline{e_1} = \overline{\la}(S\overline{e_1}) \, ,
\end{align}
and likewise
\begin{align}
Z(S\overline{e_i}) = S\overline{Z}\overline{e_i} = S(\overline{\la}\overline{e_i} + s_i \overline{e_{i-1}}) = \overline{\la}(S\overline{e_i})+  s_i (S\overline{e_{i-1}})   
\end{align}
for all other $i$. Hence, if we can prove that the set $\{S \overline{e_1}, \dots S\overline{e_m}\}$ is a basis for the generalized eigenspace of $\overline{\la}$ then the Jordan blocks do indeed agree. As we assumed that the dimension of the generalized eigenspace of $\la$ is at least that of $\overline{\la}$, it suffices to check linear independence of $\{S \overline{e_1}, \dots S\overline{e_m}\}$. Therefore, write
\begin{align}
\sum_{i=1}^m a_iS\overline{e_i} = 0 \,
\end{align}
with $a_i \in \C$. Applying $S$ and taking the complex conjugate yields

\begin{align}
\sum_{i=1}^m -\overline{a_i}e_i = 0 \, .
\end{align}
As the $e_i$ are linearly independent, we see that $-\overline{a_i} = a_i = 0$ for all $i$. Hence, the $S \overline{e_i}$ are linearly independent as well.\\
\noindent Next, let $\mu \in \R$ be a real eigenvalue of $Z$. As the statement of the lemma holds for $Z$ if and only if it holds for $Z - \mu \Id_{2n}$, we may assume that $\mu = 0$. We will need that the kernel of $Z$ is always even dimensional. To show this, let $e_1$ be a non-zero element of the kernel of $Z$. It follows that $S\overline{e_1}$ in also in the kernel of $Z$. Furthermore, $S\overline{e_1}$ and $e_1$ are linearly independent, as 
\begin{align}\label{sma1}
a{e_1} + b(S\overline{e_1}) = 0 \, , 
\end{align}
for $a,b \in \C$ implies 
\begin{align}
S(a{e_1} + bS\overline{e_1})= aS{e_1} - b\overline{e_1}=0 \,  ,
\end{align}
and so
\begin{align}\label{sma2}
\overline{a}(S\overline{e_1}) - \overline{b}e_1= 0 \,  .
\end{align}
Note that $\overline{S} = S$. Combining expressions \eqref{sma1} and \eqref{sma2}, we find

\begin{align}
0 = \overline{a}[a{e_1} + b(S\overline{e_1})] -  b[\overline{a}(S\overline{e_1}) - \overline{b}e_1] = (|a|^2 + |b|^2)e_1\, .
\end{align}
Hence, we have that $a = b = 0$. Next, assume that $W:= \spn_{\C}(e_1, S\overline{e_1}, \dots e_m, S\overline{e_m})$ is a $2m$ dimensional subspace of the kernel of $Z$. Suppose $f$ is a nonzero element that is in the kernel, but not in $W$. Then $S\overline{f}$ is also in the kernel. If furthermore we have 
\begin{align}
w + a f + b(S\overline{f}) = 0 \, , 
\end{align}
for some $w \in W$ and $a,b \in \C$, then we get

\begin{align}
0 &= \overline{a}[w + a f + b(S\overline{f})] - b[S\overline{w} + \overline{a} S \overline{f} - \overline{b} f]\\
&=(\overline{a}w - bS\overline{w}) + (|a|^2 + |b|^2)f \, .
\end{align}
Because $S\overline{w} \in W$, we see that $\overline{a}w - bS\overline{w} \in W$. Hence, it holds that $a = b = 0$ and $w =0$. This proves that the kernel has to be even dimensional. \\ \noindent Finally, let $Q_m$ denote the number of times $B_m(0)$ appears in the Jordan normal form of $Z$. Let $l$ denote the highest number such that $Q_l$ is odd. If $l=1$ then we see from
\begin{align}
\dim (\ker Z) = Q_1 + \dots Q_{2n} \, ,
\end{align}
that $\dim (\ker Z)$ has to be odd, contradicting our previous result. Likewise, $l=2n$ leads to the contradiction $\dim (\ker Z) =1$. For $1 < l < 2n$ we see that
\begin{align} \label{gen01}
\dim (\ker Z^{l-1}) = \sum_{i=1}^{l-1} i Q_i + \sum_{i=l}^{2n} (l-1) Q_i \, ,
\end{align}
and 
\begin{align}\label{gen02}
\dim (\ker Z^{l}) = \sum_{i=1}^{l} i Q_i + \sum_{i=l+1}^{2n} {l} Q_i \, .
\end{align}
Here we have used that $B_m(0)^k = 0$ for $k \geq m$ and that $\dim (\ker B_m(0)^k) = k$ for $1 \leq k \leq m$. Subtracting expression \eqref{gen01} from expression \eqref{gen02} and interpreting the numbers modulo $2$, we find 
\begin{align}
0 = (l-(l-1)) Q_l + \sum_{i=l+1}^{2n} (l-(l-1)) Q_i = Q_l \, ,
\end{align}
contradicting that $Q_l$ is odd. We conclude that a largest $l$ such that $Q_l$ is odd does not exist. As $Q_k = 0$ for all $k > 2n$ due to the size of $Z$, we see that all $B_k(0)$ appear an even number of times. This proves the lemma.
\end{proof}

\noindent Combining Lemmas \ref{normalfo} and \ref{reeal}, we see that $Z \in \mathcal{H}^P_n$ has a vanishing (respectively purely imaginary) spectrum, if and only if there exists an invertible $C \in \mathcal{H}^P_n$ such that $CZC^{-1}$ if of the form $\eqref{normaal}$ with $N$ in Jordan normal form and with a vanishing (respectively purely imaginary) spectrum.

\begin{proof}[Proof of Theorem \ref{Hnil}]
Define $\tilde{B}_{n,p}(0)$ for $p \in \mathcal{P}(n)$ to be the matrix

\begin{equation}
\tilde{B}_{n,p}(0) := \begin{pmatrix}
 B_{n,p}(0) & 0 \\
0 & B_{n,p}(0)
 \end{pmatrix} \in \mathcal{H}^P_n \, .
 \end{equation}
As before, the smooth map 
 
\begin{align}
\Psi_p: \mathcal{H}^P_n \cap \Gl(2n,\C)&\rightarrow \mathcal{H}^P_n \\ \nonumber
Z &\mapsto Z \tilde{B}_{n,p}(0) Z^{-1} \nonumber
\end{align}
has constant rank equal to the dimension of $\im ( \ad_{\tilde{B}_{n,p}(0)}\mid_{\mathcal{H}^P_n})$. Note that $Z^{-1} \in \mathcal{H}^P_n$ whenever $Z \in \mathcal{H}^P_n$ is invertible. This follows from the Cayley-Hamilton theorem, or from the fact that $\mathcal{H}^P_n$ is the image of an algebra of equivariant maps under a morphism of algebras. It remains to determine the dimension of $\im ( \ad_{\tilde{B}_{n,p}(0)}\mid_{\mathcal{H}^P_n})$, and to prove that the image of $\Psi_p$ is indeed an embedded manifold.  We denote this image by $\mathcal{S}_{\tilde{B}_{n,p}(0)}$. To determine the dimension, let us denote an element $Z \in \mathcal{H}^P_n$ given by

\begin{align}
Z = \begin{pmatrix}
  X&Y\\
  -\overline{Y} & \overline{X}
 \end{pmatrix}
\end{align}
 as $Z = [X | Y]$. We see that in this notation, \\ \noindent $ \ad_{\tilde{B}_{n,p}(0)}(Z) = [\ad_{B_{n,p}(0)}(X)| \ad_{B_{n,p}(0)}(Y)]$. Hence, as $X$ and $Y$ may be chosen freely, we see that the complex dimension of the image of $\ad_{\tilde{B}_{n,p}(0)}$ equals at most $n^2 - n + n^2 - n = 2n^2 - 2n$. Hence the real dimension is at most $4n^2 - 4n$. Equality is furthermore only attained when $p = (n)$. \\
Because $\Psi_p$ is a smooth map of constant rank, there exists an open set $S \subset \mathcal{H}^P_n$ containing $\Id$ such that $\Psi_p(S)$ is an embedded submanifold of $\mathcal{H}^P_n$ containing $\tilde{B}_{n,p}(0)$. Therefore, let $(X_r)_{r=0}^{\infty}$ be a sequence of elements in $\mathcal{S}_{\tilde{B}_{n,p}(0)} \subset \mathcal{H}^P_n$ that has $\tilde{B}_{n,p}(0)$ as its limit. We need to show that $X_r$ lies in  $\Psi_p(S)$ for large enough values of $r$. This would prove that $\mathcal{S}_{\tilde{B}_{n,p}(0)}$ is locally around $\tilde{B}_{n,p}(0)$ an embedded submanifold, analogous to the proof of Theorem \ref{Rnil}. As we have the inclusions $\mathcal{H}^P_n \subset \mat(\C,2n)$ and $\mathcal{S}_{\tilde{B}_{n,p}(0)} \subset \mathcal{O}_{\tilde{B}_{n,p}(0)}$, we may use Lemma \ref{neigh} to write 

\begin{align}
X_r = \exp(-v_r){\tilde{B}_{n,p}(0)}\Exp(v_r)
\end{align}
for sufficiently large $r$, and for certain complex matrices $v_r$ that limit $0$. As the matrices $\exp(-v_r)$ limit $\Id_{2n}$, we see by Lemma \ref{reeal} that there exist matrices $C_r \in \mathcal{H}^P_n \cap \Gl(2n,\C)$ such that 
\begin{align}
X_r = C_r{\tilde{B}_{n,p}(0)}C_r^{-1} \, .
\end{align}
 It follows from Lemma \ref{reeal} that the $C_r$ may furthermore be chosen such that their limit is $\Id_{2n}$ as well. In particular, we see that $C_r \in S$ for sufficiently large $r$, proving that $X_r \in \Psi_p(S)$ for sufficiently large $r$. This shows that $\mathcal{S}_{\tilde{B}_{n,p}(0)}$ is locally an embedded submanifold. Hence, by homogeneity, it is so globally. This proves the theorem.

\end{proof}
\noindent In order to prove Theorem \ref{Hcen}, we will again introduce some notation. Given an integer $1 \leq m \leq n$ and an elements $\xi = (p;p_1, \dots p_l) \in \Xi_m$, we (re)introduce the open set 

\begin{align}
W_{\xi} = \{x \in \R^l \mid x_i > 0, x_i \not= x_j  \text{ for all } i, j \in \{1, \dots l\} \text{ such that } i \not= j\}\, .
\end{align}
Next, given $\xi \in \Xi_m$, $x \in W_{\xi}$ and $q \in \mathcal{P}(n-m)$, we define the $n$ times $n$ matrix

\begin{align} \label{quatbase}
H_{\xi,q}(x) := \begin{pmatrix}
  B_{\xi}(x)&0\\
  0 & B_{n-m,q}(0)
 \end{pmatrix} \, ,
\end{align}
and the matrix

\begin{align}
\tilde{H}_{\xi,q}(x) := \begin{pmatrix}
  H_{\xi,q}(x)&0\\
  0 & \overline{H_{\xi,q}(x)}
 \end{pmatrix} = 
 \begin{pmatrix}
  H_{\xi,q}(x)&0\\
  0 & H_{\xi,q}(-x)
 \end{pmatrix} \in \mathcal{H}^P_n \, .
\end{align} 
In the notation of the proof of Lemma \ref{Hnil} we have $\tilde{H}_{\xi,q}(x) = [H_{\xi,q}(x)| 0]$. Lastly, we introduce the matrices

\begin{align} \label{quatbase}
I'_{\xi}(z) := \begin{pmatrix}
  I_{p}(z)&0\\
  0 & 0
 \end{pmatrix} \in \mat(\C,n)\, ,
\end{align}
for $z \in \C^l$, and $\tilde{I}_{\xi}(z) := [I'_{\xi}(z)|0] \in \mathcal{H}^P_n$.

\noindent The following lemma will be used to count the dimensions of the manifolds of Theorem \ref{Hnil}.

\begin{lem}\label{counth}
The dimension of the image of $\mathcal{L}_{H_{\xi,q}(x), \overline{H_{\xi,q}(x)}}$ is independent of $x \in W_{\xi}$. Furthermore, the operator $\mathcal{L}_{H_{\xi,q}(x), \overline{H_{\xi,q}(x)}}$ is surjective if and only if $m = n$. That is, if and only if $H_{\xi,q}(x)$ has only non-zero eigenvalues. 
\end{lem}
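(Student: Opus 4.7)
The plan is to analyze the kernel of $\mathcal{L} := \mathcal{L}_{H_{\xi,q}(x), \overline{H_{\xi,q}(x)}}$ directly, exploiting the block-diagonal structure of $H_{\xi,q}(x)$ and then invoking Lemma \ref{cominv}. Since $\mathcal{L}$ is an endomorphism of the finite-dimensional space $\mat(\C,n)$, the dimension of its image equals $n^2 - \dim \ker \mathcal{L}$, and surjectivity is equivalent to triviality of the kernel. Both claims therefore reduce to identifying $\ker \mathcal{L}$.

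First I would decompose $X \in \mat(\C,n)$ into blocks $X = (X_{ij})_{1 \leq i,j \leq 2}$ with respect to the splitting $\C^n = \C^m \oplus \C^{n-m}$ that makes $H_{\xi,q}(x) = B_\xi(x) \oplus B_{n-m,q}(0)$ block diagonal (and likewise $\overline{H_{\xi,q}(x)} = \overline{B_\xi(x)} \oplus B_{n-m,q}(0)$, since $B_{n-m,q}(0)$ is real). The equation $\mathcal{L}(X) = 0$ then decouples into four independent Sylvester equations, one per block.

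For the blocks $X_{11}$, $X_{12}$, $X_{21}$ I would apply Lemma \ref{cominv}. The block $X_{11}$ is annihilated by $\mathcal{L}_{B_\xi(x), \overline{B_\xi(x)}}$, whose spectrum is $\{i(x_i + x_j)\}_{i,j}$; since each $x_i, x_j > 0$, no eigenvalue vanishes, so this operator is invertible and $X_{11} = 0$. The blocks $X_{12}$ and $X_{21}$ are annihilated by Sylvester operators involving one of $\pm B_\xi(x)$ (eigenvalues $\pm i x_j$, all nonzero) and $B_{n-m,q}(0)$ (only eigenvalue $0$); again these spectra are disjoint, so Lemma \ref{cominv} forces $X_{12} = X_{21} = 0$. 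Finally, $X_{22}$ must commute with $B_{n-m,q}(0)$, and the dimension of this commutant depends only on $n-m$ and $q$, not on $x$. This establishes that $\dim \ker \mathcal{L}$ is independent of $x \in W_\xi$, proving the first claim.

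The second claim is then immediate: $\mathcal{L}$ is surjective iff $\ker \mathcal{L} = 0$, which occurs iff the commutant of $B_{n-m,q}(0)$ in $\mat(\C, n-m)$ is trivial. For $n-m \geq 1$ the identity $\Id_{n-m}$ lies in this commutant, so non-triviality is equivalent to $n-m = 0$, i.e.\ $m = n$, which in turn is precisely the condition that $H_{\xi,q}(x)$ has no zero eigenvalue. There is no real obstacle in this proof; the only subtle point is recognizing that the strict positivity built into the definition of $W_\xi$ guarantees the off-diagonal Sylvester operators are all invertible, which is what allows one to reduce the kernel computation to the purely combinatorial question about the commutant of the fixed nilpotent matrix $B_{n-m,q}(0)$.
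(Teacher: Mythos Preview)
Your proof is correct and follows essentially the same approach as the paper: both decompose $X$ into $2\times 2$ blocks with respect to the splitting $\C^n=\C^m\oplus\C^{n-m}$, apply Lemma~\ref{cominv} to see that the three blocks involving $B_{\xi}(x)$ yield invertible Sylvester operators (using $x_i>0$), and observe that the remaining $(2,2)$ block gives the $x$-independent operator $\mathcal{L}_{B_{n-m,q}(0),B_{n-m,q}(0)}$. The only cosmetic difference is that you phrase the argument via the kernel while the paper tracks the image directly.
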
 

\begin{proof}
Let us denote by $X_{i,j}$, $i,j \in \{1,2\}$, a block of a matrix $X \in \mat(\C,n)$ corresponding to the block structure of the matrix \eqref{quatbase}. For $i=1$ and $j=2$ we see that

\begin{align}
(\mathcal{L}_{H_{\xi,q}(x), \overline{H_{\xi,q}(x)}}(X))_{1,2} = \mathcal{L}_{B_{n-m,q}(0), \overline{B_{\xi}(x)}}(X_{1,2}) \, .
\end{align}
As $B_{n-m,q}(0)$ and $\overline{B_{\xi}(x)}$ have no eigenvalues in common, Lemma \ref{cominv} tells us that the operator $\mathcal{L}_{B_{n-m,q}(0), \overline{B_{\xi}(x)}}$ is a bijection. Similarly the map, 

\[ X_{2,1} \mapsto \mathcal{L}_{B_{\xi}(x), B_{n-m,q}(0)}(X_{2,1})\]
is a bijection, corresponding to the other off-diagonal block. For $i=j=1$ we see that 

\begin{align}
(\mathcal{L}_{H_{\xi,q}(x), \overline{H_{\xi,q}(x)}}(X))_{1,1} = \mathcal{L}_{B_{\xi}(x), \overline{B_{\xi}(x)}}(X_{1,1}) \, .
\end{align}
As $B_{\xi}(x)$ and $\overline{B_{\xi}(x)} = B_{\xi}(-x)$ have no eigenvalues in common, the map $\mathcal{L}_{B_{\xi}(x), \overline{B_{\xi}(x)}}$ is again a bijection. Lastly, we have that 

\begin{align}
(\mathcal{L}_{H_{\xi,q}(x), \overline{H_{\xi,q}(x)}}(X))_{2,2} = \mathcal{L}_{B_{n-m,q}(0), B_{n-m,q}(0)}(X_{2,2}) \, .
\end{align}
By Lemma \ref{cominv}, this map is never a bijection (when $n-m > 0$). It is, however, independent of $x \in W_{\xi}$. This proves that (the dimension of) the image of $\mathcal{L}_{H_{\xi,q}(x), \overline{H_{\xi,q}(x)}}$ is independent of $x \in W_{\xi}$. It also proves that this operator is a bijection if and only if $n = m$. This concludes the proof. 
\end{proof}

\begin{proof}[Proof of Theorem \ref{Hcen}]
The proof will be analogous to that of Theorem \ref{Rcen}. First, we define a smooth map for every pair $(\xi, q) \in \Xi_m \times \mathcal{P}(n-m)$ and show that this map is of constant rank. Then we show that the images of these maps are embedded manifolds, by comparing to the result of Theorem \ref{Ccen}.

\noindent For $m \in \{1, \dots n\}$, $\xi \in \Xi_m$ and $q \in \mathcal{P}(n-m)$ we define the smooth map
\begin{align}
\tilde{\Psi}_{\xi,q}: \, &\mathcal{H}^P_n \cap Gl(2n,\C) \times W_{\xi} \rightarrow \mathcal{H}^P_n\\ \nonumber
&(A,x) \mapsto A \tilde{H}_{\xi,q}(x) A^{-1} \, .\nonumber
\end{align}
As was the case for $\mathcal{C}^P_n$ and $\mathcal{R}^P_n$, some of the sets 

\begin{align}
\mathcal{S}_{\xi, q} := \{\tilde{\Psi}_{\xi,q}(A,x) \mid (A,x) \in  \mathcal{H}^P_n \cap Gl(2n,\C) \times W_{\xi} \}
\end{align}
may coincide for different values of $(\xi,q)$. However, after discarding doubles they will be disjoint. It follows from Lemmas \ref{reeal} and \ref{normalfo} that any element of\ $\mathcal{H}^P_n$ with a purely imaginary spectrum is either nilpotent or contained in one of these sets. 
\noindent Similar to the proof of Theorem \ref{Rcen}, the image of the derivative of $\tilde{\Psi}_{\xi,q}$ at a point $(A,x) \in \mathcal{H}^P_n \cap Gl(2n,\C) \times W_{\xi}$ is given by 

\begin{align}
&\im (T_{(A,x)}\tilde{\Psi}_{\xi,q}) \nonumber \\
&= \{ A[A^{-1}V, \tilde{H}_{\xi,q}(x)]A^{-1} + A\tilde{I}_{\xi}(iw) A^{-1}  \mid (V,w) \in \mathcal{H}^P_n \times \R^l\} \nonumber \\
&= \{ A[V, \tilde{H}_{\xi,q}(x)]A^{-1} + A\tilde{I}_{\xi}(iw) A^{-1}  \mid (V,w) \in \mathcal{H}^P_n \times \R^l\} \, .
\end{align}
The dimension of this space is equal to that of

\[ \{ [V, \tilde{H}_{\xi,q}(x)] + \tilde{I}_{\xi}(iw)   \mid (V,w) \in \mathcal{H}^P_n \times \R^l\} \, .\]
Now, $\tilde{H}_{\xi,q}(x)$ is a block-diagonal matrix. Hence, all the diagonal blocks of an element of the form $[V, \tilde{H}_{\xi,q}(x)]$ have vanishing trace. Therefore, the only element both of the form $[V, \tilde{H}_{\xi,q}(x)]$ for $V \in  \mathcal{H}^P_n$ and of the form $\tilde{I}_{\xi}(iw)$ for $w \in \R^l$ is $0$. Compare to the proof of Lemma \ref{biggest}. We conclude that 

\begin{align}
&\{ [V, \tilde{H}_{\xi,q}(x)] + \tilde{I}_{\xi}(iw)   \mid (V,w) \in \mathcal{H}^P_n \times \R^l\}  \nonumber \\
= &\{ [V, \tilde{H}_{\xi,q}(x)] \mid V \in \mathcal{H}^P_n \} \oplus \{\tilde{I}_{\xi}(iw)   \mid w \in \R^l\}  \, .
\end{align}
In order to show that $\tilde{\Psi}_{\xi,q}$ is a map of constant rank, it remains to show that the dimension of $\{ [V, \tilde{H}_{\xi,q}(x)] \mid V \in \mathcal{H}^P_n \} $ is independent of the choice of $x \in W_{\xi}$. To this end, we write $V = [V_1 | V_2]$ in the notation of the proof of Theorem \ref{Hnil}. Writing out the commutator, we see that

\begin{align}
[V, \tilde{H}_{\xi,q}(x)] = &[\mathcal{L}_{ {H}_{\xi,q}(x), {H}_{\xi,q}(x) }(V_1)\mid \mathcal{L}_{\overline{{H}_{\xi,q}(x)}, {H}_{\xi,q}(x)}(V_2)] \\ \nonumber
= &[- \ad_{ {H}_{\xi,q}(x)}(V_1)\mid \mathcal{L}_{\overline{{H}_{\xi,q}(x)}, {H}_{\xi,q}(x)}(V_2)] \, .
\end{align}
From a similar reasoning as that in the proof of Lemma \ref{biggest}, we see that the image of $\ad_{ {H}_{\xi,q}(x)}$ is independent of $x \in W_{\xi}$. Note in particular that there exists a permutation matrix $Q$ such that $Q{H}_{\xi,q}(x)Q^{-1} = B_{\xi'}(x')$ for some $\xi' \in \Xi_n$ and with $x' \in V_{\xi'}$. This permutation matrix just reorders the blocks to adhere to our convention to have partitions ordered, and does not depend on $x$. The entries of $x'$ are just those of $x$ with an added $0$ if $n \not= m$. From this we see that the complex dimension of the image of $\ad_{ {H}_{\xi,q}(x)}$ is at most $n^2 - n$, with equality only when the partitions $q$ and $p_1$ till $p_l$ in $\xi = (p; p_1, \dots p_l)$ are all trivial. From Lemma \ref{counth} we see that the image of $\mathcal{L}_{\overline{{H}_{\xi,q}(x)}, {H}_{\xi,q}(x)}$ is likewise independent of $x$. This shows that $\tilde{\Psi}_{\xi,q}$ is indeed a smooth map of constant rank. \\
\noindent It also follows that the real dimension of the image of $\tilde{\Psi}_{\xi,q}$ is equal to that of \\
 \noindent $\dim_{\R} \im (\ad_{\tilde{H}_{\xi,q}(x)}|_{\mathcal{H}^P_n}) + l$. By our bound on the dimension of the image of $\ad_{ {H}_{\xi,q}(x)}$, combined with Lemma \ref{counth} we see that this value cannot exceed $2(n^2 -n) + 2n^2 + n = 4n^2 - n$. For equality we need $l=n$. This value of $l$ forces $m$ to be equal to $n$ as well, and forces $\xi$ to equal $((1, \dots 1);(1) \dots (1))$. From our remark about the image of $\ad_{ {H}_{\xi,q}(x)}$ and from the result of Lemma \ref{counth} we see that the real dimension of $\mathcal{S}_{\xi, q}$ is indeed equal to $4n^2 - n$ in the unique case when $l = m = n$.  \\
\noindent To summarize so far, we have found that the sets $\mathcal{S}_{\xi, q}$ are all immersed manifolds of real dimension $4n^2 - n$ or lower. The exact value of $4n^2 - n$ only occurs when all eigenvalues of $\tilde{H}_{\xi,q}(x)$ are different (and hence unequal to $0$). It remains to show that the sets $\mathcal{S}_{\xi, q}$ are in fact embedded submanifolds.\\
By the constant rank theorem, we know that for any $x \in W_{\xi}$ there exists an open set $S\times T \subset \mathcal{H}^P_n \cap Gl(2n,\C) \times W_{\xi}$ containing $(\Id,x)$ such that $\tilde{\Psi}_{\xi,q}(S\times T)$ is a submanifold of $\mathcal{H}^P_n$ containing $\tilde{H}_{\xi,q}(x)$. It remains to show that other elements of $\mathcal{S}_{\xi, q}$ do not come arbitrarily close to $\tilde{H}_{\xi,q}(x)$. To this end, assume the converse, so that $(X_r)_{r=0}^{\infty}$ is a sequence in $\mathcal{S}_{\xi, q} \setminus \tilde{\Psi}_{\xi,q}(S\times T)$ converging to $\tilde{H}_{\xi,q}(x)$. Let $P$ be a permutation matrix such that $P\tilde{H}_{\xi,q}(x)P^{-1}= B_{\xi'}(x')$ for some $\xi' \in \Xi_{2n}$ and $x' \in V_{\xi'}$ (consisting of entries in $x$, minus those and possibly $0$). Note that $P$ depends only on $\xi$ and $q$. From their Jordan normal forms, we see that $\tilde{H}_{\xi,q}(y) \in \mathcal{O}_{\xi'} $ for all $y \in W_{\xi}$. By conjugacy invariance of $\mathcal{O}_{\xi'}$ we conclude that $\mathcal{S}_{\xi, q} \subset \mathcal{O}_{\xi'} \subset \mat(\C, 2n)$. Therefore, $(Y_r)_{r=0}^{\infty} := (PX_rP^{-1})_{r=0}^{\infty}$ is a sequence in $\mathcal{O}_{\xi'}$ converging to $B_{\xi'}(x')$. By the conclusion at the end of Theorem \ref{Ccen}, we see that we may write

\begin{align}
Y_r &= \exp(-v_r)(B_{\xi'}(x') + I_{\xi'}(z^r))\exp(v_r) \\ \, \nonumber
\end{align}
for large enough values of $r$. Here, the $v_r$ are complex matrices satisfying $\displaystyle \lim_{r \rightarrow \infty} v_r = 0$, so that $\displaystyle \lim_{r \rightarrow \infty} \exp(-v_r) = \Id$. We furthermore have $z^r \in (i\R)^k$ for $k := \dim(V_{\xi})$, satisfying $\displaystyle \lim_{r \rightarrow \infty} z^r = 0$.  \\
\noindent As in the proof of Theorem \ref{Rcen}, we want to conclude that $P^{-1}(B_{\xi'}(x') + I_{\xi'}(z^r))P$ is an element of the form $\tilde{H}_{\xi,q}(x^r) \in \mathcal{H}^P_n$ for large enough values of $r$. As the matrices $P^{-1}(B_{\xi'}(x') + I_{\xi'}(z^r))P$ are conjugate to $X_r \in \mathcal{H}^P_n$, we will then conclude from Lemma \ref{reeal} that this conjugation can be done by elements in $\mathcal{H}^P_n$. This will then lead to a contradiction, as it will force the $X_r$ to lie in $\tilde{\Psi}_{\xi,q}(S\times T)$ for large enough values of $r$. \\
\noindent To show that the matrices $P^{-1}(B_{\xi'}(x') + I_{\xi'}(z^r))P$ are of the form $\tilde{H}_{\xi,q}(x^r)$ for certain values of $x^r \in W_{\xi}$, we need to show that the eigenvalues in the blocks of $B_{\xi'}(x') + I_{\xi'}(z^r)$ come in prescribed pairs with opposite signs. In particular, if $m \not= n$ then a prescribed block has to be nilpotent. To this end, we reintroduce the real-property from the proof of Theorem \ref{Rcen}. An element $v \in \C^k$ has this property if for some function $\tau: \{1, \dots k\} \rightarrow \{1, \dots k\}$ we have $v_\{\tau(j)\} = -v_j$. As the matrices $B_{\xi'}(x')$ and $B_{\xi'}(x') + I_{\xi'}(z^r)$ are conjugate to elements in $\mathcal{S}_{\xi, q} \subset \mathcal{H}^P_n$, we see that if $\la \in i\R$ occurs as an eigenvalue, then so does $-\la = \overline{\la}$. Therefore, these matrices have the real-property. Exactly as the proof of Lemma \ref{Rcen}, $B_{\xi'}(x')$ has this property for exactly one function $\tau_0$. (Note that $P$ has been chosen such that the different blocks of $B_{\xi'}(x')$ have different eigenvalues, respecting the notation.) It follows that for large enough values of $r$, the eigenvalues of $B_{\xi'}(x') + I_{\xi'}(z^r)$ satisfy the real-property for $\tau_0$ as well. This shows that the eigenvalues in the blocks of $B_{\xi'}(x') + I_{\xi'}(z^r)$ are arranged so that we may write $P^{-1}(B_{\xi'}(x') + I_{\xi'}(z^r))P = \tilde{H}_{\xi,q}(x^r)$ for certain $x^r \in W_{\xi}$. Note that $\displaystyle \lim_{r \rightarrow \infty} x^r = x$. Returning to the $X_r$, we see that for $r$ large enough we have

\begin{align}
X_r &= P^{-1}\exp(-v_r)(B_{\xi'}(x') + I_{\xi'}(z^r))\exp(v_r)P \\  \nonumber
&= P^{-1}\exp(-v_r)PP^{-1}(B_{\xi'}(x') + I_{\xi'}(z^r))PP^{-1}\exp(v_r)P\\ \nonumber
&= (P^{-1}\exp(-v_r)P) \tilde{H}_{\xi,q}(x^r)(P^{-1}\exp(-v_r)P)^{-1} \, .
\end{align}
As $X_r$ and $\tilde{H}_{\xi,q}(x^r)$ are both elements of $\mathcal{H}^P_n$, we conclude from Lemma \ref{reeal} that there exist invertible matrices $C_r \in \mathcal{H}^P_n$ such that

\begin{align}
X_r &= C_r \tilde{H}_{\xi,q}(x^r)C_r^{-1} \\ \nonumber
&= \tilde{\Psi}_{\xi,q}(C_r, x^r) \, .
\end{align}
As $\displaystyle \lim_{r \rightarrow \infty} \exp(-v_r) = \Id$, it also holds that $\displaystyle \lim_{r \rightarrow \infty}P^{-1} \exp(-v_r) P =\Id$. Therefore, we see that the $C_r$ can be chosen such that $\displaystyle \lim_{r \rightarrow \infty} C_r = \Id$. Because it also holds that $\displaystyle \lim_{r \rightarrow \infty} x^r = x$, we see that for large enough values of $r$, we have that $X_r \in \tilde{\Psi}_{\xi,q}(S\times T)$. This directly contradicts our assumptions, and we conclude that $\mathcal{S}_{\xi, q}$ is locally around $\tilde{H}_{\xi,q}(x)$ an embedded submanifold. By homogeneity, $\mathcal{S}_{\xi, q}$ is globally an embedded submanifold. This proves the theorem.

\end{proof}

\noindent It follows from the proof of Theorem \ref{Hcen} that the unique manifold of highest dimension consists again of exactly those matrices with no double eigenvalues.

%


\bibliography{lin_trans}
\bibliographystyle{plain}

\end{document}